\newtheorem{theorem}{Theorem}[section]
\newtheorem{definition}[theorem]{Definition}
\newtheorem{proposition}[theorem]{Proposition}
\newtheorem{corollary}[theorem]{Corollary}
\newtheorem{lemma}[theorem]{Lemma}
\newtheorem{remark}[theorem]{Remark}
\newtheorem{conjecture}[theorem]{Conjecture}
\newtheorem*{theorem-non}{Theorem}
\theoremstyle{definition}
\newtheorem{example}[theorem]{Example}
\newcommand{\M}{\mathcal{M}}
\newcommand{\R}{\mathbb{R}}
\newcommand{\Z}{\mathbb{Z}}
\newcommand{\N}{\mathbb{N}}
\renewcommand{\P}{\mathbb{P}}
\newcommand{\cE}{\mathcal{E}}
\newcommand{\cH}{\mathcal{H}}
\newcommand{\cK}{\mathcal{K}}
\newcommand{\cM}{\mathcal{M}}
\newcommand{\cU}{\mathcal{U}}
\newcommand{\cV}{\mathcal{V}}
\def\phi{\varphi}
\def\R{{\mathbb R}}
\def\C{{\mathbb C}}
\def\N{{\mathbb N}}
\def\Z{{\mathbb Z}}
\def\H{{\mathbb H}}
\def\P{{\mathcal P}}
\def\Q{{\mathcal Q}}
\def\M{{\mathcal M}}
\def\S{{\mathcal S}}
\def\supp{\mbox{\rm supp}}
\def\le{\leqslant}
\def\ge{\geqslant}
\def\e{\epsilon}
\def\M{\mathcal{M}}
\begin{document}

\title{Thermodynamic formalism and the entropy at infinity of the geodesic flow}

\author[A. Velozo]{Anibal Velozo}  \address{Department of Mathematics, Yale University, New Haven, CT 06511, USA.}
\email{\href{anibal.velozo@gmail.com}{anibal.velozo@yale.edu}}
\urladdr{\href{https://gauss.math.yale.edu/~av578/}{https://gauss.math.yale.edu/~av578/}}

\begin{abstract}
In this paper we study the ergodic theory and thermodynamic formalism  of the geodesic flow on non-compact pinched negatively curved manifolds. We consider two notions of entropy at infinity, the topological and the measure theoretic entropy at infinity. We prove that  both notions coincide and we relate this quantity with the upper semicontinuity of the entropy map. This relationship was already studied in \cite{rv} for geometrically finite manifolds; in this paper we extend such results to arbitrary pinched negatively curved manifolds. We obtain several applications to the existence and stability of equilibrium states. Our approach has the advantage to include certain uniformly continous potentials, a situation that can not be studied through the methods from \cite{pps}. 
Of particular importance are potentials that vanish at infinity and those that satisfy a critical gap condition, that is, strongly positive recurrent potentials. We also prove some equidistribution and counting results for closed geodesics (beyond the geometrically finite case). In particular we obtain a version of the prime geodesic theorem for strongly positive recurrent manifolds. 
  
\end{abstract}

\maketitle

\section{Introduction}

In this paper we will study the ergodic theory and the thermodynamic formalism of the geodesic flow on non-compact  pinched negatively curved manifolds. In order to put our results into context  we start with a brief discussion on what is known in the compact case, which is better understood than the non-compact case and motivates a large part of the results in this paper.

 By the work of Bowen \cite{b73} and Ratner \cite{rat}, we know that the geodesic flow on a compact negatively curved manifold can be modelled as a suspension flow over a sub-shift of finite type. This symbolic representation has many important consequences for the thermodynamic formalism of H\"older potentials: the topological pressure can be computed using a weighted sum over the periodic orbits,  the equilibrium states are unique and satisfy the Gibbs property, the central limit theorem holds, and large deviation estimates are available (see \cite{bob}, \cite{rat2}, \cite{pol}). Another important property of the geodesic flow on a compact negatively curved manifold is the  upper semicontinuity of the entropy map \cite{bo1}. In the compact case the upper semicontinuity of the entropy map implies the existence of a measure of maximal entropy. In the same context, the measure of maximal entropy was  constructed independently by Margulis \cite{marg} and Bowen \cite{b73}.

Unfortunately, if the ambient manifold is non-compact then most of this discussion does not apply:  we do not have  at our disposal a symbolic representation, and the proof of the upper semicontinuity of the entropy map does not work. Let me be a bit more precise about these two technical difficulties. First, in the study of non-compact manifolds we are  often interested in situations where the injectivity radius is not bounded below (for instance in the presence of `cusps')--this is a very problematic issue when trying to extend the symbolic model of Bowen and Ratner to the non-compact case. Second, the proof of the upper semicontinuity of the entropy map is based on two hypotheses: the compactness of the ambient space, and that the dynamical system is expansive (or some weaker assumption like h-expansive, or asymptotically h-expansive).  If one removes the compactness assumption then the upper semicontinuity of the entropy map does not necessarily hold, even when the dynamical system is expansive. As an example, consider the full shift on a countable (infinite) alphabet. In this case, the entropy map is not upper semicontinuous at any measure of finite entropy. 

Fortunately it is not all bad news: in a recent work Paulin, Pollicott and Schapira \cite{pps} were able to extend many of the results known to hold for H\"older potentials on compact negatively curved manifolds to the non-compact case; this generalizes the substantial work of Roblin  \cite{ro}. More precisely, they proved the uniqueness of equilibrium states and the  equivalence between the topological pressure, the Gurevich pressure, and the critical exponent for  H\"older potentials (see Theorem \ref{pps}). A very good complement to  \cite{pps} is the recent paper of Pit and Schapira \cite{sp} where they provided a strong criterion for the existence of equilibrium states for H\"older potentials; in particular for the existence of the measure of maximal entropy for the geodesic flow. In this work we will continue the study of the thermodynamic formalism of H\"older potentials, but we will also be interested in (uniformly) continuous potentials. In the continuous category the techniques developed in \cite{pps} do no apply. 

It worth mentioning that Sullivan \cite{su0} constructed an invariant measure that whenever finite is the unique measure of maximal entropy, up to normalization (see \cite{op}); the so-called Bowen-Margulis-Sullivan measure (BMS measure for short). We emphasize that in the non-compact case the BMS measure can be infinite, in which case there is no measure of maximal entropy. For instance, any infinite normal cover of a compact manifold has infinite BMS measure. Peign\'e \cite{pei} constructed geometrically finite manifolds with infinite BMS measure.

Before stating some of the main results of this paper let us introduce some notation. The dynamical system of interest will always be the geodesic flow on a pinched negatively curved manifold.  In particular, whenever we say `invariant measure', we  mean `non-negative Borel measure invariant by the geodesic flow'. The measure theoretic entropy of an invariant probability measure $\mu$ is denoted by $h_\mu(g)$.  We say that $(\mu_n)_n$ converges vaguely to $\mu$ if for every compactly supported continuous function $f$ we have that $\lim_{n\to\infty}\int fd\mu_n=\int f d\mu$. Similarly, we say that $(\mu_n)_n$ converges in the weak* topology to $\mu$ if for every bounded continuous function $f$ we have that $\lim_{n\to\infty}\int fd\mu_n=\int f d\mu$. Since $M$ is non-compact, the vague limit of a sequence of probability measures might not be a probability measure. In other words, a sequence of probability measures might lose mass. The mass of a measure $\mu$ is the number $\mu(T^1M)$, and it is denoted by $|\mu|$. A standing assumption in this paper is that $(M,g)$ is non-elementary (see Section \ref{strucneg}), we will assume this is the case from now on. A fundamental quantity in this paper is the topological entropy at infinity of the geodesic flow (see Definition \ref{teidef}), which we denote by $\delta_\infty$. The following result is proved in Section \ref{5}.
\begin{theorem}[=Theorem \ref{A}]\label{i1}  Let $(M,g)$ be a pinched negatively curved manifold. Let $(\mu_n)_{n}$ be a sequence of invariant probability measures converging to $\mu$ in the vague topology. Then 
$$\limsup_{n\to \infty}h_{\mu_n}(g)\le |\mu|h_{\mu/|\mu|}(g)+(1-|\mu|)\delta_\infty.$$
If the sequence converges vaguely to the zero measure, then the right hand side is understood as $\delta_\infty$. 
\end{theorem}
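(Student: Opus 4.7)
The plan is to combine a Brin--Katok local entropy bound with a careful orbit decomposition according to a large compact core $W\subset T^1M$. In the limit, orbit segments staying mostly in $W$ contribute at most $|\mu|h_{\mu/|\mu|}(g)$ to the entropy, while excursions to infinity contribute at most $(1-|\mu|)\delta_\infty$; summing these gives the desired bound. This mirrors the strategy of \cite{rv} for geometrically finite manifolds, with the role played there by cusp geometry now replaced by the uniform geometric control afforded by pinched negative curvature.

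In detail, I would fix a compactly supported continuous $\chi:T^1M\to[0,1]$ with support $W$, and set $c_n:=\int\chi\,d\mu_n$, $c:=\int\chi\,d\mu$; vague convergence gives $c_n\to c$, and letting $\chi$ range through an exhaustion yields $c\nearrow|\mu|$. By the Brin--Katok formula,
$$h_{\mu_n}(g)\le\limsup_{T\to\infty}\frac{1}{T}\log N_{\mu_n}(T,\eps,1/2)+o_\eps(1),$$
where $N_{\mu_n}(T,\eps,1/2)$ is the minimal number of $(T,\eps)$-dynamical balls needed to cover a set of $\mu_n$-measure $\ge 1/2$. By Birkhoff's theorem, for small $\eta>0$ the set
$$G^n_{T,\eta}:=\left\{x\in T^1M:\left|\frac{1}{T}\int_0^T\chi(g_sx)\,ds-c_n\right|<\eta\right\}$$
has $\mu_n$-measure tending to $1$, so the Bowen cover may be restricted to $G^n_{T,\eta}$. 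After forcing the in/out transition times onto a coarse time grid (so that the number of admissible transition patterns is subexponential in $T$), the cover factorises into an ``inside'' part for the excursions in $W$, bounded via a Misiurewicz / Newhouse upper semicontinuity on the compact set $W$ applied to the vaguely converging probability measures $\chi\mu_n/c_n\to\chi\mu/c$ by $\exp(T(c\,h_{\mu/|\mu|}(g)+o_\eta(1)+o_n(1)))$, and an ``outside'' part for the excursions outside $W$, bounded via the definition of $\delta_\infty$ as an escape-rate entropy by $\exp(T((1-c)\delta_\infty+o_\eta(1)+o_n(1)))$. Multiplying and then letting in succession $n\to\infty$, $T\to\infty$, $\eta\to 0$, $\eps\to 0$, and finally $W\nearrow T^1M$ so that $c\to|\mu|$, yields the claimed inequality. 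The case of vague convergence to the zero measure corresponds to $c=0$ and leaves only the $\delta_\infty$ term.

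The main obstacle will be the clean decoupling into ``inside'' and ``outside'' parts: a typical orbit alternates between $W$ and its complement many times, so a priori the two contributions are intertwined. The resolution lies in choosing the transition grid carefully together with uniform excursion bookkeeping, which is precisely where pinched negative curvature enters -- it provides uniform geometric control on cusp excursions and so bounds the number of admissible transition patterns by a subexponential function of $T$. A secondary subtlety is that, unlike in the compact case, upper semicontinuity on the compact core $W$ must be established through direct orbit counts, since no global symbolic model is available without the H\"older / geometric-finiteness framework of \cite{pps}.
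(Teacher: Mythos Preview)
Your high-level picture—split orbits into time spent near a compact core $W$ and excursions to infinity, and bound the two contributions by $|\mu|h_{\mu/|\mu|}(g)$ and $(1-|\mu|)\delta_\infty$ respectively—is exactly the paper's strategy, but several of the steps you sketch do not work as written.

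The most serious gap is the ``inside'' bound. The measures $\chi\mu_n/c_n$ are not flow-invariant, so no Misiurewicz/Newhouse upper semicontinuity applies to them; entropy is not even defined. The paper avoids this entirely: it fixes a finite partition $\beta=\beta_{k,N}$ built from return times to $K$ and compares $h_{\mu_n}(g,\beta)$ with $\frac{1}{m}H_{\mu_n}(\beta^m)$, which for fixed $m$ is vaguely continuous once $\mu(\partial\beta)=0$ (Proposition~\ref{prop1111} and the argument following it). A second gap is your appeal to Birkhoff: for non-ergodic $\mu_n$ the time averages of $\chi$ do not concentrate near $c_n$, so $\mu_n(G^n_{T,\eta})$ need not be close to $1$. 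The paper first reduces to ergodic $\mu_n$ via the weak entropy density of the geodesic flow (Theorem~\ref{wed}), a nontrivial step proved separately. Third, the claim that the number of admissible in/out transition patterns is subexponential in $T$ is false without further structure: an orbit can make order-$T$ short excursions outside $W$. The paper's remedy is a two-scale partition: only excursions that reach the deep region $\alpha_{kN}$ are treated as genuine ``outside'' events, and there are at most $T/(kN)$ of those; shorter excursions are absorbed by the inductive covering of Proposition~\ref{22223}. Finally, to run Katok's formula at a \emph{fixed} scale $r$ rather than sending $r\to 0$, the paper proves a simplified entropy inequality (Theorem~\ref{indeprad}) using the Gibbs property of an auxiliary equilibrium state; your sketch leaves $\varepsilon\to 0$ to the end, which would unravel the covering estimates.
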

A simple consequence of Theorem \ref{i1} is the upper semicontinuity of the entropy map. A result similar to Theorem \ref{i1} was previously obtained by Einsiedler, Kadyrov and Pohl in the context of homogeneous dynamics (see \cite{ekp}). Later on, Kadyrov and Pohl proved that the formula obtained in \cite{ekp} is sharp (see \cite{kp}). In a joint work with F. Riquelme, we adapted the methods used in \cite{ekp} to obtain Theorem \ref{i1} for geometrically finite manifolds (see \cite{rv}). In this paper we extend the results from \cite{rv} to arbitrary pinched negatively curved manifolds. 

In the geometrically finite case there exists a compact set that intersects every closed geodesic--this is a crucial fact used in \cite{rv}. On the other hand, if $M$ is not geometrically finite, then there exists a sequence of closed geodesics that escape every compact subset of $M$. This result was recently obtained by  Kapovich and Liu, generalizing a famous result of Bonahon in the context of  hyperbolic 3-manifolds (see \cite{kl} and \cite{bon}). The existence of closed geodesics escaping every compact set is an obstruction to be able to run the strategy  used in \cite{rv}. To overcome this difficulty we will proceed in two steps. We first assume that the measures $(\mu_n)_n$ are ergodic and give positive mass to a fixed compact subset of $T^1M$.  Under these assumptions we can prove an estimate similar to Theorem \ref{i1}, where instead of $\delta_\infty$ we have some constant that depends on the compact subset (see Proposition \ref{pree}). We then use the weak entropy density  of the geodesic flow (see Definition \ref{weddef}) to obtain Theorem \ref{i1}. The way we partition our space is also different from the ones used in previous works. We consider a partition with more dynamical meaning than those in \cite{ekp} or \cite{rv}; we decompose our space using return times to a compact piece.


The strategy used to prove Theorem \ref{i1} applies to a larger class of dynamical systems, in particular to the geodesic flow on non-positively curved rank one manifolds and countable Markov shifts of finite topological entropy. For completeness we prove a more general result than Theorem \ref{i1} which emphasizes the main hypotheses we need to verify in order to obtain similar bounds (see Theorem \ref{teoB}). In a joint work with G. Iommi and M. Todd we have successfully used these methods to understand the entropy map of countable Markov shifts with finite topological entropy (see \cite{itv}).


The topological entropy at infinity has a measure theoretic counterpart; this is what we call the measure theoretic entropy at infinity. Since its definition does not require to introduce a lot of notation we provide it here.  The  measure theoretic entropy at infinity of the geodesic flow is defined as 
$$h_\infty=\sup_{(\mu_n)_n}\limsup_{n\to \infty} h_{\mu_n}(g),$$
where the supremum runs over sequences of invariant probability measures converging vaguely to the zero measure. We emphasize that this definition make sense for any dynamical system on a locally compact metric space. We will prove a type of variational principle for the entropy at infinity. 
\begin{theorem}[=Theorem \ref{varpri}]\label{i2} The topological entropy at infinity is equal to the measure theoretic entropy at infinity. In other words $\delta_\infty=h_\infty$. 
\end{theorem}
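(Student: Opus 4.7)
The strategy is to establish the two inequalities $h_\infty \le \delta_\infty$ and $\delta_\infty \le h_\infty$ separately. The first is an immediate consequence of Theorem \ref{i1}: any sequence $(\mu_n)_n$ of invariant probability measures converging vaguely to the zero measure has vague limit of mass zero, so the right-hand side of the bound in Theorem \ref{i1} collapses to $\delta_\infty$, giving $\limsup_{n\to\infty} h_{\mu_n}(g) \le \delta_\infty$; taking the supremum over all such sequences yields $h_\infty \le \delta_\infty$.

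The substantial work lies in the reverse inequality $\delta_\infty \le h_\infty$, which requires exhibiting sequences of invariant probability measures that escape to infinity vaguely while carrying entropy approaching $\delta_\infty$. I would proceed as follows. Fix an exhaustion $K_1 \subset K_2 \subset \ldots$ of $T^1 M$ by compact sets and a small $\epsilon>0$. Using the topological definition of $\delta_\infty$, for each $n$ one extracts times $T_n \to \infty$ and $(T_n,\epsilon)$-separated sets $E_n \subset T^1 M$ consisting of orbit segments that avoid (or spend an asymptotically negligible fraction of their length in) $K_n$, with
$$\frac{1}{T_n}\log |E_n| \ge \delta_\infty - \frac{1}{n}.$$
Form the empirical invariant probability measures
$$\mu_n = \frac{1}{|E_n|}\sum_{x\in E_n} \frac{1}{T_n}\int_0^{T_n}\delta_{g_s x}\, ds,$$
which, after passing to a subsequence, converge vaguely. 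Since the underlying orbit segments spend a vanishing fraction of time in every fixed compact set, the vague limit is the zero measure.

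A Misiurewicz-type argument, using a finite partition with small-measure boundaries and exploiting the local expansivity of the geodesic flow on pinched negatively curved manifolds, would then yield the entropy lower bound $h_{\mu_n}(g) \ge \delta_\infty - o(1)$. Therefore $h_\infty \ge \limsup_n h_{\mu_n}(g) \ge \delta_\infty$. The main obstacle is precisely this entropy lower bound in the non-compact setting: unlike in the classical variational principle, no finite partition is a generator, and one must carefully balance the exhaustion $K_n$ against the separation scale $\epsilon$ so that the partition controls the dynamics on the support of each $\mu_n$. The weak entropy density (Definition \ref{weddef}) and the local estimates underlying Proposition \ref{pree} are natural tools here; in particular, approximating by ergodic measures with comparable entropy and concentration should streamline the verification that the resulting measures still converge vaguely to zero. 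An alternative route, likely cleaner in the non-geometrically-finite case, is to build the $\mu_n$ as equidistribution measures on closed geodesics that escape every compact subset, whose existence is provided by Kapovich-Liu, bypassing the separated-set construction entirely.
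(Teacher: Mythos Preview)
Your argument for $h_\infty \le \delta_\infty$ is correct and is exactly how the paper proceeds.

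For the reverse inequality $\delta_\infty \le h_\infty$, however, your empirical-measure approach has a genuine gap. The measures
\[
\mu_n = \frac{1}{|E_n|}\sum_{x\in E_n} \frac{1}{T_n}\int_0^{T_n}\delta_{g_s x}\, ds
\]
are \emph{not} flow-invariant, so the expression $h_{\mu_n}(g)$ is not defined. The classical Misiurewicz argument produces an entropy lower bound for the \emph{limit} measure, not for the approximating empirical measures themselves; since here the vague limit is the zero measure, the argument yields nothing. To salvage this route you would need to convert each separated set $E_n$ into a genuine invariant probability measure with entropy close to $\frac{1}{T_n}\log|E_n|$, for instance by closing up the orbit segments into a horseshoe via the closing lemma and local product structure (as in the proof of Proposition~\ref{entropydense}) and then invoking the variational principle on that compact invariant set. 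This is plausible but is real work that your sketch does not carry out. Your alternative via Kapovich--Liu closed geodesics is weaker still: a single periodic orbit measure has zero entropy, and there is no mechanism in your proposal for combining escaping closed geodesics into a measure with entropy approaching $\delta_\infty$.

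The paper avoids all of this with a completely different and much shorter argument. It first proves (Lemma~\ref{entinf11}, via direct comparison of Poincar\'e series) that $P(\phi)\ge \delta_\infty$ for \emph{every} $\phi\in C_0(T^1M)$. Then (Theorem~\ref{carla2}) it fixes a strictly positive $\phi\in C_0(T^1M)$, takes for each $n$ a measure $\mu_n\in\mathcal{M}(g)$ with
\[
h_{\mu_n}(g)-n\int\phi\,d\mu_n \ge P(-n\phi)-\tfrac{1}{n}\ge \delta_\infty-\tfrac{1}{n},
\]
and observes that $(\mu_n)_n$ must converge vaguely to zero (otherwise $n\int\phi\,d\mu_n\to\infty$ would contradict the displayed inequality). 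Since $\phi\ge 0$ this gives $h_{\mu_n}(g)\ge \delta_\infty-\tfrac{1}{n}$, hence $h_\infty\ge\delta_\infty$. The key point is that the measures $\mu_n$ are invariant by construction (they come from the variational principle), so no Misiurewicz-type entropy estimate is needed.
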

In the context of the geodesic flow the measure theoretic entropy at infinity was introduced in \cite{irv} and proved to be equal to the topological entropy at infinity for extended Schottky manifolds via symbolic methods. This result was later extended to cover all geometrically finite manifolds in \cite{rv}. In the context of countable Markov shifts the entropy at infinity was already studied by Ruette \cite{ru} and Buzzi \cite{bu2}.

In this paper we will constantly refer to (bounded) real valued functions with domain $T^1M$ as `potentials'; this is a standard convention in thermodynamic formalism. The topological pressure of a potential $F$ is the quantity $$P(F)=\sup_{\mu\in \M(g)} \{h_\mu(g)+\int Fd\mu\},$$
where the supremum runs over the space of invariant probability measures $\M(g)$. An invariant probability measure satisfying $P(F)=h_\mu(g)+\int Fd\mu$, is called an equilibrium state for the potential $F$.

We denote by $C_0(T^1M)$  the space of continuous potentials  vanishing at infinity (see Definition \ref{limD}). Among potentials vanishing at infinity we will be particularly interested in strongly positive recurrent potentials (SPR for short); those are potentials satisfying the critical gap condition $P(F)>\delta_\infty$ (for the general definition of SPR potentials see Definition \ref{sprdef}). 

One of the main features of SPR potentials is part (\ref{i31}) in our next result--in the compact case this is an immediate consequence of the upper semicontinuity of the entropy map. Parts (\ref{i32}) and (\ref{i33}) exhibit some big differences between the compact and the non-compact situation.

\begin{theorem}[=Theorem  \ref{eeq2}]\label{i3} Let $F\in C_0(T^1M)$ be a potential that vanishes at infinity. Let $(\mu_n)_n$ be a sequence of invariant probability measures  such that $$\lim_{n\to\infty}\big(h_{\mu_n}(g)+\int Fd\mu_n\big)=P(F).$$ Then the following statements hold. 
\begin{enumerate}
\item\label{i31} If $F$ is SPR, then every convergent subsequence  (in the vague topology)  of $(\mu_n)_{n}$  converges in the weak* topology to an equilibrium state of $F$. If $F$ is H\"older continuous then $(\mu_n)_n$ converges in the weak*  topology to the equilibrium state of $F$.
\item\label{i32} Suppose that $F$ does not admit any equilibrium state.  Then  $(\mu_n)_{n}$  converges vaguely to the zero measure. In this case we  have  $P(F)=\delta_\infty$. 
\item\label{i33}  Suppose that $F$ does admit an equilibrium state.  Then the accumulation points of  $(\mu_n)_{n}$  lies in the set $$\{t\mu:t\in [0,1]\text{ and }\mu\text{ is an equilibrium state of $F$}\}.$$
\end{enumerate}
\end{theorem}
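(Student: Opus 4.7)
The plan is to extract vague subsequential limits of the approximating sequence $(\mu_n)_n$, exploit the hypothesis $F\in C_0(T^1M)$ together with Theorem \ref{i1}, and then read off the three conclusions from a single chain of forced equalities.

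First I would pass to a subsequence along which $\mu_{n_k}$ converges vaguely to some sub-probability measure $\nu$ on $T^1M$; such a limit exists by the vague sequential compactness of the unit ball of Radon measures. Set $t:=|\nu|\in[0,1]$. Since $F$ vanishes at infinity, approximating $F$ uniformly on large compact sets by compactly supported continuous functions (and using $|\mu_{n_k}|=1$) gives $\lim_k \int F\,d\mu_{n_k}=\int F\,d\nu$. Noting that $\int F\,d\nu=t\int F\,d(\nu/t)$ when $t>0$ and equals $0$ when $t=0$, and applying Theorem \ref{i1} to pass to the limit in $P(F)=\lim_n\bigl(h_{\mu_n}(g)+\int F\,d\mu_n\bigr)$, I obtain
$$P(F)\;\le\; t\Bigl(h_{\nu/t}(g)+\int F\,d(\nu/t)\Bigr)+(1-t)\delta_\infty,$$
with the convention that the first summand is dropped when $t=0$.

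The next observation is that $F\in C_0$ forces $\delta_\infty\le P(F)$: applying the variational inequality to a sequence of probability measures realizing $h_\infty$, whose integrals against $F$ vanish in the limit, yields $h_\infty\le P(F)$, and Theorem \ref{i2} identifies $h_\infty$ with $\delta_\infty$. Combined with the trivial bound $h_{\nu/t}(g)+\int F\,d(\nu/t)\le P(F)$ (when $t>0$), the displayed inequality is sandwiched between two copies of $P(F)$, so equality must hold at every step. This forces two pieces of information: (a) $h_{\nu/t}(g)+\int F\,d(\nu/t)=P(F)$ whenever $t>0$, and (b) $(1-t)(P(F)-\delta_\infty)=0$.

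The three parts then follow by bookkeeping on (a) and (b). For (\ref{i31}), the SPR hypothesis $P(F)>\delta_\infty$ and (b) give $t=1$, so by (a) the measure $\nu$ is a probability equilibrium state of $F$; since no mass escapes, vague convergence upgrades to weak* convergence. The H\"older case then follows from the uniqueness of equilibrium states in Theorem \ref{pps}, which makes every subsequential weak* limit the same measure, forcing the whole original sequence to converge. For (\ref{i32}), the absence of an equilibrium state rules out $t>0$ via (a), so every subsequential vague limit of $(\mu_n)_n$ is the zero measure; the main inequality at $t=0$ then collapses to $P(F)\le\delta_\infty$, hence $P(F)=\delta_\infty$. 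For (\ref{i33}), an accumulation point $\nu$ with $t=0$ is $0\cdot\mu$ for any equilibrium state $\mu$, while if $t>0$ then (a) makes $\nu/t$ an equilibrium state and $\nu=t(\nu/t)$ lies in the desired set. The main subtlety throughout is the passage $\int F\,d\mu_n\to\int F\,d\nu$ under mere vague convergence, which is exactly what fails for general bounded continuous $F$ and explains the restriction to $C_0(T^1M)$.
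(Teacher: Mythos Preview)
Your proof is correct and follows essentially the same approach as the paper: pass to a vague subsequential limit, combine Theorem~\ref{i1} with the continuity of $\mu\mapsto\int F\,d\mu$ for $F\in C_0(T^1M)$ (the paper's Lemma~\ref{c0top}), and sandwich against the inequality $\delta_\infty\le P(F)$ to force equalities. The paper actually does not write out the proof of Theorem~\ref{eeq2} separately, but declares it identical to that of Theorem~\ref{cme2}; your argument is exactly that transcription, with the one cosmetic difference that you justify $\delta_\infty\le P(F)$ via Theorem~\ref{i2} and the variational inequality, whereas the paper points to the Poincar\'e-series bound $P(F)\ge\delta^F_{\Gamma,\infty}$ together with Lemma~\ref{sprequi}.
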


This result was obtained in \cite{v} in the context of geometrically finite manifolds and in \cite{gs} for countable Markov shifts with finite range potentials. We emphasize that we require no higher regularity on $F$ than continuity (for $F\in C_0(T^1M)$ this implies uniform continuity). 
Theorem \ref{i3} gives a characterization of SPR potentials: a potential $F\in C_0(T^1M)$ is SPR if  and only if every sequence of measures satisfying the hypothesis of Theorem \ref{i3} does not converge vaguely to the zero measure. Theorem \ref{i3} also implies that  a SPR potential admits an equilibrium state.

For H\"older SPR potentials we can compute the first derivative of the pressure (see Theorem \ref{deriva}) and prove the continuity of equilibrium states (see Theorem \ref{conteq}). We will prove in Section \ref{7.2} that SPR potentials are open and dense in $C_0(T^1M)$. That is, most potentials in $C_0(T^1M)$ are similar to those in compact negatively curved manifolds.


The relationship between the upper semicontinuity of the entropy map and  large deviations  is well understood in the compact case (see \cite{kif}, \cite{pol}). In Section \ref{ld} we use this relationship to obtain some equidistribution results we proceed to explain. 

A periodic orbit of the geodesic flow $\gamma$ determines an invariant probability measure $\mu_\gamma$. The length of $\gamma$ is denoted by $l(\gamma)$. Let $W$ be an open relatively compact subset of $T^1M$ that intersects the non-wandering set of the geodesic flow. Define 
$$\M_p(W,t)=\{\mu_\gamma: \gamma\text{ a periodic orbit such that }l(\gamma)\le t\text{ and }\mu_\gamma(W)>0\}.$$

 \begin{theorem}[=Theorem \ref{equi2}]\label{equi} Let $F\in C_0(T^1M)$ be a H\"older SPR potential such that $P(F)>0$. Then the sequence of measures (parametrized by $t$)
$$\frac{1}{\sum_{\mu_\gamma\in \M_p(W,t)}\exp(l(\gamma)\int F d\mu_\gamma)}\sum_{\mu_\gamma\in \M_p(W,t)}\exp(l(\gamma)\int F d\mu_\gamma)\mu_\gamma,$$ 
converges to the equilibrium state of $F$ in the weak* topology as $t$ goes to infinity. 
\end{theorem}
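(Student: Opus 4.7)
The plan is to combine the fact that the H\"older SPR potential $F$ admits a unique equilibrium state $\mu_F$ (Theorem \ref{eeq2}, i.e.\ our Theorem \ref{i3}) with a Kifer-type large deviation argument for weighted periodic orbits, using the SPR gap $P(F) > \delta_\infty$ to rule out escape to infinity. Writing $\nu_t$ for the probability measure in the statement and $Z(t)$ for its normalizing denominator, the Gurevich pressure identity (a consequence of Theorem \ref{pps}) gives $\frac{1}{t}\log Z(t) \to P(F)$. It will suffice to prove that $(\nu_t)_t$ is tight and that every weak$^*$ subsequential limit equals $\mu_F$.

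For tightness, I would fix a relatively compact open set $K \supset W$ and $\eta > 0$, and split $\M_p(W,t)$ according to whether $\mu_\gamma(K) \ge 1-\eta$ or not. Orbits in the first class contribute at most $\eta$ to $\nu_t(T^1M \setminus K)$. For the second class, since $F \in C_0(T^1M)$ vanishes at infinity, the estimate $\int F\, d\mu_\gamma \le \|F\|_\infty \eta + \sup_{T^1M \setminus K}|F|$ forces each weight to grow no faster than $e^{t\epsilon_K}$ with $\epsilon_K \to 0$ as $K \uparrow T^1M$. The number of such orbits of length at most $t$ is bounded by $e^{t(\delta_\infty + o(1))}$, a counting consequence of the equality $\delta_\infty = h_\infty$ (Theorem \ref{varpri}) together with the weak entropy density of the geodesic flow. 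Since $P(F) > \delta_\infty$, this second class contributes $o(Z(t))$, which yields $\limsup_t \nu_t(T^1M \setminus K) \le \eta$ and hence tightness.

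To identify the limit, let $\nu$ be a weak$^*$ subsequential limit; it is a $g$-invariant probability measure. Given any weak$^*$ neighborhood $\mathcal{N}$ of $\mu_F$ in $\M(g)$, the complement $\M(g) \setminus \mathcal{N}$ is weak$^*$ closed and omits $\mu_F$. By the upper semicontinuity of $\mu \mapsto h_\mu(g) + \int F\, d\mu$ on $\M(g)$ (a consequence of Theorem \ref{i1}) combined with the uniqueness of $\mu_F$, one has $\sup\{h_\mu(g) + \int F\, d\mu : \mu \in \M(g) \setminus \mathcal{N}\} < P(F)$. A Kifer-type upper bound then shows that the contribution to $Z(t)$ from $\gamma$ with $\mu_\gamma \notin \mathcal{N}$ is $o(e^{tP(F)})$. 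Since $\mu \mapsto \int \phi\, d\mu$ is weak$^*$ continuous for any bounded continuous $\phi$, this forces $\int \phi\, d\nu = \int \phi\, d\mu_F$, whence $\nu = \mu_F$.

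The hard part will be the counting bound in the tightness step: in the general pinched negatively curved setting, Kapovich and Liu produce closed geodesics that escape every compact set, so the argument of \cite{v} for geometrically finite manifolds does not transfer. This is where Theorem \ref{varpri} is used decisively, converting $\delta_\infty$ into an exponential bound on closed geodesics that mostly avoid $K$; the H\"older regularity of $F$ is then needed to upgrade the mere existence of $\mu_F$ to uniqueness and to the Gibbs-type estimates that underlie the Kifer bound.
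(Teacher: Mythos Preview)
Your tightness step has a genuine gap. You claim that the number of periodic orbits $\gamma$ with $l(\gamma)\le t$, $\mu_\gamma(W)>0$, and $\mu_\gamma(K)<1-\eta$ is at most $e^{t(\delta_\infty+o(1))}$, and that this ``is a counting consequence of $\delta_\infty=h_\infty$ together with weak entropy density.'' Neither Theorem \ref{varpri} nor weak entropy density says anything about counting periodic orbits constrained to spend a positive fraction of time outside $K$; such a bound is itself a large deviation statement of the type you are trying to prove. Moreover, even granting the count, your weight estimate is wrong: from $\mu_\gamma(K)<1-\eta$ you only get $\mu_\gamma(K^c)>\eta$, which gives no upper bound of the form $\int F\,d\mu_\gamma\le \|F\|_\infty\eta+\sup_{K^c}|F|$. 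Such an orbit may still spend almost all its time in $K$ where $F$ is large, so the product of your count and your weight bound would be $e^{t(\delta_\infty+\|F\|_\infty)}$, and SPR does \emph{not} give $P(F)>\delta_\infty+\|F\|_\infty$. In short, the tightness argument as written does not close.

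The paper sidesteps tightness entirely by working on the compact space $\M_{\le 1}(g)$ of sub-probability measures. The rate function $I_F(\mu)=\sup_{f\in C_0(T^1M)}\{\int f\,d\mu-P(F+f)+P(F)\}$ is lower semicontinuous there, and the SPR hypothesis enters precisely through Lemma \ref{ldplem}: it forces $I_F(\mu)=0$ only at $\mu_F$, in particular $I_F$ is strictly positive at the zero measure (this is exactly where non-SPR potentials fail, since then $I_F(0)=P(F)-\delta_\infty=0$). One then applies the large deviation upper bound of Proposition \ref{ldpA} to the closed set $\cK_\e=\{\mu\in\M_{\le 1}(g):|\int H\,d\mu-\int H\,d\mu_F|\ge\e\}$ for $H\in C_0(T^1M)$, obtaining exponential decay of its relative weight; testing against $C_0(T^1M)$ suffices for weak$^*$ convergence by Theorem \ref{weakvag}. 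This packages both ``tightness'' and ``identification'' into a single step on a compact space, which is exactly what your two-stage outline was missing.
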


We finish with a counting result for closed geodesics intersecting a compact piece of $T^1M$. For $F=0$ this was proven by  Margulis \cite{marg} in the compact case, and by Roblin \cite{ro} when  $M$ is geometrically finite. Our result should be compared \cite[Corollary 1.7]{pps}, where Roblin's result was extended to include arbitrary H\"older potentials. In Theorem \ref{fin1} we are able to obtain a result beyong the geometrically finite setting, but under certain assumptions on the potential. The believe the assumption $F\in C_0(T^1M)$ can be removed; this would be the case if Conjecture \ref{conj1} holds. We point out that \cite[Corollary 1.7]{pps} does not require $F$ to be SPR, nor in $C_0(T^1M)$, but only works for geometrically finite manifolds (where the geometry of the ends of the non-wandering set can be controlled). We write $a(t)\sim b(t)$ if $\lim_{t\to\infty}a(t)/b(t)=1$.

\begin{theorem}[=Theorem \ref{fin}]\label{fin1} Assume that the geodesic flow is topologically mixing. Let $F\in C_0(T^1M)$ be a H\"older SPR potential such that $P(F)>0$. Then 
$$\sum_{\mu_\gamma\in \M_p(W,t)}e^{l(\gamma)\int F d\mu_\gamma}\sim \frac{e^{P(F)t}}{P(F)t},$$
for every open relatively compact  subset $W\subset T^1M$ that intersects the non-wandering set of the geodesic flow.
\end{theorem}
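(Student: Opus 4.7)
My plan is to adapt the Margulis--Roblin counting argument, in the form used by Paulin--Pollicott--Schapira in \cite[Cor.~1.7]{pps}, but localized to a neighborhood of a non-wandering point inside $W$. In the geometrically finite setting the counting is essentially already local, and the SPR hypothesis should supply precisely the global input needed to make it work without geometric finiteness.

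\textbf{Step 1 (Equilibrium state and its mixing).} Because $F$ is H\"older and SPR with $P(F)>0$, Theorem \ref{i3}(\ref{i31}) produces an equilibrium state $\mu_F$; uniqueness follows from the analysis of H\"older SPR potentials elsewhere in the paper, combined with the uniqueness results of \cite{pps}. Since $(g^t)$ is topologically mixing and $\mu_F$ is a unique finite equilibrium state, $\mu_F$ is mixing. Moreover, the Patterson--Sullivan construction of \cite{pps} is intrinsically local: it endows $\mu_F$ with a Gibbs-type density and a local product structure on every small stable/unstable flow box centered at a non-wandering vector, and this step uses no global hypothesis on $M$.

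\textbf{Step 2 (Local counting via mixing).} Fix $v_0\in W$ in the non-wandering set and a small flow box $B\subset W$ centered at $v_0$. Roblin's pairing argument, fed with the mixing of $\mu_F$ and the local Gibbs/product description, yields
\[
\sum_{\substack{\gamma\ \text{closed},\ l(\gamma)\le t\\ \gamma\ \text{meets } B}} e^{l(\gamma)\int F d\mu_\gamma}\ \sim\ c_{B}\cdot\frac{e^{P(F)t}}{P(F)t}.
\]
Covering $W$ by finitely many such boxes and performing inclusion-exclusion, the local constants $c_B$ add up consistently; the overall multiplicative constant on the right-hand side is forced to be $1$ by the equidistribution statement Theorem \ref{equi}, which is already known to hold under the same hypotheses.

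The main obstacle, in my view, is guaranteeing that Roblin's pairing, which in \cite{ro} and \cite{pps} is run inside a compact non-wandering set, still yields a clean main-term asymptotic when $W$ is embedded in a manifold with arbitrarily bad ends. This is exactly where the SPR hypothesis should enter: the contribution to the above sum from closed orbits meeting $B$ but making long excursions into the ends of $M$ has exponential growth rate bounded by $\delta_\infty<P(F)$. Indeed, the empirical measures of such orbits, after suitable weighting and normalization, would vaguely accumulate on measures of small mass, so Theorems \ref{i1} and \ref{i2}, together with $F\in C_0(T^1M)$, force their weighted exponential growth rate below $P(F)$; consequently they are absorbed into an error term of size $O\!\left(e^{(P(F)-\eta)t}\right)$. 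Turning this measure-theoretic heuristic into a genuine orbit-by-orbit bound is the real technical content; the natural way is a large deviation estimate in the spirit of Section \ref{ld}, applied to the event that $\mu_\gamma$ places large mass outside a prescribed compact set.
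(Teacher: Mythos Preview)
Your proposal takes a substantially harder route than the paper, and the step you flag as ``the real technical content'' is exactly the one the paper avoids entirely.

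The paper's argument is a two-line combination of two equidistribution statements, one borrowed and one proved earlier in the paper. The borrowed ingredient is \cite[Theorem 9.11(2)]{pps} (quoted as Theorem \ref{equipps}): for \emph{any} pinched negatively curved manifold with topologically mixing geodesic flow, and any H\"older $F$ with $P(F)>0$ admitting a finite Gibbs measure, the signed measures
\[
P(F)\,t\,e^{-P(F)t}\sum_{\mu_\gamma\in\M_p(t)} e^{l(\gamma)\int F\,d\mu_\gamma}\,\mu_\gamma
\]
converge \emph{vaguely} to $\mu_F$. No geometric finiteness is needed for this; it is already local in nature because vague convergence only tests against $C_c$. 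The second ingredient is Theorem \ref{equi} (=Theorem \ref{equi2}), which under the SPR hypothesis upgrades the $W$-restricted, normalized sum $\mu_p(W,t)$ to converge to $\mu_F$ in the \emph{weak*} topology. Now pick $H\in C_c(T^1M)$ with $\supp H\subset W$ and $\int H\,d\mu_F>0$ (possible since $\supp\mu_F=\Omega$ meets $W$). Testing the first statement against $H$ replaces $\M_p(t)$ by $\M_p(W,t)$ for free, while the second statement identifies $\int H\,d\mu_p(W,t)\to\int H\,d\mu_F$. Dividing one limit by the other cancels $\int H\,d\mu_F\ne 0$ and gives $P(F)\,t\,e^{-P(F)t}\,m_p(W,t)\to 1$.

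So the gap in your plan is not fatal but unnecessary: you are trying to re-prove a local Roblin-type asymptotic from scratch and then tame the excursions by an ad hoc large-deviation bound, whereas the vague equidistribution of \cite{pps} already \emph{is} the local statement you want, valid without any compactness of $\Omega$. The only genuinely new input the SPR hypothesis provides is Theorem \ref{equi}, and that has already been established via the large-deviation machinery of Section \ref{ld}; there is no need to run a second large-deviation argument on excursion times. Your inclusion--exclusion over flow boxes and the orbit-by-orbit excursion control can both be dropped once you test a single compactly supported $H$ against the two equidistribution limits.
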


We say that $(M,g)$ is strongly positive recurrent (SPR for short) if $h_{top}(g)>\delta_\infty$ (where $h_{top}(g)$ is the topological entropy of the geodesic flow). This definition was coined in \cite{st} and fits perfectly within the analogy between the geodesic flow and countable Markov shifts. It worth pointing out that every hyperbolic geometrically finite manifold is SPR. A direct application of Theorem \ref{fin1} is the following.

\begin{theorem}[=Corollary \ref{fincor}] Let $(M,g)$ be strongly positive recurrent and assume its geodesic flow is topologically mixing. Let $W$ be an open relatively compact subset intersecting the non-wandering set of the geodesic flow. Then $$\#\{\gamma\text{ primitive periodic orbit }|\text{ } l(\gamma)\le t\text{ and }\gamma\cap W\ne \emptyset\} \sim \frac{e^{h_{top}(g)t}}{h_{top}(g)t}.$$
\end{theorem}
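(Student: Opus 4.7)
The plan is to deduce this corollary by specializing Theorem \ref{fin1} to the zero potential and then separating primitive orbits from their proper iterates.

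First, I would check that $F\equiv 0$ satisfies every hypothesis of Theorem \ref{fin1}. The zero potential is trivially H\"older and lies in $C_0(T^1M)$, and by the variational principle $P(0)=h_{top}(g)$. The SPR condition on $F$, namely $P(F)>\delta_\infty$, becomes $h_{top}(g)>\delta_\infty$, which is precisely the SPR assumption on $(M,g)$; it also forces $P(0)=h_{top}(g)>\delta_\infty\ge 0$, giving the required $P(F)>0$. Topological mixing of the geodesic flow is already assumed. Theorem \ref{fin1} then yields
$$\#\M_p(W,t)\sim \frac{e^{h_{top}(g)t}}{h_{top}(g)t}.$$

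Second, I would convert this count of periodic orbits (with iterates) into a count of primitive ones. Every periodic orbit is of the form $\gamma_0^k$ for a unique primitive orbit $\gamma_0$ and integer $k\ge 1$, with $l(\gamma_0^k)=k\,l(\gamma_0)$, with the same image in $T^1M$, and with $\mu_{\gamma_0^k}=\mu_{\gamma_0}$. In particular $\gamma_0^k\cap W\ne\emptyset$ iff $\gamma_0\cap W\ne\emptyset$, and if $W$ is open this is equivalent to $\mu_{\gamma_0}(W)>0$ since $\mu_{\gamma_0}$ is the normalized arclength measure on the closed orbit. Writing $N(s)$ for the number of primitive orbits with $l(\gamma_0)\le s$ meeting $W$, one obtains
$$\#\M_p(W,t)=\sum_{k\ge 1} N(t/k).$$

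Third, I would show that the $k\ge 2$ terms are exponentially negligible. The trivial bound $N(s)\le \#\M_p(W,s)=O(e^{h_{top}(g)s}/s)$ from the step above gives
$$\sum_{k\ge 2}N(t/k)\le C\sum_{k\ge 2}\frac{e^{h_{top}(g)t/k}}{t/k}=O\!\left(e^{h_{top}(g)t/2}\right),$$
since the $k=2$ term dominates and the tail is geometrically summable. This error is $o(e^{h_{top}(g)t}/t)$, so
$$N(t)=\#\M_p(W,t)-\sum_{k\ge 2}N(t/k)\sim \frac{e^{h_{top}(g)t}}{h_{top}(g)t},$$
which is the desired asymptotic.

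The bulk of the work sits in Theorem \ref{fin1}; the only real task here is the reduction $F\equiv 0$ and the routine bookkeeping for iterates. The mildly delicate step is the last one, making sure that the crude upper bound used for $N(s)$ is strong enough to discard non-primitive contributions, but since the ratio $e^{h_{top}(g)t/k}/e^{h_{top}(g)t}$ is exponentially small for every $k\ge 2$, no finer input is needed.
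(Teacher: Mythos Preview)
Your first step is exactly the paper's argument: the corollary is stated as a direct application of Theorem~\ref{fin} with $F\equiv 0$, and no separate proof is given. Under the paper's convention, $\M_p(W,t)$ already consists of \emph{primitive} periodic orbits (see the remark just before Theorem~\ref{c}: ``we are only counting primitive periodic orbits''), so $\#\M_p(W,t)$ is precisely the quantity in the corollary and nothing further is needed.

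Your steps two and three are therefore unnecessary, and in fact the identity $\#\M_p(W,t)=\sum_{k\ge1}N(t/k)$ is false under the paper's definitions: $\M_p(W,t)$ is a set of \emph{measures}, and since you yourself note $\mu_{\gamma_0^k}=\mu_{\gamma_0}$, iterates contribute no new elements, so $\#\M_p(W,t)=N(t)$ directly. The sieve you sketch is the standard and correct manoeuvre when the count includes iterates; it simply does not apply here. One minor remark on the check $P(0)>0$: rather than going via $\delta_\infty\ge 0$ (which is not established in the paper in general), one has $P(0)=h_{top}(g)=\delta_\Gamma>0$ directly from the standing non-elementary assumption.
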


This is a version of the prime geodesic theorem for strongly positive recurrent manifolds. We emphasize that even within the class of strongly positive recurrent manifolds it might happen that 
$$\M_p(t)=\{\mu_\gamma:\gamma\text{ a periodic orbit such that }l(\gamma)\le t\},$$
has infinitely many elements. In particular it is meaningless to count all closed geodesics of length at most $t$ (see Example \ref{finrem}).\\


\noindent
{\bf Organization of the paper.} In Section \ref{2} we recall some basic facts about measure theory, entropy theory and negatively curved manifolds. Of particular importance is Section \ref{pretf} where we recall what is known about the thermodynamic formalism of the geodesic flow and introduce the important class of strongly positive recurrent potentials. In Section \ref{4} we prove the weak entropy density and a mild modification of the simplified entropy inequality for the geodesic flow. In Section \ref{moregen} we prove an abstract result in the spirit of Theorem \ref{i1} and in Section \ref{5.2} we establish Theorem \ref{i1}. In Section \ref{varprin} we prove Theorem \ref{i2}--the variational principle at infinity of the geodesic flow. In Section \ref{6} we apply the results obtained in previous sections to the existence and stability of equilibrium states. In Section \ref{ld} we prove the Theorem \ref{equi} and Theorem \ref{fin1}.\\

\noindent
{\bf Acknowledgements.} I would like to thank my PhD advisor Gang Tian for his constant support and encouragements. I would also like to thank Barbara Schapira for telling me about her joint work with Samuel Tapie, where they have also introduced and studied the topological entropy at infinity of the geodesic flow \cite{st}. Finally, I would like to thank Godofredo Iommi, Hee Oh and Felipe Riquelme for useful discussions about the topics of this paper, and to Yair Minsky for his valuable help with Example \ref{finrem}.

\section{Preliminaries}\label{2}
In this section we collect some important facts about the geodesic flow on a pinched negatively curved manifolds. The definitions and facts given in this section will be constantly used in following sections. Since the spaces of interest for this paper are non-compact we need to be  careful with the spaces of invariant probability measures that we use (it is a technically unpleasant to restrict all our attention to the space of invariant probability measures, which is non-compact). We start by making explicit the spaces of measures that we will use, and their topologies. 

\subsection{Measure theory }\label{measureth}
Let $(X,d)$ be a locally compact metric space and $T:X\to X$, a continuous map. In this paper whenever we say \emph{measure} we mean a non-negative countably additive Borel measure. The mass of a measure is the number $\mu(X)$ and it is denoted by $|\mu|$. We say that a measure $\mu$ is $T$-invariant if $\mu(T^{-1}A)=\mu(A)$, for every Borel set $A$. We denote by $\M(X,T)$ the space of $T$-invariant probability measures on $X$ and $\M_{\le 1}(X,T)$ the space of  $T$-invariant measures such that $|\mu|\in [0,1]$ (we also refer to them as sub-probability measures). Clearly $\M(X,T)\subset \M_{\le 1}(X,T)$. We endow the space $C_b(X)$ (resp. $C_c(X)$) of bounded  (resp. compactly supported) continuous functions with the uniform norm $||f||_0=\sup_{x\in X} |f(x)|$. We endow $\M(X,T)$ with the \emph{weak* topology}: a sequence $(\mu_n)_{n}$ converges weakly to $\mu$ if for every $f\in C_b(X)$ we have $$\lim_{n\to\infty}\int fd\mu_n=\int fd\mu.$$  In a similar way we endow $\M_{\le 1}(X,T)$ with the \emph{vague topology}: a sequence $(\mu_n)_{n}$ converges vaguely to $\mu$ if for every $f\in C_c(X)$ we have $$\lim_{n\to\infty}\int fd\mu_n=\int fd\mu.$$ 
If there exists a compact exhaustion of $X$, that is, an increasing sequence $(K_n)_{n}$ of compact sets such that $X=\bigcup_{n\ge 1}K_n$, then the space $C_c(X)$ is separable. In this case consider  a dense subset $(f_n)_{n}$ of the unit ball of $C_c(X)$. We define a metric $\rho$ on $\M_{\le 1}(X,T)$ by the formula $$\rho(\mu_1,\mu_2)=\sum_{n\ge 1}\frac{1}{2^n}|\int fd\mu_1-\int fd\mu_2|.$$
This metric is compatible with the vague topology. By Banach-Alaoglu theorem we know that $\M_{\le 1}(X,T)$ is a compact metric space. If $X$ is a non-compact manifold, then $X$ admits a compact exhaustion, and therefore this discussion applies to that case. The connection between this two topologies is given by the following simple fact (see \cite[Theorem 13.16]{kle}). 
\begin{theorem} \label{weakvag} Let $X$ be a locally compact metric space. Then for a sequence $(\mu_n)_n\subset \cM(X,T)$ the following statements are equivalent.
\begin{enumerate}
\item $(\mu_n)_n$ converges to $\mu$ in the weak* topology.
\item $(\mu_n)_n$ converges to $\mu$ in the vague topology and $\mu$ is a probability measure.
\end{enumerate}
\end{theorem}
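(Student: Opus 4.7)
\medskip

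\noindent
\textbf{Proof proposal for Theorem \ref{weakvag}.}

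The direction (1)$\Rightarrow$(2) is essentially a definitional matter. Since $C_c(X)\subset C_b(X)$, weak* convergence against every bounded continuous test function trivially implies vague convergence. To see that the limit is a probability measure, I would take $f\equiv 1$: this is a bounded continuous function on $X$, so weak* convergence gives $\mu(X)=\lim_n\mu_n(X)=1$.

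For the harder direction (2)$\Rightarrow$(1), the plan is the standard tightness plus Urysohn argument. Since $X$ is assumed to admit a compact exhaustion $(K_m)_{m\ge 1}$, the limit probability measure $\mu$ is automatically inner regular by compact sets: given $\eps>0$ I would pick $m$ large enough that $\mu(K_m)>1-\eps$. Locally compact Hausdorff Urysohn then produces a function $\phi\in C_c(X)$ with $0\le\phi\le 1$ and $\phi\equiv 1$ on $K_m$; in particular $\int\phi\,d\mu\ge 1-\eps$. Vague convergence applied to $\phi$ gives $\int\phi\,d\mu_n\ge 1-2\eps$ for all sufficiently large $n$, and since each $\mu_n$ has total mass $1$ this rearranges to $\int(1-\phi)\,d\mu_n\le 2\eps$, while $\int(1-\phi)\,d\mu\le\eps$ by construction.

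Now for an arbitrary $f\in C_b(X)$ with $\|f\|_0\le M$, I would split
\[
\int f\,d\mu_n-\int f\,d\mu
=\Big(\int f\phi\,d\mu_n-\int f\phi\,d\mu\Big)
+\int f(1-\phi)\,d\mu_n-\int f(1-\phi)\,d\mu.
\]
The product $f\phi$ lies in $C_c(X)$, so the first bracket tends to $0$ by vague convergence. The remaining two terms are controlled by $M\cdot\int(1-\phi)\,d\mu_n+M\cdot\int(1-\phi)\,d\mu\le 3M\eps$ for $n$ large. Letting $\eps\to 0$ finishes the proof.

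The only genuine technical point is the passage from vague convergence plus $|\mu|=1$ to uniform control of mass escape for the sequence $(\mu_n)_n$; this is the step $\int(1-\phi)\,d\mu_n\le 2\eps$ in the argument above, and it is exactly where the hypothesis that each $\mu_n$ is a probability measure (so that mass escaping from $K_m$ can be bounded using the $\phi$-test function) enters. I expect this to be the only place where any care is needed; the rest is bookkeeping.
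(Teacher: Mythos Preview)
Your proof is correct and follows the standard route: Urysohn cutoff plus tightness of the limit measure to control mass near infinity, then split $f = f\phi + f(1-\phi)$. There is nothing to compare against in the paper itself: the paper does not prove Theorem \ref{weakvag} but simply cites it as a known fact from \cite[Theorem 13.16]{kle}. One small remark on your write-up: you invoke a compact exhaustion $(K_m)_m$ to get inner regularity of $\mu$ by compact sets, which is not literally part of the theorem's hypotheses (only local compactness is stated); in the paper's setting the existence of such an exhaustion is an ambient assumption (discussed just before the theorem), so this is harmless, but if you wanted a self-contained statement you could instead appeal to the fact that finite Borel measures on $\sigma$-compact locally compact metric spaces are tight, or simply note that tightness of the single limit measure $\mu$ is all that is needed.
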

Theorem \ref{weakvag} tells us that the only obstruction to converge in the weak* topology is the possible escape of mass. 

\subsection{Entropy theory}\label{entropyth} This theory provides tools to  measure the complexity of a dynamical system. In this paper we will be mainly interested in the following two notions of entropy.

\subsubsection{Topological entropy}\label{topentropy} Let $(X,d)$ will be a metric space and $T:X\to X$, a continuous map. 
We define the dynamical metrics $(d_n)_n$ given by the formula: $$d_n(x,y)=\max_{i\in\{0,...,n-1\}} d(T^ix,T^iy).$$
For a flow $(T_t)_t$ we define $$d_t(x,y)=\max_{s\in [0,t]}d(T_sx,T_sy).$$ We denote by $B_n(x,r)$ to the ball centered at $x$ of radius $r$ in the metric $d_n$. A ball $B_n(x,r)$ is also called a $(n,r)$-\textit{dynamical ball}. Given a compact subset $K$ of $X$ we denote by $N(K,n,r)$ to the minimum number of $(n,r)$-dynamical balls needed to cover $K$. The \emph{topological entropy} of $(X,d,T)$ is defined as
$$h_d(T)=\sup_{K\subset X} \lim_{r\to 0}\lim_{n\to\infty} \dfrac{1}{n}\log N(K,n,r),$$
where the last limit runs over compact subsets of $X$. This definition was first introduced by R. Bowen in \cite{bo1} as a way to extend the classical definition of topological entropy on compact spaces to the non-compact setting. 

\subsubsection{Measure theoretic entropy} \label{meaentropy} Given a countable measurable partition $\P=\{P_i\}_{i\in I}$ of $X$ we define the entropy of $\P$ as $$H_\mu(\P)=-\sum_{i\in I}\mu(P_i)\log\mu(P_i).$$
Given two partitions $\P$ and $\Q$ we can construct the smallest common refinement of $\P$ and $\Q$, this is denoted by $\P\vee \Q$. Define
$$h_\mu(T,\P)=\lim_{n\to\infty} \dfrac{1}{n}H(\bigvee_{i=0}^nT^{-i}\P).$$
By taking the supremum over all countable partitions of finite entropy we obtain the \textit{entropy of $T$ with respect to} $\mu$, this is denoted by $h_\mu(T)$. In other words 
$$h_\mu(T)=\sup_{\P} h_\mu(T,\P),$$
where the supremum runs over all countable partitions of finite entropy. 
\begin{definition}[Entropy map] Given a dynamical system $(X,d,T)$ we refer to the map $\mu\mapsto h_\mu(T)$, as the \emph{entropy map}. 
\end{definition}

For a flow $(T_t)_t$ the measure theoretic entropy of $\mu$ is always computed for the time one map. 

\subsubsection{Katok's entropy formula} The topological entropy and the measure theoretic entropy seem, at first, a bit unrelated. The following theorem provides a very strong connection between them--it helps to understand the measure theoretic entropy in the spirit of the topological entropy. 
\begin{theorem}\label{kat} Let $(X,d)$  be a compact metric space and $T:X\to X$, a continuous transformation. Let $\mu$ be an ergodic $T$-invariant probability measure and $\delta\in(0,1)$. Then
$$h_\mu(T)=\lim_{r\to\infty}\liminf_{n\to\infty}\dfrac{1}{n}\log N_\mu(n,r,\delta),$$
where $N_\mu(n,r,\delta)$ is the minimum number of $(n,r)$-dynamical balls needed to cover a set of $\mu$-measure at least $1-\delta$. In particular the  limit above is independent of $\delta\in (0,1)$.
\end{theorem}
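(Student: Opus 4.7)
The statement is Katok's entropy formula, identifying $h_\mu(T)$ with an asymptotic count of dynamical balls. My plan is to prove the two inequalities $h_\mu(T)\le\mathrm{RHS}$ and $h_\mu(T)\ge\mathrm{RHS}$ separately, both resting on the Shannon--McMillan--Breiman (SMB) theorem applied to a suitable partition. For the second inequality I will additionally invoke the Brin--Katok local entropy formula,
$$\lim_{r\to 0}\liminf_{n\to\infty}-\tfrac{1}{n}\log\mu(B_n(x,r))=h_\mu(T)\quad\text{for }\mu\text{-a.e. } x,$$
treating it as a known consequence of SMB combined with a standard covering argument.

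For $\lim_{r\to 0}\liminf_n\tfrac{1}{n}\log N_\mu(n,r,\delta)\le h_\mu(T)$, I would fix $r>0$ and choose a finite partition $\cP$ with $\diam(\cP)<r$ (available by compactness of $X$). By SMB, $-\tfrac{1}{n}\log\mu(\cP^n(x))\to h_\mu(T,\cP)$ $\mu$-a.e., so Egorov's theorem gives, for each $\epsilon>0$, a set $G$ with $\mu(G)\ge 1-\delta/2$ and an $N$ with $\mu(\cP^n(x))\ge e^{-n(h_\mu(T,\cP)+\epsilon)}$ whenever $x\in G$ and $n\ge N$. Since the atoms of $\cP^n$ are disjoint, at most $e^{n(h_\mu(T,\cP)+\epsilon)}$ of them meet $G$, and the condition $\diam(\cP)<r$ implies $\cP^n(x)\subset B_n(x,r)$ for any $x$ in the atom. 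So $G$ is covered by that many dynamical balls of radius $r$, whence $N_\mu(n,r,\delta)\le e^{n(h_\mu(T,\cP)+\epsilon)}$. Taking $\liminf_n$, then $\epsilon\to 0$ and $r\to 0$, and using $h_\mu(T,\cP)\le h_\mu(T)$, finishes this direction.

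For the reverse inequality, fix $\epsilon>0$ and, using Brin--Katok together with Egorov, pick $r_0>0$ and a set $G$ with $\mu(G)\ge 1-\tfrac{1-\delta}{2}$ such that $\mu(B_n(x,2r_0))\le e^{-n(h_\mu(T)-\epsilon)}$ for every $x\in G$ and every $n$ sufficiently large. For $r\le r_0$, let $B_n(y_1,r),\ldots,B_n(y_k,r)$ with $k=N_\mu(n,r,\delta)$ be a minimal cover of a set $E_n$ with $\mu(E_n)\ge 1-\delta$. For each index $i$ with $B_n(y_i,r)\cap G\neq\emptyset$, choose $x_i$ in this intersection: then $B_n(y_i,r)\subset B_n(x_i,2r)\subset B_n(x_i,2r_0)$, and so $\mu(B_n(y_i,r))\le e^{-n(h_\mu(T)-\epsilon)}$. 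Summing over such $i$ on $E_n\cap G$, whose measure is at least $(1-\delta)/2>0$, forces $k\ge\tfrac{1-\delta}{2}\,e^{n(h_\mu(T)-\epsilon)}$, and sending $r\to 0$ and $\epsilon\to 0$ closes the argument.

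The main obstacle is the Brin--Katok formula itself, which is non-trivial (it refines SMB via a packing/covering argument) but standard and, crucially, independent of $\delta$. A secondary subtlety is keeping the estimates uniform as $\delta$ ranges through $(0,1)$: one must calibrate the Egorov sets so that $E_n\cap G$ has strictly positive measure regardless of whether $\delta$ is small or close to $1$, which is why the cutoffs $\delta/2$ and $(1-\delta)/2$ are chosen rather than a single universal tolerance.
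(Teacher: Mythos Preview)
The paper does not supply its own proof of this theorem; it is quoted as a result of Katok with a citation to \cite{ka}, and the surrounding discussion only extracts from Katok's argument the observation that the inequality $h_\mu(T)\le\lim_{r\to 0}\liminf_n\tfrac{1}{n}\log N_\mu(n,r,\delta)$ survives without compactness. So there is no paper proof to compare against; your proposal stands as an independent argument.

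Your argument is correct. One remark on strategy: invoking the Brin--Katok local entropy formula for the lower bound is valid but mildly anachronistic and close to circular in spirit, since the ``balls have exponentially small measure'' direction of Brin--Katok is essentially a pointwise refinement of the very inequality you are proving. Katok's original 1980 proof obtains that direction more directly: one fixes a finite partition $\cP$ whose boundary has $\mu$-measure zero, shows via SMB that most atoms of $\cP^n$ are contained in $(n,r)$-balls once $r$ is small relative to the partition (using that the $\mu$-mass near $\partial\cP$ is small), and then counts. Your route via Brin--Katok is cleaner to write down once that theorem is granted, but if the exercise is to recover Katok's formula from first principles you would want to unpack Brin--Katok rather than cite it. The calibration of the Egorov sets to make both inequalities uniform in $\delta\in(0,1)$ is handled correctly.
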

This result was proven by A. Katok in \cite{ka}. If one follows the proof of Theorem \ref{kat} (see \cite[Section 1]{ka}), it is clear that the inequality $$h_\mu(T)\le \lim_{r\to\infty}\liminf_{n\to\infty}\dfrac{1}{n}\log N_\mu(n,r;\delta),$$
holds regardless of the compactness of $X$. For most of our purposes this inequality will be enough. Recently F. Riquelme \cite{r} proved that equality holds if $X$ is a topological manifold and $T$ a Lipschitz map. More precisely we have the following result. 
\begin{theorem}\label{ri} Let $(X,d)$  be a metric space and $T:X\to X$ a continuous transformation. Then for every  ergodic $T$-invariant probability measure $\mu$ we have
$$h_\mu(T)\le \lim_{r\to\infty}\liminf_{n\to\infty}\dfrac{1}{n}\log N_\mu(n,r,\delta),$$
where $N_\mu(n,r,\delta)$ as in Theorem \ref{kat}. If we moreover assume that $(X,d)$ is a topological manifold and $T$ is Lipschitz, then the equality holds. 
\end{theorem}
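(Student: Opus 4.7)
The plan is to split the statement into its two assertions. For the general inequality $h_\mu(T) \le \lim_{r\to 0}\liminf_n (1/n)\log N_\mu(n,r,\delta)$, I would follow Katok's original strategy from \cite{ka}, whose key input is the Shannon--McMillan--Breiman theorem and which does not use compactness of $X$. Fix $\eta>0$, pick a countable measurable partition $\mathcal{P}$ of finite entropy with $h_\mu(T,\mathcal{P})\ge h_\mu(T)-\eta$, and let $\mathcal{P}^n_0 = \bigvee_{i=0}^{n-1} T^{-i}\mathcal{P}$. By SMB and Egorov, there is a set $A$ of measure close to $1$ on which $\mu(\mathcal{P}^n_0(x))\le e^{-n(h_\mu(T,\mathcal{P})-\eta)}$ uniformly for $n\ge N_0$. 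For $r$ sufficiently small relative to the partition, each dynamical ball $B_n(y,r)$ meets only boundedly many atoms of $\mathcal{P}^n_0$, so any cover of a set of measure $\ge 1-\delta$ by $N_\mu(n,r,\delta)$ such balls must intersect at least $(1-\delta-\eta)e^{n(h_\mu(T,\mathcal{P})-\eta)}$ atoms of $\mathcal{P}^n_0$. Rearranging and letting $\eta\to 0$, $r\to 0$ yields the first inequality.

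For the reverse inequality under the manifold + Lipschitz hypothesis, I would use a disjoint packing argument. The analytic input I would aim to establish is the lower local-entropy bound
\[
\limsup_{n\to\infty}-\tfrac{1}{n}\log\mu\bigl(B_n(x,r/2)\bigr)\le h_\mu(T)
\]
for $\mu$-a.e.\ $x$ and every sufficiently small $r$. Granted this, Egorov provides a set $A$ with $\mu(A)>1-\delta/2$ and an $N_0$ such that $\mu(B_n(x,r/2))\ge e^{-n(h_\mu(T)+\eta)}$ for all $x\in A$ and $n\ge N_0$. Take a maximal $(n,r)$-separated set $E\subseteq A$: by maximality the collection $\{B_n(x,r):x\in E\}$ covers $A$, which already exhibits $N_\mu(n,r,\delta)\le |E|$; by separation the balls $\{B_n(x,r/2):x\in E\}$ are pairwise disjoint, so $|E|\cdot e^{-n(h_\mu(T)+\eta)}\le \sum_{x\in E}\mu(B_n(x,r/2))\le 1$. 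Letting $n\to\infty$, then $\eta\to 0$, then $r\to 0$ concludes the argument.

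The principal obstacle is the lower local-entropy bound, and this is precisely where the topological manifold and Lipschitz hypotheses enter. The natural route is to apply SMB to a countable partition $\mathcal{P}$ of diameter at most $r$ with $h_\mu(T,\mathcal{P})$ close to $h_\mu(T)$, since then $\mathcal{P}^n_0(x)\subseteq B_n(x,r)$ forces $\mu(B_n(x,r))\ge \mu(\mathcal{P}^n_0(x))\ge e^{-n(h_\mu(T)+\eta)}$ almost surely for $n$ large. Constructing such partitions is delicate in a non-compact metric space: the atoms must be cut finely enough to have small diameter yet coarsely enough to retain finite entropy and approximate $h_\mu(T)$. The manifold structure supplies local Euclidean charts in which arbitrarily fine countable partitions are straightforward to build, while the Lipschitz hypothesis controls how the boundaries of $\mathcal{P}^n_0$ propagate under refinement so that partition atoms remain inside dynamical balls of the prescribed size. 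Carrying out this construction in the style of \cite{r} is the technical heart of the equality statement.
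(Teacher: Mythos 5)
The paper does not actually prove Theorem \ref{ri}: the inequality is attributed to Katok's argument in \cite{ka} (which, as the paper notes, never uses compactness of $X$), and the equality under the manifold--Lipschitz hypothesis is quoted from Riquelme \cite{r}. Your first half follows the same Katok route, but one step would fail as literally written: the claim that for $r$ small each dynamical ball $B_n(y,r)$ meets only boundedly many atoms of $\mathcal{P}_0^n$ is false in general. For a countable partition a set of diameter $2r$ may meet infinitely many atoms, and even for a finite partition the correct statement, after Katok's modification (replace the atoms by compact cores $K_i\subset P_i$ plus a garbage atom, and take $2r$ smaller than the mutual distances of the cores), is that a dynamical ball meets at most $2^n$ atoms of the refined partition; one must then remove the resulting $\log 2$ (e.g.\ by running the argument for $T^m$ and $\bigvee_{i<m}T^{-i}\mathcal{P}$ and using that $N_\mu(n,r,\delta)$ is nondecreasing in $n$), and control the conditional entropy of $\mathcal{P}$ relative to the core partition. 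This is exactly the bookkeeping in \cite[Section 1]{ka}, so it is repairable, but your sketch skips it.

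The genuine gap is in the equality half. Your packing argument is correct, and you rightly isolate the a.e.\ lower bound $\mu(B_n(x,r/2))\ge e^{-n(h_\mu(T)+\eta)}$ as the crux; but the mechanism you propose both misses the real difficulty and mislocates the role of the hypotheses. If a countable partition has all atoms of diameter at most $r$, then $\mathcal{P}_0^n(x)\subset B_n(x,r)$ automatically, with no Lipschitz assumption at all, so the Lipschitz condition is not needed to ``control how boundaries propagate''. The true obstruction is that on a noncompact manifold a partition with atoms of uniformly small diameter \emph{and finite static entropy} need not exist: for instance, for a purely atomic probability measure on $\mathbb{R}^2$ placing mass proportional to $1/(k\log^2 k)$ at the points $(3k,0)$, every partition into sets of diameter at most $1$ has infinite entropy, so SMB cannot be invoked in the way you describe; building ``arbitrarily fine partitions in charts'' is trivial, but keeping their entropy finite is not. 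The standard substitute is a Ma\~n\'e-type lemma producing a finite-entropy partition whose atom at $x$ has diameter at most $\rho(x)$ for some function $\rho$ with $\log\rho\in L^1(\mu)$, after which one must compare the resulting variable-size dynamical atoms with genuine $(n,r)$-dynamical balls; it is in this comparison (together with bounded-multiplicity coverings supplied by the manifold structure) that the Lipschitz hypothesis genuinely enters in \cite{r}. Since precisely this construction is what your proposal defers to ``the technical heart'', the equality statement is not established: it is reduced to the unproved key lemma, and the heuristic you give for why manifold plus Lipschitz suffices does not point at the step where those hypotheses are actually used.
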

The following two definitions will be of great importance in this paper.

\begin{definition}[Simplified entropy formula] \label{sef} We say that $(X,d,T)$ satisfies a \emph{simplified entropy formula} if there exists $\e_0>0$ such that for every $\e\in (0,\e_0)$, for every ergodic probability measure $\mu$ and $\delta\in (0,1)$ we have
$$h_\mu(T)= \limsup_{n\to\infty} \frac{1}{n}\log N_\mu(n,\e,\delta).$$
We say that $(X,d,T)$ satisfies a \emph{simplified entropy inequality}  if there exists $\e_0>0$ such that for every $\e\in (0,\e_0)$,   for every ergodic probability measure $\mu$ and $\delta\in (0,1)$ we have 
$$h_\mu(T)\le \limsup_{n\to\infty} \frac{1}{n}\log N_\mu(n,\e,\delta).$$
\end{definition}

\begin{definition}[Weak entropy dense] \label{weddef} We say that the space of ergodic measures is \emph{weak entropy dense} in $\M(X,T)$ if the following holds: for every $\mu\in\M(X,T)$ and $\eta>0$ there exists a sequence $(\mu_n)_{n}$ of ergodic measures satisfying
\begin{enumerate}
\item $(\mu_n)_n$ converges in the weak* topology to $\mu$
\item For every $n\in \N$ we have $h_{\mu_n}(T)>h_\mu(T)-\eta$.
\end{enumerate}
We also refer to this property by saying that $(X,d,T)$ is \emph{weak entropy dense}.
\end{definition}

We will prove in Section \ref{4} that the geodesic flow on a pinched negatively curved manifold  satisfies a mild modification of the simplified entropy inequality and that its ergodic measures are weak entropy dense.

\subsection{ Structure of negatively curved manifolds}\label{strucneg}
From now on we will assume that $(M,g)$ is a complete Riemannian manifold of negative sectional curvature. We will moreover assume that $K_g\in [-a,-b]$, for some $a,b>0$, where $K_g$ is the sectional curvature of $M$. We refer to a  manifold satisfying those properties as a \emph{pinched negatively curved manifold}. 

The unit tangent bundle of $M$ is defined as $T^1M=\{v\in TM: ||v||_g=1\}$. Since $(M,g)$ is complete, the geodesic flow on $T^1M$ is well defined for all times; we denote it by $(g_t)_{t\in\R}$. The Riemannian metric $g$ makes $M$ into a metric space, the induced distance function (shortest path distance) is denoted by $d$.  Let $\pi :T^1M\to M$ be the canonical projection. We define a metric on $T^1M$--which we still denote by $d$--in the following way \begin{align}\label{distance} d(x,y)=\max_{t\in [0,1]} d(\pi g_t(x),\pi  g_t(y)),\end{align}
for every $x,y\in T^1M$. We emphasize that this is the metric used in all the statement about the geodesic flow (but other possible candidates of metrics are usually H\"older equivalent to $d$, at least in the pinched negatively curved case, see \cite[Section 2.3]{pps}).

 The space of invariant  probability (resp. sub-probability) measures of the geodesic flow is denoted by $\M(g)$ (resp. $\M_{\le1}(g)$). As explained in Section \ref{measureth} we endow $\M_{\le 1}(g)$ with the vague topology and $\M(g)$ with the weak* topology. The topological entropy of the geodesic flow is denoted by $h_{top}(g)$.

 The universal cover of $M$ is denoted by $\widetilde{M}$ and its visual boundary by $\partial_\infty\widetilde{M}$. The space $\widetilde{M}\cup\partial_\infty\widetilde{M}$ is called the Gromov compactification of $\widetilde{M}$ and it is homeomorphic to a closed ball. The projection map $T^1\widetilde{M}\to\widetilde{M}$ is still denoted by $\pi$. We denote by $Iso(\widetilde{M})$ the group of isometries on $\widetilde{M}$.  It is well known that every isometry of $\widetilde{M}$ extends to a homeomorphism of $\partial_\infty \widetilde{M}$ and that the fundamental group of $M$ acts (via Deck transformations) isometrically, freely and discontinuously on $\widetilde{M}$. After fixing a reference point we identify $\pi_1(M)$ with a subgroup $\Gamma<$ $Iso(\widetilde{M})$. The \emph{limit set} of $\Gamma$--which we denote by $L(\Gamma)$--is the set of accumulation points in $\partial_\infty\widetilde{M}$ of the orbit of a point $x\in \widetilde{M}$ under $\Gamma$. We say that $\Gamma$ is  \emph{non-elementary}, if it is not generated by one hyperbolic element, nor a parabolic subgroup (see \cite{bow} for several characterizations of this property).\\

\noindent
{\bf Standing assumption:} In this paper $\Gamma$ will always be assumed to be non-elementary. \\

In this case $L(\Gamma)$ is the minimal closed $\Gamma$-invariant subset of $\partial_\infty \widetilde{M}$ and $\#L(\Gamma)>2$.  Let $\Omega\subset T^1M$ be the \emph{non-wandering set} of the geodesic flow. 
It worth pointing out that $\Omega$ concentrates all the dynamics of the geodesic flow. By Poincar\'e recurrence, $\Omega$ contains the support of all the invariant probability measures of the geodesic flow. In this paper we will be mostly interested in the case that $\Omega$ is non-compact. We use the notation $\widetilde{\Omega}:=p^{-1}(\Omega)$ for the preimage of $\Omega$ under $p:T^1\widetilde{M}\to T^1M$. If $\Omega$ is compact, then we say that $M$ is \emph{convex-cocompact}. We say that $M$ is \emph{geometrically finite} if $L(\Gamma)$ is union of bounded parabolics and radial limit points (for a precise definition see \cite{bow}). As mentioned in the introduction it was recently proved that $M$ is geometrically finite if and only if there exists a compact set $K\subset M$ that intersects all closed geodesics \cite{kl}.

\subsection{Thermodynamic formalism}\label{pretf}
We start with the definition of the topological pressure of a potential. In this paper we will only deal with the case when $F$ is bounded; this is not essential but makes some statements easier to write. 

\begin{definition}[Topological pressure] Let $F:T^1M\to \R$ be a (bounded) continuous  potential. We define the \emph{topological pressure} of $F$ as 
$$P(F)=\sup_{\mu\in \M(g)}\{h_\mu(g)+\int Fd\mu\}.$$
\end{definition}
 A measure $\mu\in \M(g)$ satisfying $P(F)=h_\mu(g)+\int Fd\mu$, is called an \emph{equilibrium state} for the potential $F$. 
For compact dynamical systems there is a well known relation--the variational principle--between the topological pressure of a continuous potential and a weighted version of the topological entropy (see \cite[Chapter 9]{wa}). Similar to the definition of the measure theoretic entropy at infinity (defined in the introduction) we have the following more general definition. 

\begin{definition}[Measure theoretic pressure at infinity]\label{defpresinf} We define the \emph{measure theoretic pressure at infinity} of $F\in C_b(T^1M)$--which we denote by $P_\infty(F)$--by the formula 
$$P_\infty(F)=\sup_{(\mu_n)_n\to 0}\limsup_{n\to\infty} \big( h_{\mu_n}(g)+\int Fd\mu_n\big),$$
where the supremum runs over sequences $(\mu_n)_n$ that converges vaguely to the zero measure. 
\end{definition}


It turns out that the topological pressure of  a H\"older potential has a nice characterization in terms of some critical exponent. Moreover, with H\"older regularity there exists at most one equilibrium state. Before making precise those results we start with some notation. Given two points $x,y\in\widetilde{M}$, we  denote by $[x,y]$ the oriented geodesic segment  starting at $x$ and ending at $y$. For a function $G:T^1\widetilde{M}\to \R$, we  use the notation $\int_x^y G$ to represent the integral of $G$ over the  tangent vectors to the path $[x,y]$ (in direction from $x$ to $y$). Given a potential $F:T^1M\to \R$, we denote by $\widetilde{F}:T^1\widetilde{M}\to \R$ the function $\widetilde{F}=F\circ p$, where $p$ is the canonical projection $p:T^1\widetilde{M}\to T^1M$.  The following definition was introduced in \cite{pps} (a similar definition was used earlier in \cite{cou}).

\begin{definition}\label{cexdef} Let $F$ be a continuous potential and $\widetilde{F}$ its lift to $T^1\widetilde{M}$. Define the \emph{Poincar\'e series associated to $(\Gamma,F)$} based at $z\in \widetilde{M}$ as
$$P(s,F)=\sum_{\gamma\in\Gamma} \exp\bigg(\int_z^{\gamma z}(\widetilde{F}-s)\bigg).$$
 The \emph{critical exponent of $(\Gamma,F)$} is $$\delta^F_\Gamma=\inf\{\text{s } | \text{ }P(s,F)\text{ is finite}\}.$$We say that the pair $(\Gamma,F)$ is of \emph{convergence type} if $P(\delta_\Gamma^F,F)<\infty$, in other words, the Poincar\'e series converges at its critical exponent. Otherwise we say $(\Gamma, F)$ is of \emph{divergence type}. 
\end{definition}

We use the notation $\delta_\Gamma$ to denote the critical exponent of $(\Gamma,0)$, where $0$ is the zero potential. 
 If $F$ is a H\"older potential, then the critical exponent does not depend on the base point $z$. We also remark that if $F$ is bounded, then $\delta_\Gamma^F$ is finite. We next state one of the main results in \cite{pps}, which is a crucial input in this work. 

\begin{theorem}\label{pps} \cite[Theorem 2.3]{pps} Let $F$ be a bounded H\"older potential. Then 
$$P(F)=\delta_\Gamma^F.$$
Moreover, there exists at most one equilibrium state of $F$. 
\end{theorem}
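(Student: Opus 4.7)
The plan is to realize $\delta_\Gamma^F$ as the entropy plus integral of a distinguished invariant measure, the Gibbs (or Bowen--Margulis--Sullivan--type) measure $m_F$ associated to $F$, and then exploit its Gibbs property to match the variational upper bound. First, I would construct Patterson--Sullivan conformal densities at the critical exponent $\delta=\delta_\Gamma^F$ by taking weak-$*$ limits as $s\searrow\delta$ of the normalized sums $P(s,F)^{-1}\sum_{\gamma\in\Gamma}\exp(\int_z^{\gamma z}(\widetilde F-s))\,\delta_{\gamma z}$, inserting a Patterson perturbation when $(\Gamma,F)$ is of convergence type. This yields a $\Gamma$-equivariant family $(\mu_z)_{z\in\widetilde{M}}$ of finite measures on $\partial_\infty\widetilde{M}$, supported on $L(\Gamma)$, and satisfying $d\gamma_*\mu_z/d\mu_z=e^{C_{F-\delta}(\cdot,\,z,\,\gamma^{-1}z)}$ for the Gibbs cocycle $C_{F-\delta}$ obtained from Busemann-type boundary extensions.

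Next, I would transport these boundary data to $T^1\widetilde{M}$ via the Hopf parametrization $T^1\widetilde{M}\cong(\partial_\infty^2\widetilde{M}\setminus\mathrm{diag})\times\R$, defining $\widetilde{m}_F$ locally as the product of two conformal densities with Lebesgue in the flow direction, weighted by a Gromov-like kernel. The cocycle identity forces simultaneous $\Gamma$-invariance and geodesic-flow invariance, so $\widetilde{m}_F$ descends to an invariant measure $m_F$ on $T^1M$. The crucial technical step is to verify a Gibbs property $m_F(B_n(v,r))\asymp \exp(-n\delta+\int_0^n \widetilde F(g_t v)\,dt)$ on dynamical balls, uniformly for $v$ in a fixed compact set and $r$ below some uniformity radius. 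Assuming $|m_F|<\infty$ (otherwise one works with sub-probability limits of finite approximations), normalize and apply Katok's formula (Theorem \ref{kat}, in the manifold form of Theorem \ref{ri}) together with the Gibbs estimate to get $h_{m_F/|m_F|}(g)+\int F\,d(m_F/|m_F|)=\delta$, hence $P(F)\ge \delta_\Gamma^F$. For the reverse inequality, take any ergodic invariant probability $\mu$, fix small $\e$, and cover a $\mu$-large set by roughly $e^{nh_\mu(g)}$ balls $B_n(x_i,\e)$ via the simplified entropy inequality; on each ball the Birkhoff sum of $\widetilde F$ varies by at most $n\cdot\mathrm{osc}_\e(F)$, so the corresponding Gibbs weights furnish a lower bound for $P(s,F)$ of order $e^{n(h_\mu(g)+\int F\,d\mu-s-O(\mathrm{osc}_\e(F)))}$, forcing $h_\mu(g)+\int F\,d\mu\le \delta_\Gamma^F$ after $\e\to 0$.

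For uniqueness I would run a Hopf--Tsuji--Sullivan type dichotomy adapted to the potential: divergence type of $(\Gamma,F)$ is equivalent to conservativity and $\Gamma$-ergodicity of the conformal density on $L(\Gamma)$, and hence to geodesic-flow ergodicity of $m_F$; one shows that the existence of any equilibrium state forces the divergence regime. Any second equilibrium state $\mu$ must then satisfy the same exponential bounds on dynamical balls as $m_F$ (this is how attaining the supremum in $h_\mu(g)+\int F\,d\mu$ interacts with the local product structure), yielding $\mu\ll m_F$, and ergodicity of $m_F$ then gives $\mu=m_F/|m_F|$. The hard part will be verifying the Gibbs property in pinched variable curvature without a uniform lower bound on injectivity radius: one must carefully control the Gibbs cocycle along orbits that excurse into the ends of $M$, and choose the Patterson perturbation so that the convergence/divergence dichotomy is resolved consistently with the construction. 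Once these points are handled, the pressure identity and the uniqueness both follow by essentially Bowen-style arguments.
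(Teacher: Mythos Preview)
This theorem is not proved in the paper under review: it is simply cited as \cite[Theorem 2.3]{pps}, and the paper uses it as a black box. There is therefore no ``paper's own proof'' to compare your proposal against. Your sketch is, in broad outline, a plausible summary of the Paulin--Pollicott--Schapira strategy (Patterson--Sullivan densities for the potential, the induced Gibbs measure via Hopf coordinates, the Gibbs property on dynamical balls, and a Hopf--Tsuji--Sullivan dichotomy for uniqueness), so in spirit you are following the source that the paper is quoting rather than proposing something new.

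That said, a few points in your outline would not go through as written. First, the case $|m_F|=\infty$ cannot be dismissed with ``work with sub-probability limits of finite approximations'': when $m_F$ is infinite there is no equilibrium state at all, and the equality $P(F)=\delta_\Gamma^F$ must be obtained without ever producing a measure realizing the supremum. In \cite{pps} this is handled by comparing the Poincar\'e series directly with the Gurevich pressure over periodic orbits, not by normalizing $m_F$. Second, your reverse inequality argument (covering a $\mu$-large set by $e^{nh_\mu(g)}$ dynamical balls and feeding this into $P(s,F)$) does not immediately yield a lower bound for the Poincar\'e series, since the centers of those balls need not be lattice orbits of a single basepoint; one has to pass through either periodic orbits (closing lemma) or a shadow-lemma comparison with the conformal density. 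Third, the uniqueness step as you phrase it---``any second equilibrium state must satisfy the same exponential bounds on dynamical balls''---is not how \cite{pps} proceeds; the argument there (following Otal--Peign\'e) goes through conservativity and ergodicity of $m_F$ together with a Kaimanovich-type entropy argument, not through a direct Gibbs comparison of two candidate measures.
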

\begin{remark}It follows from the continuity of $P(F)$ and $\delta_\Gamma^F$ under $C^0$ limits of potentials and the density of H\"older potentials among uniformly continuous functions, that Theorem \ref{pps} also holds for uniformly continuous potentials (the same idea is employed in the proof of Lemma \ref{indebase}). In other words, the critical exponent of a uniformly continuous potential is equal to its topological pressure defined through the variational principle. This fact will be frequently used in this paper. 
\end{remark}
If $(\Gamma, F)$ is of convergence type, then $F$ does not have an equilibrium state. If $(\Gamma, F)$ is of divergence type, then it is possible to construct a measure $m_F$ that whenever finite (and normalized to be a probability measure), it is the equilibrium state of the potential $F$. We refer to the measure $m_F$ as the \emph{Gibbs measure associated to}  $F$ (for its construction we refer the reader to \cite{pps}). We remark that Theorem \ref{pps}  was obtained by Otal and Peigne in the case $F=0$ (see \cite{op}). The proof of Theorem \ref{pps} follows very closely the proof of \cite[Theorem 1]{op}. The equality between the topological entropy and the critical exponent of the group was obtained by Manning in the compact case (see \cite{man}).

We will now explain the main results in \cite{sp}, where a characterization for the existence of equilibrium states is provided. Given $\widetilde{U}\subset\widetilde{M}$ we define $\Gamma_{\widetilde{U}}$ as the set of elements in $\Gamma$ such that there exists a geodesic starting at $\widetilde{U}$ and finishing at $\gamma\widetilde{U}$ that only meet $\Gamma\widetilde{U}$ at the beginning and at the end of its trajectory. More precisely we have
$$\Gamma_{\widetilde{U}}=\{\gamma\in \Gamma: \exists y,y'\in \widetilde{U}, [y,\gamma y']\cap g\widetilde{U}\ne \emptyset \Rightarrow \overline{\widetilde{U}}\cap g\overline{\widetilde{U}}\ne \emptyset\text{, or }\gamma\overline{\widetilde{U}}\cap g\overline{\widetilde{U}}\ne \emptyset\}.  $$
Let $\P$ be the set of closed geodesics and $n_U(p)$ the number of times a geodesic $p\in\P$ crosses $U=\pi(\widetilde{U})$. Recall that $\widetilde{\Omega}=p^{-1}(\Omega)$ is the preimage of the non-wandering set under $p:T^1\widetilde{M}\to T^1M$.
\begin{definition} A H\"older potential $F:T^1M\to \R$ is said to be \emph{recurrent} if there exists an open relatively compact subset $U\subset  M$, such that $T^1U\cap \Omega\ne \emptyset$, and $$\sum_{p\in\P}n_U(p)\exp(\int_p F-P(F))=\infty.$$

\end{definition}

\begin{definition} We say that the pair $(\Gamma,\widetilde{F})$ is \emph{positive recurrent} with respect to $\widetilde{U}\subset\widetilde{M}$ if the following properties hold.
\begin{enumerate}
\item $T^1\widetilde{U}$ has non-empty intersection with $\widetilde{\Omega}$. 
\item $F$ is a recurrent potential.
\item There exists $x\in\widetilde{M}$ such that $\sum_{\gamma\in\Gamma_{\widetilde{U}}}d(x,\gamma x)\exp(\int_x^{\gamma x} \widetilde{F}-P(F)),$ is finite. 
\end{enumerate}

\end{definition}

\begin{theorem} \label{sptheo} \cite[Theorem 2]{sp} Let $F$ be a H\"older potential with finite pressure. Then 
\begin{enumerate}
\item If $F$ is recurrent and $(\Gamma,\widetilde{F})$ is positive recurrent with respect to some open relatively compact set $\widetilde{U}\subset \widetilde{M}$, then $m_F$ is finite.
\item If $m_F$ is finite, then $F$ is recurrent and $(\Gamma,\widetilde{F})$ is positive recurrent with respect to any open relatively compact set $\widetilde{U}\subset \widetilde{M}$ meeting the projection  of $\widetilde{\Omega}$ to $\widetilde{M}$. 
\end{enumerate}
\end{theorem}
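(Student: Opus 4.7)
The plan is to code the geodesic flow by its first return to a small piece and apply a Kac-type formula, interpreting the positive recurrence sum as the total mass of the Gibbs measure. The key input is the construction of $m_F$ in \cite{pps} via Patterson--Sullivan-type conformal densities at $\partial_\infty \widetilde{M}$ together with the identification $P(F) = \delta_\Gamma^F$ from Theorem \ref{pps}; this gives a $\Gamma$-invariant measure $\widetilde{m}_F$ on $T^1\widetilde{M}$ whose local structure is $\Gamma$-equivariantly controlled by the weight $\exp(\int \widetilde{F})$ along geodesic segments.

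First, I would set up the coding. Given an open relatively compact $\widetilde{U} \subset \widetilde{M}$ meeting the projection of $\widetilde{\Omega}$, the set $\Gamma_{\widetilde{U}}$ is designed precisely to parametrize \emph{irreducible} excursions: any geodesic segment in $\widetilde{M}$ from $\widetilde{U}$ to $\gamma \widetilde{U}$ without intermediate visits to $\Gamma\widetilde{U}$ corresponds to a well-defined $\gamma \in \Gamma_{\widetilde{U}}$. This yields a first-return map on $T^1 U \cap \Omega$ whose return time $\tau$ satisfies $\tau(v) = d(x,\gamma x) + O(1)$ for the excursion-element $\gamma \in \Gamma_{\widetilde{U}}$ associated to $v$, with the error bounded in terms of $\operatorname{diam}(\widetilde{U})$ by pinched negative curvature. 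Summing over closed geodesics crossing $U$ gives $n_U(p)$ as the number of $\Gamma_{\widetilde{U}}$-symbols in the period of $p$, linking the recurrent-potential sum to a Gurevich-type sum over the induced system.

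Second, I would carry out the Kac-type computation. Since $U$ is relatively compact and $m_F$ is locally finite, $m_F|_{T^1 U}$ has finite mass. By flow invariance and the first-return representation,
\begin{equation*}
|m_F| \;=\; \int_{T^1 U \cap \Omega} \tau \, d m_F.
\end{equation*}
Decomposing $T^1 U \cap \Omega$ according to the excursion element $\gamma \in \Gamma_{\widetilde{U}}$ and using the Patterson--Sullivan disintegration of $\widetilde{m}_F$ in shadows, the piece indexed by $\gamma$ carries mass comparable to $\exp\!\bigl(\int_x^{\gamma x} \widetilde{F} - P(F)\bigr)$ up to multiplicative constants uniform in $\gamma$ (this is where the H\"older control of $F$ and the shadow lemma for conformal densities are used). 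Combined with $\tau \asymp d(x,\gamma x)$ on each piece, one obtains
\begin{equation*}
|m_F| \;\asymp\; \sum_{\gamma \in \Gamma_{\widetilde{U}}} d(x,\gamma x)\,\exp\!\Bigl(\int_x^{\gamma x} \widetilde{F} - P(F)\Bigr),
\end{equation*}
which gives both implications for positive recurrence. The recurrence of $F$ enters through the Hopf-type dichotomy: finiteness of a locally finite Gibbs measure forces conservativity, which in turn forces $(\Gamma,\widetilde{F})$ to be of divergence type, i.e.\ $F$ recurrent; conversely, recurrence plus finite induced mass is the classical Kac-Gurevich criterion guaranteeing $|m_F| < \infty$.

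The main obstacle will be the precise comparison between $m_F|_{T^1 U}$ and the discrete weighted sum over $\Gamma_{\widetilde{U}}$. Because we are in a non-compact pinched negatively curved manifold without any symbolic model for the flow, one must extract uniform shadow estimates for $\widetilde{m}_F$ valid over all of $\Gamma_{\widetilde{U}}$, including excursions that travel arbitrarily far into cusps or ends of $M$. Controlling the bounded multiplicative error between the measure of a shadow and $\exp(\int \widetilde{F} - P(F))$ uniformly in $\gamma$, and simultaneously controlling the additive error between $\tau$ and $d(x,\gamma x)$, is the technical heart of the argument.
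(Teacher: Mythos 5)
This statement is not proved in the paper at all: it is quoted verbatim as \cite[Theorem 2]{sp}, and the paper only uses it as a black box (e.g.\ to produce a H\"older potential admitting an equilibrium state in Section \ref{ent4}). So the comparison has to be with the actual argument of Pit--Schapira. Your outline does follow their strategy in broad strokes: induce on a relatively compact piece $T^1U$, parametrize excursions by $\Gamma_{\widetilde{U}}$, compare the mass of $m_F$ with the series $\sum_{\gamma\in\Gamma_{\widetilde{U}}} d(x,\gamma x)\exp(\int_x^{\gamma x}\widetilde{F}-P(F))$ via the product structure of $\widetilde{m}_F$ in Hopf coordinates and shadow estimates for the $F$-weighted Patterson densities, and handle recurrence through a Hopf--Tsuji--Sullivan-type dichotomy. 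In that sense the plan is pointed in the right direction.

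However, as a proof it has genuine gaps, and they are exactly the places you flag as ``obstacles'' and then skip. First, the displayed identity $|m_F|=\int_{T^1U\cap\Omega}\tau\,dm_F$ is not correct as written: $T^1U$ is a thickened set, not a cross-section of the flow, so Kac's formula must be run through a genuine transversal (or a box decomposition of the flow direction), and what one gets is a two-sided comparison with constants depending on $\mathrm{diam}(\widetilde{U})$ --- this is fixable but needs to be set up carefully, since the whole theorem is a finiteness criterion and the comparison is its content. Second, the step ``finiteness forces conservativity, which forces divergence type, i.e.\ $F$ recurrent'' conflates three different notions: recurrence of $F$ in the sense of Definition preceding the theorem is divergence of a sum over closed geodesics weighted by $n_U(p)$, and its equivalence with conservativity/divergence type of $(\Gamma,\widetilde F)$ is itself a nontrivial theorem (the Hopf--Tsuji--Sullivan--Roblin dichotomy in the weighted setting of \cite{pps}), not a formal consequence of the Hopf decomposition. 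Third, the uniform shadow/Gibbs estimate for excursion pieces --- uniform over $\gamma\in\Gamma_{\widetilde{U}}$ whose orbit segments travel arbitrarily far into the ends, where the Gibbs property of Definition \ref{gibbsprop} only gives constants depending on a compact set --- is the technical heart of \cite{sp}, and your proposal acknowledges it without supplying it. So the submission is a reasonable reading plan for the Pit--Schapira proof, but the decisive estimates (the cross-section comparison and the uniform control of excursion measures) are asserted rather than proved.
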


To introduce the class of strongly positive recurrent potentials we will need some  additional definitions. Let $Q\subset T^1\widetilde{M}$ be a compact subset and $W\subset T^1\widetilde{M}$ an open relatively compact  subset such that $W\cap \widetilde{\Omega}\ne \emptyset$. Let $D\in \N$. Given $n\in\N\cap [2D,\infty)$ we define
\begin{align*}\Gamma^W_Q(n,D)=\{\gamma\in\Gamma:\exists x\in W\cap \widetilde{\Omega}&\text{ such that }g_s(x)\in (\Gamma Q)^c,\text{ for }s\in [D,n-D] \\ &\text{ and }g_{u}(x)\in  \gamma W,\text{ for some }u\in (n-1,n]\}.\end{align*}
Then define $\Gamma_Q^W(D)=\bigcup_{n\ge 2D} \Gamma_Q^W(n,D)$. Observe that if $Q_1\subset Q_2$, then it follows from the definition that $\Gamma_{Q_2}^W(D)\subset \Gamma_{Q_1}^W(D)$.

\begin{definition}[Strongly positive recurrent potentials]\label{sprdef}  Let $W$ and $Q$ be subsets of  $T^1\widetilde{M}$ as above, and $F\in C_b(T^1M)$ a uniformly continuous potential.  We define $\delta^F_\Gamma(Q,W,D)$ as the critical exponent of the Poincar\'e series $$P(F,Q,W,D, s)=\sum_{\gamma\in\Gamma^W_{Q}(D)} \exp(\int_x^{\gamma x} \widetilde{F}-s).$$
Then define $$\delta^F_{\Gamma,\infty}=\inf_{(Q_n)_n}\inf_{n}\sup_D\delta^F_\Gamma(Q_n,W,D),$$
where the infimum runs over sequences of compact sets $(Q_n)_n$ satisfying $Q_n\subset Q_{n+1}$, and $\Omega\subset p(\bigcup_n Q_n)$. The quantity $\delta^F_{\Gamma,\infty}$ is called the \emph{critical exponent at infinity} of the potential $F$. We say that $F$ is \emph{strongly positive recurrent} (SPR for short) if $\delta^F_{\Gamma,\infty}<\delta_\Gamma^F$.
\end{definition}

Implicit in the definition of the critical exponent at infinity of $F$ is the independence on the base point $x\in \widetilde{M}$ and on the open relatively compact set $W\subset T^1\widetilde{M}$. We will explain those facts in Claim 1 and Claim 2.\\

{\bf Claim 1:} $\delta_\Gamma^F(Q,W,D)$ is independent on the base point. \\
If $F$ is H\"older continuous then the critical exponent of the Poincar\'e series of $(\Gamma,F)$ is independent of the base point (see \cite[Lemma 3.3]{pps}; this is a simple application of Lemma \ref{lem:imp}). The same proof applies to the Poincar\'e series $P(F,Q,W,D,s)$. This gives us that  $\delta_\Gamma^F(Q,W,D)$ is well defined and independent of the base point if $F$ is H\"older continuous. In Lemma \ref{indebase} we prove that the same holds for uniformly continuous potentials, that is, for those potentials $\delta_\Gamma^F(Q,W,D)$ is well defined (an analogous result holds for $\delta_\Gamma^F$). 

\begin{lemma}\label{indebase} Let $F$ be a bounded uniformly continuous potential. Then the critical exponent of $P(F, Q, W,D,s)$ is independent of the base point used in Definition \ref{sprdef}. In particular $\delta_\Gamma^F(Q,W,D)$ is well defined. Similarly, $\delta^F_{\Gamma}$ is well defined an independient of the base point. 
\end{lemma}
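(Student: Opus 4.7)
My plan is to reduce the statement to the Hölder case established in Claim 1 by uniform approximation, exploiting the fact that the critical exponent of the Poincaré series depends continuously on the potential in the uniform norm, with modulus of dependence bounded by the uniform distance itself.

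Fix $\varepsilon > 0$. I would first produce a Lipschitz (hence Hölder) potential $G : T^1 M \to \R$ with $\|F - G\|_0 < \varepsilon$, for instance via the inf-convolution
$$G_n(v) = \inf_{w \in T^1 M} \bigl( F(w) + n \cdot d(v, w) \bigr),$$
which is $n$-Lipschitz in the metric (\ref{distance}); uniform continuity and boundedness of $F$ guarantee $\|F - G_n\|_0 < \varepsilon$ once $n$ is large enough. The pointwise bound $|\widetilde{F} - \widetilde{G}| < \varepsilon$ on $T^1\widetilde{M}$ yields, by integration along the unit-speed geodesic from $x$ to $\gamma x$,
$$\left| \int_x^{\gamma x} (\widetilde{F} - \widetilde{G}) \right| \leq \varepsilon \cdot d(x, \gamma x),$$
for every $\gamma \in \Gamma$. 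Since $\int_x^{\gamma x}(-s) = -s \cdot d(x, \gamma x)$, this translates termwise into the series comparison
$$P(G, Q, W, D, s + \varepsilon) \leq P(F, Q, W, D, s) \leq P(G, Q, W, D, s - \varepsilon),$$
at every base point $x$. Comparing critical exponents gives $|\delta_\Gamma^F(Q, W, D) - \delta_\Gamma^G(Q, W, D)| \leq \varepsilon$, uniformly in the choice of base point.

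By Claim 1, $\delta_\Gamma^G(Q, W, D)$ is independent of the base point since $G$ is Hölder. Writing $\delta_x^F$ for the critical exponent of $P(F, Q, W, D, s)$ based at $x$, the triangle inequality then gives $|\delta_x^F - \delta_y^F| \leq 2\varepsilon$ for any $x, y \in \widetilde{M}$, and letting $\varepsilon \to 0$ yields the desired equality. The identical argument, with $\Gamma$ replacing $\Gamma_Q^W(D)$ as the index of summation, handles $\delta_\Gamma^F$. I do not anticipate a serious obstacle: the only minor point requiring care is constructing the Lipschitz approximant on $T^1 M$ itself so its lift $\widetilde{G}$ is automatically $\Gamma$-invariant and $\varepsilon$-close to $\widetilde{F}$ on $T^1\widetilde{M}$, which is routine since $T^1 M$ is a metric space and inf-convolution respects this structure.
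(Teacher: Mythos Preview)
Your proposal is correct and follows essentially the same approach as the paper: approximate $F$ uniformly by a H\"older (Lipschitz) potential, observe that the critical exponent varies $1$-Lipschitz in the $C^0$-norm via the series comparison $P(G,\cdot,s+\varepsilon)\le P(F,\cdot,s)\le P(G,\cdot,s-\varepsilon)$, and then invoke the base-point independence already known in the H\"older case. The only cosmetic differences are that the paper phrases this with a sequence of approximants rather than a single one, and simply cites density of Lipschitz functions in the uniformly continuous ones rather than writing out the inf-convolution.
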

\begin{proof} To keep track of the base point we define $$P(G,Q,W,D,s;x)=\sum_{\gamma\in\Gamma^W_Q(D)} \exp(\int_x^{\gamma x} \widetilde{G}-s).$$
The critical exponent of $P(G,Q,W,D,s;x)$ is denoted by $\delta_\Gamma^F(Q,W,D;x)$. As mentioned above, if $G$ is H\"older continuous, then the critical exponent of $P(G,Q,W,D,s;x)$ is independent of the base point $x$. It is a standard fact that Lipschitz functions  are dense (in the $C^0$-topology) in the space of uniformly continuous functions of $T^1M$ (we are only using that $T^1M$ is a manifold). In particular we can find a sequence $(F_n)_{n}$ of H\"older potentials converging to $F$ in the $C^0$-topology. There exists a subsequence $(n_k)_k$ such that  $||F_{n_k}-F||_0\le \frac{1}{k}$. Observe that 
\begin{align*}\sum_{\gamma\in\Gamma^W_Q(D)}  \exp\bigg(\int_x^{\gamma x} \widetilde{F_{n_k}}-\bigg(s+\frac{1}{k}\bigg)\bigg)&\le \sum_{\gamma\in\Gamma^W_Q(D)}  \exp\bigg(\int_x^{\gamma x} \widetilde{F}-s\bigg)\\ &\le \sum_{\gamma\in\Gamma^W_Q(D)}  \exp\bigg(\int_x^{\gamma x} \widetilde{F_{n_k}}-\bigg(s-\frac{1}{k}\bigg)\bigg).
\end{align*}
From this we obtain the inequality 
\begin{align}\label{sh}\delta_{\Gamma}^{F_{n_k}}(Q,W,D;x)- \frac{2}{k}\le \delta_{\Gamma}^F(Q,W,D;x)\le \delta_{\Gamma}^{F_{n_k}}(Q,W,D;x)+ \frac{2}{k}.\end{align}
We conclude that $\lim_{k\to\infty} \delta_\Gamma^{F_{n_k}}(Q,W,D;x)=\delta_{\Gamma}^F(Q,W,D;x)$. Since $F_{n_k}$ is H\"older continuous we know that $\delta_{\Gamma}^{F_{n_k}}(Q,W,D;x)=\delta_{\Gamma}^{F_{n_k}}(Q,W,D;y)$, for every $(x,y)\in M\times M$. We conclude that $$\delta_{\Gamma}^F(Q,W,D;y)=\lim_{k\to\infty} \delta_\Gamma^{F_{n_k}}(Q,W,D;y)=\lim_{k\to\infty} \delta_\Gamma^{F_{n_k}}(Q,W,D;x)=\delta_{\Gamma}^F(Q,W,D;x),$$ which is what we wanted to prove. The calculation for the usual critical exponent is analogous.

\end{proof}

Before moving to Claim 2 we will prove a useful formula for $\delta_\Gamma^F(Q,W,D)$ in case $F$ is H\"older continuous. We will need the following simple, but important result (see \cite[Lemma 3.2]{pps}). 

\begin{lemma}\label{lem:imp} Let $F$ be an H\"older potential in $T^1M$. Then for every $x,y,z,w\in \widetilde{M}$ we have that 
$$|\int_x^y \widetilde{F}-\int_z^w \widetilde{F}|\le C(d(x,z), d(y,w), F, M),$$
for some function $C$ that only depends on $d(x,z)$, $d(y,w)$, the H\"older constants of the potential $F$ and bounds on the  sectional curvature of $\widetilde{M}$. 
\end{lemma}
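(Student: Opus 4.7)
The plan is to combine the exponential fellow-traveling of geodesic segments in pinched negative curvature with the Hölder bound on $F$. First I would parametrize the two segments by arc length as $c_1:[0,L_1]\to\widetilde{M}$ from $x$ to $y$ and $c_2:[0,L_2]\to\widetilde{M}$ from $z$ to $w$, and observe via the triangle inequality that $|L_1-L_2|\le d(x,z)+d(y,w)$; so the length mismatch contributes at most $\|F\|_\infty(d(x,z)+d(y,w))$ to the difference of integrals, which can be folded into the constant $C$. After a symmetric shift of the time origin so that both segments are compared over a common interval $[0,L]$ with $L=\min(L_1,L_2)$, the problem reduces to bounding $\int_0^L\bigl(\widetilde{F}(\dot c_1(t))-\widetilde{F}(\dot c_2(t))\bigr)dt$.

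Next, using that $\widetilde{M}$ is CAT$(-b)$, I would invoke the quadrilateral comparison in the constant-curvature $-b$ model plane to obtain the standard fellow-traveling estimate
\begin{equation*}
d\bigl(c_1(t),c_2(t)\bigr)\;\le\;A\bigl(d(x,z)+d(y,w)\bigr)\bigl(e^{-\sqrt{b}\,t}+e^{-\sqrt{b}\,(L-t)}\bigr),\qquad t\in[0,L],
\end{equation*}
for a constant $A=A(b)$. By the definition (\ref{distance}) of the metric on $T^1\widetilde{M}$, which evaluates to $\max_{s\in[0,1]}d(c_1(t+s),c_2(t+s))$ on these tangent vectors, the same estimate transfers to $d(\dot c_1(t),\dot c_2(t))$ up to a slightly larger constant, at least for $t\in[0,L-1]$; the two unit-length strips near the endpoints contribute an $O(1)$ term bounded by $2\|F\|_\infty$, which is again absorbed into $C$.

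Finally, I would apply the Hölder bound $|\widetilde{F}(v)-\widetilde{F}(w)|\le C_F\, d(v,w)^\alpha$ to estimate
\begin{equation*}
\int_0^{L-1}\!\bigl|\widetilde{F}(\dot c_1(t))-\widetilde{F}(\dot c_2(t))\bigr|dt\;\le\;C_F (A')^\alpha\bigl(d(x,z)+d(y,w)\bigr)^\alpha\!\int_0^{L-1}\!\bigl(e^{-\sqrt{b}\,t}+e^{-\sqrt{b}\,(L-t)}\bigr)^\alpha dt,
\end{equation*}
where the trailing integral is bounded by $4/(\alpha\sqrt{b})$ uniformly in $L$. Summing the three contributions (length mismatch, near-endpoint $O(1)$ pieces, and Hölder middle) produces a function $C(d(x,z),d(y,w),F,M)$ of the stated form, which moreover tends to $0$ as $d(x,z),d(y,w)\to 0$. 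The main obstacle is establishing the exponential fellow-traveling estimate with the correct dependence on $b$: this uses the strict negative upper curvature bound crucially, since in nonpositive curvature one only obtains linear control of $t\mapsto d(c_1(t),c_2(t))$ by convexity, which would not suffice when $L$ is large.
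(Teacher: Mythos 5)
The paper never proves this lemma itself: it is quoted verbatim from \cite[Lemma 3.2]{pps}. Your argument is essentially the standard proof behind that citation --- reduce to two unit-speed segments over a common parameter interval, absorb the length mismatch and the pieces near the endpoints into a term controlled by $\|F\|_0\,(1+d(x,z)+d(y,w))$ (legitimate here, since the paper only considers bounded potentials; some dependence on $\sup|F|$ near the endpoints is in fact unavoidable, and appears in the PPS statement as well), and control the middle portion by the H\"older bound combined with the exponential convergence of geodesics coming from the curvature upper bound $-b$. The overall structure, and the form of the resulting constant $C$, are correct.

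One intermediate step is false as literally stated: the fellow-traveling inequality with a constant $A=A(b)$ and \emph{linear} dependence on $d(x,z)+d(y,w)$ fails once the endpoint displacements are large. Already in $\mathbb{H}^2$, for two segments of equal length ending at a common point where they meet at angle $\theta$, the hyperbolic law of cosines gives the exact identity
\begin{equation*}
\sinh\tfrac{d(c_1(t),c_2(t))}{2}\;=\;\sin\tfrac{\theta}{2}\,\sinh(L-t)\;=\;\sinh\tfrac{D_0}{2}\,\frac{\sinh(L-t)}{\sinh L},\qquad D_0:=d\bigl(c_1(0),c_2(0)\bigr),
\end{equation*}
so for $L\gg D_0\gg 1$ and $t=D_0/4$ the two geodesics are still at distance about $D_0/2$, whereas your bound $A\,D_0\,e^{-t}$ is exponentially small in $D_0$. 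What the comparison with the curvature $-b$ model actually yields has coefficients of size $\sinh(\sqrt{b}\,D_0/2)$ and $\sinh(\sqrt{b}\,D_1/2)$ in front of the decaying exponentials, i.e.\ your displayed estimate is only valid with $A=A\bigl(b,d(x,z),d(y,w)\bigr)$. This does not damage the proof, because the lemma explicitly allows $C$ to depend on $d(x,z)$ and $d(y,w)$: with the corrected constant, the rest of your computation --- including the bound $\int_0^{L}\bigl(e^{-\sqrt{b}\,t}+e^{-\sqrt{b}\,(L-t)}\bigr)^{\alpha}dt\le 4/(\alpha\sqrt{b})$, which is uniform in $L$ --- goes through verbatim and produces a constant of the required form.
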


As before, we are given $Q$, $W$ and $D$. Choose a reference point $z\in \widetilde{M}$, and let $E=\sup_{x\in \pi(W)}d(z,x)$. Let $\gamma\in \Gamma^W_Q(n,D)$. By definition there exists a point $x^n_\gamma \in W\cap \widetilde{\Omega}$ such that $g_k(x^n_\gamma)\in (\Gamma Q)^c$ for $k\in [D,n-D]$ and $g_u(x^n_\gamma)\in \gamma W$, for some $u\in [n-1,n]$. Observe that $d(z,\pi(x^n_\gamma))$ and $d(\gamma z, \pi(g_n(x^n_\gamma)))$ are both bounded above by $E$. In particular, by Lemma \ref{lem:imp} there exist $N_1>0$ independent of $\gamma$ (but depending on $s$, since $N_1$ depend on the potential we are using) such that 
$$|\int_z^{\gamma z}(\widetilde{F}-s)-\int_{\pi(x^n_\gamma)}^{\pi(g_n(x^n_\gamma))}(\widetilde{F}-s)|\le N_1.$$
From the triangle inequality we have that $d(z,\gamma z)\in [n-2E-1, n+2E]$. In particular if $\tau\in \bigcap_{i=1}^k \Gamma^W_Q(n_i,D)$, then $d(z,\tau z)\in \bigcap_{i=1}^k[n_i-2E-1, n_i+2E]$. We conclude that $\tau\in \Gamma$ can belong to at most $4(E+1)$ different sets from $\{\Gamma^W_Q(m,D)\}_{m\ge 2}$. Combining all this we get

\begin{align*} 4(E+1) \sum_{\gamma\in \Gamma^W_Q(D)}\exp\big(\int_{z}^{\gamma z}(\widetilde{F}-s)\big)&\ge \sum_{n\ge 2D}\sum_{\gamma\in \Gamma_{Q}^W(n,D)}\exp\big(\int_z^{\gamma z}\widetilde{F}-s\big)\\
&\ge  \sum_{\gamma\in \Gamma^W_Q(D)}\exp\big(\int_{z}^{\gamma z}(\widetilde{F}-s)\big).
\end{align*}
In particular the critical exponent of the Poincar\'e series of $P(F,Q,W,D,s)$ 
is equal to the critical exponent of $$\sum_{n\ge 2D}\sum_{\gamma\in \Gamma^W_{Q}(n,D)}\exp\big(\int_z^{\gamma z}\widetilde{F}-s\big).$$
It follows from $d(z, \gamma z)\in [n-2E-1,n+2E]$ whenever $\gamma\in \Gamma^W_Q(n,D)$ that for $s\ge 0$ we have (the reversed inequality holds if $s<0$)
\begin{align*}\sum_{n\ge 2D}e^{-s(n+2E)}\sum_{\gamma\in \Gamma^W_Q(n,D)}\exp\big(\int_z^{\gamma z}\widetilde{F}\big)&\le \sum_{n\ge 2D}\sum_{\gamma\in \Gamma^W_{Q}(n,D)}\exp\big(\int_z^{\gamma z}\widetilde{F}-s\big)\\
&\le \sum_{n\ge 2D}e^{-s(n-2E-1)}\sum_{\gamma\in \Gamma_Q^W(n,D)}\exp\big(\int_z^{\gamma z}\widetilde{F}\big).\end{align*}
We conclude that critical exponent of the Poincar\'e series $P(F,Q,W,D,s)$ is given by 
\begin{align}\label{xxxxy} \delta^F_\Gamma(Q,W,D)=\limsup_{n\to\infty}\frac{1}{n}\log \sum_{\gamma\in \Gamma^W_Q(n,D)}\exp\big(\int_z^{\gamma z}\widetilde{F}\big),\end{align}
provided that $F$ is H\"older continuous.  \\

{\bf Claim 2:} The quantity $$\inf_{(Q_n)_n}\inf_{n}\sup_D\delta^F_\Gamma(Q_n,W,D),$$ is independent of $W$. 
\\
We will first assume that $F$ is H\"older continuous, and prove that 
\begin{align}\label{ppp}\sup_D \delta_\Gamma^F(Q_0,W_0,D)\le \sup_D\delta_\Gamma^F(Q_1,W_1,D),\end{align}
for every pair for open relatively compact sets $W_0$ and $W_1$ in $T^1\widetilde{M}$ intersecting $\widetilde{\Omega}$, and $Q_1\subset Q_0$ such that $d(Q_1,T^1\widetilde{M}\setminus Q_0)>0$.

We will sketch the idea--the same argument will be detailed in the proof of Lemma \ref{lem:com}. It is enough to prove that $\delta_\Gamma^F(Q_0,W_0,D)\le \delta_\Gamma^F(Q_1,W_1,D_1)$, for some $D_1$ sufficiently large. We emphasize that the main ingredients in this proof are the closing lemma, the local product structure and the transitivity of the geodesic flow on $\Omega$ (for precise definitions see Section \ref{4}).


By the transitivity of the geodesic flow (on $\Omega$), for every $\delta>0$, there exists $L(\delta,p (W_0),p(W_1))>0$ such that every pair of points $x_0\in p(W_0)\cap \Omega$, $x_1\in p(W_1)\cap \Omega$ can be connected with a geodesic of length at most $L$ up to $\delta$-perturbations of $x_0$ and $x_1$ (we use points nearby $x_0$ and $x_1$). We choose $L$ so that this works for paths going from $p(W_0)$ to $p(W_1)$, and from $p(W_1)$ to $p(W_0)$. Choose a closed ball $B$ which contains a 1-neighboorhood of $p(W_0)$, $p(W_1)$ and of each connecting geodesic of length at most $L$ just described. Set $M=\max_{x\in T^1B}|F(x)|$. 

 Let $\gamma \in \Gamma_{Q_0}^{W_0}(n,D)$ and $x:=x_\gamma\in W_0\cap \widetilde{\Omega}$ such that $g_s(x)\in (\Gamma Q_0)^c$, for $s\in [D,n-D]$ and $g_u(x)\in \gamma W$, for some $u\in[n-1,n]$.  Fix a point $\hat{x}\in W_1\cap \widetilde{\Omega}$. Now consider the following three geodesic segments: the segment that (approximately) goes from $p(\hat{x})$ to $p(x)$, the geodesic flow of $p(x)$ up to time $u$, and the segment that (approximately) goes from $g_u(p(x))$ to $p(\hat{x})$. Taking $\delta$ sufficiently small and $n$ sufficiently large we can use the closing lemma to find a closed geodesic that $\e$-shadows these three geodesics segments (for us $\e>0$ will be very small).  Take a point $x'\in W_1$ which is $\e$-close to $\hat{x}$ and that is tangent to a lift of the closed geodesic just constructed. Let $H(\gamma)\in \Gamma$ be the hyperbolic isometry with axis tangent to $x'$, and $l(H(\gamma))$ its translation length. If $\e$ is sufficiently small  (that is, $\delta$ sufficiently small and $n$ sufficiently large), then the assignment $\gamma\in \Gamma_{Q_0}^{W_0}(n,D)\mapsto H(\gamma)\in \Gamma$ is an injection. We will moreover assume that $\e<d(Q_1,T^1\widetilde{M}\setminus Q_0)$. We remark that the size of $\delta$ determines $L$, which is now fixed. Observe that the point $x'\in W_1\cap \widetilde{\Omega}$ satisfies that $g_s(x')\in (\Gamma Q_1)^c$, for $s\in [L+D+1, l(H(\gamma))-(L+D+1)]$ and $g_{l(H(\gamma))}(x')\in H(\gamma) W_1$. By construction we know that $l(H(\gamma))\in [n-1, n+2L]$. In particular we have that 
\begin{align}\label{xyz} H(\gamma)\in \bigcup_{s=0}^{2L+1}\Gamma_{Q_1}^{W_1}(n-1+s,L+D+1).\end{align} 
Using Lemma \ref{lem:imp} we can compare $\int_{x_{\gamma_1}}^{\gamma_1 x_{\gamma_1}}\widetilde{F}$ with $\int_x^{\gamma_1 x}\widetilde{F}$, for every $x_{\gamma_1}$ (where $\gamma_1\in \Gamma_{Q_0}^{W_0}(n,D)$). By the construction of $H(\gamma)$--in terms of shadowing--and Lemma \ref{lem:imp} we get 
\begin{align}\label{ti}-2LM-C+\int_{x}^{\gamma x}\widetilde{F} \le \int_{\hat{x}}^{H(\gamma)\hat{x}}\widetilde{F},\end{align}
for some $C>0$ independent of $\gamma\in \Gamma_{Q_0}^{W_0}(n,D)$. Using (\ref{ti}), (\ref{xyz}) and (\ref{xxxxy}) we get that 
$\delta_\Gamma^F(Q_0,W_0,D)\le \delta_\Gamma^F(Q_1,W_1,L+D+1)$, as desired.

Recall that if $Q'\subset Q$, then $\Gamma_{Q}^W(D)\subset \Gamma_{Q'}^W(D)$. In particular we have
\begin{align}\label{ppp2}\delta_\Gamma^F(Q,W,D)\le \delta_\Gamma^F(Q',W,D).\end{align} We construct $(Q_n')_n$ inductively such that $Q_n\subset Q_n'$, $Q_n'\subset Q_{n+1}'$ and $d(Q_n',\widetilde{M}\setminus Q_{n+1}')>0$. Combining (\ref{ppp}) and (\ref{ppp2}) we get  $$\sup_D \delta_\Gamma^F(Q_{n+1}',W_0,D)\le \sup_D\delta_\Gamma^F(Q_n',W_1,D)\le \sup_D\delta_\Gamma^F(Q_n,W_1,D),$$
therefore $\inf_n \sup_D \delta_\Gamma^F(Q_{n+1}',W_0,D)\le  \inf_n\sup_D\delta_\Gamma^F(Q_n,W_1,D).$ Analogously we get $\inf_n \sup_D \delta_\Gamma^F(Q_{n+1}',W_1,D)\le  \inf_n\sup_D\delta_\Gamma^F(Q_n,W_0,D).$ This implies the independence on $W$ of the critical exponent at infinity, at least when $F$ is H\"older continuous. 

For $F$ uniformly continuous we use an approximation $(F_k)_k$ by H\"older potentials such that $||F-F_k||\le \frac{1}{k}$. As proved in (\ref{sh}) we have the bound  $$\delta_\Gamma^{F_k}(Q,W,D)-\frac{2}{k}\le \delta_\Gamma^F(Q,W,D)\le \delta_\Gamma^{F_k}(Q,W,D)+\frac{2}{k}.$$
It follows from this comparison, that the result for H\"older potentials propagates to any uniformly continuous potential. This finishes the proof of Claim 2. We conclude that $\delta_{\Gamma,\infty}^F$ is well defined for uniformly continous potentials. \\




In the geometrically finite case it is easy to verify that $$\delta^F_{\Gamma,\infty}=\sup_\P \delta_\P^F,$$
where the supremum runs over the parabolic subgroups of $\pi_1(M)$. This follows from the structure of the ends of $\Omega$ and the convexity of the horoballs. In particular a potential $F$ is SPR if and only if $\sup_\P \delta_\P^F<P(F)$. 

We emphasize that  SPR potentials are expected to have a thermodynamic formalism very similar to the existing one for potentials on compact manifolds. In Section \ref{6} we will prove the main properties of the family of SPR potentials within certain classes of potentials that do not oscillate at infinity (we will introduce them in Section \ref{funspaces}). 

We finish this section by recalling a remarkable property of the Gibbs measure associated to $F$. We use the notation $B^p_n(v,r)$ to denote the projection to $T^1M$ of the dynamical ball $B_n(\widetilde{v},r)$ in $T^1\widetilde{M}$, where $\widetilde{v}$ is a lift of $v$ (for a precise definition see Definition \ref{defp}). We say that $B^p_n(v,r)$ is a $p(n,r)$-dynamical ball. 

\begin{definition}[Gibbs property]\label{gibbsprop} Let $m$ be an invariant measure on $T^1M$. We say that $m$ verifies the \emph{Gibbs property for the potential} $F:T^1M\to \R$ if for every compact set $K\subset T^1M$ and $r>0$ there exists a constant $C_{K,r}\geq 1$ such that for every $v\in K$ and every $n\geq 1$ such that $g_n(v)\in K$, we have
$$C_{K,r}^{-1}\leq \frac{m(B^p_n(v,r))}{e^{\int_0^n F(g_t(v))-P(F)dt}}\leq C_{K,r}.$$
\end{definition}

This definition resembles the usual Gibbs property in symbolic dynamics, but it is not identical (the compact subset plays a role here). For sub-shifts of finite type, every equilibrium state of a H\"older potential satisfies the Gibbs property \cite{bob}. For countable Markov shifts Gibbs measures exist only if the shift space satisfies the BIP property (see \cite{sa5}). It is proven in \cite[Section 3.8]{pps} that the Gibbs measures of H\"older potentials satisfy the Gibbs property. This fact will be important in order to prove our modification of the simplified entropy inequality (see Theorem \ref{indeprad}).

\subsection{Relevant spaces of potentials}\label{funspaces}
In this paper we will study the thermodynamic formalism of certain classes of potentials.  

\begin{definition}\label{limD} We say that a potential $F$ \emph{converges to $D$ at infinity} if for each $\epsilon>0$, there exists a compact subset $K\subset T^1M$ such that $$\sup_{x\in K^c}|F(x)-D|<\epsilon.$$ The space of continuous potentials converging to $D$ at infinity is denoted by $C(T^1M,D)$. The set $C_0(T^1M):=C(T^1M,0)$ is the space of potentials that \emph{vanish at infinity}.
\end{definition}
We use the notation $C(T^1M,D)$ to avoid any confusion with the space of bounded continuous potentials $C_b(T^1M)$. Define $$\cH=\bigcup_{D\in \R}C(T^1M,D).$$ We remark that potentials in $\cH$ are uniformly continous. 

\begin{lemma} \label{c0top}Let $F\in C_0(T^1M)$. Then the map $$\mu\mapsto \int Fd\mu,$$ is continuous in $\M_{\le 1}(g)$. 
\end{lemma}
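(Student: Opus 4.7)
The plan is to reduce to the case of a compactly supported continuous potential, for which continuity in the vague topology holds by definition. The key observation is that since measures in $\M_{\le 1}(g)$ have total mass at most one, uniform approximation of $F$ transfers directly to uniform estimates on integrals.

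First, given $\eps>0$, I would use the hypothesis $F\in C_0(T^1M)$ to choose a compact set $K\subset T^1M$ with $\sup_{x\in K^c}|F(x)|<\eps$. Since $T^1M$ is a locally compact metric space, I can pick a continuous cutoff function $\chi\colon T^1M\to [0,1]$ with compact support, equal to $1$ on $K$ (for instance by Urysohn's lemma applied to $K$ and the complement of a slightly larger compact neighborhood). Setting $F_\chi=F\cdot\chi\in C_c(T^1M)$, we have $|F-F_\chi|=|F|(1-\chi)$, which vanishes on $K$ and is bounded by $\eps$ on $K^c$; hence $\|F-F_\chi\|_0\le \eps$.

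Now suppose $\mu_n\to\mu$ in $\M_{\le 1}(g)$ with respect to the vague topology (recall from Section \ref{measureth} that this topology is metrizable via $\rho$, so sequential continuity suffices). For every $n$,
\begin{align*}
\Bigl|\int F\,d\mu_n-\int F\,d\mu\Bigr|
&\le \Bigl|\int (F-F_\chi)\,d\mu_n\Bigr|+\Bigl|\int F_\chi\,d\mu_n-\int F_\chi\,d\mu\Bigr|+\Bigl|\int(F_\chi-F)\,d\mu\Bigr|\\
&\le \eps\,|\mu_n|+\Bigl|\int F_\chi\,d\mu_n-\int F_\chi\,d\mu\Bigr|+\eps\,|\mu|\\
&\le 2\eps+\Bigl|\int F_\chi\,d\mu_n-\int F_\chi\,d\mu\Bigr|,
\end{align*}
where the last step uses $|\mu_n|,|\mu|\le 1$. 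Because $F_\chi\in C_c(T^1M)$, vague convergence gives $\int F_\chi\,d\mu_n\to\int F_\chi\,d\mu$, so $\limsup_{n\to\infty}|\int F\,d\mu_n-\int F\,d\mu|\le 2\eps$. As $\eps>0$ was arbitrary, the limit is zero, proving continuity.

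There is no real obstacle here; the only subtlety is ensuring the existence of a compactly supported cutoff, which is immediate because $T^1M$ is a locally compact (indeed paracompact) manifold. The argument is exactly the standard fact that $C_0$-functions pair continuously with the space of positive measures of uniformly bounded mass under the vague topology, adapted to the invariant-measure setting.
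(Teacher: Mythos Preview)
Your proof is correct. It differs from the paper's argument in a minor but genuine way: the paper splits the integral as $\int_K F\,d\mu_n+\int_{K^c}F\,d\mu_n$ and invokes the portmanteau-type fact that $\mu(\partial K)=0$ forces $\int_K F\,d\mu_n\to\int_K F\,d\mu$ under vague convergence, whereas you instead approximate $F$ uniformly by $F_\chi\in C_c(T^1M)$ via a Urysohn cutoff and appeal directly to the definition of vague convergence. Your route is marginally cleaner in that it avoids having to arrange $\mu(\partial K)=0$ and sidesteps the auxiliary boundary-regularity fact; the paper's route has the small advantage of not needing to construct a cutoff. Both are standard $\eps/3$-style arguments and neither offers substantially more than the other.
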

\begin{proof} Suppose we have a sequence $(\mu_n)_n\subset\M_{\le 1}(g)$ converging in the vague topology to $\mu$. Fix $\epsilon>0$, and let $K=K(\epsilon)$ be a compact subset of $T^1M$ such that $\mu(\partial K)=0$, and $\sup_{x\in K^c} |F(x)|<\epsilon.$ Since $\mu(\partial K)=0$ we know that $\lim_{n\to\infty}\int_K Fd\mu_n=\int_K Fd\mu$. Observe that 
\begin{align*}|\int Fd\mu-\int Fd\mu_n|\le & |\int_K Fd\mu-\int_K Fd\mu_n|+|\int_{K^c} Fd\mu-\int_{K^c} Fd\mu_n|\\
 \le & |\int_K Fd\mu-\int_K Fd\mu_n|+2\e.
\end{align*}
Finally 
$$\limsup_{n\to \infty}|\int Fd\mu-\int Fd\mu_n|\le \limsup_{n\to\infty}|\int_K Fd\mu-\int_K Fd\mu_n|+2\epsilon = 2\epsilon.$$
Since $\epsilon>0$ was arbitrary we are done.
\end{proof}

As a consequence of Lemma \ref{c0top} be obtain the following result. 
\begin{lemma}\label{D}Let $(\mu_n)_n$ be a sequence of invariant probability measures converging vaguely to $\mu$. Then for every $F\in C(T^1M,D)$ we have $$\lim_{n\to\infty}\int Fd\mu_n=\int Fd\mu+(1-|\mu|)D.$$
\end{lemma}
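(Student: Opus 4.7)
The plan is to reduce to Lemma \ref{c0top} by the obvious centering trick. Set $G := F - D$. Since $F\in C(T^1M,D)$, for every $\eps>0$ there is a compact set $K\subset T^1M$ with $\sup_{x\in K^c}|G(x)|=\sup_{x\in K^c}|F(x)-D|<\eps$, so $G$ is a continuous potential vanishing at infinity; that is, $G\in C_0(T^1M)$.

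Next, because each $\mu_n$ is a probability measure we have
$$\int F\,d\mu_n = \int G\,d\mu_n + D|\mu_n| = \int G\,d\mu_n + D.$$
By Lemma \ref{c0top}, the map $\nu\mapsto \int G\,d\nu$ is continuous on $\M_{\le 1}(g)$ with the vague topology, so $\int G\,d\mu_n\to \int G\,d\mu$. Writing $\int G\,d\mu = \int F\,d\mu - D|\mu|$, we conclude
$$\lim_{n\to\infty}\int F\,d\mu_n = \int G\,d\mu + D = \int F\,d\mu - D|\mu| + D = \int F\,d\mu + (1-|\mu|)D,$$
as claimed.

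There is no real obstacle here; the only point worth checking is that the subtraction of the constant $D$ indeed produces a function in $C_0(T^1M)$ (so that Lemma \ref{c0top} applies) and that the factor $D$ that appears after integration against $\mu_n$ is exactly $D$ and not $D|\mu_n|\ne D$ — this uses in an essential way that the approximating sequence consists of \emph{probability} measures, while the vague limit $\mu$ may have $|\mu|<1$, which is precisely what produces the correction $(1-|\mu|)D$.
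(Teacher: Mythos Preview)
Your proof is correct and follows exactly the same approach as the paper's: define $G=F-D$, observe $G\in C_0(T^1M)$, apply Lemma~\ref{c0top}, and unwind the constants using $|\mu_n|=1$. The paper's version is simply terser, leaving the final arithmetic implicit.
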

\begin{proof} Define $G=F-D$. It is clear that $G\in C_0(T^1M)$. By Lemma \ref{c0top} we get 
$$\lim_{n\to\infty} \int Gd\mu_n=\int Gd\mu.$$
This implies that $\lim_{n\to\infty} \int Fd\mu_n=\int Fd\mu+(1-|\mu|)D$.
\end{proof}

Recall that $\Omega$ is the non-wandering set of the geodesic flow.

\begin{definition}\label{Edef} For a potential $F\in C_b(T^1M)$ we define $$c(F)=\inf_{n\in \N} \sup_{x\in K_n^c}F(x),$$ 
where $(K_n)_n$ is a compact exhaustion of $\Omega$.
\end{definition}
It is easy to verify that $c(F)$ is independent of the choice of compact exhaustion. Our next result will be frequently used in Section  \ref{6}.

\begin{lemma}\label{E}Let $(\mu_n)_n$ be a sequence of invariant probability measures converging vaguely to $\mu$. Then for every $F\in C_b(T^1M)$ we have $$\limsup_{n\to\infty}\int Fd\mu_n\le \int Fd\mu+(1-|\mu|)c(F).$$
\end{lemma}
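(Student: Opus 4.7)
The plan is to split $F$ using a bump function adapted to a large compact set, treat the compactly supported part by vague convergence, and dominate the tail using the definition of $c(F)$.

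Given $\epsilon>0$, I would first use the definition of $c(F)$ to pick $N$ so that $\sup_{x\in K_N^c}F(x)\le c(F)+\epsilon$. Since $\mu$ is a finite Borel measure ($|\mu|\le 1$), inner regularity supplies a compact set $K'\supset K_N$ with $\mu(K')\ge |\mu|-\epsilon$. I would then choose $\phi\in C_c(T^1M)$ with $0\le \phi\le 1$ and $\phi\equiv 1$ on $K'$, and split $F=F\phi+F(1-\phi)$.

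The crucial pointwise inequality is $F(1-\phi)\le (c(F)+\epsilon)(1-\phi)$ on all of $T^1M$: the factor $(1-\phi)$ is supported in $(K')^c\subset K_N^c$, where $F\le c(F)+\epsilon$ by construction, and $(1-\phi)\ge 0$ everywhere. Integrating against $\mu_n$ and taking the limsup, the compactly supported integrals $\int F\phi\,d\mu_n$ and $\int\phi\,d\mu_n$ converge to their $\mu$-counterparts by vague convergence (here I use that $F\in C_b(T^1M)$ makes $F\phi\in C_c(T^1M)$), giving
\[
\limsup_{n\to\infty}\int F\,d\mu_n\le \int F\phi\,d\mu+(c(F)+\epsilon)\Big(1-\int\phi\,d\mu\Big).
\]

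To conclude I would convert the $\phi$-weighted quantities back to unweighted ones: the choice of $K'$ yields $\int\phi\,d\mu\ge \mu(K')\ge |\mu|-\epsilon$, hence $1-\int\phi\,d\mu\le (1-|\mu|)+\epsilon$, while $\int F\phi\,d\mu\le \int F\,d\mu+||F||_0\,\epsilon$ from the same bound applied to $\int F(1-\phi)\,d\mu$. Combining these estimates and letting $\epsilon\to 0$ produces exactly $\int F\,d\mu+(1-|\mu|)c(F)$. The main obstacle is separating the genuine mass loss $(1-|\mu|)$ from the spurious deficit $|\mu|-\int\phi\,d\mu$ introduced by the cutoff; inner regularity of $\mu$ is precisely what lets us drive the latter to zero independently of the choice of $N$ fixed by the tail bound on $F$.
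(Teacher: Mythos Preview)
Your proof is correct and follows essentially the same strategy as the paper: split the integral into a compact piece and a tail, pass to the limit on the compact piece by vague convergence, and bound the tail using the definition of $c(F)$. The only technical difference is that the paper uses indicator functions of compact sets $K_m$ with $\mu(\partial K_m)=0$ (invoking a Portmanteau-type argument to get $\int_{K_m}F\,d\mu_n\to\int_{K_m}F\,d\mu$), whereas you use a continuous cutoff $\phi\in C_c(T^1M)$ so that $F\phi\in C_c(T^1M)$ and vague convergence applies directly; your route is arguably cleaner since it avoids selecting continuity sets.

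One small caveat: your claim that $F(1-\phi)\le (c(F)+\epsilon)(1-\phi)$ holds ``on all of $T^1M$'' is not quite right as stated, because $c(F)$ is defined via a compact exhaustion of the non-wandering set $\Omega$, so the bound $F\le c(F)+\epsilon$ is only guaranteed on $\Omega\setminus K_N$, not on $T^1M\setminus K_N$. This is harmless, since every invariant probability measure is supported on $\Omega$ (Poincar\'e recurrence), so the pointwise inequality is only needed $\mu_n$-a.e., i.e.\ on $\Omega$; but you should say so explicitly.
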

\begin{proof}  Let $(K_m)_m$ be a compact exhaustion of $\Omega$ such that the following hold:
\begin{enumerate}
\item\label{1tt} $\sup_{x\in {\Omega\setminus K_m}}F(x)<c(F)+\frac{1}{m}$.
\item\label{2tt} $\mu(\partial K_m)=0$.
\end{enumerate}
It follows from (\ref{2tt}) that  $$\lim_{n\to\infty}\int_{K_m} Fd\mu_n=\int_{K_m} Fd\mu.$$   Similarly $\lim_{n\to\infty}\mu_n(K_m)=\mu(K_m)$, and therefore $\lim_{n\to\infty}\mu_n(\Omega\setminus K_m)=1-\mu(K_m)$. Observe that (\ref{1tt}) gives us $\int_{\Omega\setminus K_m }Fd\mu_n\le (c(F)+\frac{1}{m})\mu_n(\Omega\setminus K_m)$, and that 
\begin{align*}\bigg( \int Fd\mu_n-\int_{K_m} Fd\mu - & (1-\mu(K_m))c(F)\bigg)\\
=&\bigg(\int_{K_m} Fd\mu_n-\int_{K_m} Fd\mu\bigg)+\bigg(\int_{\Omega\setminus K_m}Fd\mu_n-c(F)\mu_n(\Omega\setminus K_m)\bigg)\\
&+(c(F)\mu_n(\Omega\setminus K_m)-c(F)(1-\mu(K_m))).
\end{align*}
Taking limsup as $n$ goes to infinity we obtain that 
$$\limsup_{n\to\infty}\int Fd\mu_n-\bigg(\int_{K_m}Fd\mu+c(F)(1-\mu(K_m))\bigg)\le \frac{1}{m}.$$
Sending $m$ to infinity proves the lemma. 
\end{proof}

\section{Entropy density and simplified entropy formula} \label{4}
 In this section we will prove that the geodesic flow on a pinched negatively curved manifold satisfies a mild modification of the  simplified entropy inequality and that its ergodic measures are weak entropy dense. These two properties will be used to prove the upper semicontinuity of the entropy map.   

\subsection{Weak entropy density}\label{entdensec}
In this section we will check that the proof of \cite[Theorem B]{ekw} extends to the non-compact case for the geodesic flow on a negatively curved manifold.  

Let $(X,d)$ be a metric space and $(\phi_t)_{t\in \R}$ a continuous flow on $X$. We will need the following definitions. 

\begin{definition}[Closing lemma] We say that the flow $(\phi_t)_{t}$ satisfies the closing lemma if for all $x\in X$, there exists a neighborhood $W_x$ of $x$ such that the following holds. Given $\e>0$, there exists $\delta$ and $t_0$ such that for all $y\in W_x$ and $t\ge t_0$, if $d(y,\phi_t y)<\delta$ and $\phi_t y\in W_x$, then there exists $y'$ and $s>0$ such that $|t-s|<\e$, $\phi_s y'=y'$ and $d(\phi_h y,\phi_h y')<\e$, for $h\in (0,\min\{t,s\})$. 
\end{definition}

Define the sets 
$$W^{ss}_\e(x)=\{y\in X: d(\phi_t(x),\phi_t(y))\le \e,\text{ for all }t\ge 0\},$$
$$W^{su}_\e(x)=\{y\in X: d(\phi_t(x),\phi_t(y))\le \e, \text{ for all }t\le 0\}.$$

\begin{definition}[Local product structure] We say that the flow $(\phi_t)_{t}$  admits a local product structure if for all $x\in X$, there exists a neighborhood $V_x$ of $x$ such that the following holds. Given $\e>0$, there exists $\delta>0$ such that for all $y,z\in V_x$ satisfying $d(y,z)<\delta$, there exists a point $w=\langle y,z\rangle \in X$ and a real number $t\in (-\e,\e)$ so that  $\langle y,z\rangle\in W_\e^{su}(\phi_t(x))\cap W^{ss}_\e(y).$ 

\end{definition}

The space of flow invariant probability measures is denoted by $\M(X,\phi)$. In this context we say that $\mu$ is ergodic if it is an ergodic flow invariant probability measure. The space of ergodic measures is denoted by $\M_e(X,\phi)$. 

\begin{remark}\label{rementropydense} It is a well known fact (see for instance \cite{bal}) that the geodesic flow of a pinched negatively curved manifold restricted to its non-wandering set $\Omega$ is transitive, satisfies the closing lemma and admits local product structure. These properties are important for us because of the following result.
\end{remark}


\begin{proposition}\label{entropydense} Let $(X,d)$ be a metric space. Assume that closed balls on $X$ are compact.  Let $(\phi_t)_{t}$ be a continuous flow which is transitive, admits local product structure and satisfies the closing lemma. Then for every measure $\mu\in \M(X,\phi)$ and $\epsilon>0$, there exists an ergodic measure $\mu_e\in \M_e(X,\phi)$ arbitrarily close to $\mu$ (in the weak* topology) such that $ h_{\mu_e}(\phi_1)>h_\mu(\phi_1)-\epsilon$. We can moreover assume that $\supp \mu_e$ is compact.
\end{proposition}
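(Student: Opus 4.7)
The plan is to follow the strategy of Eizenberg--Kifer--Weiss (adapted from Sigmund's approximation theorems), reducing from an arbitrary invariant measure to a convex combination of ergodic measures with compact support, and then gluing orbit pieces from each component using the closing lemma and local product structure.

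First, using the ergodic decomposition $\mu=\int \mu_x\, d\mu(x)$ together with the affine property $h_\mu(\phi_1)=\int h_{\mu_x}(\phi_1)\,d\mu(x)$, I would fix a large compact set $K\subset X$ with $\mu(K)$ close to $1$. Restricting ergodic components to those with $\supp \mu_x\subset K'$ for a suitable compact enlargement $K'$ (available since closed balls are compact), and using a finite partition of a slightly smaller set of ergodic measures in the vague topology, I would produce ergodic measures $\nu_1,\ldots,\nu_k$ of compact support and weights $p_1,\ldots,p_k\ge 0$ with $\sum p_i=1$ such that $\tilde\mu:=\sum p_i\nu_i$ is $\eps/3$-close to $\mu$ in the weak* topology and $\sum p_i h_{\nu_i}(\phi_1)\ge h_\mu(\phi_1)-\eps/3$. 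The point is that ergodic components naturally have compact support once one localizes to $K$.

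For each $\nu_i$, I would apply Katok's entropy formula (Theorem \ref{ri}), which gives the lower bound for ergodic measures without compactness assumptions. Choose a small $r>0$, a compact set $L_i$ of large $\nu_i$-measure, and, for all sufficiently large integers $n_i$, a collection $E_i$ of $(n_i,r)$-separated points in $L_i$ with $|E_i|\ge \exp(n_i(h_{\nu_i}(\phi_1)-\eps))$, and such that each empirical measure $\tfrac{1}{n_i}\int_0^{n_i}\delta_{\phi_t(x)}\,dt$ (for $x\in E_i$) is close to $\nu_i$ in the weak* topology. Applying the closing lemma to each $x\in E_i$ (after choosing $r$ and $n_i$ to satisfy its hypotheses on a neighborhood of $L_i$) produces periodic points $x'\in X$ of period $t_x'$ close to $n_i$ whose orbits $r$-shadow $\{\phi_t(x):t\in[0,n_i]\}$ and whose empirical measures are therefore also close to $\nu_i$.

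The heart of the argument is the gluing. Pick large integers $N_1,\ldots,N_k$ approximately proportional to $p_1,\ldots,p_k$ (say, the closest integer multiples of some large common period). Using transitivity, choose fixed orbit segments of bounded length $\le L$ connecting a neighborhood of $L_i$ to a neighborhood of $L_{i+1}$ (indices cyclic). Now apply the local product structure iteratively: given any word $\omega\in\prod_{j\ge 1}E_{i(j)}$ (where $i(j)$ cycles through $\{1,\dots,k\}$), the local product structure lets me concatenate the shadowed periodic-like segments together with the connecting pieces to produce an orbit that stays $\eps$-close to the prescribed itinerary. Standard shadowing arguments show that different words give different orbits and that the set of all such orbits forms a compact $\phi$-invariant subset $\Lambda\subset X$ with a symbolic coding by a shift of finite type on the alphabet $\bigsqcup_i E_i$. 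Putting the Bernoulli measure giving total mass proportional to $N_in_i$ to the symbols in $E_i$ yields an ergodic shift-invariant measure, whose projection to $X$ is the required $\mu_e\in\cM_e(X,\phi)$.

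Finally, I would verify the three properties: (i) $\supp\mu_e\subset\Lambda$ is compact by construction, since $\Lambda$ lies in a bounded neighborhood of the $L_i$'s and the finitely many connecting arcs; (ii) averaging empirical measures of the symbolic words produces a measure close to $\sum p_i\nu_i=\tilde\mu$ (hence to $\mu$) in weak*, since each block contributes an empirical measure close to the corresponding $\nu_i$ with the right weights; and (iii) the entropy satisfies $h_{\mu_e}(\phi_1)\ge \sum p_i h_{\nu_i}(\phi_1)-O(\eps)$ by counting the $\prod_i|E_i|^{N_i}$ words. The main obstacle is the gluing step: in the non-compact setting, one has to keep the connecting pieces within a fixed compact region so that the shadowing constants provided by the local product structure remain uniform, and one must carefully verify that different symbolic words give genuinely distinct orbits so that the entropy count is not lost.
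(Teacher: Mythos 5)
Your overall strategy is the one the paper follows (the Eizenberg--Kifer--Weiss scheme, cf. \cite{ekw}): reduce to a finite convex combination of ergodic measures, produce $(n,r)$-separated sets of cardinality at least $e^{n(h_{\nu_i}(\phi_1)-\epsilon)}$ with controlled Birkhoff averages, and glue orbit segments using transitivity, the local product structure and the closing lemma. The divergence---and the gap---is in how you extract $\mu_e$. You propose to build a compact invariant set $\Lambda$ of \emph{infinite} concatenations, code it by a shift of finite type, and push forward a Bernoulli measure. Two steps there do not follow from the stated hypotheses. First, tracking an arbitrary infinite itinerary is a specification/shadowing-type property: the closing lemma you are given only closes a single finite approximate return into a periodic orbit, and the local product structure only gives local intersections of strong stable and unstable sets, so the existence (let alone uniqueness and continuity) of an orbit shadowing an infinite word, hence of a well-defined coding map, is not available without extra expansivity or hyperbolicity. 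Second, even granting $\Lambda$ and a coding, the entropy bound for $\mu_e$ does not come from ``counting the $\prod_i|E_i|^{N_i}$ words'': a factor map can only decrease entropy, and distinctness of the orbits attached to distinct words bounds $h_{top}(\Lambda)$ from below, not the entropy of the particular pushed-forward Bernoulli measure; you would need the coding to be injective (or boundedly finite-to-one) almost everywhere, which you do not establish. (A smaller slip: ergodic components of $\mu$ need not have compact support in a non-compact space; this is harmless, since, as you later do, one only needs compact sets of large $\nu_i$-measure.)

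Both issues are avoided by the paper's concluding mechanism, which is worth adopting. Apply the closing lemma only to each \emph{finite} cyclic word $\bar x=(x_1,\dots,x_{MN})$, obtaining a genuine periodic orbit $w(\bar x)$ that $\epsilon_0$-shadows the corresponding broken orbit (as in \cite[Proposition 3.2]{cs}). Let $\Psi_0$ be the closure of the union of these periodic orbits over all $M$; it is compact, invariant, and contained in the set $A_n$ of points whose length-$nN$ Birkhoff averages of $f_1,\dots,f_l$ are $\epsilon$-close to those of $\mu$, so by Birkhoff's theorem \emph{every} ergodic measure carried by $\Psi_0$ lies in the prescribed weak* neighborhood of $\mu$. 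Distinct words give $(MN(n+R),\epsilon_0/2)$-separated orbits, whence $h_{top}(\Psi_0)\ge \frac{n}{n+R}\big(h_\mu(\phi_1)-\tfrac{\eta}{2}\big)>h_\mu(\phi_1)-\eta$ once $R/n$ is small, and the variational principle on the compact set $\Psi_0$ then yields an ergodic, compactly supported $\mu_e$ with $h_{\mu_e}(\phi_1)>h_\mu(\phi_1)-\eta$ which is automatically close to $\mu$. One further flow-time technicality: to use the separated-set lemma you need the $\nu_i$ to be ergodic for a fixed time-$t_0$ map, which is arranged by choosing $t_0$ close to $1$ as in \cite[Lemma 7]{op}; your sketch applies Katok's formula to $\phi_1$ without this adjustment.
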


\begin{proof}  
As in the proof of the entropy density in the compact case we start with the following general fact.
\begin{lemma}\cite[Proposition 6.1]{ekw} \label{entropyforergodic} Let $(X,d)$ be a metric space and $T$ a continuous transformation. Given an ergodic measure $\mu$, $\alpha>0$, $\beta>0$ and $f_1,...,f_l\in C_b(X)$, there exists $n_0$ and $\gamma>0$ such that for all $n\ge n_0$ there exists a $(n,\gamma)$-separated set $S\subset X$ such that 
\begin{enumerate}
\item $|S|\ge \exp(n(h_\mu(T)-\alpha))$.
\item\label{iix} $|\frac{1}{n}\sum_{k=0}^{n-1} f_j(T^k x)-\int f_j d\mu|<\beta$, for all $x\in S$ and $j\in\{1,...,l\}$.
\end{enumerate}
Let $K\subset X$ be a measurable set satisfying  $\mu(K)>3/4$. Then we can moreover assume that $S\subset K\cap T^{-n}K$. The constant $\gamma$ does not depend on $K$, or $n$.
\end{lemma} 
We only need to justify the last part of the proposition since  points (1) and (2) are taken without modification from \cite{ekw}. We follow the notation in the proof of \cite[Proposition 6.1]{ekw} and  modify the definition of $F_n$ by the formula
 $$F_n=E_n\cap K\cap T^{-n}K\cap \{x\in X: \frac{1}{n}\sum_{k=0}^{n-1}\chi_V(T^k x)\le 2\delta\},$$
where $E_n=\{x\in X: |\frac{1}{n}\sum_{k=0}^{n-1}f_j(T^kx)-\int f_jd\mu|<\beta,\forall j\in \{1,...,l\}\}$. Since $\mu(K\cap T^{-n}K)>\frac{1}{2}$, and the measure of the other two sets involved in the definition of $F_n$, by Birkhoff ergodic theorem, tends to 1 as $n$ goes to infinity, we conclude that for $n$ sufficiently large we have $\mu(F_n)>\frac{1}{2}$. With this definition of $F_n$ the rest of the proof follows without modification the proof of \cite[Proposition 6.1]{ekw}. \\

We want to prove that given $\mu\in \M(X,\phi)$, $\epsilon>0$, $\eta>0$, and $f_1,...,f_l\in C_c(X)$, there exists an ergodic measure $\mu_e\in V(f_1,...,f_l;\mu,\epsilon)$, where $$ V(f_1,...,f_l;\mu,\epsilon)=\{\nu\in \M(X,\phi): |\int f_i d\nu-\int f_id\mu|\le \epsilon,\forall i\in\{1,...,l\}\},$$
and such that $h_{\mu_e}(\phi_1)>h_\mu(\phi_1)-\eta.$  As in the proof of \cite[Theorem B]{ekw} we can reduce the problem to the case $\mu=\frac{1}{N}\sum_{k=1}^N \mu_k$, where $\{\mu_k\}_{k=1}^N$ is a collection of ergodic measures. It is convenient to define $M=\max_{i\in \{1,...,l\}} ||f_i||_0$. 

In order to use Lemma \ref{entropyforergodic} we choose a time $t_0$ for which $\mu_1,...,\mu_k$ are ergodic with respect to $\phi_{t_0}$ (for instance see \cite[Lemma 7]{op}), and use $T=\phi_{t_0}$. In this situation the bound (\ref{iix}) can be replaced by $|\frac{1}{n}\int_{0}^{n} f_j(\phi_t x)dt-\int f_j d\mu_i|<\beta$ (for $x$ in the appropiate set $S_i$). This can be done by changing sums into integrals in the definition of $E_n$ (see paragraph above); the measure of $E_n$ is still close to one by the ergodic theorem for flows. We can take $t_0$ arbitrarily close to $1$, but for simplicity we will pretend that $t_0$ is actually equal to $1$ (otherwise we will need to factor out $t_0$ in many of our computations). 

Fix  a compact set $K$ such that $\mu_i(K)>3/4$, $\forall i\in\{1,...,l\}$. By the uniform continuity of the functions $(f_i)_i$, there exists $\epsilon_0>0$ such that if $d(x,y)<\epsilon_0$, then $|f_i(x)-f_i(y)|<\frac{\epsilon}{4}$. Using Lemma \ref{entropyforergodic} (and its modification for a flow) we can find $n_0$ and $\gamma>0$ such that for every $n\ge n_0$, there exist an $(n,\gamma)$-separated set $S_i=S_i(n,\e,\eta)\subset K\cap \phi_{-n}K$ satisfying
that 
\begin{enumerate}
\item\label{cfea} $|S_i|\ge \exp(n(h_{\mu_i}(\phi_1)-\eta/2))$.
\item\label{clinda} $|\frac{1}{n}\int_{0}^{n} f_j(\phi_t x)dt-\int f_j d\mu_i|<\epsilon/4$, for all $x\in S_i$ and $j\in\{1,...,l\}$.
\end{enumerate}
We will moreover assume that $\epsilon_0$ satisfy $\gamma>\epsilon_0/4$. 

By the definition of the  local product structure and the compactness of $K$  
 there exists $\delta_0=\delta_0(\epsilon_0,K)>0$ such that if $x,y\in K$ satisfy $d(x,y)<\delta_0$, then there exists a point $\langle x,y\rangle$ such that $\langle x,y\rangle \in W_{\epsilon_0}^{su}(\phi_t(x))\cap W_{\epsilon_0}^{ss}(y)$ and $|t|<\epsilon_0$. 

 By the transitivity of the flow and the compactness of $K$, 
there exists a constant $R=R(\delta_0,K)$ such that for every $x,y\in K$, there exists $z\in X$ such that $d(x,z)<\delta_0$, and $d(\phi_p z,y)<\delta_0$ for some $p\in [0,R]$. In particular if $(x,y)\in \big(\bigcup_{k=1}^N S_k\big)^2$, there exists $z=z(x,y)\in X$ such that $d(\phi_n x,z)<\delta_0$, and $d(\phi_p z,y)<\delta_0$, for some $p=p(x,y)\in [0,R]$. By choosing $n$ sufficiently large we can and will assume $R(\delta_0,K)/n$ is sufficiently small.  

Choose $\bar{x}=(x_1,x_2,...,x_{MN})\in (\prod_{i=1}^N S_i)^M$. As in the proof of \cite[Proposition 3.2]{cs}, using the closing lemma and the local product structure, we can  construct a periodic orbit that $\epsilon_0$-shadows the broken orbit 
$$W=O_0^n(x_1)\cup O_0^{p(x_1,x_2)}z(x_1,x_2)\cup O_0^n(x_2)\cup ...\cup O_0^{n}(x_n)\cup O_0^{p(x_{MN},x_1)}z(x_{MN},x_1),$$
where the notation $O_a^b(x)$ represents the piece of orbit $(\phi_t(x))_{t\in [a,b]}$. 
%
We think of $W$ as a parametrized map which represents the sequence of segments above. We denote the periodic orbit shadowing $W$ by $w(\bar{x})=w(x_1,...,x_{MN})$. The period of $w(\bar{x})$ is approximately the domain of $W$, it belongs to $[nNM, (n+R+1)NM]$.

 Define $S(n)=\bigcup_{M\ge 1}(\prod_{i=1}^N S_i)^M$ (here we emphasize that the sets $(S_i)_i$ depend on $n$).  As before, to each element $\bar{x}\in S(n)$ we associate a periodic orbit $w(\bar{x})$. Fix some reference point $q\in K$. There exists a constant $L=L(n,R(\delta_0,K))$ such that $ w(\bar{x})\subset B(q,L)$, for every $\bar{x}\in S(n)$. In particular the set $$\Psi(n)=\bigcup_{\bar{x}\in S(n)}w(\bar{x}),$$
is relatively compact in $X$ and $\phi$-invariant. This implies that $\Psi_0(n)=\overline{\Psi(n)}$ is compact and $\phi$-invariant. Define 
$$A_n=\{x\in X: |\frac{1}{nN}\int_{0}^{nN} f_i(\phi_{t}x)dt-\int f_i d\mu|\le\epsilon, \forall i\in\{1,...,l\}\}.$$
It follows from the choices of $\e_0$, $\delta_0$, and the inequality (\ref{clinda}) that for $n$ large enough (in comparison with $NMR$) we have that $\Psi(n)\subset A_n$. We remark that the constants $\e_0$, $\delta_0$, $N$, $M$ and $R$ do not depend on $n$. Since $A_n$ is closed we automatically get that $\Psi_0(n)\subset A_n$. 

Since $\Psi_0(n)$ is $\phi$-invariant we know that $x\in \Psi_0(n)$ implies that  $\phi_s(x)\in A_n$, for every $s\in \R$. Let $\nu$ be an ergodic measure supported in $\Psi_0$ and $x\in \Psi_0$ a point such that 
\begin{align}\label{birk}\lim_{m\to\infty}\frac{1}{m}\int_0^m f_i(\phi_t x)dt=\int f_i d\nu,\end{align}
for every $i\in \{1,...,l\}$. Observe that $$|\frac{1}{nN}\int_{0}^{nN} f_i(\phi_{t+nNk}x)dt-\int f_i d\mu|\le \epsilon,$$ for every $k\in \N$ and $i\in \{1,...,l\}$. Combining this with (\ref{birk}) we get that $|\int fd_i\mu-\int f_i\nu|\le \e$, for every $\i \in \{1,...,l\}$. This implies that $\nu\in V(f_1,...,f_l;\mu,\e)$. 
We conclude that every ergodic measure supported in $\Psi_0$ belongs to  $V(f_1,...,f_l;\mu,\epsilon)$.

 Recall that $\gamma>\epsilon_0/4$. By construction if $\bar{x},\bar{y}\in (\prod_{i=1}^N S_i)^M$, and $\bar{x}\ne \bar{y}$, then $$d_{MN(n+R)}(w(\bar{x}),w(\bar{y}))>\epsilon_0/2.$$ In other words $\Psi_0$ contains a $(MN(n+R),\epsilon_0/2)$-separated set of cardinality $$\exp\bigg(nNM\bigg(\frac{1}{N}\sum_{k=1}^N h_{\mu_k}(\phi_1)-\frac{\eta}{2}\bigg)\bigg)=\exp\bigg(nNM\bigg(h_\mu(\phi_1)-\frac{\eta}{2}\bigg)\bigg).$$
Here we used inequality (\ref{cfea}). Then $$h_{top}(\Psi_0)\ge \limsup_{M\to \infty} \dfrac{nNM(h_\mu(\phi_1)-\frac{\eta}{2})}{MN(n+R)}.$$
As mentioned earlier, we assumed that $R/n$ is sufficiently small, in particular we require that the right hand side in the last inequality is strictly bigger than $h_\mu(\phi_1)-\eta$. Finally, take an ergodic measure $\mu_e$ supported in $\Psi_0$ with entropy at least $h_\mu(\phi_1)-\eta$ (using the variational principle for flows). Since we had already proved that  $\mu_e\in V(f_1,...,f_l;\mu,\epsilon)$,  this finishes the proof. 
\end{proof}

Combining Remark \ref{rementropydense} and Proposition \ref{entropydense} we obtain the main result of this section. Here we are using that $\Omega$ is the support of all invariant measures on $\M(g)$. 

\begin{theorem}[Weak entropy density of the geodesic flow]\label{wed}  Let $M$ be a pinched negatively curved manifold. Then the ergodic measures are weak entropy dense in the space of invariant probability measures. 
\end{theorem}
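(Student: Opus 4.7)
The plan is to deduce Theorem~\ref{wed} as a direct application of Proposition~\ref{entropydense}, with $X$ taken to be the non-wandering set $\Omega$ equipped with the restriction of the metric $d$ from (\ref{distance}), and the flow taken to be $(g_t)|_\Omega$. By Poincaré recurrence every $\mu\in \M(g)$ is supported on $\Omega$, so constructing ergodic approximants inside $\Omega$ suffices; moreover the topologies induced on $\M(\Omega,g)$ and $\M(g)$ agree, so weak* approximation by measures supported in $\Omega$ is the same as weak* approximation in $\M(g)$.

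The hypotheses of Proposition~\ref{entropydense} are already packaged. Transitivity, the closing lemma, and local product structure for $(g_t)|_\Omega$ are all classical for pinched negatively curved manifolds and recorded in Remark~\ref{rementropydense}. For the metric hypothesis, I would note that since $(M,g)$ is a complete Riemannian manifold, closed balls in $M$ are compact by Hopf--Rinow, and the definition (\ref{distance}) of the metric on $T^1M$ implies that a closed $d$-ball of radius $r$ around $v\in T^1M$ is contained in the unit tangent bundle over the closed metric ball of radius $r+1$ around $\pi(v)$, which is compact; the closed subset $\Omega\subset T^1M$ therefore inherits the property that closed balls are compact.

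Invoking Proposition~\ref{entropydense} then yields, for each $\mu\in \M(g)$ and each $\eta>0$, ergodic measures $\mu_e$ with compact support arbitrarily close to $\mu$ in the weak* topology and with $h_{\mu_e}(g)>h_\mu(g)-\eta$. Since the weak* topology on the space of probability measures is metrizable (it is induced by the separable space $C_b(T^1M)$ restricted to a countable dense subset, or by fixing a countable dense family of $C_c$ functions together with a mass functional), one extracts a sequence $(\mu_n)_n$ of ergodic measures with $\mu_n\to\mu$ weak* and $h_{\mu_n}(g)>h_\mu(g)-\eta$ for all $n$, which is exactly the content of Definition~\ref{weddef}.

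The proof is thus essentially a packaging step: the genuine work is contained in Proposition~\ref{entropydense}, whose proof in the previous subsection already handles the non-compactness by carefully building periodic orbits in a fixed compact region $B(q,L)$ using the closing lemma and local product structure. The only mild point to verify here is the compactness of $d$-closed balls, which is immediate from Hopf--Rinow together with the definition of the metric on $T^1M$.
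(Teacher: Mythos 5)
Your proposal is correct and follows essentially the same route as the paper: the paper also deduces Theorem~\ref{wed} by combining Remark~\ref{rementropydense} with Proposition~\ref{entropydense}, using that $\Omega$ carries all invariant probability measures. Your additional verifications (compactness of closed $d$-balls via Hopf--Rinow and metrizability of the weak* topology) are fine and merely make explicit what the paper leaves implicit.
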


Let $A\subset T^1M$ be a closed invariant subset of the geodesic flow. We define the topological pressure of $F$ restricted to $A$ as $$P_A(F)=\sup_\mu\{h_\mu(g)+\int Fd\mu\},$$
where the supremum runs over the invariant probability measures supported on $A$. As a corollary of Theorem \ref{wed} we obtain that the topological pressure can be approximated by compact subsets. This result was obtained in \cite[Lemma 6.7]{pps} for H\"older potentials. 

\begin{theorem}\label{aproxc0}  Let $F\in C_b(T^1M)$, then $$P(F)=\sup_K P_K(F),$$ where the supremum runs over the set of  compact invariant subsets.
\end{theorem}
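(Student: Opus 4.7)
The plan is to prove the two inequalities separately. The inequality $\sup_K P_K(F) \le P(F)$ is immediate: any invariant probability measure supported on a compact invariant subset $K$ is a fortiori an element of $\M(g)$, so every pressure $P_K(F)$ is bounded above by the variational supremum $P(F)$.

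For the reverse inequality, I would combine the variational principle with the weak entropy density (Theorem \ref{wed}) and the refinement from Proposition \ref{entropydense} that the approximating ergodic measures can be chosen with compact support. Fix $\varepsilon>0$. By the definition of $P(F)$ as a supremum, pick $\mu\in\M(g)$ with $h_\mu(g)+\int F\,d\mu\ge P(F)-\varepsilon/3$. Apply Proposition \ref{entropydense} with tolerance $\varepsilon/3$ to produce an ergodic measure $\mu_e$ whose support is a compact invariant set $K:=\supp\mu_e\subset T^1M$, such that $h_{\mu_e}(g)>h_\mu(g)-\varepsilon/3$, and such that $\mu_e$ is arbitrarily close to $\mu$ in the weak* topology. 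Since $F\in C_b(T^1M)$, the functional $\nu\mapsto\int F\,d\nu$ is continuous on $\M(g)$ with the weak* topology, so the closeness of $\mu_e$ to $\mu$ (in the weak* neighborhood we choose) yields $\bigl|\int F\,d\mu_e-\int F\,d\mu\bigr|<\varepsilon/3$.

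Putting these bounds together,
\begin{align*}
P_K(F)\ge h_{\mu_e}(g)+\int F\,d\mu_e &> h_\mu(g)-\tfrac{\varepsilon}{3}+\int F\,d\mu-\tfrac{\varepsilon}{3}\\
&\ge P(F)-\varepsilon.
\end{align*}
Hence $\sup_K P_K(F)\ge P(F)-\varepsilon$ for every $\varepsilon>0$, which yields the desired equality.

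The main (small) obstacle is simply making sure the weak*-approximating ergodic measure can be taken with compact support; this is exactly the extra conclusion built into Proposition \ref{entropydense} (the set $\Psi_0(n)$ constructed there is compact and $\phi$-invariant, and the ergodic measures it supports approximate $\mu$). Once this point is invoked, the rest is a one-line juggling of the variational principle with the weak*-continuity of $\nu\mapsto\int F\,d\nu$ for bounded continuous $F$. Note that no further regularity on $F$ is needed beyond boundedness and continuity, which is the improvement of Theorem \ref{aproxc0} over \cite[Lemma 6.7]{pps}.
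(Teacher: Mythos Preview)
Your proof is correct and follows essentially the same approach as the paper's own proof: both pick a near-maximizing measure, invoke Proposition~\ref{entropydense} to obtain a weak*-close ergodic approximant with compact support and controlled entropy drop, and then use the weak*-continuity of $\nu\mapsto\int F\,d\nu$ for $F\in C_b(T^1M)$ to control the potential term. The only cosmetic difference is the bookkeeping of the $\varepsilon$'s.
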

\begin{proof}
 Fix $\epsilon>0$ and choose $\mu_\e \in \M(g)$ such that $$h_{\mu_\e}(g)+\int Fd\mu_\e>P(F)-\epsilon.$$ Using Proposition \ref{entropydense} we can approximate $\mu_\e$ with an invariant probability measure of compact support $\nu_\e$ satisfying $h_{\nu_e}(g)>h_{\mu_\e}(g)-\epsilon$, and $\int Fd\nu_\e>\int Fd\mu_\e-\e$. This implies that 
$$h_{\nu_\e}(g)+\int Fd\nu_\e>P(F)-3\epsilon,$$
where $\nu_\e$ is compactly supported. In particular $\sup_K P_K(F)\ge h_{\nu_\e}(g)+\int Fd\nu_\e$. It is clear that $P(F)\ge \sup_K P_K(F)$. Therefore $$P(F)\ge \sup_K P_K(F)\ge P(F)-3\e.$$ Since $\e>0$ was arbitrary we conclude that $P(F)=\sup_{K} P_K(F)$.
\end{proof}

\subsection{Simplified entropy inequality for geodesic flows}\label{ent4}
We will now proceed to prove that the geodesic flow on a pinched negatively curved manifold satisfies a simplified entropy inequality after a mild modification in the definition of the dynamical balls.  As before $p:T^1\widetilde{M}\to T^1M$ is the canonical projection.

\begin{definition} \label{defp} Let $y\in T^1\widetilde{M}$ and $x=p(y)$, we define $B^p_n(x,r)$ as the image under $p$ of the $(n,r)$-dynamical ball in $T^1\widetilde{M}$ centered at $y$. We say that $B^p_n(x,r)$ is the $p(n,\e)$-dynamical ball centered at $x$, where $p$ stands for projection. 
\end{definition}
 We recall that $N_\mu(n,\e,\delta)$ is the minimum number of $(n,\e)$-dynamical balls needed to cover a set of measure strictly bigger than $1-\delta$. We define $N_\mu^p(n,\e,\delta)$ as the  minimum number of $p(n,\e)$-dynamical balls needed to cover a set of measure strictly bigger than $1-\delta$. We also use the notation $N^p(C,n,\e)$ to denote the minimum number of $p(n,\e)$-dynamical balls needed to cover the set $C\subset T^1M$. To simplify notation we define $X=T^1M$. To prove the main result of this subsection we will need the following lemma, which follows easily from the criterion given in \cite{sp}.
\begin{lemma} Let $(M,g)$ be a pinched negatively curved manifold. Then there exists a H\"older continuous potential $\phi$ which admits an equilibrium state. 
\end{lemma}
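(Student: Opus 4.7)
The plan is to invoke the Pit--Schapira criterion (Theorem~\ref{sptheo}): it suffices to exhibit a H\"older potential $\phi$ with finite topological pressure such that $(\Gamma,\widetilde\phi)$ is positive recurrent with respect to some open relatively compact $\widetilde U\subset\widetilde M$ (note that positive recurrence already incorporates recurrence as condition (2) of its definition, so no separate verification is needed). To arrange this, I will construct a potential that is very negative outside a compact hyperbolic region meeting $\Omega$, so that the Gibbs measure is ``trapped'' where the dynamics is nicely hyperbolic.

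First, using density of periodic orbits in $\Omega$ and the non-elementary hypothesis, fix a compact locally maximal hyperbolic invariant set $\Lambda\subset\Omega$ with $h_{top}(g|_\Lambda)>0$; such a horseshoe can be built from two transverse closed geodesics. Choose an open relatively compact $U\subset M$ with $\pi(\Lambda)\subset U$, a lift $\widetilde U\subset\widetilde M$, and a bounded H\"older function $\psi:T^1M\to[0,1]$ that vanishes on a neighborhood of $\Lambda$ and equals $1$ outside a slightly larger compact set. Set $\phi_C:=-C\psi$ for a large constant $C>0$; this is bounded and H\"older, so $P(\phi_C)$ is finite.

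Condition (1) of positive recurrence is immediate from $\Lambda\subset\Omega$ and $\pi(\Lambda)\subset U$. For condition (3), if $\gamma\in\Gamma_{\widetilde U}$ then by definition the geodesic segment $[x,\gamma x]$ meets $\Gamma\widetilde U$ only at its endpoints, so all but two end-pieces of total length $O(\diam\widetilde U)$ lie where $\widetilde\psi\equiv 1$. Hence $\int_x^{\gamma x}\widetilde\phi_C\le -C\,d(x,\gamma x)+O(C)$. Combined with the standard orbital counting bound $\#\{\gamma\in\Gamma:d(x,\gamma x)\le L\}\le Ae^{(\delta_\Gamma+\varepsilon)L}$, the positive recurrence sum is dominated by $\sum_L L\,e^{(\delta_\Gamma-C-P(\phi_C))L}$, which converges as soon as $C+P(\phi_C)>\delta_\Gamma$. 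This holds for $C$ large, since $P(\phi_C)\ge h_{top}(g|_\Lambda)>0$ (test against the measure of maximal entropy on $\Lambda$, on which $\phi_C\equiv 0$).

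For the recurrence condition (2), note that for periodic orbits $p\subset\Lambda$ we have $\int_p\phi_C=0$, so $\sum_{p\in\P}n_U(p)\exp(\int_p\phi_C-P(\phi_C)l(p))$ dominates $\sum_{p\subset\Lambda}n_U(p)e^{-P(\phi_C)l(p)}$. Classical divergence at the critical exponent for Axiom~A flows (Parry--Pollicott) implies divergence of this sub-sum provided $P(\phi_C)\le h_{top}(g|_\Lambda)$. The main technical step, and the main obstacle, is to arrange the matching equality $P(\phi_C)=h_{top}(g|_\Lambda)$ for some finite $C$. I would analyze the variational formula $P(\phi_C)=\sup_\mu(h_\mu(g)-C\int\psi\,d\mu)$: measures with $\int\psi\,d\mu>0$ are penalized by $-C\int\psi\,d\mu$, which becomes arbitrarily negative as $C\to\infty$, while $h_\mu$ is bounded by $h_{top}(g)$. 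Thus for $C$ large the supremum is essentially attained on measures supported in $\{\psi=0\}$, a neighborhood of $\Lambda$; local maximality of the hyperbolic set $\Lambda$ and a careful shrinkage of the zero-set of $\psi$ then pin the supremum exactly to $h_{top}(g|_\Lambda)$. With both conditions verified, Theorem~\ref{sptheo} gives that $m_{\phi_C}$ is finite, and its normalization is the desired equilibrium state.
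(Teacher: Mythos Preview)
Your setup is sound, and the verification of conditions (1) and (3) of positive recurrence is essentially correct (modulo the harmless point that the defining excursion geodesic is $[y,\gamma y']$ rather than $[x,\gamma x]$, which Lemma~\ref{lem:imp} handles). The real gap is in condition (2), recurrence, specifically in the step where you claim $P(\phi_C)=h_{top}(g|_\Lambda)$ for some finite $C$.

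Your argument there is circular. You want to show that for large $C$ the supremum in $P(\phi_C)=\sup_\mu\bigl(h_\mu(g)-C\int\psi\,d\mu\bigr)$ is realized on $\{\int\psi\,d\mu=0\}$, hence on $\Lambda$ by local maximality. But to rule out a sequence $(\mu_n)$ with $\int\psi\,d\mu_n\to 0$ and $h_{\mu_n}(g)\to L>h_{top}(g|_\Lambda)$ you need exactly this: such a sequence is tight (since $\psi\equiv 1$ off a compact set), so $\mu_n\to\mu$ weak$^*$ with $\mu$ supported on $\Lambda$, and you want $\limsup h_{\mu_n}(g)\le h_\mu(g)$. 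That is upper semicontinuity of the entropy map --- precisely what this lemma is being used to establish (via the Gibbs property in Theorem~\ref{indeprad}). Without it, the ratio $(h_\mu(g)-h_{top}(g|_\Lambda))/\int\psi\,d\mu$ need not be bounded over $\{\int\psi\,d\mu>0\}$, so no finite $C$ need achieve equality. Even the weaker statement $\lim_{C\to\infty}P(\phi_C)=h_{top}(g|_\Lambda)$ already requires this semicontinuity.

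The intended ``easy'' route avoids computing $P(\phi_C)$ altogether. Pit--Schapira's paper contains, beyond Theorem~\ref{sptheo} as quoted, the geometric analogue of Sarig's implication SPR $\Rightarrow$ positive recurrent $\Rightarrow$ recurrent: a strict gap between the critical exponent of the first-return series $\sum_{\gamma\in\Gamma_{\widetilde U}}e^{\int_x^{\gamma x}\widetilde F}$ and $\delta_\Gamma^F=P(F)$ forces both recurrence and positive recurrence at once. Your own estimate gives that first-return exponent $\le\delta_\Gamma-C$, while testing against any measure on $\Lambda$ gives $P(\phi_C)\ge h_{top}(g|_\Lambda)>0$; so for $C>\delta_\Gamma$ the gap holds and $m_{\phi_C}$ is finite. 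This is presumably what the paper means by ``follows easily from the criterion in \cite{sp}''.
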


\begin{theorem}[Simplified entropy inequality]\label{indeprad} Let $(M,g)$ be a pinched negatively curved manifold. Then for every ergodic measure $\mu$ have $$h_\mu(g)\le \liminf_{n\to\infty} \frac{1}{n}\log N^p_\mu(n,r,\delta),$$
where $\delta\in (0,1)$ and $r>0$. 
\end{theorem}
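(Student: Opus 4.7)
The target is a Katok-style counting inequality using the projected dynamical balls $B^p_n$ at a \emph{fixed} radius $r > 0$ — stronger than the generic bound of Theorem \ref{ri}, which requires $r \to 0$. I plan to combine the Gibbs property (Definition \ref{gibbsprop}) of the auxiliary equilibrium state furnished by the preceding lemma with a Brin-Katok type estimate derived from the global expansiveness of the lifted geodesic flow on $T^1\widetilde M$.

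First I fix a H\"older potential $\phi$ whose Gibbs measure $m_\phi$ is finite and normalize it to a probability measure. The Gibbs property yields, for each compact $K\subset T^1M$, a constant $C_{K,r}$ such that
\[
m_\phi(B^p_n(v,r)) \le C_{K,r}\exp\!\left(\int_0^n \phi(g_s v)\,ds - P(\phi)\,n\right)\qquad\text{whenever }v,g_n v\in K.
\]
Combining this with the variational inequality $P(\phi)-\int\phi\,d\mu\ge h_\mu(g)$, Birkhoff's ergodic theorem applied to $\phi$ and $\chi_K$, and Egorov's theorem produces a set $E\subset K$ of $\mu$-measure $>1-\eta$ and an index $n_0$ such that for every $v\in E$ and $n\ge n_0$ with $g_n v\in K$,
\[
m_\phi(B^p_n(v,r))\le e^{-n(h_\mu(g)-\eta)}.
\]

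The crucial step is to transfer this $m_\phi$-estimate to an analogous $\mu$-estimate $\mu(B^p_n(v,r))\le e^{-n(h_\mu(g)-\eta')}$ valid on a set of $\mu$-measure close to $1$. My plan is to do this by Shannon-McMillan-Breiman applied to a finite partition $\mathcal{Q}$ of $T^1M$ chosen so that $h_\mu(g,\mathcal{Q})$ is arbitrarily close to $h_\mu(g)$; SMB then gives $\mu(\mathcal{Q}_n(v)) = e^{-n h_\mu(g,\mathcal{Q})+o(n)}$ for $\mu$-a.e.\ $v$. The expansiveness of the lifted flow on $T^1\widetilde M$ — an ingredient absent in $T^1M$ once the injectivity radius collapses, but always available upstairs — should imply that each projected dynamical ball $B^p_n(v,r)$ intersects only subexponentially many atoms of $\mathcal{Q}_n$; summing the SMB asymptotic over those atoms yields the desired $\mu$-bound.

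Given the uniform $\mu$-bound, the theorem follows by the standard Katok covering argument: if $\{B^p_n(v_i,r)\}_{i=1}^N$ covers a set of $\mu$-measure $>1-\delta$, then by Egorov at least $(1-O(\delta+\eta))$ of the $\mu$-mass is covered by balls that meet the typical set; shifting each center via the triangle inequality into that set (which doubles $r$, but this is inessential as the statement is for every $r>0$) and applying the $\mu$-bound gives $N\ge (1-O(\delta+\eta))\,e^{n(h_\mu(g)-\eta')}$, and taking $\liminf_n\tfrac{1}{n}\log N$ and then $\eta'\to 0$ finishes the argument. The main obstacle will be the transfer from $m_\phi$ to $\mu$, as these two measures can be mutually singular; the SMB route I outlined succeeds precisely because projected balls $B^p_n$ are dynamically small at any fixed radius $r>0$ — a feature traceable to the expansiveness of the lift — which would fail for ordinary dynamical balls of $T^1M$ and is the whole reason for the modification in the definition of the counting quantity $N^p_\mu$.
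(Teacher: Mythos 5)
Your proposal has a genuine gap at exactly the point you flag as ``the crucial step''. The claim that, for a \emph{fixed} radius $r>0$, each projected ball $B^p_n(v,r)$ meets only subexponentially many atoms of $\mathcal{Q}_n=\bigvee_{i=0}^{n-1}g_{-i}\mathcal{Q}$ is not a consequence of expansiveness of the lifted flow, and no argument is offered for it. For a finite partition $\mathcal{Q}$ fine enough that $h_\mu(g,\mathcal{Q})$ is close to $h_\mu(g)$, the ball $B(g_i v,r)$ can meet $k\ge 2$ atoms of $\mathcal{Q}$ at every time $i$, and these choices multiply, allowing up to $k^n$ atoms of $\mathcal{Q}_n$ to meet $B^p_n(v,r)$; the standard way to beat this (showing typical orbits rarely visit the $r$-neighbourhood of $\partial\mathcal{Q}$) only works when $r$ is \emph{small} compared with the partition, which is precisely what the fixed-$r$ statement forbids. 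Expansiveness upstairs does not help: $B^p_n(v,r)$ is contracted only in the unstable direction as $n$ grows and keeps size comparable to $r$ in the stable and flow directions, so nothing forces it into few atoms. Note also a structural inconsistency: if that atom-counting claim were available, your SMB argument would prove the theorem with no reference measure at all, making the entire Gibbs-measure half of your proposal superfluous; conversely, the $m_\phi$ upper bound you derive in the first half cannot by itself lower-bound the cardinality of a $\mu$-cover, since $\mu$ and $m_\phi$ may be mutually singular --- an obstacle you acknowledge but do not overcome.

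The paper resolves the fixed-radius issue differently, and this is the idea your proposal is missing: it never tries to bound $\mu(B^p_n(v,r))$ or to transfer estimates between singular measures. Instead it compares covering numbers at two scales. Using an intersection lemma it produces a positive-measure set $A_n\subset F_n\cap K\cap F'$ of Birkhoff-typical points returning near $K$ at time about $n$; inside each $p(n,2r)$-ball centred in $A_n$ it takes a maximal $p(n,r')$-separated set, and bounds its cardinality by $C e^{2n\epsilon}$ using \emph{both} the upper Gibbs bound for the big ball (radius $2r+r'$) and the lower Gibbs bound for the disjoint small balls (radius $r'/2$) of the auxiliary equilibrium state $m_\phi$, the two Birkhoff integrals of $\phi$ at the centres differing by at most $2n\epsilon$. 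This yields $N^p_\mu(n,r',\cdot)\le C'e^{2n\epsilon}N^p_\mu(n,r,\delta)$, and the theorem then follows from the general small-scale Katok inequality (Theorem \ref{ri}) together with $N_\mu\le N^p_\mu$, letting $r'\to 0$. If you want to salvage your outline, replace your step 4 by this two-scale Gibbs comparison (or supply a genuine uniform scale-reduction argument in $T^1\widetilde{M}$ with a quantitative count, which you have not done); as written, the proof does not go through.
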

\begin{proof}  Fix $\delta\in (0,1)$, $r>0$ and $r'\in (0,r)$. Let $\phi$ be a H\"older continuous potential that admits an equilibrium state. Choose $m\in\N$ such that $1-\delta>\frac{1}{m}$. Let $F_n\subset X$  be a set satisfying $N^p_\mu(n,r,1-\frac{1}{m})=N^p(F_n,r,1-\frac{1}{m})$ and $\mu(F_n)>\frac{1}{m}$.  
By Birkhoff ergodic theorem there exists $F'\subset X$ and $N_0>0$ such that  $\mu(F')>1-\frac{1}{8m}$,  and $|\frac{1}{n}\int_0^{n-1}\phi(g_t x)dt-\int \phi d\mu|<\epsilon$, for every $x\in F'$ and $n\ge N_0$. From now on we will assume $n\ge N_0$. Let $K$ be a compact subset such that $\mu(K)>1-\frac{1}{8m}$. We will need the following fact, which follows directly from the formula $$\mu(A_1\cup...\cup A_n)=\sum_{i=1}^{n-1}(-1)^{i+1}\sum \mu(A_{t_1}\cap ...\cap A_{t_i}).$$
\begin{lemma} Let $F$ be a measurable set satisfying $\mu(F)>\frac{1}{s}$. Then for every $h\in\Z$ there exists $k\in [h,h+2s)$ such that $\mu(F\cap g_{-k}F)>\frac{1}{2^{2s}}$.
\end{lemma}
Define $S_n=F_n\cap K\cap F'$ and observe that by construction $\mu(S_n)>\frac{1}{2m}$. Then there exists $k_n\in (-4m,0]$ such that $$\mu(S_n\cap g_{-(n-1+k_n)}S_n)>\frac{1}{2^{4m}}.$$
Define $A_n=S_n\cap g_{-(n-1+k_n)}S_n$. In particular we get $$N^p_\mu(n,r',1-\frac{1}{2^{4m}})\le N^p(A_n, n, r').$$
Consider a maximal $p(n,2r)$-separated set of $A_n$ and denote by $R$ the set of centers of such dynamical balls. Observe that $\#R\le N^p(A_n,n,r)$.  
For each $x\in R$, let $E_x$ be a $p(n,r')$-separated set of maximal cardinality in $B^p_n(x,2r)$. By definition, the $p(n,r'/2)$-dynamical balls with centers in $E_x$ are disjoint. Moreover, since $\#E_x$ is maximal, the collection of $p(n,r')$-dynamical balls having centers in $E_x$ is a $p(n,r')$-covering of $B^p_n(x,2r)$. Define $Y=\bigcup_{l=0}^{4m} g_l  K$. Since $K$ is compact, the same holds for $Y$. Recall that $\phi$ is a H\"older potential that admits an equilibrium state,  denote its equilibrium measure by $m$. Therefore

\begin{equation*}
 \sum_{y\in E_x}m(B^p_n(y,r'/2))=m\left(\bigcup_{y\in E_x} B^p_n(y,r'/2) \right) \leq m(B^p_n(x,2r+r')),
\end{equation*}
and so
$$\#E_x \leq \frac{m(B^p_n(x,2r+r'))}{\min_{y\in E_x}m(B^p_n(y,r'/2))}.$$
Observe that by construction if $x\in A_n$, then $g_{n-1+k_n}(x)\in S_n$, which implies $g_n(x)\in Y$. In particular $A_n\subset Y\cap g_{-n}Y$. Recall that $m$ satisfies the Gibbs property, i.e. there exists a constant $C=C(Y,2r+r',r'/2)$ such that 
$$C^{-1}\le \frac{m(B^p_n(y,r_0))}{\exp(\int_0^{n}\phi(g_ty)dt-nP(\phi))}\le C,$$
for every $y\in Y\cap g_{-n}Y$, and $r_0\in\{2r+r',r'/2\}$ (for a precise definition of the Gibbs property see Definition \ref{gibbsprop}). Using the notation above this implies the bound
$$\#E_x \leq C^2 \exp\left(\int_0^{n}\phi(g_t x)dt- \min_{y\in E_x} \int_0^{n}\phi(g_ty)dt \right).$$
Therefore, by the definition of $F'$, we have
$$\#E_x \leq C' \exp(2n\epsilon).$$
Observe that
\begin{align*}
N_\mu^p(n,r',1-\frac{1}{2^{4m}}) &\leq N^p(A_n, n,r') \leq C'\exp(2n\epsilon) \#R \le C'\exp(2n\epsilon)N^p(A_n,n,r) \\
&\le C'\exp(2n\epsilon)N^p(F_n,n,r)= C'\exp(2n\epsilon)N^p_\mu(n,r,1-\frac{1}{m})\\
&\le C'\exp(2n\epsilon)N^p_\mu(n,r,\delta).
\end{align*}
We remark that $C'$ is independent of $n$; it only depends  on $r$, $r'$, $m$ and $K$. Then $$\liminf_{n\to\infty} \frac{1}{n}\log N^p_\mu(n,r',1-\frac{1}{2^{4m}})\le \liminf_{n\to\infty} \frac{1}{n}\log N^p_\mu(n,r,\delta)+2\epsilon.$$
Since $\epsilon>0$ was arbitrary we get 
$$\liminf_{n\to\infty} \frac{1}{n}\log N^p_\mu(n,r',1-\frac{1}{2^{4m}})\le \liminf_{n\to\infty} \frac{1}{n}\log N^p_\mu(n,r,\delta).$$
  By definition $B^p_n(x,r)\subset B_n(x,r)$. This implies that $N_\mu(n,s,q)\le N^p_\mu(n,s,q)$, for every $n\in \N$, $s\in \R$ and $q\in (0,1)$. Using Theorem \ref{ri} we get $$h_\mu(g)=\lim_{r'\to 0}\liminf_{n\to\infty} \frac{1}{n}\log N_\mu(n,r',1-\frac{1}{2^{4m}})\le \lim_{r'\to 0}\liminf_{n\to\infty} \frac{1}{n}\log N^p_\mu(n,r',1-\frac{1}{2^{4m}}).$$
Combining the inequalities above we obtain $$h_\mu(g)\le \liminf_{n\to\infty} \frac{1}{n}\log N^p_\mu(n,r,\delta).$$

\end{proof}

\section{Semicontinuity of the entropy map} \label{5}
In this section we will prove the upper semicontinuity of the entropy map. More precisely we will prove that if $(\mu_n)_n$ converges in the weak* topology to $\mu$, then $$\limsup_{n\to\infty}h_{\mu_n}(g)\le h_\mu(g).$$
In order to do so we will prove a more general inequality that involves the escape of mass and the topological entropy at infinity of the geodesic flow (see Section \ref{5.2}). We will also prove that the topological entropy at infinity  coincides with a measure theoretic counterpart--this is what we call the variational principle for the entropy at infinity. The results in Section \ref{5.2} and Section \ref{varprin} will have many consequences to the existence and stability of equilibrium states (see Section \ref{6}).  

\subsection{A more general result}\label{moregen} In this section we will prove a more general version than the one we need for the geodesic flow. This has the advantage of emphasizing the key ingredients to obtain  similar results. 

We start with our definition of topological entropy at infinity. Let $(X,d)$ be a non-compact topological manifold, and $T:X\to X$   an $L$-Lipschitz homeomorphism, i.e. a homeomorphism satisfying $d(Tx,Ty)\le Ld(x,y)$, for every $(x,y)\in X\times X$.  Let $K$ be a compact subset of $X$. Define $$K(n)=K\cap \bigcap_{i=1}^{n-2}T^{-i}K^c\cap T^{-(n-1)}K.$$
Given a point $x\in K$ and $r>0$, we define $C(x,n,r)$ as the number of $(n,r)$-dynamical balls needed to cover $B(x,r)\cap K(n)$.
We define the \emph{topological entropy outside $K$ at scale} $r$ by the formula $$\delta_\infty(K,r)=\limsup_{n\to\infty}\frac{1}{n}\log \sup_{x\in K}C(x,n,r),$$
and the \emph{topological entropy outside $K$} by $\delta_\infty(K)=\inf_{r>0} \delta_\infty(K,r)$.

\begin{definition}[Topological entropy at infinity]\label{teig} The \emph{topological entropy at infinity} of $(X,d,T)$ is the quantity $$\delta_\infty=\inf_{\{K_n\}}\liminf_{n\to \infty}  \delta_\infty(K_n ),$$
where the infimum runs over sequences $\{K_n\}_{n\in \N}$ where each $K_n$ is compact, $K_n\subset K_{n+1}$ and $X=\bigcup_{n\ge 1}K_n$.
\end{definition}

Our goal is to prove the following result.

\begin{theorem}\label{teoB} Let $(X,d)$ be a non-compact topological manifold and $T$ an $L$-Lipschitz homeomorphism with finite topological entropy at infinity. Assume that $(X,d,T)$ satisfies a simplified entropy inequality and that its ergodic measures are weak entropy dense.  Let $(\mu_n)_{n}$ be a sequence of $T$-invariant probability measures converging to $\mu$ in the vague topology.  Then 
$$\limsup_{n\to \infty} h_{\mu_n}(T)\le |\mu|h_{\mu/|\mu|}(T)+(1-|\mu|)\delta_\infty.$$
If the sequence $(\mu_n)_{n}$ converges to the zero measure the right hand side of the inequality is understood as $\delta_\infty$.
\end{theorem}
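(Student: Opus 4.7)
The plan is to establish the inequality in three stages: (i) reduce to the case of ergodic measures using the weak entropy density hypothesis; (ii) prove an intermediate bound in which $\delta_\infty$ is replaced by $\delta_\infty(K)$ for a fixed compact set $K$; and (iii) take the infimum along a compact exhaustion. This mirrors the strategy announced in the introduction, with Proposition \ref{pree} playing the role of the intermediate estimate.

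For the reduction, I would apply weak entropy density to each $\mu_n$ to obtain an ergodic probability measure $\nu_n$ with $h_{\nu_n}(T) > h_{\mu_n}(T) - 1/n$ and $\nu_n$ arbitrarily close to $\mu_n$ in the weak$^*$ topology. Since weak$^*$ convergence implies vague convergence, a diagonal argument ensures $(\nu_n)_n$ still converges vaguely to $\mu$, and as $\limsup_n h_{\nu_n}(T) \geq \limsup_n h_{\mu_n}(T)$, it suffices to prove the inequality for the ergodic sequence $(\nu_n)_n$.

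For the intermediate estimate, fix a compact $K$ with $\mu(\partial K) = 0$ and apply the simplified entropy inequality:
\begin{equation*}
h_{\nu_n}(T) \leq \liminf_{m\to\infty} \frac{1}{m}\log N_{\nu_n}(m, r, \delta).
\end{equation*}
The idea is to decompose the optimal cover of a set of $\nu_n$-measure at least $1-\delta$ according to the first-return pattern of each orbit to $K$. By Birkhoff's ergodic theorem, for large $m$ a $\nu_n$-typical orbit of length $m$ hits $K$ at $k_n \approx \nu_n(K) \cdot m$ instants, making excursions of lengths $n_i$ whose total $\sum n_i$ is approximately $(1-\nu_n(K)) m$. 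Each excursion starting at a point $x\in K$ of length $n_i$ lies inside $B(x,r)\cap K(n_i+1)$, which by the very definition of $\delta_\infty(K,r)$ can be covered by at most $\exp(n_i \delta_\infty(K,r) + o(n_i))$ dynamical balls; thus the excursions contribute $(1-\nu_n(K))\delta_\infty(K,r)$ to the exponential growth rate. The contribution from the visits to $K$ is controlled via a dynamical-ball count for the induced first-return map $T_K$ on $K$ applied to $\hat\nu_n = \nu_n|_K/\nu_n(K)$, which by Abramov's formula matches $\nu_n(K) h_{\hat\nu_n}(T_K)$. Passing to the limsup, using vague convergence $\nu_n(K)\to\mu(K)$ together with upper semicontinuity of the entropy on the compact set $K$ (which can be proven by standard methods since $T_K$ acts on a relatively compact piece), and then taking $K$ large enough so $\mu(K)$ approaches $|\mu|$, one obtains
\begin{equation*}
\limsup_n h_{\nu_n}(T) \leq |\mu| h_{\mu/|\mu|}(T) + (1-|\mu|)\delta_\infty(K).
\end{equation*}

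For the final stage, pick a compact exhaustion $\{K_j\}_j$ realizing $\delta_\infty$, with $\mu(\partial K_j)=0$ for all $j$ (achievable by slight perturbation); apply the intermediate estimate to each $K_j$ and pass to the limit $j\to\infty$. The main obstacle is the intermediate stage: correctly disentangling the contributions from excursions and from visits to $K$, showing that the combinatorial factor counting distinct return patterns contributes negligibly at the $1/m$ scale, and identifying the in-$K$ contribution as converging to $|\mu| h_{\mu/|\mu|}(T)$ rather than a looser expression. The case $|\mu|=0$ (complete escape of mass) must be handled separately, since then $\nu_n(K)\to 0$ for every compact $K$ and the argument collapses to showing $\limsup h_{\nu_n}(T) \leq \delta_\infty$ directly; this amounts to checking that the excursion bound still applies in the vanishing-mass regime, which should follow from the uniformity of the estimate in the fraction $\nu_n(K)$.
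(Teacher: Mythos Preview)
Your overall three-stage plan matches the paper's strategy, but the intermediate stage has a genuine gap in how you propose to control the ``in-$K$ contribution.''

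You want to identify that contribution with $\nu_n(K)\,h_{\hat\nu_n}(T_K)$ via Abramov's formula and then pass to the limit using ``upper semicontinuity of the entropy on the compact set $K$ (which can be proven by standard methods since $T_K$ acts on a relatively compact piece).'' This is circular: upper semicontinuity of the entropy map is precisely what Theorem~\ref{teoB} is meant to establish, and you cannot assume it for the induced system. Moreover, the first-return map $T_K$ is not continuous on $K$ (return times jump), so the standard compact/expansive argument does not apply; and even the convergence of $\hat\nu_n$ to $\hat\mu$ in a topology suitable for comparing entropies is not automatic. The paper sidesteps all of this. Instead of inducing, it builds a finite partition $\beta_{k,N}$ of $X$ whose atoms (apart from one ``escape'' atom) have $\mu$-null boundary, proves the deterministic covering estimate of Proposition~\ref{22223} to get
\[
h_{\nu_n}(T)\le h_{\nu_n}(T,\beta_{k,N})+\nu_n(\alpha_N)(\delta_\infty(K,r)+\epsilon)+\tfrac{1}{k}(\delta_\infty(K,r)+\epsilon),
\]
and then uses the elementary fact that $H_{\nu_n}(\beta_{k,N}^m)\to H_{\mu}(\beta_{k,N}^m)$ for each fixed $m$ (vague convergence plus $\mu(\partial\beta_{k,N})=0$). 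Since $\tfrac{1}{m}H_{\mu}(\beta_{k,N}^m)$ can be made close to $|\mu|\,h_{\mu/|\mu|}(T)$ by taking $m$ large, no semicontinuity is invoked anywhere.

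A second, smaller gap: in your ergodic reduction you apply weak entropy density directly to $\mu_n$, but this does not guarantee $\nu_n(K)>0$, which your excursion decomposition needs. The paper first mixes $\mu_n$ with a fixed $\mu_0$ satisfying $\mu_0(K)>0$, setting $\mu_n'=(1-\tfrac{1}{n})\mu_n+\tfrac{1}{n}\mu_0$, and only then applies weak entropy density; the resulting ergodic $\nu_n$ is weak$^*$-close to $\mu_n'$, hence has $\nu_n(K)>0$, and one still recovers $\limsup_n h_{\nu_n}(T)\ge \limsup_n h_{\mu_n}(T)$.
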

 
As mentioned in the introduction, we start with a weaker version of Theorem \ref{teoB}.

\begin{proposition} \label{pree} Let $(X,d)$ be a non-compact topological manifold and $T$ a $L$-Lipschitz homeomorphism with finite topological entropy at infinity. Assume that $(X,d,T)$ satisfies a simplified entropy inequality.  Let $(\mu_n)_{n}$ be a sequence of ergodic measures converging to $\mu$ in the vague topology. Suppose there exists a compact set $K$ such that  $\mu(\partial K)=0$, and $\mu_n(K)>0$, for every $n\in \N$.  Then 
$$\limsup_{n\to \infty} h_{\mu_n}(T)\le |\mu|h_{\mu/|\mu|}(T)+(1-\mu(A(K)))\delta_\infty(K,r),$$
where $A(K)=\{x\in X: T^k x\in K, \text{ for some } k\ge 0\text{ and some }k\le 0\}$.
\end{proposition}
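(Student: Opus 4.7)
The plan is to combine the simplified entropy inequality (Definition~\ref{sef}) with a Katok-style covering bound that exploits the excursion structure of $\mu_n$-orbits relative to $K$. Since $\mu_n$ is ergodic, the simplified entropy inequality gives $h_{\mu_n}(T) \le \limsup_{N \to \infty} \tfrac{1}{N}\log N_{\mu_n}(N, r, \eta)$ for any $\eta \in (0,1)$ and sufficiently small $r$, so the task reduces to bounding this covering number uniformly in $n$.

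For large $N$, I would apply Birkhoff's ergodic theorem to select a set $E_{n,N} \subset X$ with $\mu_n(E_{n,N}) > 1 - \eta$ whose points $x$ have orbits visiting $K$ at times $t_1 < \cdots < t_k$ in $\{0,\ldots,N-1\}$ with $k/N$ close to $\mu_n(K)$ and with typical excursion lengths $n_i = t_{i+1} - t_i$. Each such orbit is encoded, up to $r$-error in the $(N,r)$-dynamical metric, by the visit positions $y_i = T^{t_i}x \in K$ together with the excursion from $y_i$ back to $K$. By the very definition of $\delta_\infty(K,r)$, each excursion of length $n_i$ is covered by at most $C(y_i, n_i+1, r) \le e^{n_i(\delta_\infty(K,r) + \epsilon)}$ dynamical balls for $n_i$ large. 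Since $\mu_n$ is ergodic and $\mu_n(K) > 0$, Poincar\'e recurrence yields $\mu_n(A(K)) = 1$, so the head before $t_1$ and the tail after $t_k$ are ``virtual excursions'' bounded by visits to $K$ in the extended orbit and admit the same covering estimate.

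The contribution from the visit positions $(y_1,\ldots,y_k)$ is then controlled through the first-return map $T_K$ on the compact set $K$: I combine Abramov's formula $h_{\mu_n}(T) = \mu_n(K)\, h_{\mu_n^K}(T_K)$ (with $\mu_n^K = \mu_n|_K / \mu_n(K)$) with Katok's entropy formula on the compact system $(K, T_K)$, which bounds the covering of the visit sequence by roughly $e^{k\, h_{\mu_n^K}(T_K) + o(k)}$. Combining the two contributions, dividing by $N$ and passing $N \to \infty$ yields an intermediate bound of the form $h_{\mu_n}(T) \lesssim \mu_n(K)\, h_{\mu_n^K}(T_K) + (1 - \mu_n(K))\delta_\infty(K,r)$. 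The final step is to take $n \to \infty$, using the vague convergence $\mu_n \to \mu$ with $\mu(\partial K) = 0$ (so $\mu_n(K) \to \mu(K)$) and an upper-semicontinuity argument on the compact piece $K$.

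The main obstacle I anticipate is the bookkeeping in this final limit that replaces the naive coefficient $(1 - \mu(K))$ by the sharper $(1 - \mu(A(K)))$ and identifies the in-$K$ contribution with $|\mu|\, h_{\mu/|\mu|}(T)$ on the full space. At the $\mu_n$-level one has $\mu_n(A(K)) = 1$, so the distinction between $A(K)$ and its complement only appears in the limit: the non-recurrent mass $\mu(X \setminus A(K))$ arises from orbits whose one-sided return times to $K$ diverge under $\mu_n \to \mu$ and, together with the escaped mass $1 - |\mu|$, totals exactly $1 - \mu(A(K))$ — the content of the excursion-type term. Handling these two sources of lost mass uniformly via the $\delta_\infty(K,r)$ estimate, while retaining the full coefficient $|\mu|$ (not $\mu(K)$) in front of $h_{\mu/|\mu|}(T)$, is where the argument is most delicate and likely requires a careful exhaustion by compact subsets together with the weak entropy density of ergodic measures.
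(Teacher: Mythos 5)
Your overall strategy shares the paper's philosophy at the level of excursion counting: start from the simplified entropy inequality, use Birkhoff to isolate a typical set, and cover each long excursion away from $K$ by at most $e^{n_i(\delta_\infty(K,r)+\epsilon)}$ dynamical balls. However, the way you propose to handle the ``in-$K$'' contribution and the passage to the limit has a genuine gap. First, the intermediate inequality $h_{\mu_n}(T)\lesssim \mu_n(K)h_{\mu_n^K}(T_K)+(1-\mu_n(K))\delta_\infty(K,r)$ is vacuous: by Abramov's formula its first term already equals $h_{\mu_n}(T)$, so nothing has been gained at the level of fixed $n$. Second, the tool you invoke to produce the covering bound $e^{k\,h_{\mu_n^K}(T_K)+o(k)}$ for the visit sequence, Katok's formula on ``the compact system $(K,T_K)$'', is not available: the first-return map $T_K$ is only defined almost everywhere, is in general discontinuous, and has unbounded return times, so Theorem \ref{kat} (which requires a continuous map on a compact space) does not apply; moreover, gluing $T_K$-dynamical balls with per-excursion coverings into genuine $(N,r)$-dynamical balls for $T$ needs a mechanism to keep orbits $r$-close across short excursions (the paper does this with atoms of diameter $r/(2L^{kN+2})$ and the Lipschitz constant), which your sketch does not supply.

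The decisive missing step, though, is the one you defer to the end: converting $\limsup_n \mu_n(K)h_{\mu_n^K}(T_K)$ into $|\mu|h_{\mu/|\mu|}(T)$ under vague convergence. This amounts to an upper semicontinuity statement for the entropy of a discontinuous induced system with unbounded return times, which is exactly the kind of statement that fails in general (the countable full shift being the standard counterexample quoted in the introduction) and is in effect the proposition you are trying to prove; also note that weak entropy density is not a hypothesis of Proposition \ref{pree} and cannot be invoked here (it enters only later, in Theorem \ref{teoB}, to reduce to ergodic measures charging $K$). The paper circumvents this circularity by never inducing: it builds one fixed countable partition $\beta_{k,N}$ whose finitely many ``finite'' atoms are small and relatively compact with $\mu$-null boundaries, proves the covering estimate atom by atom (Proposition \ref{22223}) to get $h_{\mu_n}(T)\le h_{\mu_n}(T,\beta_{k,N})+\mu_n(\alpha_N)(\delta_\infty(K,r)+\epsilon)+\tfrac1k(\delta_\infty(K,r)+\epsilon)$, and then passes to the limit through the \emph{static} entropies $\tfrac1m H_{\mu_n}(\beta_{k,N}^m)$, which do converge under vague convergence and are eventually bounded by $|\mu|h_{\mu/|\mu|}(T)+3\varepsilon_0$; the coefficient $1-\mu(A(K))$ then appears because $\mu_n(\alpha_N)\to 1-\mu(\bigcup_{s\le N}C_s)$ and $\bigcup_N\bigcup_{s\le N}C_s=A(K)$. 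Without an argument of this type (or some other proof that the induced entropies are upper semicontinuous here), your final step does not go through.
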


\begin{proof}  Define $Y=A(K)$ as the set of points in $X$ that enter to $K$ under positive and negative iterates of $T$. Let $A_k$ be the set of points in $K$ that have their first return to $K$  at time $k$. Given $x\in Y$ define $n_2(x)$ as the smallest non-negative number such that $T^{n_2(x)}x\in K$ and $n_1(x)$ as the smallest non-negative number such that there exists $y\in K$ satisfying $T^{n_1(x)}(y)=x$. For $x\in Y$ define $n(x)=n_1(x)+n_2(x),$ and declare $n(x)=\infty$ whenever $x\in Y^c$. For $n\in\Z_{\ge 0}\cup\{\infty\}$ define $$C_n=\{x\in X: n(x)=n\}.$$ By the ergodicity of $\mu_m$ and the hypothesis $\mu_m(K)>0$ we have that $\mu_m(C_\infty)=0$. Moreover,  $x\in C_n$ means that $x$ is in the orbit of a point in $A_n$. Since $T$ is Lipschitz and $\sup_{x\in K} d(x,Tx)$ is finite, we conclude that $\bigcup_{n=0}^M C_n$ is bounded, and therefore relatively compact. Define $$\alpha_{N,M}=\bigcup^M_{n> N} C_n, \text{ and }\alpha_N=\bigcup_{n> N} C_n.$$ 
It worth mentioning that $\partial \alpha_N\subset \bigcup_{k\in \Z} T^{-k}\partial K$, and that the same holds for $\alpha_{N,M}$ and $C_\infty$. This implies that $\mu(\partial \alpha_{N,M})=\mu(\partial \alpha_N)=\mu(\partial C_\infty)=0$. By the definition of $\delta_\infty(K,r)$  we know that  given $\epsilon>0$, there exists $N_0=N_0(\epsilon)$ such that the following holds. For every $n\ge N_0$ and $x\in K(n)=K\cap \bigcap_{s=1}^{n-2}T^{-s}K^c  \cap T^{-(n-1)}K$, we have that $B(x,r)\cap K(n)$ can be covered with at most $e^{n(\delta_\infty(K,r)+\e)}$ $(n ,r)$-dynamical balls.

Choose natural numbers $k\ge 2$ and $N\ge N_0(\e)$. Define the partition  $$\beta_{k,N}=\{\alpha_{kN},\alpha_{N,kN}, Q^1_N,...,Q^s_N,C_\infty\},$$
where $Q_N^i$ are disjoint sets of diameter less than $r/(2L^{kN+2})$ covering $\bigcup_{n=0}^{N}C_n$ (which is relatively compact) and let $\beta_{k,N}'=\{Q^1_N,..., Q^s_N\}$. We choose this covering such that $\mu(\partial Q_N^i)=0$ for every $i$. In particular we know $\mu(\partial \beta_{k,N})=0$.  

Recall that for a partition $\P$ we denote $\P^n$ to the partition $\bigvee_{i=0}^{n-1}T^{-i}\P$. Let $Q\in \beta_{k,N}^n$ be such that $Q\subset \alpha_N^c$. We say that $[r,s)\subset [0,n)$ is an \emph{excursion} of $Q$ into $\alpha_{N}$ if $T^t Q \subset \alpha_{N}$ for every $t\in [r,s)$, $T^{r-1} Q\subset\alpha_N^c$ and $T^{s}Q\subset \alpha_N^c$.  Define $m_{k,N,n}(Q)$ as the number of  excursions of $Q$  into $\alpha_{N}$ that contain at least one excursion into $\alpha_{kN}$ and let $|E_{N,n}(Q)|=\#\{k\in[0,n): T^k Q\subset \alpha_N\}$. 

\begin{remark}\label{iteracion} \noindent
\begin{enumerate} 
\item\label{1s} Observe that if $x\in   C_n\cap K^c$, then $Tx\in C_n$, or $Tx\in K$. In other words if $x\in K^c$, then $n(T x)\le n (x)$.  

\item\label{2s} Let $[r,s)$ be an excursion of $Q$ into $\alpha_N$. Suppose there exists $x\in T^{r-1}Q\cap K^c$, then by (\ref{1s}) we have $n(x)\ge n(T x)>N$. In particular $x\in \alpha_N$, which contradicts that $T^{r-1}Q\subset \alpha_N^c$. We conclude that $T^{r-1}Q\subset K$. 

\item\label{3s} If $x\in Q$ and $Q\subset \alpha_N^c$, then $T^t x\in K$, for some $t\in [0,N]$. Indeed, if $x\in \bigcup_{n=0}^N C_n$, then by definition $n_2(x)\le N$ and the conclusion follows. 
\end{enumerate}
\end{remark}
 We claim that an atom $Q\in \beta_{k,N}^n$ such that $Q\subset K\cap T^{-(n-1)}K$, can be covered with an appropriate  number of $(n,r)$-dynamical balls. This estimate is very important in order to prove Proposition \ref{pree}. 

\begin{proposition} \label{22223} Let $\beta_{k,N}$ as above. Then an atom $Q\in \beta_{k,N}^n$ such that $Q\subset K\cap T^{-(n-1)}K$ can be covered with at most 
$$C_0e^{|E_{N,n}(Q)|(\delta_\infty(K,r)+\epsilon)}e^{m_{k,N,n}(Q)N(\delta_\infty(K,r)+\epsilon)},$$
$(n,r)$-dynamical balls, where $C_0=C_0(m,q,N,k)$.\end{proposition}

To simplify notation we will forget the subindex $N$ and $k$. We remark that since $C_\infty$ satisfies $T C_\infty\subset C_\infty$, the assumption $T^{n-1}Q\subset K $ rules out the possibility that $Q$ entered to $C_\infty$ before the $(n-1)$th iterate. 
The proof is inductive, and it is essentially the same idea as the one employed in \cite{ekp}. First decompose $[0,n-1]$ as $$[0,n-1]=W_1\cup V_1\cup... \cup V_s\cup W_{s+1},$$
according to the  excursions into $\alpha_{N}$ that contain at least one excursion into $\alpha_{kN}$. It worth mentioning that by Remark \ref{iteracion} each excursion into $\alpha_N$ can contain at most one excursion into $\alpha_{kN}$. More precisely, let $V_i=[n_i,n_i+h_i)$ and $W_i=[l_i,l_i+L_i)$ with $l_i+L_i=n_i$ and $n_i+h_i=l_{i+1}$.  The segment $V_i$ denotes an  excursion into $\alpha_{N}$ that contains  an excursion into $\alpha_{kN}$ and $(W_i)_i$ are the complementary intervals. \\\\
Step 0: Cover $\alpha_N^c=\bigcup_{n=0}^NC_n$ with  balls of diameter $r/L^{kN+2}$. By the definition of the set $\beta_{k,N}'$ we can take one ball per element in $\beta_{k,N}'$.  
We denote the number of balls required for this covering as $C_0=C_0(K, r,N,k)$. \\\\
Step 1: Assume we have covered $Q$ with $$C_0C_1^{i-1} e^{(\delta_\infty(K,r)+\epsilon)(|V_1|+...+|V_{i-1}|)}e^{(i-1)N(\delta_\infty(K,r)+\e)},$$ $(l_i+1,r)$-dynamical balls. We claim the same number of balls suffices to cover $Q$ with $(l_i+L_i,r)$-dynamical balls. Observe that by hypothesis $T^{l_i}Q\subset \alpha_N^c$, therefore $\text{diam }T^{l_i}Q\le r/L^{kN+2}$. Since the balls used to cover $\beta'$ have all diameter smaller than $r/L^{kN+2}$ the same hold if $Q$ spends some extra time in $\beta'$. If $Q$ have an excursion into $\alpha_N$ that does not enter to $\alpha_{kN}$, then by definition it must come back to $\beta'$ before $kN$ iterates (see Remark \ref{iteracion} (3)). In particular if the excursion into $\alpha_N$ is $[p_i,p_i+q_i)$, then $q_i\le kN$. Observe that  $\text{diam }T^{p_i -1}Q\le r/L^{kN+2}$ implies that $\text{diam }T^{p_i+t}Q\le r$ for every $t\in [0,kN)$. In particular the same holds for $t\in [0,q_i]$. Now we have entered to $\beta'$ again and we can repeat this process until we find an excursion into $\alpha_{kN}$, in that case we proceed to Step 2.
\\\\
Step 2:  Assume we have covered $Q$ by $$C_0e^{(\delta_\infty(K,r)+\epsilon)(|V_1|+...+|V_{i-1}|)}e^{(i-1)N(\delta_\infty(K,r)+\e)},$$ $(n_i,r)$-dynamical balls. To get a covering with $(n_i+h_i+1,r)$-dynamical balls we will cover each $(n_i,r)$-dynamical ball in the given covering by $(n_i+h_i+1,r)$-dynamical balls. Let $x\in Q$ be the center of one of the $(n_i,r)$-dynamical balls (if the center of the ball is not in $Q$ one takes a point in the ball that do belong to $Q$ and change $r$ by $2r$ in our next argument, for simplicity we assume that $x\in Q$ is the center of the dynamical ball). By definition $T^{t}x\in \alpha_{N}$ for $t\in [n_i,n_i+h_i)$,  $T^{n_i-1}x\in \alpha_N^c$ and  $T^{n_i+h_i}x\in \alpha_N^c$. Let $s(x)\ge 0$ be the smallest number such that $T^{n_i+h_i+s(x)}(x)\in K$. Notice that by Remark \ref{iteracion}($2$) we have $T^{n_i-1}(x)\in K$, and by Remark \ref{iteracion}($3$) we know that $s(x)\le N$. Since $T^{n_i-1} B_{n_i}(x,r)\subset B(T^{n_i-1}x,r)$, we can just focus on covering $B=B(T^{n_i-1}x,r)\cap K(h_i+s(x)+1)$ with $(h_i+1,r)$ dynamical balls. By the definition of $\delta_\infty (K,r)$ we know that $B $ can be covered with at most $e^{(\delta_\infty(K,r)+\e)(h_i+s(x))}$ $(h_i+s(x)+1,r)$-dynamical balls.  We conclude that $B$ can be covered by at most $e^{(\delta_\infty(K,r)+\e)(h_i+N)}$ $(h_i+1,r)$-dynamical balls. This proves that the number of $(n_i+h_i+1,r)$-dynamical balls needed to cover $Q$ is at most the number of balls we had at the beginning of Step 2, times $e^{(\delta_\infty(K,r)+\e)h_i}e^{(\delta_\infty(K,r)+\e)N}$.

We conclude that $Q$ can be covered with at most $$C_0e^{|E_{N,n}(Q)|(\delta_\infty(K,r)+\epsilon)}e^{m_{k,N,n}(Q)N(\delta_\infty(K,r)+\epsilon)},$$ $(n,r)$-dynamical balls, where $C_0=C_0(m,q,N,k)$.  We remark that $C_0$ is a constant independent  of $n$. We also remark that the term $|E_{N,n}(Q)|$ is a very rough bound, we can actually use the time spent in excursions into $\alpha_N$ containing excursions into $\alpha_{kN}$--for our applications this does not seem to be necessary.

\begin{proposition}\label{prop1111} Let $\beta_{k,N}$ the partition defined in Proposition \ref{22223}. Let $\mu$ be an ergodic T-invariant probability measure satisfying $\mu(K)>0$. Then
$$h_{\mu}(T)\le h_{\mu}(T,\beta_{k,N})+\mu(\alpha_N)(\delta_\infty(K,r)+\epsilon)+\frac{1}{k}(\delta_\infty(K,r)+\e).$$
\end{proposition}

\begin{proof}
Recall that by Definition \ref{sef} we know that for every ergodic measures $\mu$ we have 
$$h_\mu(T)\le \lim_{n\to \infty}\frac{1}{n}\log N_\mu(n,r,\delta).$$
Using the ergodicity of $\mu$ and the assumption $\mu(K)>0$ we can find an increasing  sequence $(n_i)_{i}$ such that $$\mu(K \cap T^{-n_i}K) > \delta_1,$$
for every $i\in \N$, where $\delta_1$ is  sufficiently small but positive (and independent  of $n_i$). By Shannon-McMillan-Breiman theorem the set
$$A_{\e_1,N}=\{x\in X : \forall n\geq N, \mu(\beta^n(x))\geq \exp(-n(h_\mu(T,\beta)+\e_1))\},$$
has measure converging to 1 as $N$ goes to $\infty$, for every $\e_1>0$. Fix $\epsilon_1>0$ small. By Birkhoff ergodic theorem there exists a set $W_{\e_1}$ such that
$$\frac{1}{n}\sum_{i=0}^{n} 1_{\alpha_N}(T^n x) < \mu(\alpha_N)+\e_1,\text{ and }\mu(W_{\e_1})>1-\frac{\delta_1}{4},$$
for all $x\in W_{\e_1}$ and $n\ge n(\e_1)$. We finally define
$$X_i= W_{\e_1}\cap  A_{\e_1,n_i}\cap K\cap T^{-n_i}K.$$
By construction, for $i$ sufficiently large, we have $\mu(X_i)>\frac{\delta_1}{2}$. From now on we will always assume $i$ is sufficiently large. Our goal is to cover $X_i$ by $(n_i,r)$-dynamical balls. By definition of $A_{\e_1,n_i}$ we know $X_i$ can be covered with $\exp(n_i(h_\mu(T,\beta)+\e_1))$ many elements of $\beta^{n_i}$. We will use Proposition \ref{22223} to cover efficiently each of those atoms with dynamical balls. Let $Q\in \beta^{n_i}$ be an atom intersecting $X_i$, in particular $Q\in K\cap T^{-(n-1)}K$. By the choice of $W_{\e_1}$  we have $$|E_{N,n_i}(Q)|<(\mu(\alpha_N)+\e_1)n_i.$$ 
We claim that $m_{k,N,n_i}(Q)\le \frac{1}{Nk}n_i$. This follows from Remark \ref{iteracion}. Let  $[p,p+q)$ be is an excursion of $Q$ into $\alpha_N$ that contain an excursion into $\alpha_{kN}$. There exists a smallest $h\ge 0$ such that $T^{p+q+h}Q\subset K$. By definition of $\alpha_{kN}$ we have $q+h+1\ge kN$. Moreover $T^kQ \subset \alpha_N^c$ for every $k\in [p+q+1,p+q+h]$. In particular each excursion into $\alpha_{kN}$ generates an interval of length at least $kN$ where no other excursion into $\alpha_{N}$ can occur.

  Putting all together, we get that $N(n_i,r,1-\frac{\delta_1}{2})$ is bounded  above by
$$e^{n_i(h_\mu(T,\beta)+\e_1)}C_0  e^{n_i(\delta_\infty(K,r)+\epsilon)(\mu(\alpha_N)+\e_1)}e^{\frac{1}{kN}n_iN(\delta_\infty(K,r)+\e)}.$$
Finally we obtained
\begin{align*}
h_\mu(T)\le & h_\mu(T,\beta_{k,N})+\e_1+(\delta_\infty(K,r)+\e)(\mu(\alpha_N)+\e_1)+\frac{1}{k}(\delta_\infty(K,r)+\e).
\end{align*}
Since $\epsilon_1>0$ was arbitrary we are done.  
\end{proof}

We now explain how to get Proposition \ref{pree} from Proposition \ref{prop1111}; this is basically identical to the last part of the proof of the main result in \cite{ekp}. First assume $\mu(X)>0$, and fix  $\varepsilon_0>0$. We remark that by construction $\mu(\partial \beta_{k,N})=0$.  To simplify notation we use $\beta$ instead of $\beta_{k,N}$ to denote our partition. Choose $m$ sufficiently large such that 
$$h_{\frac{\mu}{|\mu|}}(T)+\varepsilon_0>\frac{1}{m}H_{\frac{\mu}{|\mu|}}(\beta^{m}), \quad 2\frac{e^{-1}}{m}<\frac{\varepsilon_0}{2},$$
and $-(1/m)\log \mu(X)<\varepsilon_0$. Then $$|\mu| h_{\frac{\mu}{|\mu|}}(T)+2\varepsilon_0>\frac{1}{m}\sum_{P\in \beta^m} \mu(P)\log\mu(P).$$
Define $A=\bigcap_{i=0}^{m-1}T^{-i} \alpha_{kN}$ and observe that by the definition of the vague convergence we have
$$\lim_{n\to\infty} \sum_{Q\in \beta^{m}\setminus\{A\}}\mu_n(Q)\log\mu_n(Q) = \sum_{Q\in \beta^{m}\setminus\{A\}}\mu(Q)\log\mu(Q).$$
Choosing $n$ sufficiently large we get the inequality 
$$|\mu|h_{\frac{\mu}{|\mu|}}(T)+3\varepsilon_0\ge \frac{1}{m}H_{\mu_n}(\beta^m).$$
Finally use Proposition \ref{prop1111} to get 
\begin{align*}
\mu(X)h_{\frac{\mu}{|\mu|}}(T)+3\varepsilon > & \frac{1}{m}H_{\mu_n}(\beta^{m})\geq h_{\mu_n}(T,\beta)\\
\ge & h_{\mu_n}(T)-(\delta_\infty(K,r)+\e)\mu_n(\alpha_N) -\frac{1}{k}(\delta_\infty(K,r)+\e).
\end{align*}
Observe that $\bigcup_{n=0}^NC_n$ is relatively compact and that $\mu_n(\alpha_N)=1-\mu_n(\bigcup_{n=0}^NC_n)$. We remark that by construction $\mu(\partial \alpha_N)=0$. Therefore
\begin{align*} \limsup_{n\to\infty} h_{\mu_n}(T)\leq & \mu(X)h_{\frac{\mu}{|\mu|}}(T)+(\delta_\infty(K,r)+\e)(1-\mu(\bigcup_{n=0}^NC_n))\\ & +\frac{1}{k}(\delta_\infty(K,r)+\e).
\end{align*}
Observe that by construction we  can send $\e$ to zero as $N$ goes to infinity. Finally take $k\to \infty$ and $N\to \infty$. We obtained the desired inequality
$$\limsup_{n\to \infty} h_{\mu_n}(T)\le |\mu|h_{\frac{\mu}{|\mu|}}(T)+(1-\mu (A(K)))\delta_\infty(K,r).$$

The case when $\mu(X)=0$ follows directly from Proposition \ref{prop1111} since $h_{\mu_n}(g,\beta)\to 0$ and $\mu_n(\alpha_N)=1-\mu_n(\bigcup_{s=1}^NC_s)\to 1$ as $n$ tends to $\infty$.
\end{proof}


We have finally all the ingredients to prove Theorem \ref{teoB}.
\begin{proof}[Proof of Theorem \ref{teoB}]
We will prove that 
\begin{align}\label{1op} \limsup_{n\to \infty} h_{\mu_n}(T)\le |\mu|h_{\mu/|\mu|}(T)+(1-\mu(A(K)))\delta_\infty(K,r),\end{align}
holds, for every sufficiently large compact set $K$ and $r>0$. Let $\mu_0$ be a $T$-invariant measure with finite entropy that gives positive measure to $K$ (which exists because $K$ is sufficiently large) and define $\mu'_{n}=(1-\frac{1}{n})\mu_n+\frac{1}{n}\mu_0$. By hypothesis, the space of ergodic measures is weak entropy dense (see Definition \ref{wed}), therefore we can find an ergodic measure $\nu_n$ arbitrarily close to $\mu_{n}'$ (in the weak* topology) such that $h_{\nu_n}(T)>h_{\mu'_n}(T)-\frac{1}{n}$. In particular we can assume $\nu_n(K)>0$ and that the sequence $(\nu_n)_n$ converges to $\mu$ in the weak* topology. We can now use Proposition \ref{pree} to the sequence $(\nu_n)_{n}$ and get
$$\limsup_{n\to \infty} h_{\nu_n}(T)\le |\mu|h_{\mu/|\mu|}(T)+(1-\mu(A(K)))\delta_\infty(K,r).$$
By construction we have  $$h_{\nu_n}(T)>h_{\mu'_n}(T)-\frac{1}{n}=\bigg(1-\frac{1}{n}\bigg)h_{\mu_n}(T)+\frac{1}{n}h_{\mu_0}(T)-\frac{1}{n},$$
and therefore $$\limsup_{n\to \infty} h_{\nu_n}(T)\ge \limsup_{n\to \infty} h_{\mu_n}(T),$$
which implies the inequality (\ref{1op}). Take an increasing sequence $(K_i)_{i}$ of compact sets such that $\lim_{i\to\infty}\delta_\infty(K_{i})=\delta_\infty$. Now observe that $A(K_{i})\subset A(K_{i+1})$ and $\bigcup_{i\ge 1}A(K_i)=X.$ This implies that  $\lim_{i\to \infty} \mu(A(K_i))=\mu(X)$, and therefore inequality (\ref{1op}) finishes the proof. 
\end{proof}

\subsection{Upper semicontinuity of the entropy map for the geodesic flow}\label{5.2}
For the geodesic flow the topological entropy at infinity introduced in Definition \ref{teig} takes a more concrete form. We will explain the  modifications in the notation and proof of Theorem \ref{teoB} when dealing with the geodesic flow. For consistency with the notation used in Section \ref{moregen} we set $X=T^1M$, and $T=g_1$, the time one map of the geodesic flow.  For a set $Z\subset X$ we define $N_1(Z)=\{x\in X: d(x,Z)\le 1\}$. 

For a compact set $K\subset T^1M$ and $t\ge 3$ we define $$K(t)=\{x\in N_1(K): g_s(x)\in K^c, \text{ for }s\in[1,t-2]\text{ and }g_{t-1}(x)\in K\}.$$
Given a point $x\in N_1(K)$, and $r>0$ we define $C(x,t,r)$ as the number of $p(t,r)$ dynamical balls needed to cover $B(x,r/4)\cap K(t)$ (we slightly changed the definition of $C(x,t,r)$ by shrinking the ball of radius $r$ to $r/4$; this does not affect any part in the proof of Theorem \ref{teoB}). Then define $$\delta_\infty(K)=\inf_{r>0}\limsup_{t\to\infty}\frac{1}{t}\log \sup_{x\in N_1(K)}C(x,t,r).$$  
We remark that in the definition of $K(t)$ we consider points in $N_1(K)$--rather than  points in $K$--because of Remark \ref{iteracion2}(\ref{2ss}). 
 Recall that in the proof of Proposition \ref{pree} we defined the function $n(x)=n_1(x)+n_2(x)$. We now allow $n_1(x)$ and $n_2(x)$ to be non-negative real numbers (instead of non-negative integers). In particular we have that
$$C_t=\{x\in X:n(x)=t\},$$
is well defined for every $t\in \R_{\ge 0}$. As before, $C_\infty$ is the complement of $A(K)$ in $X$. Similarly we modify the definition of $\alpha_N$ and $\alpha_{N,M}$ into 
$$\alpha_{N,M}=\bigcup_{s\in (N,M]}C_s,\text{ and }\alpha_{N}=\bigcup_{s\in (N,\infty)}C_s.$$
It still holds that $\alpha_N^c=\bigcup_{s\in [0,N]}C_s$ is relatively compact and that $\mu(\partial \alpha_N)=\mu(\partial \alpha_{N,M})=0$. In the proof of Proposition \ref{22223}, Remark \ref{iteracion} plays an important role (specially parts (\ref{2s}) and (\ref{3s})). For the geodesic flow Remark \ref{iteracion} becomes 
\begin{remark}\label{iteracion2} \noindent
\begin{enumerate} 
\item \label{1ss} We claim that  $\alpha^c_N\cap T^{-1}\alpha_N \subset N_1(K)$. Indeed, if $x\in \alpha^c_N$ and $T(x)=g_1(x)\in \alpha_N$, then there exists $s\in [0,1]$ such that $g_s(x)\in K$. This automatically implies that $x\in N_1(K)$
\item\label{2ss} Let $[r,s)$ be an excursion of $Q$ into $\alpha_N$. Then by (\ref{1ss}) we have that $T^{r-1}Q\subset N_1(K)$.

\item\label{3ss} If $x\in Q$ and $Q\subset \alpha_N^c$, then $g_t (x)\in K$, for some $t\in [0,N]$. Indeed, if $x\in \bigcup_{s\in [0,N]} C_s$, then by definition $n_2(x)\le N$ and the conclusion follows. 
\end{enumerate}
\end{remark}

We emphasize that part (\ref{3ss}) (resp. part (\ref{2ss})) is used in step 1 (resp. step 2) in the proof of Proposition \ref{22223}. With these modifications the proof of Theorem \ref{teoB} go through for the geodesic flow. An important point to make is that we are using $T=g_1$, the time one map of the geodesic flow. In particular to use Proposition \ref{pree} we need a sequence of ergodic measures under the action of $g_1$. By \cite[Lemma 7]{op} we can choose $t<1$ arbitrarily close to $1$ such that our sequence of ergodic flow invariant measures are ergodic under the action of $g_t$. This is enough to obtain Proposition \ref{pree} for any sequence of ergodic flow invariant probability measures satisfying $\mu_n(K)>0$.


For the geodesic flow it makes sense to consider a definition of the topological entropy at infinity that is closer to the definition of the critical exponent of $\pi_1(M)$. Before making this explicit let us recall some notation. 

We use the notation $d$ for the metric on $T^1M$ and $T^1\widetilde{M}$, and the notation $g_t$ for the geodesic flow on $T^1M$ and $T^1\widetilde{M}$ (the difference will be clear from the inputs of $d(\cdot, \cdot)$ and $g_t(\cdot)$). Also recall that $x,y\in T^1M$ are $p(n,r)$-separated if $x$ does not belong to the $p(n,r)$-dynamical ball centered at $y$. We say that $\tilde{x},\tilde{y}\in T^1 \widetilde{M}$ are $p(n,r)$-separated if $d_n(\tilde{x},\tilde{y})>r$. We have the canonical projection $p:T^1\widetilde{M}\to T^1M$ and $\pi$ is the projection that sends a vector to its base point (in $T^1M$ or $T^1\widetilde{M}$). We denote by $\widetilde{\Omega}$ to $p^{-1}(\Omega)$.

Let $Q\subset T^1\widetilde{M}$ be a compact subset and $W\subset T^1\widetilde{M}$ an open relatively compact  such that $W\cap \widetilde{\Omega}\ne \emptyset$. Let $D\in \N$. Given $n\in\N\cap [2D,\infty)$ we define
\begin{align*}\Gamma^W_Q(n,D)=\{\gamma\in\Gamma:\exists x\in W\cap \widetilde{\Omega}&\text{ such that }g_s(x)\in (\Gamma Q)^c,\text{ for }s\in [D,n-D] \\ &\text{ and }g_{u}(x)\in  \gamma W,\text{ for some }u\in [n-1,n]\}.\end{align*}
Then define \begin{align}\label{truedef}\delta_\infty^W(Q)=\sup_{D\in \N}\limsup_{n\to\infty}\frac{1}{n}\log \# \Gamma^W_Q(n,D).\end{align}




\begin{lemma}\label{lem:com}
Set $W_0=p(W)$, $K_0=p(Q)$, and $\widehat{Q}\subset Q$ be another compact set such that $d(\widehat{Q},\widetilde{M}\setminus Q)>0$. Then $\delta_\infty(K_0)\le \delta^W_\infty(\widehat{Q})$. 
\end{lemma}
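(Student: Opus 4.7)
My plan is to control, for $x\in N_1(K_0)$ and sufficiently small $r>0$, the $p(t,r)$-covering number of $B(x,r/4)\cap K_0(t)$ by a bounded-multiplicity map into $\bigcup_{n'\in [t-1,\,t-1+2L]}\Gamma^W_{\widehat{Q}}(n',D)$, for fixed $L$ and $D$ depending on $W,Q,\widehat{Q}$ but not on $t$. Passing to logs and $\limsup_t \tfrac{1}{t}$ then yields the inequality at scale $r$, and sending $r\to 0$ gives the lemma.

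The main construction runs as follows. Pass to a maximal $p(t,r)$-separated family $\{y_i\}_{i=1}^k$ inside $B(x,r/4)\cap K_0(t)$ and lift each $y_i$ to $\tilde y_i\in B(\tilde x,r/4)\subset T^1\widetilde{M}$, where $\tilde x$ is the lift of $x$ chosen so that $d(\tilde x,Q)\le 1$ (possible since $x\in N_1(K_0)$ and $K_0=p(Q)$). For $r$ smaller than the local injectivity radius at $x$, the $p(t,r)$-separation forces $d_t(\tilde y_i,\tilde y_j)>r$ for $i\ne j$, and the conditions $g_{t-1}(y_i)\in K_0$, $g_s(y_i)\in K_0^c$ for $s\in [1,t-2]$ lift directly to $g_{t-1}(\tilde y_i)\in\gamma_i Q$ for a unique $\gamma_i\in\Gamma$, and $g_s(\tilde y_i)\in(\Gamma Q)^c\subset(\Gamma\widehat{Q})^c$ for $s\in[1,t-2]$. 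Next, replace $\tilde y_i$ by a true vector $\hat x_i\in W$ whose orbit realizes the defining conditions of $\Gamma^W_{\widehat{Q}}(n_i,D)$: fix $\tilde w\in W\cap \widetilde\Omega$, use transitivity of the geodesic flow on $\Omega$ to find a length $L$ linking $\pi(W)$ with $K_0$ in both directions up to a small perturbation $\delta_0$, then invoke the closing lemma and local product structure to produce a geodesic $\hat x_i\in W$ whose forward orbit $\delta_0$-shadows the broken path $\tilde w\to\tilde y_i\to g_{t-1}(\tilde y_i)\to\gamma_i'\tilde w$ for some $\gamma_i'\in\Gamma$. Choosing $\delta_0<d(\widehat{Q},T^1\widetilde{M}\setminus Q)$ ensures that the $\delta_0$-shadow cannot push the middle of the trajectory back into $\Gamma\widehat{Q}$, so $\gamma_i'\in \Gamma^W_{\widehat{Q}}(n_i,D)$ with $n_i\in [t-1,\,t-1+2L]$ and a fixed $D=D(L,\delta_0)$. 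This is a direct reprise of the shadowing argument used in the proof of Claim 2 in Section \ref{pretf}.

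Finally, the assignment $i\mapsto\gamma_i'$ has multiplicity bounded by a constant $M$ independent of $t$: if $\gamma_i'=\gamma_j'$, both $\hat x_i$ and $\hat x_j$ start in $W$ and end in $\gamma_i'W$, so by convexity of the distance function in nonpositive curvature their orbits stay at bounded distance throughout, forcing $d_s(\tilde y_i,\tilde y_j)$ to be bounded by a constant depending only on $\operatorname{diam}(W), \operatorname{diam}(Q), L, \delta_0$; since the $\tilde y_i$ are $p(t,r)$-separated, only boundedly many indices can share a single $\gamma_i'$. Thus
\[
k \;\le\; M \sum_{n'\in [t-1,\,t-1+2L]} \#\Gamma^W_{\widehat{Q}}(n',D),
\]
and taking $\tfrac{1}{t}\log$ and $\limsup_t$ yields $\delta_\infty(K_0,r)\le \delta^W_\infty(\widehat{Q})$ at scale $r$; letting $r\to 0$ concludes. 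The main obstacle is the shadowing construction, which must be arranged so that all constants $L,\delta_0,D,M$ are independent of $t$ and of the specific $x\in N_1(K_0)$; this is precisely where the buffer hypothesis $d(\widehat{Q}, T^1\widetilde{M}\setminus Q)>0$ is essential.
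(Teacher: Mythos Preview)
Your approach is essentially the same as the paper's: both pick a maximal $p(t,r)$-separated set in $B(x,r/4)\cap K_0(t)$, lift to the universal cover, use transitivity to connect to $W$ on both ends, apply the closing lemma and local product structure to produce a shadowing orbit, and invoke the buffer $d(\widehat{Q},T^1\widetilde{M}\setminus Q)>0$ to ensure the shadowed trajectory stays out of $\Gamma\widehat{Q}$ in the middle. The resulting $\gamma$'s land in a bounded union $\bigcup_{n'}\Gamma^W_{\widehat{Q}}(n',D)$ with $n'$ ranging over an interval of length $O(L)$, and one concludes by taking $\tfrac{1}{t}\log$.

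The one substantive difference is in how the fibers of $i\mapsto\gamma_i'$ are controlled. The paper builds genuine \emph{closed} geodesics, so $\gamma_s$ is a hyperbolic element with a well-defined axis; if $\gamma_s=\gamma_{s'}$ then the two shadowing points $\widetilde{h(s)},\widetilde{h(s')}$ lie on the \emph{same} axis, and a direct computation using convexity and the choice $\epsilon<r/9$ forces $d_t(\widetilde{h(s)},\widetilde{h(s')})\le r/2+2\epsilon<r-2\epsilon$, contradicting separation. This gives injectivity outright. Your bounded-multiplicity argument is a valid substitute, but the sentence ``since the $\tilde y_i$ are $p(t,r)$-separated, only boundedly many indices can share a single $\gamma_i'$'' needs one more line: you must explain why the number of $d_t$-$r$-separated points in a $d_t$-$C$-ball is bounded \emph{independently of $t$}. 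This follows because in CAT(0) the function $s\mapsto d(\pi g_s(\tilde y_i),\pi g_s(\tilde y_j))$ is convex, so $d_t$ is achieved at $s=0$ or $s=t$; hence the $d_t$-$C$-ball is determined by two $C$-balls in $\widetilde{M}$, and pinched curvature bounds the number of $r$-separated points there by a constant depending only on $C/r$ and $\dim M$. Also note that the definition of $\Gamma^W_{\widehat{Q}}(n,D)$ requires $\hat x_i\in W\cap\widetilde{\Omega}$, so you do need the closing lemma to produce a periodic orbit (automatically in $\Omega$), not merely a shadowing geodesic from the local product structure.
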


\begin{proof} Pick a point $w\in W_0\cap \Omega$. Let $x\in N_1(K_0)$ and $r>0$ sufficiently small compared with the injectivity radius on $N_1(K_0)$. Let  $\widetilde{x}\in T^1\widetilde{M}$ be a lift of $x$ to $T^1\widetilde{M}$. Similarly, for every $y\in B(x,r)$ we denote by $\widetilde{y}\in T^1\widetilde{M}$ the lift of $y$ satisfying $d(\widetilde{x},\widetilde{y})\le r$. 
Let $S_t$ be a maximal  $p(t,r)$-separated set in $B(x,r/4)\cap K_0(t)$. Notice that $C(x,t,r)\le \#S_t$. For each element $s\in S_t$ we consider $q(s):=g_{t-1}(s)\in K_0$. 


Fix $\delta>0$. By the transitivity of the geodesic flow there exists $L=L(W_0,K_0,\delta)>0$ such that the following holds: for every $y\in K_0$ there exists $y_\delta$ such that $d(y,y_\delta)<\delta$ and such that $d(w,g_{j(y)} y_\delta)<\delta$, for some $j(y)\in [0, L]$.  Moreover, for $z\in N_1(K_0)$ there exists a point $w^z_\delta\in W_0$ such that $d(w,w^z_\delta)<\delta$ and $d(z,g_{j(w^z)}w^z_\delta)<\delta$, for some $j(w^z)\in [0,L]$. If $z=x$ we denote the point simply by $w_\delta$ and $j(w):=j(w^x)$. 


For every $s\in S_t$ we have three geodesic segments: the first is simply the geodesic $\{g_p(w_\delta)\}_{p\in [0,j(w)]}$, the second is $\{g_p(s)\}_{p\in [0,t-1]}$, and the third is $\{g_p(q(s)_\delta)\}_{p\in [0,j(q(s))]}$. By the local product structure and the closing lemma we can find a closed geodesic, say $c(s)$,  shadowing these three geodesic segments. By taking $\delta>0$ sufficiently small, and $t$ sufficiently large we can make sure the geodesic is $\epsilon$-close to the union of these three geodesic segment (for any fixed $\e>0)$--this argument was already used in the proof of Proposition \ref{entropydense}. 

The length of $c(s)$, say $l(s)$, is approximately the length of the three segments, in particular it belongs to $[t-2,t+2L]$. In fact, this whole approximation by a closed geodesics takes place in the universal cover, that is, if one lift the segments to the universal cover (so that the end of the first is sufficiently close to the beginning of the second, and the end of the second is sufficiently close to the beginning of the third), then there exists a hyperbolic axis $\e$-close to this chain of paths. For convenience we take the lift that passes near to $\widetilde{s}$. Choose a point $\widetilde{h(s)}$ tangent to the axis and such that $d(\widetilde{s},\widetilde{h(s)})<\e$. Denote by $\gamma_s\in \Gamma$ the hyperbolic element with such axis that represents $c(s)$. 

Let us suppose that $\gamma_s=\gamma_{s'}$ for distinct points $s,s'\in S_t$. If $t$ is sufficiently large we can ensure that  $d(\gamma_{s}$$\widetilde{s},g_{l(s)}\widetilde{h(s)})<2\e$; we will assume this is the case. Therefore 
$$d(\gamma_s \widetilde{x}, g_{l(s)}\widetilde{h(s)})\le  d(\gamma_s\widetilde{x},\gamma_s\widetilde{s})+d(\gamma_s\widetilde{s},g_{l(s)}\widetilde{h(s)}) \le r/4+\e.$$
Analogously we have $d(\gamma_{s'}\widetilde{x},g_{l(s')}\widetilde{h(s')})\le r/4+\e$.  Combining these two inequalities
\begin{align}\label{up1}d(g_{l(s)}\widetilde{h(s)},g_{l(s')}\widetilde{h(s')})\le r/2+2\e.\end{align}
 Also notice that 
\begin{align}\label{up2}d(\widetilde{h(s)},\widetilde{h(s')})\le d(\widetilde{h(s)},\widetilde{s})+d(\widetilde{s},\widetilde{x})+d(\widetilde{x},\widetilde{s'})+d(\widetilde{s'},\widetilde{h(s')})\le r/2+2\e.\end{align}
Since $s$ and $s'$ are $p(t,r)$-separated we know that $d_t(\widetilde{s},\widetilde{s'})>r$. It follows from the 
construction that $d_t(\widetilde{h(s)},\widetilde{s})<\e$. In particular 
\begin{align}\label{up3}d_t(\widetilde{h(s)},\widetilde{h(s')})\ge d_t(\widetilde{s},\widetilde{s'})-d_t(\widetilde{h(s)},\widetilde{s})-d_t(\widetilde{h(s')},\widetilde{s'})>r-2\e.\end{align}
It follows from inequalities (\ref{up1}) and (\ref{up2}), and the convexity of the distance function in $CAT(0)$ spaces that $d_t(\widetilde{h(s)},\widetilde{h(s')})\le r/2+2\e.$ If $\e<r/9$ this would contradict inequality (\ref{up3}).

From now on we assume $t\gg0$ and $\delta\ll 1$  are choosen to ensure that $\e<\min\{r/9,d(\widehat{Q},\widetilde{M}\setminus Q)\}$. Since $L$ depends only on $\delta$, $K_0$ and $W_0$, we consider it fixed from now on. As explained above, in this case the map $s\mapsto \gamma_s$ is an injection. The assumption $\epsilon<d(\widehat{Q},\widetilde{M}\setminus Q)$ ensures that  $\gamma_s\in \Gamma_{\widehat{Q}}(\lfloor l(s)\rfloor ,L+1+D)$. Indeed, choose a point $w_s\in W_0$ which is tangent to $c(s)$ and such that $d(w_s,w)<\epsilon$. A lift of $w_s$ to $W$ can be used in the definition of $\Gamma_{\widehat{Q}}(\lfloor l(s)\rfloor ,L+1+D)$ to check that $\gamma_s\in \Gamma_{\widehat{Q}}(\lfloor l(s)\rfloor,L+1+D)$. This follows directly from the construction of $c(s)$, and the definition of $K_0(t)$.


Recall that $l(s)\in [t-2,t+2L]$. We conclude that $$C(x,t,r)\le \#S_t\le \sum_{u=t-2}^{t+2L} \Gamma_{\widehat{Q}}(u,L+1+D).$$
Observe that the right hand side is independent of $x\in N_1(K_0)$ ($L$ is independent of the point in $N_1(K_0)$). This and the definition of $\delta_\infty(K_0)$ and $\delta_\infty^W(\widehat{Q})$ imply the inequality $\delta_\infty(K_0)\le \delta_\infty^W(\widehat{Q})$.
\end{proof}





Let $P$ be a Dirichlet fundamental domain of $M$ in $\widetilde{M}$.  

\begin{definition}[Topological entropy at infinity]\label{teidef} The \emph{topological entropy at infinity of the geodesic flow} is the quantity  $$\delta_\infty=\inf_{(P_k)}\liminf_{k\to \infty}\delta_\infty^W(P_k),$$
where the infimum runs over compact exhaustions of $P$, in other words, increasing sequences $(P_k)_{k}$ such that each $P_k$ is compact and $\bigcup_{n\ge 1}P_k=P$.
\end{definition}

It follows from the definition that  $\delta_\infty=\delta^0_{\Gamma,\infty}$, where $0$ is the zero potential (see formula (\ref{xxxxy})). That is, the topological entropy at infinity is simply the critical exponent at infinity of the zero potential. It  follows from the discussion right after Definition \ref{sprdef} that $\delta_\infty$ is independent of $W$. 


\begin{remark} Shortly after a first draft of this paper was finished, B. Schapira told us that in a joint work with S. Tapie \cite{st} they have also defined the topological entropy at infinity. The critical gap $\delta_\infty<h_{top}(g)$, has important consequence in the study of the regularity of the topological entropy under $C^1$ perturbations of the metric (see \cite{st}).
\end{remark}


It follows from Lemma \ref{lem:com} that the topological entropy at infinity defined in Theorem \ref{teoB} is at most the topological entropy at infinity of the geodesic flow. As an application of Theorem \ref{teoB} and the discussion above we obtain one of the main results of this paper

\begin{theorem}\label{A} Let $(M,g)$ be a pinched negatively curved manifold. Let $(\mu_n)_{n}$ be a sequence of invariant probability measures converging to $\mu$ in the vague topology. Then 
$$\limsup_{n\to \infty}h_{\mu_n}(g)\le |\mu|h_{\mu/|\mu|}(g)+(1-|\mu|)\delta_\infty.$$
If the sequence converges vaguely to zero, then the right hand side is understood as $\delta_\infty$. 
\end{theorem}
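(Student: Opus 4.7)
The plan is to deduce Theorem \ref{A} from the abstract Theorem \ref{teoB} applied to the time one map $T = g_1$ of the geodesic flow on $X = T^1M$, after verifying the hypotheses and after identifying the abstract topological entropy at infinity of Definition \ref{teig} with the geometric quantity $\delta_\infty$ of Definition \ref{teidef}. The weak entropy density hypothesis is supplied by Theorem \ref{wed}, so the only remaining ingredient is the simplified entropy inequality — and this is where a genuine adaptation is needed, because Theorem \ref{indeprad} provides the inequality in terms of the projected dynamical balls $B^p_n(x,r)$ rather than the usual $d_n$-balls.

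First I would revisit the proof of Theorem \ref{teoB} and rewrite it with $p(n,r)$-dynamical balls in place of standard ones. Concretely this forces the substitutions already outlined in Section \ref{5.2}: enlarge $K$ to $N_1(K)$ in the definition of the return-time annulus $K(t)$ so that the boundary vectors of an excursion actually lie in a set where the covering bound applies; index $C_t$ by real return times $t \in \R_{\ge 0}$ (and set $n(x)=n_1(x)+n_2(x)$ accordingly); and replace the combinatorial facts of Remark \ref{iteracion} by their flow versions in Remark \ref{iteracion2}, in particular using part (\ref{2ss}) in Step 2 of the proof of Proposition \ref{22223} (to conclude $T^{r-1}Q\subset N_1(K)$) and part (\ref{3ss}) in Step 1 (to confine return times into $K$ to length $\le N$). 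With these substitutions the inductive covering estimate of Proposition \ref{22223}, the entropy bound of Proposition \ref{prop1111} and hence Proposition \ref{pree} go through verbatim.

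Second, Proposition \ref{pree} requires the measures to be ergodic under $T=g_1$, but our hypothesis only gives invariance under the flow. To bridge this, I would appeal to \cite[Lemma 7]{op}: given the sequence $(\mu_n)_n$, pick $t<1$ arbitrarily close to $1$ so that every $\mu_n$ is ergodic with respect to $g_t$, and apply the adapted Proposition \ref{pree} and Theorem \ref{teoB} with $T=g_t$. Sending $t \to 1^-$ transfers the conclusion to the geodesic flow (as $h_{\mu_n}(g_t)=t\, h_{\mu_n}(g)$ and the abstract entropy-at-infinity quantity behaves linearly under this time change). This yields
\begin{equation*}
\limsup_{n\to\infty} h_{\mu_n}(g) \le |\mu|\, h_{\mu/|\mu|}(g) + (1-|\mu|)\, \delta_\infty^{\mathrm{abs}},
\end{equation*}
where $\delta_\infty^{\mathrm{abs}}$ is the quantity of Definition \ref{teig} for the flow.

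Finally I would identify $\delta_\infty^{\mathrm{abs}}$ with $\delta_\infty$. Given any compact exhaustion $(K_i)_i$ of $T^1M$, Lemma \ref{lem:com} says that for each $K_0 = p(Q)$ and each compact $\widehat{Q}\subset Q$ with $d(\widehat{Q},T^1\widetilde{M}\setminus Q)>0$ one has $\delta_\infty(K_0)\le \delta_\infty^W(\widehat{Q})$. Choosing such a nested pair of compact exhaustions and taking the $\liminf$ in $i$ yields $\delta_\infty^{\mathrm{abs}} \le \delta_\infty$ directly from Definition \ref{teidef}. Substituting this bound into the previous inequality produces the conclusion of Theorem \ref{A}.

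The main obstacle I anticipate is the careful bookkeeping in the flow adaptation of Proposition \ref{22223}: once $C_t$ is indexed by real numbers and $T=g_1$ is discrete, one must keep track of the fractional overshoot between the integer times at which one evaluates $g_1$-orbits and the real return times to $K$, while still ensuring the dynamical ball $B_n(T^{n_i-1}x,r)\cap K(h_i+s(x)+1)$ is covered by at most $e^{(\delta_\infty(K,r)+\varepsilon)(h_i+N)}$ balls. Remark \ref{iteracion2} is precisely the tool for controlling this, but one has to feed it in at exactly the right spots in Steps 1 and 2.
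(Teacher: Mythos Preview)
Your proposal is correct and follows essentially the same approach as the paper: you adapt Theorem \ref{teoB} to the geodesic flow by replacing standard dynamical balls with $p(n,r)$-balls (invoking Theorem \ref{indeprad}), make the modifications of Remark \ref{iteracion2} and the real-indexed $C_t$, handle the $g_1$-ergodicity issue via \cite[Lemma 7]{op}, and then bound the abstract entropy at infinity by the geometric $\delta_\infty$ using Lemma \ref{lem:com}. This is exactly the route taken in Section \ref{5.2}.
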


In Theorem \ref{varpri} we will prove that Theorem \ref{A} is sharp, in particular the topological entropy at infinity of the geodesic flow coincides with the one defined at the beginning of this section (which comes from Section \ref{moregen}). 

 It worth mentioning that if $M$ is geometrically finite, then $$\delta_\infty=\max_\P \delta_\P,$$ where the maximum runs over the parabolic subgroups of $\pi_1(M)$. This follows from the structure of the ends of $\Omega$ when $M$ is geometrically finite, and the convexity of the horoballs. In particular we recover one of the main results of \cite{rv}. 


The following definition was coined in \cite{st}, and fits perfectly within the analogy between the geodesic flow and countable Markov shifts. 

\begin{definition}[Strongly positive recurrent manifolds]\label{spr_def} We say a pinched negatively curved manifold $M$ is \emph{strongly positive recurrent} (SPR for short) if $$\delta_\infty<h_{top}(g).$$
\end{definition}

Among the consequences of Theorem \ref{A} we get that SPR manifolds have a measure of maximal entropy (see Theorem \ref{cme}). This definition should also be compared with Definition \ref{sprdef}, where an analogous critical gap condition is required. 

We remark that every hyperbolic geometrically finite manifold is SPR. This follows from the fact that in $\mathbb{H}^n$ every parabolic subgroup is of divergence type and \cite[Proposition 2]{dop}. 

A direct application of Theorem \ref{A} is the upper semicontinuity of $\mu\mapsto h_\mu(g)$, and $\mu \mapsto h_\mu(g)+\int Fd\mu$ (with respect to the weak* topology).

\begin{theorem}[Upper semicontinuity of the entropy map]\label{us}  Let $(M,g)$ be a pinched negatively curved manifold. Let $(\mu_n)_{n}$ be a sequence of invariant probability measures converging to $\mu$ in the weak* topology. Then 
$$\limsup_{n\to \infty}h_{\mu_n}(g)\le h_\mu(g).$$
\end{theorem}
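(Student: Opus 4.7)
The plan is to deduce Theorem \ref{us} directly from Theorem \ref{A}, whose proof has already been given in the previous subsection. The only conceptual content is to observe how weak\textsuperscript{*} convergence specializes the general vague--convergence inequality.

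First, I would invoke Theorem \ref{weakvag}, which says that a sequence $(\mu_n)_n\subset\M(g)$ converges to $\mu$ in the weak\textsuperscript{*} topology if and only if it converges vaguely to $\mu$ \emph{and} $\mu$ is itself a probability measure, i.e.\ $|\mu|=1$. Since weak\textsuperscript{*} convergence in particular implies vague convergence, the hypothesis of Theorem \ref{A} is fulfilled, so
\[
\limsup_{n\to\infty} h_{\mu_n}(g)\le |\mu|\,h_{\mu/|\mu|}(g)+(1-|\mu|)\delta_\infty.
\]
Substituting $|\mu|=1$ collapses the right-hand side to $h_\mu(g)$, which is exactly the inequality we want. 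The edge case where $(\mu_n)_n$ converges vaguely to the zero measure cannot occur here, because in that case $|\mu|=0\ne 1$, contradicting that $\mu$ is a probability measure.

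There is really no obstacle in this argument: all the heavy lifting (the weak entropy density of Theorem \ref{wed}, the simplified entropy inequality of Theorem \ref{indeprad}, the dynamical-ball counting estimate of Proposition \ref{22223}, and the comparison between $\delta_\infty(K,r)$ and $\delta^W_\infty(\widehat Q)$ in Lemma \ref{lem:com}) is absorbed into the statement of Theorem \ref{A}. The only step worth flagging is the appeal to Theorem \ref{weakvag}, which is the bridge from the weak\textsuperscript{*} hypothesis to the vague-topology setup in which Theorem \ref{A} is stated; once this bridge is crossed, the $(1-|\mu|)\delta_\infty$ term vanishes and no control on the entropy at infinity is needed.
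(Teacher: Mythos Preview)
Your proposal is correct and matches the paper's approach exactly: the paper states Theorem \ref{us} as a direct application of Theorem \ref{A}, and your argument---pass from weak\textsuperscript{*} to vague convergence via Theorem \ref{weakvag}, then set $|\mu|=1$ in the inequality of Theorem \ref{A}---is precisely the intended deduction.
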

\begin{theorem}[Upper semicontinuity of the pressure]\label{usp}  Let $(M,g)$ be a pinched negatively curved manifold. Let $(\mu_n)_{n}$ be a sequence of invariant probability measures converging to $\mu$ in the weak* topology and $F\in C_b(T^1M)$. Then 
$$\limsup_{n\to \infty}\big(h_{\mu_n}(g)+\int Fd\mu_n\big)\le \big(h_\mu(g)+\int F d\mu\big).$$
\end{theorem}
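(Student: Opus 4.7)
The plan is to reduce Theorem \ref{usp} directly to Theorem \ref{us} together with the elementary continuity of the map $\mu \mapsto \int F\, d\mu$ under weak* convergence. Since $F \in C_b(T^1M)$ is bounded continuous, the very definition of weak* convergence gives
$$\lim_{n\to\infty} \int F\, d\mu_n = \int F\, d\mu,$$
so in particular this quantity converges (not just lim sup). Meanwhile Theorem \ref{us} tells us that
$$\limsup_{n\to\infty} h_{\mu_n}(g) \le h_\mu(g).$$

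Combining these via the general inequality $\limsup_n (a_n+b_n) \le \limsup_n a_n + \lim_n b_n$ (valid whenever the second term converges and both sequences are bounded, which holds here because $F$ is bounded, so $|\int F\, d\mu_n| \le \|F\|_0$), I obtain
$$\limsup_{n\to\infty}\Bigl(h_{\mu_n}(g) + \int F\, d\mu_n\Bigr) \le \limsup_{n\to\infty} h_{\mu_n}(g) + \lim_{n\to\infty}\int F\, d\mu_n \le h_\mu(g) + \int F\, d\mu,$$
which is exactly the desired inequality.

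There is essentially no obstacle in this argument, since all the real work has already been done in Theorem \ref{A} (which yields Theorem \ref{us} as a special case). The only subtle point worth flagging is that one really does need weak* convergence, not merely vague convergence: if $(\mu_n)_n$ only converges vaguely and loses a fraction $1-|\mu|$ of its mass, then $\int F\, d\mu_n$ need not tend to $\int F\, d\mu$ (by Lemma \ref{E} one only gets an upper bound involving $c(F)$), and similarly the entropy estimate would pick up an extra term $(1-|\mu|)\delta_\infty$. Under weak* convergence both of these defects disappear automatically because no mass escapes, which is why the statement here is the clean compact-case analogue.
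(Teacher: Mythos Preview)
Your proof is correct and matches the paper's approach: the paper presents Theorem~\ref{usp} as an immediate consequence of Theorem~\ref{A} (via Theorem~\ref{us}) without further argument, and your reduction to Theorem~\ref{us} plus the definition of weak* convergence is exactly that.
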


\begin{remark}After the work of Newhouse \cite{n} we know that on a compact manifold the entropy map is upper semicontinuous if the dynamics is of class $C^\infty$ (see also \cite{y}). Later on, Buzzi \cite{bu} refined Newhouse's result and proved that those dynamical systems are asymptotically h-expansive, which it is known to imply the upper semicontinuity of the entropy map in the compact setting. We remark that their methods use, in an essential way, the compactness of the manifold. 
 
\end{remark}

\subsection{Variational principle for the entropy and pressure at infinity}\label{varprin} Theorem \ref{A} indicates the importance of  the topological entropy at infinity when studying the regularity of the entropy map. We now recall its measure theoretic counterpart and prove that these two a priori different notions of entropy at infinity coincide.

\begin{definition}[Measure theoretic entropy at infinity]\label{meaent} The \emph{measure theoretic entropy at infinity} of the geodesic flow is defined as 
$$h_\infty=\sup_{(\mu_n)}\limsup_{\mu_n\to 0} h_{\mu_n}(g),$$
where the supremum runs over sequences of invariant probability measures converging vaguely to the zero measure.
\end{definition}
This invariant was first defined in the context of the geodesic flow in \cite{irv} and proved to be equal to the topological entropy at infinity for the geodesic flow on extended Schottky manifolds via symbolic methods. This result was later extended to cover all geometrically finite manifolds in \cite{rv}. In this paper we generalize those results to any pinched negatively curved manifold. We emphasize that $h_\infty=P_\infty(0)$ (see Definition \ref{defpresinf}). 

\begin{theorem}[Variational principle for the entropy at infinity]\label{varpri} The topological entropy at infinity is equal to the measure theoretic entropy at infinity. Equivalently, $\delta_\infty=h_\infty.$
\end{theorem}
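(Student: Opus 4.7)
The inequality $h_\infty \le \delta_\infty$ is immediate from Theorem \ref{A}: for any sequence $(\mu_n)$ of invariant probability measures converging vaguely to the zero measure, Theorem \ref{A} gives $\limsup_{n \to \infty} h_{\mu_n}(g) \le \delta_\infty$, and taking the supremum over all such sequences yields $h_\infty \le \delta_\infty$. The reverse inequality $\delta_\infty \le h_\infty$ is the substantive content. The plan is to exhibit, for arbitrary $\epsilon > 0$, a sequence of invariant probability measures $(\mu_k)$ converging vaguely to zero with $\limsup h_{\mu_k}(g) \ge \delta_\infty - O(\epsilon)$. I will use the counting definition of $\delta_\infty$ to produce many closed geodesics whose orbits avoid a prescribed compact set for a long time, and then assemble them into high-entropy invariant ergodic measures.

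Fix a Dirichlet fundamental domain and a compact exhaustion $(P_k)$ as in Definition \ref{teidef}, so that $\delta_\infty^W(P_k) \ge \delta_\infty - \epsilon$ for all large $k$. By formula (\ref{truedef}), for each such $k$ there exists $D_k \in \N$ and arbitrarily large integers $n$ with $\#\Gamma_{P_k}^W(n, D_k) \ge e^{n(\delta_\infty - 2\epsilon)}$; choose $n_k$ with this property and $n_k \gg D_k$. For each $\gamma \in \Gamma_{P_k}^W(n_k, D_k)$, running the closing lemma / local product structure argument from the proof of Lemma \ref{lem:com} produces a hyperbolic isometry $\sigma_\gamma \in \Gamma$ whose axis shadows the prescribed orbit from $W$ to $\gamma W$. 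For $n_k$ large, the assignment $\gamma \mapsto \sigma_\gamma$ is injective and the translation length lies in $[n_k - O(1), n_k + O(1)]$. The projected closed geodesic $\mathcal O_\gamma \subset T^1 M$ spends time at most $2D_k + O(1)$ inside the compact set $K_k := p(P_k) \subset T^1 M$, since the underlying orbit avoids $\Gamma P_k$ on $[D_k, n_k - D_k]$.

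Following the construction in the proof of Proposition \ref{entropydense}, chain together arbitrary $M$-tuples of these closed geodesics via short connecting arcs (of length at most some $L$ depending only on $p(W)$, provided by transitivity of the geodesic flow on $\Omega$ combined with the closing lemma). For each $M \ge 1$ this produces at least $(\#\Gamma_{P_k}^W(n_k, D_k))^M$ distinct periodic orbits of length between $Mn_k$ and $M(n_k + L)$ in $T^1 M$. Let $\Psi^k$ be the closure in $T^1 M$ of the union of all such orbits; it is a compact $g$-invariant set with
\[
 h_{top}(\Psi^k) \;\ge\; \frac{n_k(\delta_\infty - 2\epsilon)}{n_k + L} \;\ge\; \delta_\infty - 3\epsilon
\]
once $n_k$ is large compared to $L$. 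The variational principle on $(\Psi^k, g|_{\Psi^k})$ supplies an ergodic $\mu_k$ on $\Psi^k$ with $h_{\mu_k}(g) \ge \delta_\infty - 4\epsilon$.

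It remains to verify that $\mu_k \to 0$ vaguely. Each periodic orbit in the construction spends fraction at most $(2D_k + L)/(n_k + L)$ of its time in $K_k$, and since $n_k \gg D_k$ this fraction tends to $0$ as $k \to \infty$. Realizing $\mu_k$ as a weak$^*$ accumulation point (as $M \to \infty$) of the uniform average of empirical measures over the $(\#\Gamma_{P_k}^W(n_k, D_k))^M$ concatenated orbits, and applying Portmanteau to a slightly enlarged open neighborhood of $K_k$, transfers this estimate to $\mu_k(K_k) = o(1)$. Since $(P_k)$ exhausts $P$, every compact $C \subset T^1 M$ is contained in $K_k$ for $k$ large, so $\mu_k(C) \to 0$; this is vague convergence to zero, and hence $h_\infty \ge \delta_\infty - 4\epsilon$. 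Letting $\epsilon \to 0$ completes the proof. The main obstacle is this last step: cleanly transferring the orbit-wise time-fraction estimate into the measure-theoretic bound $\mu_k(K_k) \to 0$, which requires carefully choosing $\mu_k$ (e.g.\ as the weak$^*$ limit described above rather than an arbitrary ergodic measure from the variational principle) and handling the boundary of $K_k$ via slightly enlarged open neighborhoods.
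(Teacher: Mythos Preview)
Your easy direction $h_\infty \le \delta_\infty$ matches the paper exactly. For the hard direction $\delta_\infty \le h_\infty$, your approach is genuinely different from the paper's, and the ``main obstacle'' you flag is a real gap.

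The paper avoids any direct construction. It first shows (Lemma~\ref{entinf11}) that $P(\phi) \ge \delta_\infty$ for every $\phi \in C_0(T^1M)$: restricting the Poincar\'e series to $\gamma \in \Gamma_Q^W(D)$, the orbit from $z$ to $\gamma z$ spends almost all of its time outside $p(Q)$, where $|\phi|$ is small, so the restricted series is comparable to $\sum_{\gamma \in \Gamma_Q^W(D)} e^{-s\, d(z,\gamma z)}$. Then (Theorem~\ref{carla2}) for any strictly positive $\phi \in C_0(T^1M)$ one has $h_\infty = P_\infty(0) = \lim_{t\to\infty} P(-t\phi)$: near-equilibrium measures $\mu_n$ for the potentials $-n\phi$ must converge vaguely to zero, since otherwise $\int \phi\,d\mu_n \ge c > 0$ along a subsequence and $h_{\mu_n}(g) - n\int\phi\,d\mu_n \to -\infty$, contradicting $P(-n\phi) \ge \delta_\infty$. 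Combining gives $h_\infty \ge \delta_\infty$. No closing lemma, no concatenation, no transfer of time-fractions to measures.

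In your approach the tension is that $\mu_k$ must satisfy two things: high entropy and small mass on $K_k$. The variational principle on $\Psi^k$ gives the first; your weak$^*$ limit of averaged periodic measures gives the second. These are not the same measure, and your proposed resolution---replacing $\mu_k$ by the weak$^*$ limit of the empirical averages---does not obviously retain high entropy: each periodic measure has entropy zero, entropy is affine, and entropy is only upper semicontinuous, so the limit could in principle have entropy zero as well. A cleaner fix is to imitate the last part of the proof of Proposition~\ref{entropydense}: take a bump function $f$ with $1_{K_{k-1}} \le f \le 1_{K_k}$ and show that $\Psi^k$ is contained in the closed set $\{x : \tfrac{1}{T}\int_0^T f(g_t x)\,dt \le 2\alpha_k\}$ for the relevant $T$, so that \emph{every} ergodic measure supported on $\Psi^k$ satisfies $\int f\,d\nu \le 2\alpha_k$. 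Then the high-entropy ergodic $\mu_k$ furnished by the variational principle automatically has $\mu_k(K_{k-1}) \le 2\alpha_k$.

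Once patched, your argument gives an explicit geometric construction of escaping high-entropy measures; the paper's route through the pressure is shorter and sidesteps the closing-lemma bookkeeping entirely.
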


For sake of generality we will prove some results for arbitrary potentials and eventually restrict to the zero potential to prove Theorem \ref{varpri}. We start with the following  useful lemma. 


\begin{lemma}\label{entinf11} Let  $F\in C_b(T^1M)$ be uniformly continuous and $\phi\in C_0(T^1M)$. Then $$P(F+\phi)\ge \delta^F_{\Gamma,\infty}.$$
\end{lemma}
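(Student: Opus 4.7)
The plan is to reduce to the case where both $F$ and $\phi$ are H\"older continuous and then combine the Poincar\'e series formula (\ref{xxxxy}) with the defining property that orbits indexed by $\Gamma^W_Q(n,D)$ spend time at most $2D$ inside $\Gamma Q$ (hence, for $Q$ large, time at most $2D$ in the region where $\phi$ is not uniformly small). First, by the density of H\"older potentials in the space of uniformly continuous potentials, the estimate (\ref{sh}) of Lemma \ref{indebase} gives $|\delta^{F_k}_{\Gamma,\infty}-\delta^F_{\Gamma,\infty}|\le 2||F-F_k||_0$, and $|P(F_k+\phi_k)-P(F+\phi)|\le ||F-F_k||_0+||\phi-\phi_k||_0$ is standard. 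We may therefore assume from the outset that both $F$ and $\phi$ are H\"older continuous.

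Fix $\e>0$. Since $\phi\in C_0(T^1M)$, choose a compact set $K\subset T^1M$ with $\sup_{K^c}|\phi|<\e$, and then a compact set $Q\subset T^1\widetilde{M}$ with $p(Q)\supset K$. Fix $D\in\N$, $n\ge 2D$, and $\gamma\in\Gamma^W_Q(n,D)$, with associated point $x=x_\gamma\in W\cap\widetilde{\Omega}$. By the defining property of $\Gamma^W_Q(n,D)$ we have $p(g_s x)\in p(Q)^c\subset K^c$ for every $s\in[D,n-D]$, so $|\widetilde{\phi}(g_s x)|<\e$ on this interval and
$$\int_{\pi(x)}^{\pi(g_n x)}\widetilde{\phi}\;\ge\;-2D\,||\phi||_0-\e(n-2D)\;\ge\;-2D\,||\phi||_0-\e n.$$
Since $d(z,\pi(x))$ and $d(\gamma z,\pi(g_n x))$ are bounded by a constant depending only on $W$, Lemma \ref{lem:imp} applied to $\widetilde{F}$ and $\widetilde{\phi}$ yields a constant $C=C(F,\phi,W,M)$, independent of $\gamma$ and $n$, such that
$$\int_z^{\gamma z}(\widetilde{F}+\widetilde{\phi})\;\ge\;\int_z^{\gamma z}\widetilde{F}-2D\,||\phi||_0-C-\e n.$$

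Exponentiating, summing over $\gamma\in\Gamma^W_Q(n,D)$, and then taking $\frac{1}{n}\log\limsup$ as $n\to\infty$, formula (\ref{xxxxy})---applied to the H\"older potentials $F$ and $F+\phi$---gives
$$\delta^{F+\phi}_\Gamma(Q,W,D)\;\ge\;\delta^F_\Gamma(Q,W,D)-\e.$$
Because $\Gamma^W_Q(D)\subset\Gamma$, the Poincar\'e series for the full group dominates the one restricted to $\Gamma^W_Q(D)$ term by term, so $\delta^{F+\phi}_\Gamma(Q,W,D)\le\delta^{F+\phi}_\Gamma=P(F+\phi)$ by Theorem \ref{pps}. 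Taking $\sup_D$ on the right and then infimum over compact sets $Q\subset T^1\widetilde{M}$ with $p(Q)\supset K$, and observing that this infimum still equals $\delta^F_{\Gamma,\infty}$ (since $\sup_D\delta^F_\Gamma(Q,W,D)$ is monotone decreasing in $Q$ and any such $Q$ can be used to start a compact exhaustion of $T^1\widetilde{M}$ covering $\Omega$ under $p$), we obtain $P(F+\phi)\ge\delta^F_{\Gamma,\infty}-\e$. Letting $\e\to 0$ concludes the argument. The only delicate step is the $\phi$-integral bound in the middle paragraph: the exit-time structure built into $\Gamma^W_Q(n,D)$ is exactly what converts the smallness of $\phi$ at infinity into a loss of only $O(\e n)$ on the exponential weights, with boundary and base-point comparison errors of order $O(D)+C$ that vanish after dividing by $n$.
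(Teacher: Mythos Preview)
Your argument is correct and follows essentially the same route as the paper: both proofs exploit that orbits indexed by $\Gamma^W_Q(n,D)$ spend all but time $2D$ outside $\Gamma Q$, which converts $\phi\in C_0(T^1M)$ into an $O(\e n)$ error once $p(Q)$ is large enough; Lemma~\ref{lem:imp} handles the base-point change, and the comparison $\delta^{F+\phi}_\Gamma(Q,W,D)\le \delta^{F+\phi}_\Gamma=P(F+\phi)$ finishes. The only stylistic difference is that you invoke formula~(\ref{xxxxy}) twice (for $F$ and for $F+\phi$) to compare critical exponents directly, whereas the paper keeps the Poincar\'e series with parameter $s$ and argues divergence at $s=\delta^F_\Gamma(Q,W,D)-2\e$; your packaging is slightly cleaner. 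One cosmetic remark: in your last step you do not actually need to ``take the infimum'' over $Q$ with $p(Q)\supset K$ --- for any single such $Q$ the monotonicity you cite already gives $\sup_D\delta^F_\Gamma(Q,W,D)\ge\delta^F_{\Gamma,\infty}$, so one $Q$ suffices.
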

\begin{proof} It is enough to prove the result for $F$ and $\phi$ H\"older continuous (by $C^0$ approximation we conclude the result for uniformly continuous potentials).  

Let $N$ be a Dirichlet fundamental domain of $M$ in $\widetilde{M}$. Given $\e>0$, there exists a compact subset $K=K(\e)$ of $M$ such that $\phi(x)\in [-\e,\e]$, for every $x\in T^1K^c$. Let $Q=Q(\e)$ be the subset of $N$ corresponding to $K$. Let $W$ be an open relatively compact set intersecting $\widetilde{\Omega}$ and $E$ its diameter.  Set $M=||\phi||_0$. 

Choose a reference point $z\in W$. Let $\gamma\in \Gamma^W_Q(n,D)$. In this case there exists a point $x^n_\gamma \in W$ such that $g_k(x^n_\gamma)\in (T^1(\Gamma Q))^c$, for $k\in [D,n-D]$ and $g_u(x^n_\gamma)\in \gamma W$, for some $u\in (n-1,n]$. Observe that $d(z,\pi(x^n_\gamma))$ and $d(\gamma z, \pi(g_u(x^n_\gamma)))$ are both bounded above by $E$. In particular, by Lemma \ref{lem:imp} there exist constants $N_1,N_2>0$ independents of $\gamma$ (but depending on $s$, since those depend on the potential we are using) such that 
$$|\int_z^{\gamma z}(\widetilde{F}+\widetilde{\phi}-s)-\int_{\pi(x^n_\gamma)}^{\pi(g_u(x^n_\gamma))}(\widetilde{F}+\widetilde{\phi}-s)|\le N_1,$$
and that 
$$|\int_z^{\gamma z}(\widetilde{F}-s)-\int_{\pi(x^n_\gamma)}^{\pi(g_u(x^n_\gamma))}(\widetilde{F}-s)|\le N_2.$$
By the definition of $Q$ and its connection with $\phi$ it follows that 
$$\int_{\pi(x^n_\gamma)}^{\pi(g_u(x^n_\gamma))}(\widetilde{F}+\widetilde{\phi}-s)\ge-M(2D+1)-\e(n-2D)+ \int_{\pi(x^n_\gamma)}^{\pi(g_u(x^n_\gamma))}(\widetilde{F}-s).$$
Set $M_0:=-M(2D+1)+\e2D)$. From the triangle inequality we have that $d(z,\gamma z)\in [n-2E-1, n+2E]$. In particular if $\tau\in \bigcap_{i=1}^k \Gamma^W_Q(n_i,D)$, then $d(z,\tau z)\in \bigcap_{i=1}^k[n_i-2E-1, n_i+2E]$. We conclude that $\tau\in \Gamma$ can belong to at most $4(E+1)$ different sets from $\{\Gamma^W_Q(m,D)\}_{m}$. Combining all this we get
 \begin{align*}4(E+1)\sum_{\gamma\in \Gamma} \exp(\int_z^{\gamma z} \widetilde{F}+\widetilde{\phi}-s)&\ge\sum_{n\ge2D}\sum_{\gamma\in \Gamma^W_Q(n,D)}\exp(\int_z^{\gamma z}\widetilde{F}+ \widetilde{\phi} -s)\\ 
&\ge e^{-N_1}\sum_{n\ge 2D}\sum_{\gamma\in \Gamma^W_Q(n,D)}\exp\big(\int_{\pi(x^n_\gamma)}^{\pi(g_u(x^n_\gamma))}(\widetilde{F}+\widetilde{\phi}-s)\big)\\
&\ge e^{-N_1+M_0}\sum_{n\ge 2D}e^{-\e n}\sum_{\gamma\in \Gamma^W_Q(n,D)}\exp\big(\int_{\pi(x^n_\gamma)}^{\pi(g_u(x^n_\gamma))}(\widetilde{F}-s)\big)\\
&\ge e^{-N_1+M_0-N_2}\sum_{n\ge 2D}e^{-\e n}\sum_{\gamma\in \Gamma^W_Q(n,D)}\exp\big(\int_{z}^{\gamma z}(\widetilde{F}-s)\big).\\
&\ge e^{-N_1+M_0-N_2+2\e-t(s)(2E+1)s)}\sum_{n\ge 2D}e^{-n(s+\e)}\sum_{\gamma\in \Gamma^W_Q(n,D)}\exp\big(\int_{z}^{\gamma z}\widetilde{F}\big),\\
 \end{align*}
where $t(s)$ is the sign of $s$. Summarizing we obtained that 
\begin{align}\label{tp}\sum_{\gamma\in \Gamma} \exp(\int_z^{\gamma z} \widetilde{F}+\widetilde{\phi}-s)\ge R_s \sum_{n\ge 2D}e^{-n(s+\e)}\sum_{\gamma\in \Gamma^W_Q(n,D)}\exp\big(\int_{z}^{\gamma z}\widetilde{F}\big),\end{align}
for some constant $R_s>0$ that only depends on $s$. 

Recall that by formula (\ref{xxxxy}) we know that 
\begin{align}\label{xxxx} \delta^F_\Gamma(W,Q,D)=\limsup_{n\to\infty}\frac{1}{n}\log \sum_{\gamma\in \Gamma^W_Q(n,D)}\exp\big(\int_z^{\gamma z}\widetilde{F}\big).\end{align} 
Define $a_n:=\sum_{\gamma\in \Gamma^W_Q(n,D)}\exp\big(\int_z^{\gamma z} \widetilde{F}\big)$, and $H(t):=\sum_{n\ge 2}e^{-nt}a_n$. By equation (\ref{xxxx}) we know that $H(t)$  converges if $t>\delta_\Gamma^F(Q,W,D)$ and diverges if $t<\delta_\Gamma^F(Q,W,D)$. With this notation we can rewrite (\ref{tp}) into 
\begin{align}\label{tpp}\sum_{\gamma\in \Gamma} \exp(\int_z^{\gamma z} \widetilde{F}+\widetilde{\phi}-s)\ge R_s H(s+\e),\end{align}
Taking $s=\delta_\Gamma^F(Q,W,D)-2\epsilon,$ we get  the divergence of the right hand side in (\ref{tpp}). This implies that $P(F+\phi)\ge \delta_\Gamma^F(Q,W,D)-2\epsilon,$  and therefore $$P(F+\phi)\ge \sup_D\delta_\Gamma^F(Q,W,D)-2\e.$$ 
Taking an increasing sequence $(Q_k)_k$ where $Q_0=Q$ and such that $(\sup_D\delta^F_\infty(Q_k,W,D))_k$ converges to $\delta^F_{\Gamma,\infty}$, we obtain $P(F+\phi)\ge \delta^F_{\Gamma,\infty}-2\e.$ Since $\e>0$ was arbitrary we are done.

\end{proof}



Our next result gives a formula for the measure theoretic pressure at infinity (see Definition \ref{defpresinf}) in terms of the topological pressure. 
\begin{theorem}\label{carla2} Let $F\in C_b(T^1M)$ be a uniformly continuous potential and $\phi\in C_0(T^1M)$ a strictly positive function. Then $$P_\infty(F)=\lim_{t\to\infty}P(F-t\phi)\ge \delta_{\infty,\Gamma}^F$$
\end{theorem}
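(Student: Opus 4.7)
The plan is to prove the two bounds $P_\infty(F) \le \lim_{t\to\infty} P(F-t\phi)$ and $P_\infty(F) \ge \lim_{t\to\infty} P(F-t\phi)$ separately, and to derive the comparison with $\delta_{\Gamma,\infty}^F$ directly from Lemma \ref{entinf11}. Since $\phi > 0$, the map $t \mapsto P(F-t\phi)$ is non-increasing, so the limit $\lim_{t\to\infty} P(F-t\phi)$ exists in $[-\infty, P(F)]$. For each $t>0$ the potential $-t\phi$ lies in $C_0(T^1M)$, so applying Lemma \ref{entinf11} to $F$ and $-t\phi$ yields $P(F-t\phi) \ge \delta_{\Gamma,\infty}^F$; letting $t\to\infty$ gives the last assertion of the theorem and shows that the limit is finite.

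For the inequality $\lim_{t\to\infty} P(F-t\phi) \ge P_\infty(F)$, let $(\mu_n)_n$ be any sequence of invariant probability measures converging vaguely to the zero measure. The variational principle applied to $\mu_n$ gives
\begin{equation*}
P(F-t\phi) \;\ge\; h_{\mu_n}(g) + \int F\,d\mu_n \;-\; t\int \phi\,d\mu_n .
\end{equation*}
Because $\phi \in C_0(T^1M)$, Lemma \ref{c0top} gives $\int \phi\,d\mu_n \to 0$, so taking $\limsup_{n\to\infty}$ yields $P(F-t\phi) \ge \limsup_n (h_{\mu_n}(g) + \int F\,d\mu_n)$ for every fixed $t$. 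Taking the supremum over all vaguely null sequences $(\mu_n)_n$ produces $P(F-t\phi) \ge P_\infty(F)$, and then $t\to\infty$.

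The substantive direction is $\lim_{t\to\infty} P(F-t\phi) \le P_\infty(F)$, and the main obstacle is producing measures that both nearly saturate $P(F-t\phi)$ and escape to infinity as $t\to\infty$. For each $t>0$, pick $\mu_t \in \M(g)$ with
\begin{equation*}
h_{\mu_t}(g) + \int (F-t\phi)\,d\mu_t \;\ge\; P(F-t\phi) - \tfrac{1}{t}.
\end{equation*}
Rearranging and using $h_{\mu_t}(g) + \int F\,d\mu_t \le P(F)$ together with $P(F-t\phi) \ge \delta_{\Gamma,\infty}^F$ yields the quantitative escape estimate
\begin{equation*}
t\int \phi\,d\mu_t \;\le\; P(F) - \delta_{\Gamma,\infty}^F + \tfrac{1}{t},
\end{equation*}
so $\int \phi\,d\mu_t = O(1/t)$. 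Now the strict positivity of $\phi$ is essential: for every compact $K\subset T^1M$ the continuous function $\phi$ attains a positive minimum $c_K := \min_K \phi > 0$, hence $\mu_t(K) \le c_K^{-1}\int \phi\,d\mu_t \to 0$. Because this holds for every compact $K$, the net $(\mu_t)_{t>0}$ converges to the zero measure in the vague topology. Finally, from the defining inequality for $\mu_t$,
\begin{equation*}
h_{\mu_t}(g) + \int F\,d\mu_t \;\ge\; P(F-t\phi) - \tfrac{1}{t} + t\int \phi\,d\mu_t \;\ge\; P(F-t\phi) - \tfrac{1}{t},
\end{equation*}
and taking $\limsup$ along any subsequence $t_n\to\infty$, the definition of $P_\infty(F)$ delivers $P_\infty(F) \ge \lim_{t\to\infty} P(F-t\phi)$, finishing the proof.
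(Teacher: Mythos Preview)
Your proof is correct and follows essentially the same strategy as the paper's: both arguments split into the two inequalities, invoke Lemma \ref{entinf11} for the bound $P(F-t\phi)\ge \delta_{\Gamma,\infty}^F$, use Lemma \ref{c0top} for the direction $P(F-t\phi)\ge P_\infty(F)$, and for the reverse direction pick near-maximizers $\mu_t$ for $P(F-t\phi)$ and show they escape to infinity. The only cosmetic difference is that you give a direct quantitative estimate $\int\phi\,d\mu_t=O(1/t)$ and deduce $\mu_t(K)\to 0$ for each compact $K$, whereas the paper argues the vague convergence to zero by contradiction; both routes rely on the strict positivity of $\phi$ in the same way.
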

\begin{proof} Define $G(t):=P(F-t\phi)$. Observe that $G$ is a non-decreasing function bounded below by $\delta_{\Gamma,\infty}^F$ (see Lemma \ref{entinf11}), therefore the limit $A:=\lim_{t\to\infty}G(t)$ is well defined. It also follows from Lemma \ref{entinf11} that $A\ge \delta_{\Gamma,\infty}^F$. 

We will first prove that $A\ge P_\infty(F)$. Let $(\nu_n)_n\subset \M(g)$ be a sequence converging vaguely to the zero measure and such that $$\lim_{n\to\infty}\big(h_{\nu_n}(g)+\int Fd\nu_n\big)=P_\infty(F).$$
By the definition of the topological pressure we know that $$h_{\nu_n}(g)+\int (F-t\phi)d\nu_n\le P(F-t\phi),$$
for every $n$. Note that $\lim_{n\to\infty}\big(h_{\nu_n}(g)+\int (F-t\phi)d\nu_n\big)=P_\infty(F)$, for every $t\in \R$. It follows that $P_\infty(F)\le P(F-t\phi)$, and therefore $P_\infty(F)\le A$.  

We will now prove that $A\le P_\infty(F)$. Let $\mu_n\in \M(g)$ be a measure such that
 \begin{align}\label{carla}h_{\mu_n}(g)+\int (F-n\phi)d\mu_n\ge P(F-n\phi)-\frac{1}{n}.\end{align} We claim that the sequence $(\mu_n)_n$ converges vaguely to the zero measure. If not we would have $\limsup_{n\to\infty}\int \phi d\mu_n\ge c$, for some $c>0$. Since $P(F-n\phi)\ge \delta_{\Gamma,\infty}^F$, this assumption contradicts inequality  (\ref{carla}). We conclude that $(\mu_n)_n$ converges vaguely to the zero measure. By sending $n\to\infty$ in (\ref{carla}) we get 
$$P_\infty(F)\ge \limsup_{n\to\infty}\big( h_{\mu_n}(g)+\int  F d\mu_n\big)\ge \limsup_{n\to\infty}\big( h_{\mu_n}(g)+\int (F-n\phi)d\mu_n\big)\ge A.$$
We conclude that $P_\infty(F)\ge A$.
\end{proof}

\begin{proof}[Proof of Theorem \ref{varpri}] In light of Theorem \ref{carla2} it is enough to prove that $$P_\infty(0)=h_{\infty}\le \delta_\infty=\delta^0_{\Gamma,\infty}.$$
By Theorem \ref{A} we know that any sequence $(\mu_n)_n\subset \M(g)$ converging vaguely to the zero measure satisfies 
$$\limsup_{n\to\infty}h_{\mu_n}(g)\le \delta_\infty.$$
This readily implies that  $\delta_\infty\le h_\infty$.


\end{proof}

We except that the formula $P_\infty(F)=\delta^F_{\infty, \Gamma}$  holds in complete generality. This would follow directly from Theorem \ref{carla2} and Conjecture \ref{conj1}.



\section{Applications to thermodynamic formalism} \label{6}
In this section we obtain several applications of Theorem \ref{i1} to the existence and stability of equilibrium states. For a suitable class of potentials (see Definition \ref{cE}) we will compute the first derivative of the pressure and show the continuity of equilibrium states. We will also prove that SPR potentials in $C_0(T^1M)$ form an open and dense subset. 

\subsection{Existence and stability of measure of maximal entropy}
 We start with a criterion for the existence of measures of maximal entropy. This result was also obtained in \cite{st} by different methods.  

\begin{theorem}[Criterion for existence of the measure of maximal entropy]\label{cme} Let $(M,g)$ be a pinched negatively curved manifold. Assume that $M$ is SPR, that is $\delta_\infty<h_{top}(g)$. Then the geodesic flow on $M$ admits a measure of maximal entropy. 
\end{theorem}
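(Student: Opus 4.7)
The strategy is to take a sequence of invariant probability measures whose entropies approach $h_{top}(g)$, extract a vaguely convergent subsequence, and then use Theorem \ref{A} together with the SPR hypothesis $\delta_\infty < h_{top}(g)$ to rule out any loss of mass. Once we know the limit has full mass, Theorem \ref{weakvag} promotes the vague convergence to weak* convergence, and Theorem \ref{A} directly delivers the desired lower bound on the entropy of the limit.

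More precisely, the plan is as follows. By the variational principle, choose a sequence $(\mu_n)_n \subset \M(g)$ with $\lim_{n\to\infty} h_{\mu_n}(g) = h_{top}(g)$; note that $h_{top}(g) = \delta_\Gamma < \infty$ by (the $F=0$ case of) Theorem \ref{pps}. Since $\M_{\le 1}(g)$ is compact in the vague topology (see Section \ref{measureth}), I pass to a subsequence and assume $\mu_n \to \mu$ vaguely for some $\mu \in \M_{\le 1}(g)$. Applying Theorem \ref{A} yields
$$h_{top}(g) = \limsup_{n\to\infty} h_{\mu_n}(g) \le |\mu|\, h_{\mu/|\mu|}(g) + (1-|\mu|)\delta_\infty,$$
with the convention that the first term is $0$ when $|\mu|=0$.

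Now I analyze the value of $|\mu|$. If $|\mu|=0$, the inequality reduces to $h_{top}(g) \le \delta_\infty$, contradicting the SPR assumption. If $|\mu| \in (0,1)$, the variational principle gives $h_{\mu/|\mu|}(g) \le h_{top}(g)$, so
$$h_{top}(g) \le |\mu|\, h_{top}(g) + (1-|\mu|)\delta_\infty,$$
i.e.\ $(1-|\mu|)h_{top}(g) \le (1-|\mu|)\delta_\infty$, again contradicting $\delta_\infty < h_{top}(g)$. Therefore $|\mu|=1$, and by Theorem \ref{weakvag} the convergence $\mu_n \to \mu$ holds in the weak* topology; in particular $\mu \in \M(g)$. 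Finally, the inequality from Theorem \ref{A} becomes $h_{top}(g) \le h_\mu(g)$, and combined with $h_\mu(g) \le h_{top}(g)$ this shows $\mu$ is a measure of maximal entropy.

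The argument is essentially a direct application of Theorem \ref{A}, so there is no real obstacle once that theorem is in hand; the role of the SPR gap $\delta_\infty < h_{top}(g)$ is precisely to prevent the extracted limit from losing mass. It is worth emphasizing that without a critical gap hypothesis this strategy can genuinely fail (as on manifolds with infinite BMS measure), which is exactly why the SPR condition appears naturally here.
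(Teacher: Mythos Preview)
Your proof is correct and follows essentially the same approach as the paper: take a maximizing sequence, pass to a vague limit, and use Theorem \ref{A} together with the SPR gap to force $|\mu|=1$, then conclude that $\mu$ realizes $h_{top}(g)$. The only cosmetic differences are that the paper handles the cases $|\mu|=0$ and $|\mu|\in(0,1)$ in one stroke and invokes Theorem \ref{us} at the end, whereas you separate the cases and re-apply Theorem \ref{A} after noting $|\mu|=1$ via Theorem \ref{weakvag}.
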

\begin{proof}
Let $(\mu_n)_n$ be a sequence of invariant probability measures such that $$h_{\mu_n}(g)>h_{top}(g)-\frac{1}{n}.$$ Recall that $\M_{\le1}(g)$ is compact with respect to the vague topology.  We will assume that $(\mu_n)_n$ converges in the vague topology (otherwise take a subsequence). Let $\mu$ be the vague limit of the sequence. By Theorem \ref{i1} we have that 
\begin{align*} h_{top}(g)=\limsup_{n\to \infty} h_{\mu_n}(g)&\le |\mu|h_{\mu/|\mu|}(g)+(1-|\mu|)\delta_\infty\\
&\le |\mu|h_{top}(g)+(1-|\mu|)\delta_\infty.
\end{align*}
Assume for a second that $\mu$ is not a probability measure, then we get 
$$ |\mu|h_{top}(g)+(1-|\mu|)\delta_\infty<  h_{top}(g),$$
which leads to a contradiction. We conclude that $\mu$ is a probability measure. By Theorem \ref{us} we obtain that 
$$h_{top}(g)=\limsup_{n\to\infty}h_{\mu_n}(g)\le h_\mu(g),$$
which proves the statement.
\end{proof}

 We remark that by \cite{op} the measure of maximal entropy is unique (see Theorem \ref{pps}). The same argument used in the proof of Theorem \ref{cme} gives us a slightly more general result.

\begin{theorem} \label{cme2} Let $(M,g)$ be a pinched negatively curved manifold. Let $(\mu_n)_n$ be a sequence of invariant probability measures such that $$\lim_{n\to\infty} h_{\mu_n}(g)=h_{top}(g).$$ Then the following statements hold.  
\begin{enumerate}
\item\label{1} Suppose that $M$ is SPR.  Then  $(\mu_n)_{n}$  converges  in the weak* topology to the measure of maximal entropy.
\item\label{2} Suppose that the geodesic flow on $M$ does not admit a measure of maximal entropy.  Then  $(\mu_n)_{n}$  converges vaguely to zero. In this case we  have  $\delta_\infty=h_{top}(g)$. 
\item\label{3} Suppose that the geodesic flow on $M$  admits a measure of maximal entropy. Then the accumulation points of  $(\mu_n)_{n}$  lies in the set $\{t\mu_{max}:t\in [0,1]\}$, where $\mu_{max}$ is the measure of maximal entropy.
\end{enumerate}
\end{theorem}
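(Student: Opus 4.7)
My plan is to extract vague subsequential limits and apply Theorem \ref{A} to them. By vague compactness of $\M_{\le 1}(g)$, every subsequence of $(\mu_n)_n$ has a further subsequence converging vaguely to some $\mu\in \M_{\le 1}(g)$; call such a $\mu$ an \emph{accumulation point}. Since $h_{\mu_n}(g)\to h_{top}(g)$, Theorem \ref{A} applied along this subsequence together with the trivial bound $h_{\mu/|\mu|}(g)\le h_{top}(g)$ yields
\begin{equation}\label{fundamental}
h_{top}(g)\le |\mu|h_{\mu/|\mu|}(g)+(1-|\mu|)\delta_\infty\le |\mu|h_{top}(g)+(1-|\mu|)\delta_\infty,
\end{equation}
with the middle expression interpreted as $\delta_\infty$ when $|\mu|=0$. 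The outer inequality rearranges to $(1-|\mu|)(h_{top}(g)-\delta_\infty)\le 0$. I will combine this with uniqueness of the measure of maximal entropy (Theorem \ref{pps} applied with $F=0$) and with the universal bound $\delta_\infty\le h_{top}(g)$, which follows from Theorem \ref{varpri} since $h_\infty\le h_{top}(g)$ trivially.

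For part (\ref{1}), SPR means $h_{top}(g)-\delta_\infty>0$, so (\ref{fundamental}) forces $|\mu|=1$. Then Theorem \ref{weakvag} upgrades vague to weak$^*$ convergence, the first half of (\ref{fundamental}) reads $h_{top}(g)\le h_\mu(g)$, and uniqueness identifies $\mu$ with the measure of maximal entropy $\mu_{\max}$. Since every accumulation point equals the same probability measure, the whole sequence converges to $\mu_{\max}$ in the weak$^*$ topology.

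For parts (\ref{2}) and (\ref{3}), the trichotomy $|\mu|\in\{0\}\cup(0,1)\cup\{1\}$ drives the argument. If $|\mu|=1$, the reasoning of part (\ref{1}) shows $\mu$ is an MME. If $0<|\mu|<1$, (\ref{fundamental}) forces $\delta_\infty=h_{top}(g)$ (combining with the universal bound), and then the inner inequality of (\ref{fundamental}) yields $h_{\mu/|\mu|}(g)\ge h_{top}(g)$, so $\mu/|\mu|$ is an MME. Under the hypothesis of (\ref{2}) (no MME), both these cases are ruled out, so every accumulation point has $|\mu|=0$; hence the whole sequence converges vaguely to zero, and (\ref{fundamental}) with $|\mu|=0$ gives $\delta_\infty=h_{top}(g)$. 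Under the hypothesis of (\ref{3}), uniqueness of $\mu_{\max}$ in the three cases above gives $\mu=0$, $\mu=|\mu|\mu_{\max}$, or $\mu=\mu_{\max}$ respectively, each of which lies in $\{t\mu_{\max}:t\in [0,1]\}$.

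The main delicate point is the intermediate-mass case $0<|\mu|<1$, where both inequalities in (\ref{fundamental}) must be exploited in tandem: one to pin down $\delta_\infty=h_{top}(g)$, and the other to force the normalized limit $\mu/|\mu|$ to be an MME. Only after this case is controlled does the dichotomy between the SPR, no-MME, and MME-exists hypotheses become decisive in separating the three conclusions.
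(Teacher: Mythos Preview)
Your proof is correct and follows essentially the same approach as the paper: pass to vague accumulation points, apply Theorem~\ref{A} to obtain the key inequality $h_{top}(g)\le |\mu|h_{\mu/|\mu|}(g)+(1-|\mu|)\delta_\infty$, combine with $\delta_\infty\le h_{top}(g)$ to force equality, and then read off each part from the resulting constraints together with uniqueness of the measure of maximal entropy. Your treatment of the intermediate-mass case $0<|\mu|<1$ is slightly more explicit than the paper's, but the logic is identical.
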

\begin{proof} Part (\ref{1}) follows directly from the proof of Theorem \ref{cme2}. We will now justify part (\ref{2}) and (\ref{3}). Let $\mu$ be a vague limit of $(\mu_n)_n$. Theorem \ref{i1} gives us that $$h_{top}(g)\le |\mu|h_{\mu/|\mu|}(g)+(1-|\mu|)\delta_\infty.$$
Observe that $\delta_\infty\le h_{top}(g)$. This implies that $|\mu|h_{\mu/|\mu|}(g)+(1-|\mu|)\delta_\infty\le h_{top}(g)$. Combining these inequalities we obtain that
\begin{align}\label{xxz} h_{top}(g)= |\mu|h_{\mu/|\mu|}(g)+(1-|\mu|)\delta_\infty.\end{align}
Assume for a second that the geodesic flow has not measure of maximal entropy, and that $\mu$ is not the zero measure. In this case we have $h_{\mu/|\mu|}(g)<h_{top}(g)$, which contradicts equation (\ref{xxz}).  This proves part (\ref{2}). If there exists a measure of maximal entropy and $\mu$ is not the zero measure, then by (\ref{xxz}) we conclude that $\mu/|\mu|$ is the measure of maximal entropy. This proves part (\ref{3}). 
\end{proof}

\begin{theorem}[Characterization of SPR manifolds]\label{charspr} $(M,g)$ is SPR  if and only if for every sequence $(\mu_n)\subset \M(g)$ satisfying $\lim_{n\to\infty}h_{\mu_n}(g)=h_{top}(g)$ we have that $(\mu_n)_n$ does not converge to the zero measure. 
\end{theorem}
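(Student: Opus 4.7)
The plan is to verify both implications using the two main structural results already established: Theorem \ref{i1} (the semicontinuity inequality involving $\delta_\infty$) and Theorem \ref{varpri} (the variational principle $\delta_\infty = h_\infty$).

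For the forward direction, assume $(M,g)$ is SPR, so $\delta_\infty < h_{top}(g)$. Let $(\mu_n)_n \subset \M(g)$ satisfy $\lim_{n\to\infty} h_{\mu_n}(g) = h_{top}(g)$, and suppose for contradiction that $(\mu_n)_n$ converges vaguely to the zero measure. Applying Theorem \ref{i1} (where the right-hand side is interpreted as $\delta_\infty$ in the vanishing case), we obtain
\[
h_{top}(g) = \limsup_{n\to\infty} h_{\mu_n}(g) \le \delta_\infty < h_{top}(g),
\]
which is a contradiction. Hence no such sequence can converge vaguely to zero. (This is essentially the content already extracted in Theorem \ref{cme2}(\ref{1}), so the forward direction is immediate.)

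For the reverse direction, I would argue the contrapositive: assume $(M,g)$ is not SPR, i.e.\ $\delta_\infty = h_{top}(g)$ (recall $\delta_\infty \le h_{top}(g)$ always holds by the variational principle combined with Theorem \ref{i1}). By Theorem \ref{varpri}, $h_\infty = \delta_\infty = h_{top}(g)$. By the very definition of $h_\infty$ (Definition \ref{meaent}), there exists a sequence $(\mu_n)_n \subset \M(g)$ converging vaguely to the zero measure such that $\limsup_{n\to\infty} h_{\mu_n}(g) = h_\infty = h_{top}(g)$. Passing to a subsequence we may assume $\lim_{n\to\infty} h_{\mu_n}(g) = h_{top}(g)$ (the upper bound $h_{\mu_n}(g) \le h_{top}(g)$ makes this limit coincide with the limsup). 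This produces a sequence of the required form that converges vaguely to zero, contradicting the hypothesis.

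The argument is really just a packaging of the two theorems \ref{i1} and \ref{varpri}, so there is no serious obstacle; the only small subtlety is to make sure we extract from the supremum defining $h_\infty$ an actual sequence realizing the value, rather than only an approximating family. This is done by a routine diagonal extraction: choose sequences $(\mu_n^{(k)})_n$ vaguely tending to zero with $\limsup_n h_{\mu_n^{(k)}}(g) \ge h_\infty - 1/k$, then pick one element from each with entropy at least $h_\infty - 2/k$ and which is sufficiently close to zero in the metric $\rho$ on $\M_{\le 1}(g)$ introduced in Section \ref{measureth}; the resulting diagonal sequence has entropies tending to $h_\infty = h_{top}(g)$ and converges vaguely to the zero measure.
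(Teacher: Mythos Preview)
Your proof is correct and follows essentially the same approach as the paper's: the forward direction is exactly Theorem \ref{cme2}(\ref{1}) (which the paper invokes directly), and the reverse direction is the contrapositive via the variational principle at infinity (Theorem \ref{varpri}). The only difference is that the paper asserts the existence of the required sequence without comment, whereas you spell out the diagonal extraction needed because the supremum defining $h_\infty$ need not be attained by a single sequence; this is a welcome clarification rather than a different argument.
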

\begin{proof} Suppose that $(M,g)$ is not SPR, that is $h_{top}(g)=\delta_\infty$. The variational principle at infinity  implies the existence of a sequence $(\mu_n)_n\subset \M(g)$ converging vaguely to the zero measure such that $\lim_{n\to\infty} h_{\mu_n}(g)=h_{top}(g)$. This proves one implication of the characterization, for the other direction simply use Theorem \ref{cme2}(\ref{1}). 
\end{proof}

\subsection{Existence and stability of equilibrium states} 
 We will now obtain similar results to Theorem \ref{cme}--\ref{charspr} in connection to the pressure of continuous potentials. Recall that $c(F)$ was introduced in  Definition \ref{Edef}. Our next result is the basic tool to obtain the existence of equilibrium states.

\begin{theorem}\label{AA} Let $(M,g)$ be a pinched negatively curved manifold. Let $(\mu_n)_n$ be a sequence of invariant probability measures converging vaguely to $\mu$ and $F\in C_b(T^1M)$. Then 
$$\limsup_{n\to\infty} \bigg(h_{\mu_n}(g)+\int Fd\mu_n\bigg)\le |\mu|\bigg(h_{\mu/|\mu|}(g)+\int Fd\mu/|\mu|\bigg)+(1-|\mu|)(\delta_\infty+c(F)).$$ 
\end{theorem}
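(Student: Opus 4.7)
The plan is to decompose $h_{\mu_n}(g)+\int F\,d\mu_n$ into its two summands, bound each separately by an already-proved inequality, and then add. Concretely, from the general inequality $\limsup_n(a_n+b_n)\le \limsup_n a_n+\limsup_n b_n$ I would get
\begin{equation*}
\limsup_{n\to\infty}\Big(h_{\mu_n}(g)+\int F\,d\mu_n\Big)\le \limsup_{n\to\infty} h_{\mu_n}(g)+\limsup_{n\to\infty}\int F\,d\mu_n.
\end{equation*}
The first term is controlled directly by Theorem \ref{A} (=\ref{i1}), giving $\limsup_n h_{\mu_n}(g)\le |\mu|h_{\mu/|\mu|}(g)+(1-|\mu|)\delta_\infty$.

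For the second term, the hypothesis $F\in C_b(T^1M)$ is exactly the setting of Lemma \ref{E}, which yields
\begin{equation*}
\limsup_{n\to\infty}\int F\,d\mu_n \le \int F\,d\mu+(1-|\mu|)c(F).
\end{equation*}
When $|\mu|>0$ I would rewrite $\int F\,d\mu=|\mu|\int F\,d(\mu/|\mu|)$ so that the potential integral matches the normalization used in Theorem \ref{A}. Adding the two bounds gives the desired inequality.

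The case $|\mu|=0$ has to be handled as a separate (but trivial) remark: the entropy bound reduces to $\delta_\infty$ (by the convention stated in Theorem \ref{A}), the integral bound reduces to $c(F)$ since $\int F\,d\mu=0$, and by convention the right hand side of the statement is interpreted as $\delta_\infty+c(F)$, which is exactly what the combined inequality gives.

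I do not anticipate any real obstacle: everything essential has already been done in Theorem \ref{A} (the delicate semicontinuity argument with escape of mass and the topological entropy at infinity) and in Lemma \ref{E} (the elementary partition of the integral over a compact exhaustion $(K_m)$ of $\Omega$ with $\mu(\partial K_m)=0$). The only small bookkeeping point is to be careful that the two $\limsup$'s are taken along the same subsequence in the intermediate step, which causes no trouble because the bound $\limsup_n(a_n+b_n)\le \limsup_n a_n+\limsup_n b_n$ holds unconditionally.
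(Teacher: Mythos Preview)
Your proposal is correct and is exactly the paper's own argument: the proof given there is the single sentence ``This result follows directly from combining Theorem \ref{i1} and Lemma \ref{E},'' which is precisely the splitting via $\limsup(a_n+b_n)\le\limsup a_n+\limsup b_n$ that you wrote out. Your handling of the $|\mu|=0$ case and the normalization $\int F\,d\mu=|\mu|\int F\,d(\mu/|\mu|)$ are the only bookkeeping details, and both are fine.
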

\begin{proof} This result follows directly from combining Theorem \ref{i1} and Lemma \ref{E}.
\end{proof}

Unfortunately the constant $\delta_\infty+c(F)$ is not necessarily sharp.  In fact we have the following conjecture. 

\begin{conjecture}\label{conj1} Let $(M,g)$ be a pinched negatively curved manifold. Let $(\mu_n)_n$ be a sequence of invariant probability measures converging vaguely to $\mu$ and $F\in C_b(T^1M)$ a uniformly continuous potential. Then 
$$\limsup_{n\to\infty} \bigg(h_{\mu_n}(g)+\int Fd\mu_n\bigg)\le |\mu|\bigg(h_{\mu/|\mu|}(g)+\int Fd\mu/|\mu|\bigg)+(1-|\mu|)\delta^F_{\Gamma,\infty}.$$ 
\end{conjecture}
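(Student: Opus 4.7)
The plan is to adapt the proof of Theorem \ref{A} by replacing the ``topological entropy outside $K$'' by a pressure-weighted version. For a compact set $K\subset T^1M$, $r>0$, $x\in N_1(K)$ and $t\ge 3$, define
\[
Z(x,t,r;F)=\inf\Big\{\sum_i \exp\!\Big(\!\int_0^t F(g_s y_i)\,ds\Big) : \{B^p_t(y_i,r)\}\text{ covers }B(x,r/4)\cap K(t)\Big\},
\]
set $P_\infty(K,r;F)=\limsup_{t\to\infty}\tfrac{1}{t}\log\sup_{x\in N_1(K)}Z(x,t,r;F)$, and finally $P_\infty^{\mathrm{top}}(F)=\inf_{(K_n)}\liminf_n\inf_{r>0} P_\infty(K_n,r;F)$, mirroring Definition \ref{teig}. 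The first key step is to prove $P_\infty^{\mathrm{top}}(F)\le \delta^F_{\Gamma,\infty}$ by adapting Lemma \ref{lem:com}: given an efficient cover of $B(x,r/4)\cap K(t)$ by dynamical balls, associate to each center $y_i$ a hyperbolic isometry $\gamma_i\in \Gamma_{\widehat Q}^W(L+D+1)$ via the closing lemma and local product structure, and use Lemma \ref{lem:imp} to compare $\int_0^t F(g_sy_i)\,ds$ with $\int_x^{\gamma_i x}\widetilde F$ up to a bounded additive error. For uniformly continuous $F$ the approximation by H\"older potentials from Lemma \ref{indebase} (together with the estimate (\ref{sh})) propagates the bound.

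Next, I would upgrade the inductive covering result (Proposition \ref{22223}) into a weighted statement: for any atom $Q\in\beta_{k,N}^n$ with $Q\subset K\cap T^{-(n-1)}K$ there is a cover of $Q$ by $(n,r)$-dynamical balls with centers $(y_i)_i$ satisfying
\[
\sum_i \exp\!\Big(\!\int_0^n F(g_sy_i)\,ds\Big)\le C_0\, e^{|E_{N,n}(Q)|(P_\infty^{\mathrm{top}}(F)+\varepsilon) + m_{k,N,n}(Q)\,N(P_\infty^{\mathrm{top}}(F)+\varepsilon)} \sup_{x\in Q}e^{\int_0^n F(g_sx)\,ds}.
\]
The induction follows the same Step~0/1/2 scheme as in Proposition \ref{22223}, but the covering contributions are now weighted by $F$, and one exploits the uniform continuity of $F$ together with the small diameter $r/L^{kN+2}$ of the atoms of $\beta'$ to conclude that $\int_0^j F(g_s\cdot)$ oscillates by $o(n)$ on each piece of orbit spent near $K$, while each excursion into $\alpha_N$ contributes multiplicatively at rate $P_\infty^{\mathrm{top}}(F)+\varepsilon$ by definition. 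Feeding this into a pressure analog of the simplified entropy inequality (Definition \ref{sef}, Theorem \ref{indeprad}) yields, for ergodic $\mu_n$ with $\mu_n(K)>0$,
\begin{align*}
h_{\mu_n}(g)+\int F\,d\mu_n \le\, & h_{\mu_n}(g,\beta_{k,N}) + \int F\,d\mu_n \\ & + \mu_n(\alpha_N)(P_\infty^{\mathrm{top}}(F)+\varepsilon) + \tfrac{1}{k}(P_\infty^{\mathrm{top}}(F)+\varepsilon).
\end{align*}
Passing to the vague limit using $\mu(\partial\beta_{k,N})=0$ and then $k,N\to\infty$, $\varepsilon\to 0$, proves the conjecture for ergodic approximating sequences, and the weak entropy density of Theorem \ref{wed} handles the general case exactly as in the deduction of Theorem \ref{teoB} from Proposition \ref{pree}. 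Combined with $P_\infty^{\mathrm{top}}(F)\le \delta^F_{\Gamma,\infty}$ this yields the conjectured inequality.

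The hardest step will be the pressure analog of the simplified entropy inequality, i.e.\ the Katok-type bound $h_\mu(g)+\int F\,d\mu\le \liminf_n \tfrac{1}{n}\log Z^P_\mu(n,r,\delta;F)$, where $Z^P_\mu$ is the minimal weighted sum over $p(n,r)$-covers of a set of measure $\ge 1-\delta$. In the entropy case (Theorem \ref{indeprad}) one exploits the Gibbs property of an auxiliary H\"older equilibrium state to bound the cardinality of a maximal $p(n,r')$-separated set inside a $p(n,2r+r')$-ball by $C\exp(2n\varepsilon)$. For pressure one needs a weighted analogue: the ratio between the $F$-weighted mass of a large dynamical ball and the $F$-weighted mass of a smaller one must be controlled via a Gibbs-type comparison that interpolates between the reference H\"older potential used in Theorem \ref{indeprad} and $F$ itself. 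This rests on the uniform continuity of $F$ to ensure that Birkhoff integrals of $F$ oscillate by at most $o(n)$ on dynamical balls, and is the main technical obstacle; together with verifying that $P_\infty^{\mathrm{top}}(F)$ is independent of the choice of $W$ (which, for merely uniformly continuous $F$, requires the H\"older approximation argument used in Claim~2 of Section \ref{pretf}), it constitutes the bulk of the remaining work.
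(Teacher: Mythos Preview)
This statement is left as an \emph{open conjecture} in the paper; the paper only verifies it for potentials $F\in\cH$ (those converging to a constant at infinity), where it follows trivially from Theorem~\ref{AA} and Lemma~\ref{sprequi}. So you are attempting something the paper does not prove, and your sketch has a genuine gap.

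The central problem is that your weighted covering bound double-counts the contribution of $F$ on excursions. In your proposed upgrade of Proposition~\ref{22223} you bound $\sum_i \exp\big(\int_0^n F(g_sy_i)\,ds\big)$ by a product of $e^{|E_{N,n}(Q)|(P_\infty^{\mathrm{top}}(F)+\varepsilon)}$ and $\sup_{x\in Q}e^{\int_0^n F(g_sx)\,ds}$. But $P_\infty^{\mathrm{top}}(F)$ already absorbs the Birkhoff integral of $F$ along each excursion (that is precisely how the weighted $Z(x,t,r;F)$ is defined), while $\sup_{x\in Q}e^{\int_0^n F(g_sx)\,ds}$ again contains $\int_{V_j}F(g_sx)\,ds$ over every excursion interval $V_j$. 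If $F$ is strictly negative along an excursion your stated inequality can fail; if $F$ is positive there, the bound is valid but too weak to be useful. The correct right-hand side should carry only $\exp\big(\int_{[0,n]\setminus E_{N,n}(Q)}F(g_sx)\,ds\big)$, i.e.\ the Birkhoff integral restricted to the times the orbit is \emph{not} on an excursion.

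This double-counting is exactly why your displayed ``pressure analogue'' of Proposition~\ref{prop1111} is vacuous: $\int F\,d\mu_n$ appears on both sides and cancels, leaving only $h_{\mu_n}(g)\le h_{\mu_n}(g,\beta_{k,N})+\mu_n(\alpha_N)(P_\infty^{\mathrm{top}}(F)+\varepsilon)+\tfrac{1}{k}(\cdots)$. From this you cannot recover the conjectured bound, because after taking the vague limit you must still add $\limsup_n\int F\,d\mu_n$ separately, and the only tool available for that is Lemma~\ref{E}, which reintroduces $c(F)$ and gives back (at best) Theorem~\ref{AA}, not the conjecture. To make the scheme work you would need the corrected covering bound above and, downstream, a genuinely joint inequality of the form
\[
h_{\mu_n}(g)+\int_{\alpha_N} F\,d\mu_n \;\le\; h_{\mu_n}(g,\beta_{k,N}) + \mu_n(\alpha_N)\bigl(P_\infty^{\mathrm{top}}(F)+\varepsilon\bigr) + \tfrac{1}{k}(\cdots),
\]
together with a way to pass $\int_{\alpha_N}F\,d\mu_n$ to the limit that is compatible with the escape of mass. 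Controlling this last term is delicate precisely because $\alpha_N$ is not relatively compact and $F$ need not converge at infinity; this, rather than the Katok-type pressure inequality you flag, is where the real obstruction lies and why the paper records the statement as a conjecture.
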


As mentioned right above Theorem \ref{carla2}, Conjecture \ref{conj1} implies that for every uniformly continous potential $F\in C_b(T^1M)$ we have that $P_\infty(F)=\delta^F_{\Gamma,\infty}$.  

In Section \ref{funspaces} we introduced the space of potentials that converges to $D$ at infinity, which we denoted by $C(T^1M,D)$. Recall that the space $\cH$ is defined as $$\cH=\bigcup_{E\in \R}C(T^1M,E).$$ 

\begin{lemma}\label{sprequi}Let $F\in C(T^1M,E)$. Then $\delta^F_{\Gamma,\infty}=\delta_\infty+E$. 
\end{lemma}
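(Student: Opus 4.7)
The plan is to reduce to the case $E=0$ by a constant-shift identity, prove the $C_0$ case first for H\"older potentials using the direct formula \eqref{xxxxy}, and finally transfer to uniformly continuous $G \in C_0(T^1M)$ by $C^0$-approximation.

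Set $G := F - E \in C_0(T^1M)$. Because $\widetilde F = \widetilde G + E$ pointwise, the constant $E$ can be absorbed into the parameter $s$ of the Poincar\'e series: for every $z \in \widetilde M$ and $\gamma \in \Gamma$,
\begin{equation*}
\int_z^{\gamma z}(\widetilde F - s) \;=\; \int_z^{\gamma z}\widetilde G \;-\; (s - E)\, d(z, \gamma z) \;=\; \int_z^{\gamma z}\bigl(\widetilde G - (s - E)\bigr),
\end{equation*}
so $P(F, Q, W, D, s) = P(G, Q, W, D, s - E)$, and hence $\delta^F_\Gamma(Q, W, D) = \delta^G_\Gamma(Q, W, D) + E$ for every $Q, W, D$. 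This gives $\delta^F_{\Gamma, \infty} = \delta^G_{\Gamma, \infty} + E$, and it remains to show $\delta^G_{\Gamma, \infty} = \delta_\infty$ for each $G \in C_0(T^1M)$.

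Next I would handle H\"older $G \in C_0(T^1M)$ by direct comparison with the zero potential. Given $\epsilon > 0$, pick a compact $K \subset T^1M$ such that $|G| < \epsilon$ on $K^c$, and a compact $Q_0 \subset T^1\widetilde M$ with $p(Q_0) \supset K \cap \Omega$; fix an open relatively compact $W \subset T^1\widetilde M$ meeting $\widetilde\Omega$ with reference point $z \in W$. For any compact $Q \supset Q_0$, any $D \in \N$, and any $\gamma \in \Gamma_Q^W(n, D)$ with distinguished vector $x_\gamma \in W \cap \widetilde\Omega$, the defining constraint $g_t(x_\gamma) \in (\Gamma Q)^c \cap \widetilde\Omega$ for $t \in [D, n-D]$ forces $|\widetilde G(g_t(x_\gamma))| < \epsilon$ there, hence
\begin{equation*}
\Bigl|\int_0^u \widetilde G(g_t(x_\gamma))\, dt\Bigr| \;\le\; (n - 2D)\epsilon + C_1(D, G),\qquad u \in [n-1, n].
\end{equation*}
Lemma \ref{lem:imp} applied to the H\"older potential $G$ bounds the difference between $\int_z^{\gamma z}\widetilde G$ and $\int_0^u \widetilde G(g_t(x_\gamma))\, dt$ by a constant $C_2 = C_2(G, W)$. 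Substituting into formula \eqref{xxxxy} yields $|\delta^G_\Gamma(Q, W, D) - \delta^0_\Gamma(Q, W, D)| \le \epsilon$ uniformly in $D$ and in $Q \supset Q_0$. Passing to $\sup_D$, to the monotone infimum over $n$ of any compact exhaustion $(Q_n)$ (which eventually contains $Q_0$, since $K \cap \Omega$ is compact and covered by $\bigcup_n p(Q_n)$), and finally to the infimum over exhaustions, gives $|\delta^G_{\Gamma, \infty} - \delta_\infty| \le \epsilon$; letting $\epsilon \to 0$ settles the H\"older case.

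For general $G \in C_0(T^1M)$, which is automatically uniformly continuous (it extends continuously to the one-point compactification of $T^1M$), I would approximate $G$ in $C^0$ by H\"older $G_k \in C_0(T^1M)$ with $\|G - G_k\|_0 \le 1/k$ (e.g.\ via infimal convolution $G_k(v) = \inf_w(G(w) + k\, d(v, w))$, which preserves both boundedness and decay at infinity). The inequality \eqref{sh} from the proof of Lemma \ref{indebase} gives $|\delta^G_\Gamma(Q, W, D) - \delta^{G_k}_\Gamma(Q, W, D)| \le 2/k$ for every $Q, W, D$, and this estimate passes through $\sup_D$, $\inf_n$, and $\inf_{(Q_n)}$, yielding $|\delta^G_{\Gamma, \infty} - \delta^{G_k}_{\Gamma, \infty}| \le 2/k$. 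Combined with the H\"older case $\delta^{G_k}_{\Gamma, \infty} = \delta_\infty$ and letting $k \to \infty$, we conclude $\delta^G_{\Gamma, \infty} = \delta_\infty$, hence $\delta^F_{\Gamma, \infty} = E + \delta_\infty$.

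The main obstacle is the comparison between the ``virtual'' path integral $\int_z^{\gamma z}\widetilde G$ appearing in the Poincar\'e series and the flow integral $\int_0^u \widetilde G(g_t(x_\gamma))\, dt$ along the actual orbit: while the two geodesics stay $O(1)$-close throughout by CAT(0) convexity of distance, bounding the difference of the integrals by a constant independent of $n$ requires H\"older regularity (Lemma \ref{lem:imp}). This is precisely why the argument tackles the H\"older case first and only then handles the uniformly continuous case by $C^0$-approximation using the stability bound \eqref{sh}.
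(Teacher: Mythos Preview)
Your argument is correct and follows essentially the same route as the paper: bound the weighted integral along the distinguished orbit $x_\gamma$ using that $|\widetilde F-E|<\epsilon$ outside $\Gamma Q$, transfer this to $\int_z^{\gamma z}\widetilde F$, and compare the resulting critical exponents to those for the zero potential. Your version is in fact more careful than the paper's sketch---you make explicit the reduction to $E=0$, the need for Lemma~\ref{lem:imp} (hence H\"older regularity) when passing from the orbit integral to the base-point integral, and the final $C^0$-approximation step via~\eqref{sh}, all of which the paper's proof leaves implicit under the phrase ``up to multiplicative constants.''
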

\begin{proof} Choose a compact set $Q\subset T^1\widetilde{M}$ such that $|F(x)-E|<\e$, for every $x\in p(Q)$. In particular $|\widetilde{F}(x)-E|<\e$, for every $x\in Q$. Let $W$ be an open relatively compact subset of $T^1\widetilde{M}$ intersecting $\widetilde{\Omega}$.  Observe that the Poincar\'e series $P(F,Q,W,D,s)$  can  be bounded below by $\sum_{\gamma\in \Gamma^W_{Q}(D)}\exp(-(s-E+\epsilon)d(x,\gamma x))$, and above by $\sum_{\gamma\in \Gamma^W_Q(D)}\exp(-(s-E-\epsilon)d(x,\gamma x))$ (up to multiplicative constants). From this we can conclude that $|\delta_\Gamma^F(Q,W,D)-(\delta_\infty+E)|<2\epsilon$. Since this computation works for every $Q_1$ satisfying $Q\subset Q_1$, we obtain  $$|\delta^F_{\Gamma,\infty}-(\delta_\infty+E)|<2\epsilon.$$ Since $\epsilon>0$ was arbitrary we conclude that $\delta^F_{\Gamma,\infty}=\delta_\infty+E$. 

\end{proof}

If $F\in C(T^1M,E)$, then $c(F)=E$. Lemma \ref{sprequi} implies that if $F\in \cH$, then $$\delta^F_{\Gamma,\infty}=\delta_\infty+c(F).$$  Using Theorem \ref{AA} we  conclude that Conjecture \ref{conj1} holds for potentials in $\cH$. We also observe that $F\in\cH$ is SPR if and only if $P(F)>\delta_\infty+c(F)$.

\begin{definition}\label{cE}  We say that a uniformly continuous potential $F\in C_b(T^1M)$ belongs to the class $\cE$ if $$P(F)>\delta_\infty+c(F).$$
\end{definition}
Theorem \ref{AA} and the strategy of the proof of Theorem \ref{cme} give us the following criterion for the existence of equilibrium states.

\begin{theorem}[Criterion for the existence of equilibrium states] \label{eeq} Let $(M,g)$ be a pinched negatively curved manifold. Then every potential in $\cE$ admits at least one equilibrium state. 
\end{theorem}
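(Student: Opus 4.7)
The plan is to mimic the proof of Theorem~\ref{cme}, using the sharper pressure-version inequality from Theorem~\ref{AA} in place of Theorem~\ref{i1}. First, I would choose a sequence $(\mu_n)_n \subset \M(g)$ with
\[
\lim_{n\to\infty}\Bigl(h_{\mu_n}(g) + \int F\, d\mu_n\Bigr) = P(F).
\]
Since $\M_{\le 1}(g)$ is compact in the vague topology (Section~\ref{measureth}), after passing to a subsequence I may assume $\mu_n \to \mu$ vaguely for some $\mu \in \M_{\le 1}(g)$.

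Applying Theorem~\ref{AA} to this sequence yields
\[
P(F) \le |\mu|\Bigl(h_{\mu/|\mu|}(g) + \int F\, d(\mu/|\mu|)\Bigr) + (1-|\mu|)(\delta_\infty + c(F)).
\]
The variational principle gives $h_{\mu/|\mu|}(g) + \int F\, d(\mu/|\mu|) \le P(F)$, so the right-hand side is at most $|\mu| P(F) + (1-|\mu|)(\delta_\infty + c(F))$. Because $F \in \cE$ we have $P(F) > \delta_\infty + c(F)$, hence if $|\mu| < 1$ this bound would be strictly less than $P(F)$, a contradiction. This forces $|\mu| = 1$, so $\mu$ is a probability measure and, by Theorem~\ref{weakvag}, the convergence $\mu_n \to \mu$ upgrades to the weak$^*$ topology.

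With $|\mu|=1$ the right-hand side in the inequality above collapses to $h_\mu(g) + \int F\, d\mu$, so Theorem~\ref{AA} gives $P(F) \le h_\mu(g) + \int F\, d\mu$; the reverse inequality is immediate from the definition of pressure, so $\mu$ is an equilibrium state for $F$. The only nontrivial ingredient is Theorem~\ref{AA} itself; once that is in hand, the proof is the purely arithmetic observation that the critical gap $P(F) > \delta_\infty + c(F)$ is incompatible with any loss of mass along a near-maximizing sequence. In this sense the argument is the natural pressure analogue of Theorem~\ref{cme}, with the term $c(F)$ accounting for the behavior of $F$ at infinity that was absent in the zero-potential case.
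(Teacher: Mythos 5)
Your proof is correct and is essentially the paper's argument: the paper proves this theorem precisely by invoking Theorem~\ref{AA} together with the strategy of Theorem~\ref{cme} (rule out loss of mass via the gap $P(F)>\delta_\infty+c(F)$, then conclude the vague limit is an equilibrium state), which is exactly what you have written out. The only cosmetic difference is that at the final step you reuse Theorem~\ref{AA} at $|\mu|=1$, whereas the paper's template for Theorem~\ref{cme} quotes the upper semicontinuity statement (here Theorem~\ref{usp}); these are the same inequality.
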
 
An immediate consequence of Theorem \ref{eeq} is that if $F\in \cH$ is SPR, then $F$ admits an equilibrium state. Another consequence of Theorem \ref{eeq} is that  potentials with small oscillation admit an equilibrium state. A weaker result (replacing $c(F)$ with $\sup F$) was obtained in \cite[Theorem 6.11]{irv} for Schottky manifolds and H\"older potentials using symbolic methods. 
\begin{corollary}  Let $F\in C_b(T^1M)$ such that $$c(F)-\inf_x F(x)<h_{top}(g)-\delta_\infty.$$ Then $F$ admits an equilibrium state.
\end{corollary}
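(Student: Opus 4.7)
The plan is to reduce this corollary directly to Theorem \ref{eeq} by checking that the stated oscillation bound forces $F$ into the class $\cE$. Recall that $F \in \cE$ exactly when $P(F) > \delta_\infty + c(F)$, so the whole task is to produce a sufficiently strong lower bound on $P(F)$.

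First I would invoke the variational principle to get the trivial lower bound
\[
P(F) \;=\; \sup_{\mu \in \M(g)} \Bigl( h_\mu(g) + \int F\, d\mu \Bigr) \;\ge\; \sup_{\mu \in \M(g)} h_\mu(g) \,+\, \inf_{x} F(x) \;=\; h_{top}(g) + \inf_x F(x),
\]
where the middle inequality uses that $\int F\, d\mu \ge \inf F$ for every probability measure. The hypothesis then immediately gives
\[
P(F) \;\ge\; h_{top}(g) + \inf_x F(x) \;>\; \delta_\infty + c(F),
\]
so $F \in \cE$, and Theorem \ref{eeq} produces an equilibrium state.

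The only mildly delicate point is that $\cE$ was defined for \emph{uniformly continuous} bounded potentials, whereas the corollary only asks for $F \in C_b(T^1M)$. If strict uniform continuity is needed to apply Theorem \ref{eeq}, one can either observe that its proof (which rests on Theorem \ref{AA} and the upper semicontinuity machinery of Section \ref{5.2}, neither of which uses uniform continuity) goes through for any $F \in C_b(T^1M)$ with $P(F)>\delta_\infty+c(F)$, or else approximate $F$ uniformly by uniformly continuous potentials $F_k$ satisfying the same gap condition (choose $\eta>0$ with $c(F)-\inf F + 2\eta < h_{top}(g)-\delta_\infty$ and perturb by less than $\eta$ in the $C^0$ norm; both $c(\cdot)$ and $\inf(\cdot)$ are $1$-Lipschitz in $\|\cdot\|_0$). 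Either route delivers the conclusion, and I expect the bookkeeping around uniform continuity to be the only nontrivial step; the pressure estimate itself is immediate from the variational principle.
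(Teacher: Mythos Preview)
Your proof is correct and follows essentially the same route as the paper: both establish $P(F)\ge h_{top}(g)+\inf_x F(x)>\delta_\infty+c(F)$ and then invoke Theorem~\ref{eeq}. Your version is in fact slightly cleaner, since the paper first deduces $\delta_\infty<h_{top}(g)$ and applies Theorem~\ref{cme} to obtain a measure of maximal entropy before bounding $P(F)$, whereas you take the supremum directly; your remark about uniform continuity is also well taken, as the paper's proof tacitly relies (just as you suggest) on the fact that the argument behind Theorem~\ref{eeq} via Theorem~\ref{AA} only needs $F\in C_b(T^1M)$.
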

\begin{proof} Since $\inf_x F(x)\le c(F)$, we obtain that $\delta_\infty<h_{top}(g)$. Theorem \ref{cme} implies that the geodesic flow has a measure of maximal entropy. Denote by $\mu$ the measure of maximal entropy. Then $$P(F)\ge h_{\mu}(g)+\int Fd\mu\ge h_{top}(g)+\inf_x F(x)>c(F)+\delta_\infty.$$
Theorem \ref{eeq} implies that $F$ has an equilibrium state.
\end{proof}

 Our next result is the equivalent of Theorem \ref{cme2} for potentials in $\cH$ and its proof is identical to the one of Theorem \ref{cme2}.  We remark that $P(F)\ge \delta^F_{\Gamma,\infty}$; this is an important observation in order to prove Theorem \ref{eeq2} (this is the equivalent to inequality $h_{top}(g)\ge \delta_\infty$). 

\begin{theorem} \label{eeq2} Let $F\in \cH$ and $(\mu_n)_n$ be a sequence of invariant probability measures such that  $$\lim_{n\to\infty}\big(h_{\mu_n}(g)+\int Fd\mu_n\big)=P(F).$$ Then the following statements hold 
\begin{enumerate}
\item\label{1xx} If $F$ is SPR, then every convergent (in the vague topology) subsequence of $(\mu_n)_{n}$  converges in the weak* topology to an equilibrium state of $F$. If $F$ is H\"older continuous then $(\mu_n)_n$ converges in the weak*  topology to the (unique) equilibrium state of $F$.
\item\label{2xx} Suppose that $F$ does not admit any equilibrium state.  Then  $(\mu_n)_{n}$  converges vaguely to zero. In this case we  have  $P(F)=P_\infty(F)$. 
\item\label{3xx}  Suppose that $F$ does admit an equilibrium state.  Then the accumulation points of  $(\mu_n)_{n\in \N}$  lies in the set $$\{t\mu:t\in [0,1]\text{ and }\mu\text{ is an equilibrium state of $F$}\}.$$
\end{enumerate}
\end{theorem}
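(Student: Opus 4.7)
The plan is to follow the same pattern as the proof of Theorem \ref{cme2}, replacing the topological entropy with the topological pressure and using Theorem \ref{AA} in place of Theorem \ref{i1}. The key preliminary observation is that for $F\in\cH$, Lemma \ref{sprequi} together with Theorem \ref{varpri} gives the identification $P_\infty(F)=\delta_\infty+c(F)=\delta^F_{\Gamma,\infty}$, so Theorem \ref{AA} reads
\[
P(F)\le |\mu|\Big(h_{\mu/|\mu|}(g)+\textstyle\int F\,d\mu/|\mu|\Big)+(1-|\mu|)P_\infty(F),\qquad(\ast)
\]
after passing to a subsequence of $(\mu_n)_n$ converging vaguely to some $\mu\in\M_{\le 1}(g)$ (possible by compactness of $\M_{\le 1}(g)$) and using the hypothesis $h_{\mu_n}(g)+\int F\,d\mu_n\to P(F)$. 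A second preliminary remark, obtained from Lemma \ref{entinf11} applied to $F-c(F)\in C_0(T^1M)$, is the universal lower bound $P(F)\ge P_\infty(F)$; the SPR hypothesis on $F$ is exactly the strict version of this inequality.

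For part \textup{(1)}, suppose $F$ is SPR. Replacing the entropy-plus-integral term in $(\ast)$ by its trivial upper bound $P(F)$ yields $(1-|\mu|)(P(F)-P_\infty(F))\le 0$, and the strict gap $P(F)>P_\infty(F)$ forces $|\mu|=1$. Theorem \ref{weakvag} then upgrades the convergence to weak$^*$, and Theorem \ref{usp} combined with $h_{\mu_n}(g)+\int F\,d\mu_n\to P(F)$ forces $h_\mu(g)+\int F\,d\mu=P(F)$, so $\mu$ is an equilibrium state of $F$. In the H\"older case Theorem \ref{pps} gives uniqueness of the equilibrium state, so every vaguely convergent subsequence has the same limit and the whole sequence converges.

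For parts \textup{(2)} and \textup{(3)}, assume $|\mu|>0$ and isolate the entropy-plus-integral term in $(\ast)$ to obtain the two-sided bound
\[
\frac{P(F)-P_\infty(F)}{|\mu|}+P_\infty(F)\le h_{\mu/|\mu|}(g)+\textstyle\int F\,d\mu/|\mu|\le P(F),
\]
which implies $(P(F)-P_\infty(F))\bigl(\tfrac{1}{|\mu|}-1\bigr)\le 0$. Since both factors are non-negative, either $|\mu|=1$ or $P(F)=P_\infty(F)$, and in either sub-case the chain of inequalities collapses, forcing $h_{\mu/|\mu|}(g)+\int F\,d\mu/|\mu|=P(F)$, that is, $\mu/|\mu|$ is an equilibrium state of $F$. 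For \textup{(2)}, the assumed non-existence of equilibrium states then rules out $|\mu|>0$, so every vaguely convergent subsequence limits to the zero measure and the full sequence converges vaguely to zero; specializing $(\ast)$ to $\mu=0$ gives $P(F)\le P_\infty(F)$, which together with $P(F)\ge P_\infty(F)$ yields $P(F)=P_\infty(F)$. For \textup{(3)} the same analysis shows every accumulation point is either the zero measure or equals $|\mu|$ times an equilibrium state of $F$, as claimed.

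The logical skeleton is routine once the reduction to $(\ast)$ is in place; the main obstacle is verifying that the identification $P_\infty(F)=\delta_\infty+c(F)$ genuinely holds on the whole class $\cH$, not merely on $C_0(T^1M)$. This rests on Lemma \ref{sprequi}, the variational principle of Theorem \ref{varpri}, and the vague-continuity of $\mu\mapsto\int F\,d\mu$ provided by Lemma \ref{D}, which in turn depends on the convergence-at-infinity encoded in the definition of $\cH$.
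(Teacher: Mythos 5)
Your proposal is correct and follows essentially the paper's own route: Theorem \ref{eeq2} is proved there by repeating the argument of Theorem \ref{cme2} with Theorem \ref{AA} in place of Theorem \ref{A}, using Lemma \ref{sprequi} and the observation $P(F)\ge \delta^F_{\Gamma,\infty}=\delta_\infty+c(F)$ (plus Theorem \ref{usp}, Theorem \ref{weakvag} and uniqueness from Theorem \ref{pps} for part (1)), which is exactly your inequality $(\ast)$ together with $P(F)\ge P_\infty(F)$. The only cosmetic difference is that you phrase the threshold as $P_\infty(F)$, identified with $\delta_\infty+c(F)$ via Theorem \ref{varpri} and Lemma \ref{D}, rather than as $\delta^F_{\Gamma,\infty}$; the content is the same.
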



As in Theorem \ref{charspr} this result gives a characterization of SPR potentials in $\cH$. Our next result  follows directly from the proof of Theorem \ref{cme} (when using Theorem \ref{AA} instead of Theorem \ref{A}) and it is an essential ingredient in order to prove Theorem \ref{deriva} and Theorem \ref{conteq}.
\begin{theorem} \label{eeq3} Let $F\in \cE$ and $(\mu_n)_n$ be a sequence of invariant probability measures such that  $$\lim_{n\to\infty}\big(h_{\mu_n}(g)+\int Fd\mu_n\big)=P(F).$$ Then  $(\mu_n)_{n}$  converges to an equilibrium state of $F$. 
\end{theorem}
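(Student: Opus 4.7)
The strategy is to mimic the proof of Theorem \ref{cme}, but using Theorem \ref{AA} (the pressure version) in place of Theorem \ref{A}, and exploiting the critical gap $P(F)>\delta_\infty+c(F)$ built into the definition of $\cE$ to rule out escape of mass.

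First I would pass to a subsequence and use the compactness of $\M_{\le 1}(g)$ in the vague topology to assume that $(\mu_n)_n$ converges vaguely to some $\mu\in \M_{\le 1}(g)$. Applying Theorem \ref{AA} gives
\begin{align*}
P(F)=\limsup_{n\to\infty}\Bigl(h_{\mu_n}(g)+\int F\,d\mu_n\Bigr)
\le |\mu|\Bigl(h_{\mu/|\mu|}(g)+\int F\,d(\mu/|\mu|)\Bigr)+(1-|\mu|)(\delta_\infty+c(F)).
\end{align*}
By the variational principle, $h_{\mu/|\mu|}(g)+\int F\,d(\mu/|\mu|)\le P(F)$ (provided $|\mu|>0$; the case $|\mu|=0$ is handled by reading the right-hand side as $\delta_\infty+c(F)$, which is strictly less than $P(F)$, immediately yielding a contradiction). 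Rearranging, we obtain
\[
(1-|\mu|)\bigl(P(F)-(\delta_\infty+c(F))\bigr)\le 0.
\]
Since $F\in \cE$ means exactly that $P(F)-(\delta_\infty+c(F))>0$, we conclude $|\mu|=1$, so no mass escapes.

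Now by Theorem \ref{weakvag}, vague convergence to the probability measure $\mu$ upgrades to weak* convergence $\mu_n\to\mu$. Since $F\in C_b(T^1M)$, this gives $\int F\,d\mu_n\to \int F\,d\mu$. Combining with the upper semicontinuity of the entropy map (Theorem \ref{us}), we deduce
\[
P(F)=\lim_{n\to\infty}\Bigl(h_{\mu_n}(g)+\int F\,d\mu_n\Bigr)\le h_\mu(g)+\int F\,d\mu\le P(F),
\]
so equality holds throughout and $\mu$ is an equilibrium state for $F$. Finally, since every subsequence of $(\mu_n)_n$ admits a further subsequence converging weak* to some equilibrium state (by the argument above applied to the subsequence), the full sequence accumulates only on equilibrium states; the statement "converges to an equilibrium state" then follows, with the proviso that if one knows uniqueness (e.g., if $F$ is also H\"older, via Theorem \ref{pps}) the limit is unambiguous.

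The only delicate point is the $|\mu|=0$ case, where $\mu/|\mu|$ is undefined and the statement of Theorem \ref{AA} must be read with the convention that the right-hand side collapses to $\delta_\infty+c(F)$; this is exactly the convention fixed in the statements of Theorems \ref{i1} and \ref{AA}, and the defining inequality $P(F)>\delta_\infty+c(F)$ of $\cE$ instantly produces a contradiction, so this case does not arise.
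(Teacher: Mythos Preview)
Your proof is correct and follows exactly the approach the paper indicates: mimic the proof of Theorem \ref{cme}, replacing Theorem \ref{A} by Theorem \ref{AA} and using the gap $P(F)>\delta_\infty+c(F)$ from Definition \ref{cE} to force $|\mu|=1$, then invoke Theorem \ref{weakvag} and Theorem \ref{us} (equivalently Theorem \ref{usp}) to conclude. Your handling of the $|\mu|=0$ case and the subsequence argument is also in line with the paper's conventions.
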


We emphasize that in the results presented above we require no higher regularity on $F$ than uniform continuity. In particular the theory developed in \cite{pps} does not apply.  A big difference with respect to more regular potentials is the lack of uniqueness of equilibrium states for continuous potentials. It can be proven that we can slightly $C^0$-perturb any potential $F\in C_b(T^1M)$ into a potential with uncountably many equilibrium states. A crucial ingredient for that result is Theorem \ref{us}, which allows us to identify subderivatives of the pressure at $F$ to its equilibrium states. This follows from the work of Israel \cite{is} and will be explained in details somewhere else. The next result was proven in \cite[Theorem 10]{v} for geometrically finite manifolds and potentials in $C_0(T^1M)$.

\begin{theorem}[First derivative of the pressure]\label{deriva} Let $F$ be a H\"older continuous potential  in $\cE$. For every $G\in C_b(T^1M)$ the following holds:
$$\frac{d}{dt}_{|t=0}P(F+tG)=\int Gd\mu_F,$$
where $\mu_F$ is the equilibrium state of $F$. 
\end{theorem}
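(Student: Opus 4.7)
The plan is to combine the convexity of $t \mapsto P(F+tG)$ with the convergence result for near-maximizing sequences (Theorem \ref{eeq3}), together with the uniqueness of the equilibrium state for H\"older potentials (Theorem \ref{pps}).

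For the easy side-bounds, I would test the variational principle at $F+tG$ against the equilibrium state $\mu_F$ itself. This yields
\[
P(F+tG) \ge h_{\mu_F}(g) + \int (F+tG)\, d\mu_F = P(F) + t\int G\, d\mu_F,
\]
so that $\frac{P(F+tG)-P(F)}{t} \ge \int G\, d\mu_F$ for every $t>0$ and $\le \int G\, d\mu_F$ for every $t<0$. This immediately gives the ``correct'' bound on $\liminf_{t\to 0^+}$ and $\limsup_{t\to 0^-}$ of the difference quotient.

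The harder direction is the reverse bound, and this is where Theorem \ref{eeq3} enters. For each small $t\neq 0$, pick an invariant probability measure $\mu_t$ with
\[
h_{\mu_t}(g) + \int (F+tG)\, d\mu_t \ge P(F+tG) - t^2.
\]
Plugging $\mu_t$ into the variational principle for $F$ gives
\[
h_{\mu_t}(g) + \int F\, d\mu_t \ge P(F+tG) - t^2 - |t|\,\|G\|_0.
\]
Since $P$ is $1$-Lipschitz in the $C^0$-norm, $P(F+tG)\to P(F)$ as $t\to 0$, so $h_{\mu_t}(g)+\int F\, d\mu_t\to P(F)$. This puts $(\mu_{t_n})$, for any sequence $t_n\to 0$, into the hypothesis of Theorem \ref{eeq3} (applied to $F\in\cE$). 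That theorem yields convergence to \emph{some} equilibrium state of $F$, and because $F$ is H\"older continuous, Theorem \ref{pps} gives uniqueness, so the limit is forced to be $\mu_F$. As the argument works along any sequence $t_n \to 0$, we get $\mu_t \to \mu_F$ in the weak$^*$ topology as $t\to 0$.

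With this convergence in hand, the reverse bound follows: since $G\in C_b(T^1M)$, weak$^*$ convergence gives $\int G\, d\mu_t \to \int G\, d\mu_F$, and
\[
\frac{P(F+tG)-P(F)}{t} \le \int G\, d\mu_t + t \quad (t>0),
\]
with the analogous inequality reversed for $t<0$. Combining with the lower bounds from the first step shows that the two one-sided derivatives exist and both equal $\int G\, d\mu_F$. The main obstacle is the weak$^*$ convergence $\mu_t \to \mu_F$, which is precisely the content supplied by Theorem \ref{eeq3} under the SPR-type hypothesis $F\in\cE$; without this, near-maximizing sequences could escape to infinity (losing mass) and the upper bound on the difference quotient could fail.
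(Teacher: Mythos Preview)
Your proof is correct and takes essentially the same approach as the paper. The paper does not spell out the argument but defers to \cite[Theorem 10]{v}, while explicitly flagging Theorem \ref{eeq3} as the ``essential ingredient in order to prove Theorem \ref{deriva}''; your proof is precisely the standard convexity-plus-near-maximizers argument built on that ingredient, and the same idea reappears verbatim in the paper's proof of Theorem \ref{conteq}.
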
 
\begin{proof}  The proof of this result is identical to the one of \cite[Theorem 10]{v}.
\end{proof}


Using Theorem \ref{eeq3} we can prove the continuity of a family of equilibrium states for potentials in $\cE$.

\begin{theorem}[Continuity of equilibrium states]\label{conteq} Let $F$ and $G$ be H\"older potentials. Assume that for $t\in (-\e,\e)$, we have that $(F+tG)\in \cE$. Denote by $\mu_t$ to the equilibrium state of $F+tG$. Then the map $t\mapsto \mu_t$, is continuous in the weak* topology. 
\end{theorem}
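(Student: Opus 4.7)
The plan is to reduce continuity of $t\mapsto \mu_t$ to the selection principle already proved in Theorem \ref{eeq3}. Fix $t_0\in(-\e,\e)$ and let $t_n\to t_0$. I would verify that the sequence of equilibrium states $(\mu_{t_n})_n$ is a maximizing sequence for the potential $F+t_0 G$, in the sense that
\begin{equation*}
h_{\mu_{t_n}}(g)+\int (F+t_0 G)\,d\mu_{t_n}\longrightarrow P(F+t_0 G),
\end{equation*}
and then invoke Theorem \ref{eeq3} together with the uniqueness of equilibrium states for H\"older potentials (Theorem \ref{pps}) to conclude that $\mu_{t_n}\to\mu_{t_0}$ in the weak* topology.

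To establish the maximizing property, I would first rewrite, using that $\mu_{t_n}$ is the equilibrium state of $F+t_n G$,
\begin{equation*}
h_{\mu_{t_n}}(g)+\int (F+t_0 G)\,d\mu_{t_n}=P(F+t_n G)+(t_0-t_n)\int G\,d\mu_{t_n}.
\end{equation*}
Since $G$ is bounded, the second term is bounded in absolute value by $|t_0-t_n|\,\|G\|_0$, hence tends to $0$. For the first term, the elementary Lipschitz estimate $|P(F_1)-P(F_2)|\le \|F_1-F_2\|_0$ (immediate from the variational principle) gives $P(F+t_n G)\to P(F+t_0 G)$. Combining these yields the desired convergence to $P(F+t_0 G)$.

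Since $F+t_0 G\in\cE$ by hypothesis, Theorem \ref{eeq3} applies and guarantees that $(\mu_{t_n})_n$ converges, as a sequence, to an equilibrium state of $F+t_0 G$. Because $F+t_0 G$ is H\"older continuous, Theorem \ref{pps} ensures this equilibrium state is unique and equal to $\mu_{t_0}$. Hence $\mu_{t_n}\to\mu_{t_0}$ in the weak* topology, proving the continuity at $t_0$.

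The technical heart of the argument is entirely contained in Theorem \ref{eeq3}, which in turn depends on the upper semicontinuity estimate of Theorem \ref{AA} and the fact that potentials in $\cE$ satisfy the gap condition $P(F)>\delta_\infty+c(F)$ preventing escape of mass along maximizing sequences. The only point that requires care in the present proof is that Theorem \ref{eeq3} gives convergence to \emph{an} equilibrium state, so without the uniqueness input from H\"older regularity we would only obtain that every subsequential limit is an equilibrium state; the H\"older hypothesis is therefore essential and should not be dropped.
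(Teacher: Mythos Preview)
Your proof is correct and follows essentially the same approach as the paper: both arguments show that $(\mu_{t_n})_n$ is a maximizing sequence for $F+t_0 G$ and then invoke Theorem \ref{eeq3} together with the uniqueness of equilibrium states for H\"older potentials. Your presentation is in fact slightly more explicit in justifying why the error term $(t_0-t_n)\int G\,d\mu_{t_n}$ vanishes and why $P(F+t_n G)\to P(F+t_0 G)$, but the underlying argument is identical.
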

We remark that $\mu_t$ exists by Theorem \ref{eeq3} and that it is well defined (unique equilibrium state) because $F+tG$ is H\"older continuous. Observe  that the hypothesis $P(H)>\delta_\infty+c(H)$ is an open condition in $C_b(T^1M)$. In other words $\cE\subset C_b(T^1M)$ is an open subset. In particular if $F\in \cE$ and $G\in C_b(T^1M)$, then there exists $\e>0$ such that for every $t\in (-\e,\e)$ we have that $F+tG\in \cE$.  
\begin{proof}   We now claim that $(\mu_t)_t$ converges in the weak* topology to $\mu_0$ as $t$ goes to zero. Notice that 
$$\lim_{t\to 0}\big(h_{\mu_t}(g)+\int F d\mu_t\big)=\lim_{t\to 0}\big(h_{\mu_t}(g)+\int (F+tG)d\mu_t\big)=\lim_{t\to 0}P(F+tG)=P(F).$$
Since $F$ is H\"older we know that $\mu_0$ is the unique equilibrium state of $F$. We now use Theorem \ref{eeq3} to conclude that $(\mu_t)_t$ converges to $\mu_0$ in the weak* topology. The same argument can be done replacing $0$ with a point $t_0\in (-\e,\e)$. We conclude that the map $t\mapsto \mu_t$ is continuous in the weak* topology.
\end{proof}

\subsection{Strongly positive recurrent potentials in $C_0(T^1M)$}\label{7.2} In this section we will prove that SPR potentials are open and dense in $C_0(T^1M)$. The same holds for potentials in $\cH$, but for simplicity we restrict our attention to potentials that vanish at infinity. This fact should be compared with what is known for SPR potentials in countable Markov shifts \cite{csa}. Recall that a potential $F\in C_0(T^1M)$ is strongly positive recurrent if $P(F)>\delta_\infty$. The family of SPR potentials in $C_0(T^1M)$ will be denoted by $\S$.

\begin{lemma} \label{spropen}$\S$ is an open subset of $C_0(T^1M)$.
\end{lemma}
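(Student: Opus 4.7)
The plan is to exploit the fact that the topological pressure is $1$-Lipschitz on $C_b(T^1M)$ with respect to the uniform norm, while $\delta_\infty$ is a fixed constant independent of the potential. The SPR condition $P(F)>\delta_\infty$ is then an open strict inequality in a continuous function of $F$, which immediately gives openness of $\S$ in $C_0(T^1M)$.

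More concretely, first I would recall that for any invariant probability measure $\mu$ and any two potentials $F,G\in C_b(T^1M)$ one has
$$\bigg|\int F\,d\mu-\int G\,d\mu\bigg|\le \|F-G\|_0,$$
so from the variational principle $P(H)=\sup_{\mu\in\M(g)}\{h_\mu(g)+\int H\,d\mu\}$ it follows by taking suprema in both directions that
$$|P(F)-P(G)|\le \|F-G\|_0.$$
In particular $P:C_b(T^1M)\to\R$ is continuous in the $C^0$-topology, and the same holds after restriction to $C_0(T^1M)$.

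Now fix $F\in\S$, i.e.\ $F\in C_0(T^1M)$ with $P(F)>\delta_\infty$. Set $\eta=\tfrac{1}{2}(P(F)-\delta_\infty)>0$. For any $G\in C_0(T^1M)$ with $\|F-G\|_0<\eta$, the Lipschitz estimate yields
$$P(G)\ge P(F)-\|F-G\|_0>P(F)-\eta=\tfrac{1}{2}(P(F)+\delta_\infty)>\delta_\infty,$$
so $G\in\S$. This shows the open ball of radius $\eta$ around $F$ in $C_0(T^1M)$ is contained in $\S$, proving that $\S$ is open.

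There is no substantial obstacle here: the only ingredients are the variational principle (Definition of $P$), the fact that $\delta_\infty$ is a single constant not depending on the potential, and the elementary $1$-Lipschitz property of $P$ in the $C^0$-norm. I would not anticipate needing any of the more delicate tools of the paper (entropy density, the entropy-at-infinity inequality, Gibbs measures, etc.) for this particular lemma; those will only become relevant for the density statement that presumably follows.
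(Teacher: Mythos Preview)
Your argument is correct and essentially identical to the paper's own proof: both exploit the $1$-Lipschitz continuity of $P$ in the $C^0$-norm to show that the strict inequality $P(F)>\delta_\infty$ persists under small perturbations. The only cosmetic difference is that the paper takes the full gap $r=P(F)-\delta_\infty$ as the radius while you take $\eta=r/2$, but this is immaterial.
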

\begin{proof} Let $F\in \S$ and define $r=P(F)-\delta_\infty$. Observe that if $||G-F||_0<r$, then $P(G)>P(F)-r=\delta_\infty$. 
\end{proof}
\begin{lemma}\label{sprdense} $\S$ is dense in $C_0(T^1M)$. 
\end{lemma}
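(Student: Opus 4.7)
The plan is to perturb $F$ by adding a small, nonnegative, compactly supported bump concentrated on the support of a well-chosen compactly supported invariant measure, thereby boosting the pressure strictly above $\delta_\infty$ while staying in $C_0(T^1M)$ and within an arbitrarily small uniform distance of $F$.

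Fix $F \in C_0(T^1M)$ and $\varepsilon > 0$. Since $F \in C(T^1M,0)$, Lemma \ref{sprequi} gives $\delta^F_{\Gamma,\infty}=\delta_\infty+c(F)=\delta_\infty$, and Lemma \ref{entinf11} applied with zero perturbation yields $P(F)\ge \delta_\infty$. By Theorem \ref{aproxc0} (approximation of the pressure by compact invariant subsets, a consequence of the weak entropy density of the geodesic flow), we may choose a compact invariant set $K\subset T^1M$ and an invariant probability measure $\nu$ supported on $K$ with
$$h_\nu(g)+\int F\,d\nu \;>\; P(F)-\varepsilon/2 \;\ge\; \delta_\infty-\varepsilon/2.$$

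Using Urysohn's lemma on the locally compact Hausdorff space $T^1M$, pick $\phi\in C_c(T^1M)\subset C_0(T^1M)$ with $0\le \phi\le 1$ and $\phi\equiv 1$ on $K$. Set $G=F+\varepsilon\phi$, which lies in $C_0(T^1M)$ and satisfies $||G-F||_0\le \varepsilon$. Since $\supp(\nu)\subset K$, we have $\int\phi\,d\nu=1$, so by the variational principle
$$P(G)\;\ge\; h_\nu(g)+\int G\,d\nu \;=\; h_\nu(g)+\int F\,d\nu+\varepsilon \;>\; \delta_\infty+\varepsilon/2 \;>\; \delta_\infty,$$
hence $G\in\S$. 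Since $\varepsilon>0$ was arbitrary, $\S$ is dense in $C_0(T^1M)$.

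The only nontrivial input is the availability of a compactly supported invariant measure $\nu$ whose free energy $h_\nu(g)+\int F\,d\nu$ comes within $\varepsilon/2$ of $P(F)$; this is precisely Theorem \ref{aproxc0}, which was established as a direct corollary of the weak entropy density Theorem \ref{wed}. Once this compactly supported $\nu$ is in hand, the rest is a routine bump-function perturbation, made possible by the fact that $c(F)=0$ forces $P(F)\ge\delta_\infty$ to begin with, so the target $\delta_\infty$ is always accessible from below by measures nearly attaining $P(F)$.
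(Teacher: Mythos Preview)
Your proof is correct and follows essentially the same approach as the paper's own argument: both use Theorem~\ref{aproxc0} to obtain a compactly supported invariant measure nearly realizing $P(F)$, then add an $\varepsilon$-sized compactly supported bump equal to $\varepsilon$ on that support to push the pressure strictly above $\delta_\infty$. The only cosmetic difference is that the paper first splits into the trivial case $F\in\S$ and the case $P(F)=\delta_\infty$, whereas you invoke Lemma~\ref{entinf11} directly to get $P(F)\ge\delta_\infty$ and proceed uniformly; both are fine.
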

\begin{proof} Let $F\in C_0(T^1M)$, we will prove that we can approximate $F$ by SPR potentials. If $F\in \S$ there is nothing to prove. We will assume that $F$ is not strongly positive recurrent, in other words, that $P(F)=\delta_\infty$. By Theorem \ref{aproxc0} we know that the pressure of $F$ can be approximated by the pressure over compact subsets. In particular, given $\e>0$, there exists a compact invariant subset $K$ and a measure $\mu$ supported on $K$ such that $$h_\mu(g)+\int Fd\mu>P(F)-\frac{\epsilon}{2}.$$
Choose a compactly supported continuous function $G$ such that $G(x)=\e$, for every $x\in K$, and $||G||_0=\e$. Observe that 
$$h_\mu(g)+\int (F+G)d\mu=\e+h_\mu(g)+\int Fd\mu\ge P(F)+\frac{\e}{2}>\delta_\infty.$$
In particular we obtain that $H=(F+G)\in C_0(T^1M)$ is strongly positive recurrent and $||H-F||_0\le \e$. Since $\e$ was arbitrary we obtain that $\S$ is dense in $C_0(T^1M)$.
\end{proof}

Combining Lemma \ref{spropen} and Lemma \ref{sprdense} we obtain the following result. 
\begin{theorem} The space of SPR potentials $\S$ is open and dense in $C_0(T^1M)$.
\end{theorem}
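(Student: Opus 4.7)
The plan is to simply combine the two preceding lemmas, since Lemma \ref{spropen} and Lemma \ref{sprdense} together say exactly that $\mathcal{S}$ is open and dense in $C_0(T^1M)$. Nothing further is needed once both ingredients are in hand, so the ``proof'' is a one-line assembly of the two statements.

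For openness, I would note that the key input is the Lipschitz continuity of $P$ in the uniform norm: if $F\in\mathcal{S}$, set $r=P(F)-\delta_\infty>0$, and observe that any $G\in C_0(T^1M)$ with $\|G-F\|_0<r$ satisfies $P(G)\ge P(F)-\|G-F\|_0>\delta_\infty$, so $G\in\mathcal{S}$. This is exactly Lemma \ref{spropen}.

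For density, the plan is to take $F\in C_0(T^1M)$ with $P(F)=\delta_\infty$ (the interesting case) and produce small perturbations that strictly raise the pressure above $\delta_\infty$. The crucial tool is Theorem \ref{aproxc0} (proved via weak entropy density), which lets us find a compact invariant set $K$ and a probability measure $\mu$ supported on $K$ with $h_\mu(g)+\int F\,d\mu>P(F)-\varepsilon/2$. Then I would choose a compactly supported continuous $G$ with $G\equiv\varepsilon$ on $K$ and $\|G\|_0=\varepsilon$; the estimate $h_\mu(g)+\int(F+G)\,d\mu\ge P(F)+\varepsilon/2>\delta_\infty$ shows $F+G\in\mathcal{S}$ and $\|G\|_0=\varepsilon$ is arbitrarily small. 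This is exactly Lemma \ref{sprdense}.

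The only place where real content enters is density, and the main obstacle there is handled by Theorem \ref{aproxc0}: without the ability to approximate the pressure by compact invariant subsets one could not localize the boost in pressure via a compactly supported bump. Given the lemmas, the theorem follows immediately.
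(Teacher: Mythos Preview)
Your proposal is correct and follows exactly the paper's approach: the theorem is stated immediately after Lemma \ref{spropen} and Lemma \ref{sprdense} as their direct combination, with no further argument given. Your recap of the two lemmas (including the use of Theorem \ref{aproxc0} for density) matches the paper's proofs verbatim.
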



It worth mentioning that if Conjecture \ref{conj1} holds, then the results in this section automatically apply to the entire family of SPR potentials, and not simply to those in $C_0(T^1M)$, or $\cH$.

\section{An equidistribution result}\label{ld}

In this section we will obtain a large deviation estimate  (see Proposition \ref{ldpA})  and use that to prove Theorem \ref{equi} and Theorem \ref{fin1}.

 This section is motivated by the work of Pollicott \cite{pol} and Kifer \cite{kif}; we will check that their strategy extends to the non-compact case when suitable assumptions are satisfied. We will focus on large deviations upper bounds. We emphasize that large deviations lower bounds are more delicate and stronger assumptions on our potential and manifold would need to be imposed (for a detailed discusion we refer the reader to \cite[Chapter 8]{v2}). 

Let us first introduce some notation. The length of a periodic orbit of the geodesic flow  $\gamma$ is denoted by $l(\gamma)$. The periodic orbit $\gamma$ induces an invariant probability measure $\mu_\gamma\in \M(g)$; this is simply a normalization of the one dimensional Lebesgue measure on $\gamma$. We say that $\mu_\gamma$ is a \emph{periodic measure}. Let $\M_p$ be the set of periodic measures in $\M(g)$. Define 
$$\M_p(t)=\{\mu_\gamma: \gamma\text{ a periodic orbit such that }l(\gamma)\le t\},$$
and 
$$\M_p(W,t)=\{\mu_\gamma: \gamma\text{ a periodic orbit such that }l(\gamma)\le t\text{ and }\mu_\gamma(W)>0\}.$$

In this section the equilibrium state of a potential $F$ (if exists) will be denoted by $\mu_F$. This is simply $\mu_F=m_F/|m_F|$, where $m_F$ is the Gibbs measure of $F$ (see Section \ref{pretf}).


The \emph{Gurevich pressure} of a H\"older potential $F$ such that $P(F)>0$ is defined as 
$$P_{Gur}(F)=\lim_{t\to\infty}\frac{1}{t}\log \sum_{g\in \cM_p(W,t)}\exp(l(g)\int F dg),$$
where $W$ is a relatively compact open subset of $T^1M$ intersecting the non-wandering set of the geodesic flow. We emphasize that the initial definition of the Gurevich pressure in \cite{pps} differs from the one presented here; it is proved in \cite[Theorem 4.7]{pps} that both notions coincide. We also emphasize that the Gurevich pressure can be defined for potentials that do not satisfy $P(F)>0$, but we will not deal with that situation here. Note that we are only counting primitive periodic orbits, this definition and the one using all closed geodesics coincide (see \cite[Remark 9.13]{pps}). It is known that $$P_{Gur}(F)=P(F),$$
 for every  H\"older potential (see \cite[Theorem 1.1]{pps}).  We will summarize all this information in the following result. 
\begin{theorem}\label{c} Let $W$ be a relatively compact open subset of $T^1M$ intersecting the non-wandering set of the geodesic flow. Then
$$P(F)=P_{Gur}(F)=\lim_{t\to\infty}\frac{1}{t}\log \sum_{\mu_\gamma\in \cM_p(W,t)}\exp(l(\gamma)\int F d\mu_\gamma),$$
for every H\"older potential $F$ such that $P(F)>0$.
\end{theorem}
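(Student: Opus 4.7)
The plan is to assemble the theorem directly from the results of Paulin--Pollicott--Schapira \cite{pps}, since every ingredient is already available there; the only task is to verify that the three identifications in the statement really do combine as claimed, for the specific counting function appearing on the right-hand side.

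First I would recall the original definition of the Gurevich pressure from \cite{pps}: for a relatively compact open subset $W$ of $T^1M$ meeting the non-wandering set $\Omega$, one counts periodic orbits whose associated invariant probability measure charges $W$ and weights them by $\exp(l(\gamma)\int F d\mu_\gamma)$. With the standing assumption $P(F) > 0$, I would invoke \cite[Theorem 4.7]{pps} to identify this quantity with the original definition of $P_{\mathrm{Gur}}(F)$ given in that paper (which is phrased in slightly different terms, typically via the universal cover and the group $\Gamma$), and to establish that the limit exists and is independent of the choice of $W$. This gives the second equality in the statement.

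Next I would apply \cite[Theorem 1.1]{pps} (essentially the theorem already recalled in the excerpt as Theorem \ref{pps}, together with the Gurevich-pressure variant), which asserts that for any bounded H\"older continuous potential the topological pressure, the critical exponent $\delta_\Gamma^F$, and the Gurevich pressure all agree. In particular $P(F) = P_{\mathrm{Gur}}(F)$, giving the first equality.

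Finally I would justify that restricting to primitive periodic orbits does not change the limit. The contribution of non-primitive orbits of length at most $t$ is dominated by $\sum_{k \ge 2}\sum_{\mu_\gamma\in\mathcal{M}_p(W,t/k)}\exp(l(\gamma)\int F d\mu_\gamma)$, and since $P(F) > 0$ this multiplicity correction is exponentially smaller than the main term; this is precisely the observation made in \cite[Remark 9.13]{pps}. Assembling the three pieces yields the chain of equalities stated in the theorem. I do not anticipate any genuine obstacle here since the theorem is explicitly presented in the excerpt as a packaging of known results from \cite{pps}; the only care needed is to ensure the conventions (primitive vs.\ all orbits, weighting by $\int F d\mu_\gamma$ times $l(\gamma)$ vs.\ $\int_\gamma F$, and the role of $W$) match those used in \cite{pps}, which they do once one unwinds that $l(\gamma)\int F d\mu_\gamma = \int_0^{l(\gamma)}F(g_t v)\,dt$ for any $v$ tangent to $\gamma$.
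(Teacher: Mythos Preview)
Your proposal is correct and matches the paper's approach exactly: the paper does not give a standalone proof but simply packages the same three citations from \cite{pps} (Theorem~4.7 for the identification of the counting formula with their Gurevich pressure, Theorem~1.1 for $P(F)=P_{\mathrm{Gur}}(F)$, and Remark~9.13 for the primitive-versus-all-orbits point) into the stated theorem. Your additional remark unwinding $l(\gamma)\int F\,d\mu_\gamma=\int_0^{l(\gamma)}F(g_t v)\,dt$ is a helpful sanity check that the paper leaves implicit.
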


The following simple lemma will be used later.
\begin{lemma}\label{cc} Let $F\in C_b(T^1M)$ be uniformly continuous and such that $P(F)>0$. Then
$$\limsup_{t\to\infty}\frac{1}{t}\log \sum_{\mu_\gamma\in \cM_p(W,t)}\exp(l(\gamma)\int F d\mu_\gamma)\le P(F).$$
\end{lemma}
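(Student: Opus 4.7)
The plan is to reduce to the H\"older case treated by Theorem \ref{c} via a uniform $C^0$ approximation argument. Since Lipschitz functions are dense in the space of bounded uniformly continuous functions on $T^1M$ (as already used in Lemma \ref{indebase}), given $\epsilon>0$ I can pick a H\"older potential $F_\epsilon$ with $\|F-F_\epsilon\|_0<\epsilon$. Choosing $\epsilon$ small enough that $P(F)-\epsilon>0$, the variational principle gives $|P(F)-P(F_\epsilon)|\le \|F-F_\epsilon\|_0<\epsilon$, so in particular $P(F_\epsilon)>0$ and Theorem \ref{c} applies to $F_\epsilon$.

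Next I would compare the weighted sums. For any periodic orbit $\gamma$ with $l(\gamma)\le t$,
\[
\Bigl|l(\gamma)\int F\,d\mu_\gamma - l(\gamma)\int F_\epsilon\,d\mu_\gamma\Bigr| \le l(\gamma)\|F-F_\epsilon\|_0 \le t\epsilon,
\]
hence
\[
\sum_{\mu_\gamma\in \cM_p(W,t)}\exp\Bigl(l(\gamma)\int F\,d\mu_\gamma\Bigr) \le e^{t\epsilon}\sum_{\mu_\gamma\in \cM_p(W,t)}\exp\Bigl(l(\gamma)\int F_\epsilon\,d\mu_\gamma\Bigr).
\]
Taking $\tfrac{1}{t}\log$ and then $\limsup_{t\to\infty}$, Theorem \ref{c} applied to $F_\epsilon$ yields
\[
\limsup_{t\to\infty}\frac{1}{t}\log\sum_{\mu_\gamma\in \cM_p(W,t)}\exp\Bigl(l(\gamma)\int F\,d\mu_\gamma\Bigr) \le \epsilon + P(F_\epsilon) \le 2\epsilon + P(F).
\]
Since $\epsilon>0$ was arbitrary, the desired inequality follows. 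There is no real obstacle here: the statement is only the \emph{upper} bound, so I do not need to know that $P_{Gur}(F)=P(F)$ holds in the uniformly continuous category, and the continuity of pressure under $C^0$ perturbations gives everything needed.
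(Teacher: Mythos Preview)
Your proof is correct and follows essentially the same approach as the paper's own proof: approximate $F$ in $C^0$ by a H\"older potential $F_\epsilon$ with $P(F_\epsilon)>0$, use $l(\gamma)\le t$ to bound the weighted sum by $e^{t\epsilon}$ times the corresponding sum for $F_\epsilon$, apply Theorem~\ref{c}, and conclude via $|P(F)-P(F_\epsilon)|\le\epsilon$.
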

\begin{proof} Since $F\in C_b(T^1M)$ is uniformly continuous, for every $\e>0$ there exists a H\"older potential $F_\e$ such that $||F-F_\e||_0<\e$. We consider $\e>0$ sufficiently small such that $P(F_\e)>0$. Note that 
\begin{align*}\sum_{\mu_\gamma\in \cM_p(W,t)}\exp(l(\gamma)\int F d\mu_\gamma)&\le \sum_{\mu_\gamma\in \cM_p(W,t)}\exp(\e l(\gamma))\exp(l(\gamma)\int F_\e d\mu_\gamma)\\ &\le \exp(\epsilon t) \sum_{\mu_\gamma\in \cM_p(W,t)}\exp(l(\gamma)\int F_\e d\mu_\gamma).\end{align*}
Therefore 
$$\limsup_{t\to\infty}\frac{1}{t}\log \sum_{\mu_\gamma\in \cM_p(W,t)}\exp(l(\gamma)\int F d\mu_\gamma)\le\e+P(F_\e).$$
Here we used Theorem \ref{c} for $F_\e$. Finally observe that $\e+P(F_\e)\le 2\e+P(F)$, but $\e$ was an arbitrary sufficiently small positive number.
\end{proof}



\begin{definition}[G-pressure]\label{gpre} Given $G\in C_0(T^1M)$ we define $Q_G:C_0(T^1M)\to \R,$ by the formula 
$$Q_G(f)=P(G+f)-P(G).$$ 
\end{definition}
\begin{definition}[Rate function]\label{rf} Given $\mu\in \cM_{\le 1}(g)$ we define $$I_G(\mu)=\sup_{f\in C_0(T^1M)}\big\{\int fd\mu-Q_G(f)\big\}.$$
\end{definition}

Observe that by definition (take $f=0$) we know $I_G(\mu)\ge 0$. If the choice of function $G$ is clear we will use the notation $Q$ (resp. $I$) instead of $Q_G$ (resp. $I_G$). 

Definitions \ref{gpre} and \ref{rf} are the starting point of our analysis (in the compact case they were introduced in \cite{pol}).  We remark that in the non-compact case the choice of domain for the G-pressure and the rate function  might not be  canonical. For convenience we defined the rate function in the space of sub-probability measures $\M_{\le 1}(g)$; once this decision is made the domain of $Q_G$ must be contained in $C_0(T^1M)$ (to ensure duality). One good reason to justify this choice  is that in  Proposition \ref{ldpA} we want to consider arbitrary closed subsets, instead of compact subsets (by a standard Contraction Principle we  could bypassed this issue if the sequence $(\cM_p(W,t))_t$ is exponentially tight, but we prefer not to use this approach, see  \cite[Theorem 4.2.1]{dz}). 

\begin{lemma} The fuction $I_G$ is lower semicontinuous. 
\end{lemma}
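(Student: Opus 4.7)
The proof should be a one-line observation once the right lemma is invoked. The plan is to write $I_G$ as a supremum of continuous functions on $\M_{\le 1}(g)$, and then appeal to the standard fact that a pointwise supremum of lower semicontinuous (here continuous) functions is lower semicontinuous.

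More precisely, for each fixed $f \in C_0(T^1M)$ define $\Phi_f : \M_{\le 1}(g) \to \R$ by $\Phi_f(\mu) = \int f\, d\mu - Q_G(f)$. Since $Q_G(f)$ is a constant (independent of $\mu$), continuity of $\Phi_f$ with respect to the vague topology on $\M_{\le 1}(g)$ reduces to continuity of $\mu \mapsto \int f\, d\mu$, which is exactly the content of Lemma \ref{c0top}, valid because $f \in C_0(T^1M)$ (this is precisely why the domain of $Q_G$ was restricted to $C_0(T^1M)$ in Definition \ref{gpre}). Thus each $\Phi_f$ is continuous, and in particular lower semicontinuous, on $\M_{\le 1}(g)$.

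Now by definition $I_G(\mu) = \sup_{f \in C_0(T^1M)} \Phi_f(\mu)$, so $I_G$ is a pointwise supremum of lower semicontinuous functions. For any $\alpha \in \R$ the sublevel set
\[
\{\mu \in \M_{\le 1}(g) : I_G(\mu) \le \alpha\} = \bigcap_{f \in C_0(T^1M)} \{\mu : \Phi_f(\mu) \le \alpha\}
\]
is an intersection of closed sets, hence closed. This gives the lower semicontinuity of $I_G$ in the vague topology.

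There is no real obstacle here: the only nontrivial ingredient is the vague continuity of $\mu \mapsto \int f\, d\mu$ for $f \in C_0(T^1M)$, and this has already been established earlier in the paper (Lemma \ref{c0top}). The choice to use $C_0(T^1M)$ rather than $C_b(T^1M)$ in the definition of $Q_G$ is exactly what makes the argument work, a point the authors already emphasized in the discussion following Definition \ref{rf}.
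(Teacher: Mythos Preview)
Your proof is correct and follows essentially the same approach as the paper: both arguments observe that $I_G$ is a pointwise supremum of functions $\mu\mapsto \int f\,d\mu - Q_G(f)$ that are continuous in the vague topology, and deduce lower semicontinuity from this. The paper phrases it by showing the super-level sets $\{I_G>t\}$ are open, while you show the sublevel sets are closed; your explicit citation of Lemma~\ref{c0top} for the continuity of $\mu\mapsto\int f\,d\mu$ when $f\in C_0(T^1M)$ is in fact slightly more careful than the paper's appeal to ``the definition of the vague topology''.
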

\begin{proof} We want to prove that the set $I_G^{-1}( (t,\infty])$ is open. Pick a measure $\mu\in\cM_{\le 1}(g)$ such that $I_G(\mu)>t$. This implies that there exists $f\in C_0(T^1M)$ such that $\int fd\mu-Q_G(f)>t$. By the definition of the vague topology the map $\mu\mapsto \int fd\mu-Q_G(f)$, is continuous. In particular the inequality $\int fd\nu-Q_G(f)>t$, holds for measures $\nu$ in a neighorhood of $\mu$. This implies that $I_G(\nu)>t$, for every $\nu$ in this neighborhood.  
\end{proof}

The following proposition follows from the upper semicontinuity of the entropy map (see Theorem \ref{us}) and the proof of \cite[Theorem 9.12]{wa} after some mild modifications. 
\begin{proposition} \label{ldpentropy} For every $\mu\in\cM(g)$ the following formula holds
$$h_\mu(g)=\inf_{f\in C_c(T^1M)} \big\{P(f)-\int fd\mu\big\}.$$
\end{proposition}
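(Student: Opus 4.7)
The plan is to adapt the Hahn-Banach argument from Walters' proof of the analogous formula in the compact case (\cite[Theorem 9.12]{wa}), with the compactness hypothesis replaced by an upper semicontinuity statement for a suitably extended entropy functional on invariant sub-probability measures. The easy direction is immediate from the variational principle: for any $f \in C_c(T^1M) \subset C_b(T^1M)$, we have $P(f) \geq h_\mu(g) + \int f\, d\mu$, so taking the infimum gives $\inf_{f \in C_c(T^1M)} \{P(f) - \int f\, d\mu\} \geq h_\mu(g)$.

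For the hard direction, I would fix $\epsilon > 0$ and produce $f \in C_c(T^1M)$ with $P(f) - \int f\, d\mu < h_\mu(g) + \epsilon$. Work in the locally convex space $V$ of signed Radon measures on $T^1M$ with the vague topology, whose topological dual is $C_c(T^1M)$. On $\cM_{\le 1}(g)$, introduce the extended entropy $\tilde h(\nu) := |\nu| h_{\nu/|\nu|}(g) + (1 - |\nu|)\delta_\infty$ (with the convention $\tilde h(0) = \delta_\infty$) and the set $\tilde E := \{(\nu, t) \in V \times \R : \nu \in \cM_{\le 1}(g),\ t \leq \tilde h(\nu)\}$. Expanding a convex combination of sub-probabilities shows $\tilde h$ is affine on $\cM_{\le 1}(g)$, so $\tilde E$ is convex. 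Vague closedness of $\tilde E$ in $V \times \R$ follows from Theorem~\ref{i1}: given $(\nu_n, t_n) \in \tilde E$ with $(\nu_n, t_n) \to (\nu, t)$ vaguely, one passes to a subsequence with $|\nu_n| \to \beta$, normalizes $\nu_n/|\nu_n|$ to probability measures, and combines Theorem~\ref{weakvag} (upgrading vague to weak* convergence when the limit is a probability) with Theorem~\ref{i1} to conclude $t \leq \limsup \tilde h(\nu_n) \leq \tilde h(\nu)$. Since $\tilde h(\mu) = h_\mu(g)$, the point $(\mu, h_\mu(g) + \epsilon)$ lies outside $\tilde E$.

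The Hahn-Banach separation theorem now furnishes $(f, \kappa) \in C_c(T^1M) \times \R$ and $\alpha \in \R$ such that $\int f\, d\nu + \kappa t \leq \alpha < \int f\, d\mu + \kappa(h_\mu(g) + \epsilon)$ for all $(\nu, t) \in \tilde E$. Sending $t \to -\infty$ forces $\kappa \geq 0$, and $\kappa = 0$ is ruled out by testing at $(\nu, t) = (\mu, t)$ for $t \leq h_\mu(g)$ (which would yield $\int f\, d\mu \leq \alpha < \int f\, d\mu$). Normalizing $\kappa = 1$, specializing to $\nu \in \cM(g)$ with $t = h_\nu(g)$, and taking the supremum yields $P(f) \leq \alpha < \int f\, d\mu + h_\mu(g) + \epsilon$, which rearranges to the desired bound.

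The principal technical point is the verification that $\tilde E$ is vaguely closed, i.e., that $\tilde h$ is vaguely upper semicontinuous on $\cM_{\le 1}(g)$. This is a genuine strengthening of the weak* upper semicontinuity in Theorem~\ref{us} that accommodates escape of mass, and Theorem~\ref{i1} is precisely the statement that supplies it, thereby validating the Hahn-Banach argument in the non-compact setting.
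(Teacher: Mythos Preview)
Your proof is correct and runs the same Hahn--Banach separation argument the paper has in mind (Walters' \cite[Theorem~9.12]{wa}), carried out in the vague topology so that the separating functional lands in $C_c(T^1M)$. The one difference is the upper-semicontinuity input: you extend the entropy to $\cM_{\le 1}(g)$ via $\tilde h$ and invoke Theorem~\ref{i1} to show the subgraph $\tilde E$ is vaguely closed, whereas the paper points only to Theorem~\ref{us}. The weaker input in fact suffices: one needs merely that $(\mu, h_\mu(g)+\epsilon)$ lies outside the vague closure of $E=\{(\nu,t):\nu\in\cM(g),\ t\le h_\nu(g)\}$, and since $\mu$ is a probability measure, any sequence $(\nu_n,t_n)\in E$ with $\nu_n\to\mu$ vaguely automatically converges weak* by Theorem~\ref{weakvag}, so Theorem~\ref{us} already gives $\limsup t_n\le\limsup h_{\nu_n}(g)\le h_\mu(g)<h_\mu(g)+\epsilon$. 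Your route has the virtue of explicitly identifying the closure $\overline{E}$ as $\tilde E$ (which, together with Theorem~\ref{varpri}, it indeed is), but for the present statement the paper's lighter hypothesis is enough.
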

\begin{remark} Observe that by definition of the topological pressure we have $h_\mu(g)+\int fd\mu\le P(f)$. In particular $$h_\mu(g)\le \inf_{f\in C_0(T^1M)}\big\{P(f)-\int fd\mu\big\}.$$ Proposition \ref{ldpentropy} gives the lower bound $h_\mu(g)\ge \inf_{f\in C_c(T^1M)}\big\{P(f)-\int fd\mu\big\}$. In particular $h_\mu(g)\ge \inf_{f\in C_0(T^1M)}\big\{P(f)-\int fd\mu\big\}$. We conclude that the formula in Proposition \ref{ldpentropy}  holds when replacing $C_c(T^1M)$ by $C_0(T^1M)$. 
\end{remark}

The following lemma was taken from \cite{pol}. For completeness we provide a proof. 

\begin{lemma}  \label{lemrate} Let $G\in C_0(T^1M)$. Then for every $\mu\in\cM(g)$ the following formula holds
$$I_G(\mu)=P(G)-(h_\mu(g)+\int Gd\mu).$$
\end{lemma}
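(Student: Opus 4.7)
The plan is to unwind the definition of $I_G$ by substituting the definition of $Q_G$, absorbing the $G$ via a change of variables, and then recognizing the resulting expression as the Legendre-type formula for entropy provided by Proposition \ref{ldpentropy} (in its $C_0$-version given in the Remark immediately following).

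First, I would write, directly from Definition \ref{rf} and Definition \ref{gpre},
\[
I_G(\mu)=\sup_{f\in C_0(T^1M)}\Bigl\{\int f\,d\mu-P(G+f)+P(G)\Bigr\}=P(G)-\inf_{f\in C_0(T^1M)}\Bigl\{P(G+f)-\int f\,d\mu\Bigr\}.
\]
Next I would perform the substitution $h=G+f$. Because $G\in C_0(T^1M)$ and $C_0(T^1M)$ is a linear space, the map $f\mapsto G+f$ is a bijection of $C_0(T^1M)$ onto itself; hence as $f$ ranges over $C_0(T^1M)$, so does $h$. Under this substitution $\int f\,d\mu=\int h\,d\mu-\int G\,d\mu$, so
\[
\inf_{f\in C_0(T^1M)}\Bigl\{P(G+f)-\int f\,d\mu\Bigr\}=\int G\,d\mu+\inf_{h\in C_0(T^1M)}\Bigl\{P(h)-\int h\,d\mu\Bigr\}.
\]

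The final step is to identify the right-hand infimum with $h_\mu(g)$. This is precisely Proposition \ref{ldpentropy} together with the Remark that follows it, which asserts
\[
h_\mu(g)=\inf_{f\in C_0(T^1M)}\Bigl\{P(f)-\int f\,d\mu\Bigr\}
\]
for every $\mu\in\cM(g)$. Combining the three displayed equalities yields
\[
I_G(\mu)=P(G)-\int G\,d\mu-h_\mu(g),
\]
as desired.

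There is essentially no obstacle; the only point worth checking is that the change of variables lands back in the admissible test class, and this is exactly where the hypothesis $G\in C_0(T^1M)$ (rather than merely $G\in C_b(T^1M)$) is used. If one wished to remove that hypothesis one would need to enlarge the domain of the supremum defining $I_G$ accordingly, so the current formulation is tight.
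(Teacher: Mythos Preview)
Your proof is correct and follows essentially the same approach as the paper's: both rewrite the supremum as an infimum, perform the change of variables $h=G+f$ (which is a bijection of $C_0(T^1M)$ precisely because $G\in C_0(T^1M)$), and then invoke Proposition~\ref{ldpentropy} in its $C_0$-form to identify the remaining infimum with $h_\mu(g)$. The only cosmetic difference is the order of the algebraic steps; your closing remark on why the hypothesis $G\in C_0(T^1M)$ is needed is a nice addition.
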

\begin{proof}
Observe that 
\begin{align*}
I_G(\mu)=&\sup_{f\in C_0(T^1M)}\big\{\int (G+f)d\mu-P(G+f)\big\} + P(G)-\int G d\mu\\
=&-\inf_{f\in C_0(T^1M)}\big\{P(G+f)-\int (G+f)d\mu\big\}+ P(G)-\int G d\mu\\
=&-\inf_{g\in C_0(T^1M)}\big\{P(g)-\int gd\mu\big\}+ P(G)-\int G d\mu\\
=&P(G)-(h_\mu(g)+\int Gd\mu).
\end{align*}
\end{proof}


\subsection{Upper bound for closed sets} 
Our next result follows closely the proof of \cite[Theorem 1 (i)]{pol} (also see \cite{kif}). The main difference is that we work with sub-probability measures and our formula for the Gurevich pressure only works when $P(F)>0$. Let $W$ be a relatively compact open subset of $T^1M$ intersecting the non-wandering set of the geodesic flow. 

\begin{proposition} \label{ldpA} Let $G$ be a H\"older potential in $C_0(T^1M)$ such that $P(G)>0$. Let $\cK$ be a closed subset of $\cM_{\le 1}(g)$. Suppose there exists $\nu\in \cK\cap \M(g)$ such that $h_\nu(g)+\int Gd\nu\ge 0$. Then we have $$\limsup_{t\to\infty}\frac{1}{t}\log \bigg( \frac{\sum_{\mu_\tau\in \cM_p(W,t)\cap \cK} \exp(l(\tau)\int G d\mu_\tau) }{\sum_{\mu_\tau\in \cM_p(W,t)} \exp(l(\tau)\int Gd\mu_\tau )} \bigg)\le -\beta,$$
where $\beta=\inf_{\mu\in \cK} I_G(\mu)$.
\end{proposition}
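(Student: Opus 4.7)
The strategy is the classical Kifer--Pollicott large deviations upper bound, adapted to the vague topology on $\mathcal{M}_{\le 1}(g)$. Two preliminary observations drive everything. First, $\mathcal{K}$ is vague-compact, being a closed subset of the vague-compact space $\mathcal{M}_{\le 1}(g)$ (Section \ref{measureth}). Second, the hypothesis on $\nu$ combined with Lemma \ref{lemrate} gives $I_G(\nu) = P(G) - (h_\nu(g) + \int G d\nu) \le P(G)$, so that $\beta \le P(G)$; this inequality is the essential input ensuring the positivity of a certain auxiliary pressure, which is needed to invoke Lemma \ref{cc}.

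Fix $\epsilon > 0$. For each $\mu \in \mathcal{K}$, since $I_G(\mu) \ge \beta$, Definition \ref{rf} furnishes some $f \in C_0(T^1M)$ with $\int f d\mu - Q_G(f) > \beta - \epsilon$. Because the topological pressure is convex in its argument, the function $h(t) := t\int f d\mu - \bigl(P(G+tf) - P(G)\bigr)$ is concave and continuous on $[0,1]$, with $h(0) = 0$ and $h(1) > \beta - \epsilon$. By the intermediate value theorem there exists $t^* \in (0,1]$ with $h(t^*) \in (\beta - \epsilon, \beta]$ (the case $\beta = 0$ is trivial since the ratio is bounded by $1$). Setting $f_\mu := t^* f \in C_0(T^1M)$ and $c_\mu := \int f_\mu d\mu - \epsilon$, and invoking the vague-continuity of $\mu' \mapsto \int f_\mu d\mu'$ from Lemma \ref{c0top}, we obtain a vague-open neighborhood $U_\mu \ni \mu$ on which $\int f_\mu d\mu' > c_\mu$.

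For any periodic measure $\mu_\tau \in U_\mu \cap \cM_p(W,t)$, using that $\mu_\tau$ is a probability measure,
\[
\exp\!\left(l(\tau) \int G d\mu_\tau\right) \le \exp\!\left(l(\tau) \int (G + f_\mu - c_\mu) d\mu_\tau\right).
\]
Set $\widetilde{G}_\mu := G + f_\mu - c_\mu$, which is uniformly continuous. One computes
\[
P(\widetilde{G}_\mu) = P(G) - \Bigl(\textstyle\int f_\mu d\mu - Q_G(f_\mu)\Bigr) + \epsilon \ge P(G) - \beta + \epsilon > 0,
\]
where the last strict inequality crucially uses $\beta \le P(G)$. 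Hence Lemma \ref{cc} applies to $\widetilde{G}_\mu$ and yields
\[
\limsup_{t\to\infty} \tfrac{1}{t}\log \sum_{\mu_\tau \in U_\mu \cap \cM_p(W,t)} \exp\!\left(l(\tau) \int G d\mu_\tau\right) \le P(\widetilde{G}_\mu) < P(G) - \beta + 2\epsilon.
\]

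To conclude, extract a finite subcover $U_{\mu_1}, \ldots, U_{\mu_n}$ of $\mathcal{K}$ from $\{U_\mu\}_{\mu\in\mathcal{K}}$. Since $\limsup \tfrac{1}{t}\log$ of a sum of finitely many positive terms equals the maximum of the individual $\limsup$s, one gets
\[
\limsup_{t\to\infty} \tfrac{1}{t}\log \sum_{\mu_\tau \in \mathcal{K} \cap \cM_p(W,t)} \exp\!\left(l(\tau) \int G d\mu_\tau\right) \le P(G) - \beta + 2\epsilon.
\]
Combined with Theorem \ref{c} applied to $G$ (which gives that the denominator grows like $e^{tP(G)+o(t)}$), the logarithm of the ratio is at most $-\beta + 2\epsilon$; letting $\epsilon \to 0$ yields the claim. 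The main subtle point is the strict positivity of $P(\widetilde{G}_\mu)$ required to apply Lemma \ref{cc}; this is precisely where the hypothesis $h_\nu(g) + \int G d\nu \ge 0$ enters, via $\beta \le P(G)$.
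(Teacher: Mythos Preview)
Your proof is correct and follows the same Kifer--Pollicott strategy as the paper: cover the vague-compact set $\cK$ by finitely many vaguely open sets determined by $C_0$ test functions, shift the potential on each piece, apply Lemma \ref{cc} (which requires positive pressure), and handle the denominator via Theorem \ref{c}.

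The one substantive difference is how you arrange positivity of the shifted pressure. The paper normalizes directly to $H_i:=G+f_i-P(G+f_i)+\epsilon$, forcing $P(H_i)=\epsilon$, and then separates out a factor $e^{l(\tau)(P(G)-\beta)}$ which it bounds by $e^{t(P(G)-\beta)}$ using $l(\tau)\le t$ together with $P(G)\ge\beta$. You instead exploit convexity of the pressure and the intermediate value theorem to rescale $f$ to $f_\mu=t^*f$ so that $\int f_\mu\,d\mu-Q_G(f_\mu)\in(\beta-\epsilon,\beta]$; this makes $P(\widetilde G_\mu)=P(G)-h(t^*)+\epsilon\ge\epsilon$ hold directly, with no residual $l(\tau)$ factor to control. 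Both routes use the hypothesis on $\nu$ only through the consequence $\beta\le P(G)$. Your version packages everything into a single shifted potential at the cost of the extra convexity/IVT step; the paper's version is arguably more transparent in isolating exactly where $l(\tau)\le t$ and $\beta\le P(G)$ enter.
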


We remark that the condition $P(G)>0$ and the assumption that $\cK$ contains a probability measure such that $h_\nu(g)+\int Gd\nu\ge 0$ can be removed if we sum over sets of the form $$\M_p(W,t,t-c)=\{\mu_\gamma: \gamma\text{ periodic such that }l(\gamma)\in [t-c,t]\text{ and }\mu_\gamma(W)>0\},$$
where $c$ is sufficiently large (see \cite[Chapter 8]{v2}). The statement of Proposition \ref{ldpA} is enough for our purposes--we want to obtain an equidistribution result when summing over $\M_p(W,t)$, and not just $\M_p(W,t, t-c)$. 

\begin{proof}  From now on we will use the notation $I=I_G$, and $Q=Q_G$.
We remark that since $K\subset \cM_{\le1}(g)$ is closed, it is immediately compact. 
For every $\epsilon>0$ we have
$$\cK\subset\{\mu\in \cM_{\le 1}(g): I(\mu)>\beta-\epsilon\}.$$
By the definition of $I(\mu)$ we have that  $$\{\mu\in \cM_{\le 1}(g): I(\mu)>\beta-\epsilon\}=\bigcup_{f\in C_0(T^1M)}\{\mu\in\cM_{\le1 }(g): \int f d\mu-Q(f)>\beta-\epsilon\}.$$
Define $\cV_f =\{\mu\in\cM_{\le 1}(g): \int f d\mu-Q(f)>\beta-\epsilon\}$, and observe that $\cV_f$ is open in the   vague topology (since $f\in C_0(T^1M)$). By the compactness of $\cK$ we obtain a  finite subcover $K\subset \bigcup_{i=1}^N\cV_{f_i}$. Define $\cU_{f_i}(t)=\cV_{f_i}\cap \cM_p(W,t)\cap \cK$. We have the following inequality

\begin{align*}
\frac{\sum_{\mu_\tau\in \cM_p(W,t)\cap\cK} \exp(l(\tau)\int G d\mu_\tau) }{\sum_{\mu_\tau\in \cM_p(W,t)} \exp(l(\tau)\int Gd\mu_\tau )} &\le \sum_{i =1}^N\frac{\sum_{\mu_\tau\in \cU_{f_i}(t)} \exp(l(\tau)\int G d\mu_\tau) }{\sum_{\mu_\tau\in \cM_p(W,t)} \exp(l(\tau)\int Gd\mu_\tau )} \\
& = \sum_{i=1}^N  \frac{\sum_{\mu_\tau\in \cU_{f_i}(t)} \exp(l(\tau)\int (G +f_i) d\mu_\tau)e^{-l(\tau)\int f_id\mu_\tau}}{\sum_{\mu_\tau\in \cM_p(W,t)} \exp(l(\tau)\int Gd\mu_\tau) } \\
&\le \sum_{i=1}^N \frac{\sum_{\mu_\tau\in \cU_{f_i}(t)} \exp(l(\tau)\int (G +f_i) d\mu_\tau)e^{-l(\tau)(Q(f_i)+\beta-\epsilon)}}{\sum_{\mu_\tau\in \cM_p(W,t)} \exp(l(\tau)\int Gd\mu_\tau) }. \\
\end{align*}
Define $H_i:=G+f_i-P(G+f_i)+\e$. With this notation and the formula $Q(f_i)=P(G+f_i)-P(G)$ our last inequality becomes 
\begin{align}\label{ccc}
\frac{\sum_{\mu_\tau\in \cM_p(W,t)\cap\cK} \exp(l(\tau)\int G d\mu_\tau) }{\sum_{\mu_\tau\in \cM_p(W,t)} \exp(l(\tau)\int Gd\mu_\tau )}  &\le \sum_{i=1}^N \frac{\sum_{\mu_\tau\in \cU_{f_i}(t)} \exp(l(\tau)\int H_i d\mu_\tau)e^{l(\tau)(P(G)-\beta)}}{\sum_{\mu_\tau\in \cM_p(W,t)} \exp(l(\tau)\int Gd\mu_\tau) }. 
\end{align}
Observe that $P(H_i)=\e>0$, in particular Lemma \ref{cc} implies that $$\limsup_{t\to\infty}\frac{1}{t}\log \sum_{\mu_\tau\in \cU_{f_i}(t)} \exp(l(\tau)\int H_i d\mu_\tau)\le P(H_i)=\e.$$
Recall that $l(\gamma)\le t$ whenever $\mu_\gamma\in \M_p(W,t)$. Moreover 
$$P(G)-\beta=P(G)-\inf_{\mu\in \cK}I(\mu)\ge P(G)-I(\nu)=h_\nu(g)+\int Gd\nu\ge 0.$$ Therefore
\begin{align*} \limsup_{t\to \infty}\frac{1}{t}\log \bigg(\frac{\sum_{\mu_\tau\in \cU_{f_i}(t)} \exp(l(\tau)\int H_id\mu_\tau)e^{l(\tau)(P(G)-\beta)}}{\sum_{\mu_\tau\in \cM_p(W,t)} \exp(l(\tau)\int Gd\mu_\tau) }\bigg) &\le \epsilon+(P(G)-\beta)-P(G)\\
&=-\beta+\e. 
\end{align*}
Here we used the fact that $\lim_{t\to\infty}\frac{1}{t}\log \sum_{\mu_\tau\in \cM_p(W,t)} \exp(l(\tau)\int Gd\mu_\tau)$, is well defined and equal to $P(G)$ (see Theorem \ref{c}). Combining this with the inequalities $$\limsup_{t\to\infty}\frac{1}{t}\log \big(\sum_{i=1}^N a_i(t)\big)\le\max_{i\in\{1,..,N\}} \limsup_{t\to\infty}\frac{1}{t}\log a_i(t),$$
and  (\ref{ccc}), we obtain that
\begin{align*}
\limsup_{t\to\infty} \frac{1}{t} \log \bigg( \frac{\sum_{\mu_\tau\in \cM_p(W,t)\cap\cK} \exp(l(\tau)\int G d\mu_\tau) }{\sum_{\mu_\tau\in \cM_p(W,t)} \exp(l(\tau)\int Gd\mu_\tau )}\bigg)\le -\beta+\epsilon,
\end{align*}
but $\epsilon>0$ was arbitrary.

\end{proof}

Our next lemma is the reason why we have to restrict to SPR potentials in  Theorem \ref{equi}.

\begin{lemma} \label{ldplem} Let $H\in C_0(T^1M)$ be a H\"older SPR potential. Then the only measure $\mu\in\cM_{\le 1}(g)$ satisfying the equation $$I_H(\mu)=0,$$ is the equilibrium state of $H$.  
\end{lemma}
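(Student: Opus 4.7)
The plan is to recognize the condition $I_H(\mu)=0$ as a subgradient-at-$0$ condition for the convex functional $Q_H\colon C_0(T^1M)\to\R$, and then to read off from Theorem \ref{deriva} that this subgradient is forced to coincide with the unique equilibrium state of $H$. By the definition of $I_H$, the hypothesis $I_H(\mu)=0$ is equivalent to
$$\int f\,d\mu\le Q_H(f)=P(H+f)-P(H)\qquad\text{for every }f\in C_0(T^1M).$$

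First I would observe that since $H\in C_0(T^1M)$ we have $c(H)=0$, so the SPR hypothesis $P(H)>\delta_\infty$ is exactly the condition $H\in\cE$. Hence Theorem \ref{eeq} supplies an equilibrium state for $H$, and the H\"older hypothesis combined with Theorem \ref{pps} makes it unique; denote it by $\mu_H$. Theorem \ref{deriva} then gives the first order expansion
$$P(H+tg)=P(H)+t\int g\,d\mu_H+o(t)\qquad\text{as }t\to 0,$$
for every $g\in C_b(T^1M)$, hence in particular for every $g\in C_0(T^1M)$.

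Substituting $f=tg$ with $t>0$ into the inequality above and dividing by $t$ yields, after letting $t\downarrow 0$, the bound $\int g\,d\mu\le\int g\,d\mu_H$. Applying the same argument to $-g\in C_0(T^1M)$ gives the reverse inequality, and therefore $\int g\,d\mu=\int g\,d\mu_H$ for every $g\in C_0(T^1M)\supset C_c(T^1M)$. Since the vague topology on $\cM_{\le 1}(g)$ is precisely the topology of duality against $C_c(T^1M)$, this forces $\mu=\mu_H$ in $\cM_{\le 1}(g)$; in particular $|\mu|=1$ and $\mu$ is the equilibrium state of $H$. The argument is essentially a formal consequence of Theorem \ref{deriva} once the Legendre--Fenchel interpretation of $I_H$ is in place, so there is no serious obstacle at this point; the substantive analytic content was already absorbed into Theorem \ref{deriva}, whose proof relies on the upper semicontinuity of the entropy map provided by Theorem \ref{i1}.
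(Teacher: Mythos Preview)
Your proof is correct and follows essentially the same route as the paper: translate $I_H(\mu)=0$ into the tangent inequality $\int f\,d\mu\le P(H+f)-P(H)$, substitute $f=tg$, divide by $t$, and invoke Theorem~\ref{deriva} to conclude $\mu=\mu_H$. You supply a bit more detail than the paper does (checking explicitly that $H\in\cE$ so that Theorem~\ref{deriva} applies, and using $-g$ to upgrade the inequality to equality), but the argument is the same.
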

\begin{proof} Recall that $I_H(\mu)\ge 0$, for every $\mu\in\cM_{\le 1}(g)$. If $I_H(\mu)=0$,  we must have 
$$\int fd\mu-P(H+f)+P(H)\le 0,$$
for every $f\in C_0(T^1M)$. In particular we have $t\int fd\mu\le P(H+tf)-P(H)$. Suppose $t>0$, divide by $t$ and take $t\to 0$. By Theorem \ref{deriva} we obtain $\int fd\mu\le \int fd\mu_H$, where $\mu_H$ is the equilibrium state of $H$. Since $f$ was an arbitrary function in $C_0(T^1M)$ we necessarily have $\mu=\mu_H$. 
\end{proof}
 If we assume that $H\in C_0(T^1M)$ is not strongly positive recurrent, then $$I_H(\mu_0)=\sup_{f\in C_0(T^1M)}\{P(H)-P(H+f)\}=P(H)-\inf_{f\in C_0(T^1M)} P(H+f),$$
where $\mu_0$ is the zero measure. By Theorem \ref{carla2} for $F=0$ we know that $$\inf_{f\in C_0(T^1M)} P(H+f)=\inf_{g\in C_0(T^1M)}P(g)=\delta_\infty.$$ Since $H$ is not strongly positive recurrent we get that $I_H(\mu_0)=0$. In this case the conclusion of Lemma \ref{ldplem} does not hold.

\begin{remark}\label{carlane} Let $H\in C_0(T^1M)$ be a H\"older SPR potential. Observe that if $\cK\subset \M_{\le 1}(g)$ is a closed set that does not contain the equilibrium state of $H$ then $\inf_{\mu\in \cK}I_G(\mu)>0$. Argue by contradiction and assume that there exists $(\mu_n)_n\subset \cK$ such that $\lim_{n\to\infty} I_G(\mu_n)=0$. By the compactness of $\cK$  we can assume that $(\mu_n)_n$ converges in the vague topology to some $\mu\in \cK$. Since $I_G$ is lower semicontinuous we conclude that $$0=\liminf_{n\to\infty}I_G(\mu_n)\ge I_G(\mu).$$ Therefore $I_G(\mu)=0$, which by Lemma \ref{ldplem} implies that $\cK$ contains the equilibrium state of $G$, a contradiction. 
\end{remark}

As before we assume that $W$ is a relatively compact open subset of $T^1M$ intersecting the non-wandering set of the geodesic flow. Define $$m_p(W,t)=\sum_{\gamma\in \M_p(W,t)}\exp(l(\gamma)\int F d\mu_\gamma),$$ and 
$$\mu_p(W,t)=\frac{1}{m_{p}(W,t)}\sum_{\mu_\gamma\in \M_p(W,t)}\exp(l(\gamma)\int F d\mu_\gamma)\mu_\gamma,$$
whenever $m_p(W,t)$ is positive. By definition $\mu_p(W,t)$ is an invariant probability measure. We first obtain the following equidistribution result which is a standard application of a large deviation estimate like Proposition \ref{ldpA} (see \cite{pol}). 

\begin{theorem}\label{equi2} Let $F\in C_0(T^1M)$ be a H\"older SPR potential such that $P(F)>0$. Then the sequence $(\mu_p(t))_t$ converges to the equilibrium state $\mu_F$ of $F$ in the weak* topology. 
\end{theorem}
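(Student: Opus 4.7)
The plan is to deduce weak* convergence from the large deviations bound of Proposition \ref{ldpA} combined with the uniqueness statement in Lemma \ref{ldplem}. Since $\mu_F$ is a probability measure, Theorem \ref{weakvag} reduces this to showing vague convergence $\mu_p(W,t)\to\mu_F$. So I will fix $f\in C_c(T^1M)$ and prove that $\int f\,d\mu_p(W,t)\to \int f\,d\mu_F$.

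Given $\epsilon>0$, introduce the vaguely closed subsets of $\M_{\le 1}(g)$
\[
\cK^\pm=\bigg\{\mu\in\M_{\le 1}(g):\pm\bigg(\int f\,d\mu-\int f\,d\mu_F\bigg)\ge\epsilon\bigg\},
\]
whose closedness follows from Lemma \ref{c0top} applied to $f\in C_c(T^1M)\subset C_0(T^1M)$. Neither set contains $\mu_F$, so Lemma \ref{ldplem} together with Remark \ref{carlane} give $\beta:=\min\{\inf_{\mu\in\cK^+}I_F(\mu),\;\inf_{\mu\in\cK^-}I_F(\mu)\}>0$. Proposition \ref{ldpA} then produces a constant $c>0$ such that
\[
\sum_{\mu_\gamma\in\M_p(W,t)\cap(\cK^+\cup\cK^-)}e^{l(\gamma)\int F\,d\mu_\gamma}\le e^{-ct}\,m_p(W,t)
\]
for all sufficiently large $t$. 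The mild auxiliary hypothesis on $\cK^\pm$ is either trivial when $\cK^\pm\cap\M(g)=\emptyset$ (the sum above is then empty) or follows by forming an affine combination of any element of $\cK^\pm\cap\M(g)$ with $\mu_F$, using that entropy is affine on $\M(g)$ and $P(F)>0$.

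From this the conclusion is a routine estimate. Writing
\[
\int f\,d\mu_p(W,t)-\int f\,d\mu_F=\frac{1}{m_p(W,t)}\sum_{\mu_\gamma\in\M_p(W,t)}e^{l(\gamma)\int F\,d\mu_\gamma}\bigg(\int f\,d\mu_\gamma-\int f\,d\mu_F\bigg)
\]
and splitting the sum according to whether $\mu_\gamma\in\cK^+\cup\cK^-$ or not, I bound the first piece by $2||f||_0\, e^{-ct}$ using the display above, and the second piece by $\epsilon$ using the definition of $\cK^\pm$. Thus $\limsup_{t\to\infty}|\int f\,d\mu_p(W,t)-\int f\,d\mu_F|\le\epsilon$, and since $\epsilon>0$ was arbitrary the desired convergence follows.

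The main obstacle is securing $\beta>0$; this is precisely where the SPR hypothesis is essential, via Lemma \ref{ldplem} which identifies $\mu_F$ as the unique zero of $I_F$. As noted in the discussion following Lemma \ref{ldplem}, when $F$ fails to be SPR the zero measure is also a zero of $I_F$, so the rate function no longer isolates $\mu_F$ and one would have to contend with partial escape of mass in the argument. The verification of the auxiliary hypothesis of Proposition \ref{ldpA} is a minor technical point handled via the affine structure of $\M(g)$.
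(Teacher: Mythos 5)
Your strategy is the same as the paper's: reduce to testing against a class of functions (you use $f\in C_c(T^1M)$ plus Theorem \ref{weakvag}, the paper tests directly against $H\in C_0(T^1M)$ -- an immaterial difference), split the weighted sum over periodic measures according to a closed deviation set, kill the deviating part with Proposition \ref{ldpA} together with Lemma \ref{ldplem} and Remark \ref{carlane}, and bound the remaining part by $\epsilon$. The final estimate, the closedness of your sets, and the argument that $\beta>0$ are all fine.

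The flaw is in your verification of the auxiliary hypothesis of Proposition \ref{ldpA}, namely the existence of $\nu\in\cK\cap\M(g)$ with $h_\nu(g)+\int F\,d\nu\ge 0$. If $\eta\in\cK^+\cap\M(g)$ has $h_\eta(g)+\int F\,d\eta<0$ and you pass to $\eta_s=s\eta+(1-s)\mu_F$, then indeed $h_{\eta_s}(g)+\int F\,d\eta_s\ge 0$ for $s$ small (affinity of the entropy and $P(F)>0$), but the deviation scales by the same factor: $\int f\,d\eta_s-\int f\,d\mu_F=s\big(\int f\,d\eta-\int f\,d\mu_F\big)$, which can fall below $\epsilon$; for instance, if $\int f\,d\eta-\int f\,d\mu_F=\epsilon$ exactly, every $s<1$ takes $\eta_s$ out of $\cK^+$. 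So for your \emph{fixed} $\epsilon$ the affine combination need not land in $\cK^\pm$, and the hypothesis of Proposition \ref{ldpA} is not secured by this argument. The paper's proof is arranged precisely to avoid this: it first fixes $\eta$ with $\epsilon_0=|\int H\,d\eta-\int H\,d\mu_F|>0$, forms $\eta_1=t_0\eta+(1-t_0)\mu_F$ with $h_{\eta_1}(g)+\int F\,d\eta_1>0$, and then runs the large-deviation estimate only for $\epsilon\in(0,t_0\epsilon_0)$, for which $\eta_1$ does belong to the (two-sided) deviation set; since one only needs $\limsup_{t\to\infty}|\int f\,d\mu_p(W,t)-\int f\,d\mu_F|\le\epsilon$ for arbitrarily small $\epsilon$, this restriction costs nothing. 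Reordering your quantifiers in the same way -- equivalently, performing your splitting with a smaller threshold $\epsilon'\le t_0\epsilon_0$ in place of $\epsilon$, which only improves the bound on the non-deviating part -- repairs the proof; as written, the sentence ``follows by forming an affine combination'' is a genuine gap.
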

\begin{proof}  Since $(m_p(W,t))_t$ and $\mu_F$ are probability measures it is enough to prove that for every $H\in C_0(T^1M)$ we have that $$\lim_{t\to\infty}\int Hd\mu_p(W,t)=\int Hd\mu_F,$$ (see Theorem \ref{weakvag}). 

Fix $H\in C_0(T^1M)$ such that $\{\int Hd\mu:\mu\in \M(g)\}$ has more than one element (otherwise there is nothing to prove). Choose $\eta\in \M(g)$ such that $\epsilon_0:=|\int Hd\eta- \int H d\mu_F|>0$. By definition $h_{\mu_F}(g)+\int Fd\mu_F=P(F)>0$. Choose $t_0$ sufficiently small such that $\eta_1:=t_0\eta+(1-t_0)\mu_F$ satisfies $h_{\eta_1}(g)+\int Fd\eta_1>0$. Observe that $|\int H d\eta_1-\int Hd\mu_F|=t_0|\int H d\eta-\int H d\mu_F|=t_0\epsilon_0>0$. This implies that the set
$$\cK_\e:=\{\mu\in \M_{\le 1}(g): |\int Hd\mu-\int Hd\mu_F|\ge \epsilon\},$$
contains a measure $\nu\in \M(g)$ such that $h_{\nu}(g)+\int F d\nu>0$, provided $\epsilon\in (0,t_0\e_0)$. Since $H\in C_0(T^1M)$ the set $\cK_\e$ is closed. Let 
$$A=\sum_{\mu_\gamma\in \M_p(W,t)\cap \cK_\e}e^{l(\gamma)\int Fd\mu_\gamma}\big(\int Hd\mu_\gamma-\int Hd\mu_F\big),$$
and 
$$B=\sum_{\mu_\gamma\in \M_p(W,t)\cap \cK_\e^c}e^{l(\gamma)\int Fd\mu_\gamma}\big(\int Hd\mu_\gamma-\int Hd\mu_F\big).$$
Observe that $A\le 2 \sum_{\mu_\gamma\in \M_p(W,t)\cap \cK_\e}e^{l(\gamma)\int Fd\mu_\gamma}||H||_0$, and $B\le \epsilon m_p(W,t)$. Then
\begin{align*}\label{y}|\int Hd\mu_p(W,t)-\int Hd\mu_F|\le \frac{|A|+|B|}{m_p(W,t)}\le 2\frac{ \sum_{\mu_\gamma\in \M_p(W,t)\cap \cK_\e}e^{l(\gamma)\int Fd\mu_\gamma}}{m_p(W,t)}+\e.\end{align*}
For $\e\in (0,t_0\e_0)$ the set $\cK_\e$ satisfies all the hypotheses of Proposition \ref{ldpA}. In particular we have that  
$$\limsup_{t\to\infty}\frac{1}{t}\log \bigg( \frac{\sum_{\mu_\gamma\in \cM_p(W,t)\cap \cK_\e} \exp(l(\gamma)\int F d\mu_\gamma) }{\sum_{\mu_\gamma\in \cM_p(W,t)} \exp(l(\gamma)\int Fd\mu_\gamma)} \bigg)\le -\beta_\e,$$
where $\beta_\e:=\inf_{\mu\in \cK_\e} I_F(\mu)$. Since $\mu_F$ does not belong to $\cK_\e$ we know that $\beta_\e>0$ (see Remark \ref{carlane}). In particular we obtain that 
$$\limsup_{t\to\infty}|\int Hd\mu_p(W,t)-\int Hd\mu_F|\le \e.$$
Since $\e$ was an arbitrary number in $(0,t_0\e_0)$, we conclude $$\lim_{t\to\infty}\int Hd\mu_p(W,t)=\int H d\mu_F.$$

\end{proof}

We will need an important equidistribution result obtained in \cite[Theorem 9.11 (2)]{pps}. When $M$ is convex-cocompact this result was obtained by Bowen \cite{bo9}, \cite{bo8},\cite{b73} and generalized by Roblin \cite[Theorem 5.1.1]{ro} when $F=0$. We remark that in the notation of \cite{pps} the vague convergence (resp. weak* convergence) is called weak* convergence (resp. vague convergence). In our language  \cite[Theorem 9.11 (2)]{pps} becomes
\begin{theorem}\label{equipps} Let $(M,g)$ be a pinched negatively curved manifold and assume that the geodesic flow is topologically mixing. Let $F\in C_b(T^1M)$ be a H\"older potential which admits an equilibrium state $\mu_F$ and such that $P(F)>0$. Then 
$$P(F)te^{-P(F)t}\sum_{\mu_\gamma\in \M_p(t)}\exp(l(\gamma)\int Fd\mu_\gamma)\mu_\gamma,$$
converges in the vague topology to $\mu_F$. 
\end{theorem}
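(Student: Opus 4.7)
The plan is to follow the Margulis--Roblin strategy, adapted to the potential setting as in \cite{pps}. The central object is the Gibbs measure $m_F$ associated to $F$, whose construction via Patterson--Sullivan type conformal densities on $\partial_\infty \widetilde{M}$ is already available from the preliminaries (Section \ref{pretf}). By hypothesis $m_F$ is finite, so after normalization it equals $\mu_F$; moreover, as $F$ is H\"older and the geodesic flow is topologically mixing, $\mu_F$ is \emph{mixing} for the geodesic flow. The weighted equidistribution of closed orbits will be extracted from this mixing via a Margulis-type product structure argument.

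The first step is to fix a test function $\phi \in C_c(T^1M)$ and choose a small flow box $B \subset T^1M$ of the form $B = W^{ss}_{loc} \times W^{su}_{loc} \times (-\eta,\eta)$ inside the support of $\mu_F$, on which $\mu_F$ decomposes (via the local product structure of stable/unstable conditionals) into a product of measures in the Hopf coordinates. Closed orbits of length near $t$ passing through $B$ are in natural bijection with pairs $(\xi,\eta) \in W^{ss}_{loc} \times g_{-t}(W^{su}_{loc})$ that are paired by a group element $\gamma \in \Gamma$; this is the classical Margulis correspondence. Integrating $\phi$ against the periodic orbit measure and expressing $\exp(l(\gamma)\int F d\mu_\gamma)$ via $\int_\gamma \tilde F$, one reduces the sum on the left-hand side to a correlation integral of the form $\int \phi \cdot (\phi' \circ g_t) \, d m_F$ weighted by exponentials of $\int F$, plus a controllable boundary term. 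This is where the H\"older regularity of $F$ enters crucially: it allows one to apply the Gibbs property to compare $\exp(\int_0^l F \circ g_s \, ds)$ along two orbits that shadow each other, up to a bounded multiplicative error.

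Now one applies mixing of $(g_t, \mu_F)$ to the correlation integrals. The mixing gives asymptotic factorization $\int \phi\,(\psi \circ g_t)\, d\mu_F \to \int \phi\, d\mu_F \int \psi\, d\mu_F$, which combined with the Margulis correspondence yields, after summing over flow-box translates, an asymptotic of the form $\sum_{\mu_\gamma \in \M_p(t)} e^{l(\gamma)\int F d\mu_\gamma} \int \phi\, d\mu_\gamma \sim C(F)\, \frac{e^{P(F)t}}{P(F)t} \int \phi\, d\mu_F$ for an explicit normalizing constant $C(F)$. A parallel computation with $\phi \equiv 1$ fixes $C(F)$ and gives the claimed vague convergence.

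The main obstacle, and the reason this is far more delicate than the compact case, is that $m_F$ is only a Radon measure and may have cusps/ends where mass is distributed in a non-uniform way. One must ensure that the local product structure, the shadowing argument, and the mixing property all survive in regions that might be traversed by long orbits, and that the error terms coming from non-compactness are truly negligible compared to $e^{P(F)t}/t$. The key input that makes this possible is precisely the finiteness of $m_F$ together with the mixing of $\mu_F$, together with a careful choice of flow box avoiding atomic defects at infinity; these are the non-trivial steps carried out in \cite{pps}, generalizing Roblin's result and using ideas going back to Margulis.
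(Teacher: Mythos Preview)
The paper does not give its own proof of this statement: Theorem~\ref{equipps} is quoted verbatim from \cite[Theorem 9.11(2)]{pps}, with attribution to Bowen and Roblin for the special cases. Your sketch correctly identifies the Roblin--Margulis strategy (mixing of the finite Gibbs measure $m_F$, local product structure in Hopf coordinates, shadowing and the H\"older Gibbs comparison) that \cite{pps} actually carries out, and you appropriately defer the delicate error control to that reference. In that sense your outline is aligned with the source the paper invokes.

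One genuine issue in your sketch: the step ``a parallel computation with $\phi\equiv 1$ fixes $C(F)$'' is not available. The function $1$ is not in $C_c(T^1M)$, and the theorem asserts only \emph{vague} convergence; controlling the total mass $\sum_{\mu_\gamma\in\M_p(t)} e^{l(\gamma)\int F\,d\mu_\gamma}$ is precisely the content of the upgrade to weak$^*$ convergence, which requires further hypotheses (geometric finiteness in Theorem~\ref{yess}, or the SPR condition in Theorem~\ref{fin} of the present paper). In \cite{pps} the correct normalization $C(F)=1$ is not obtained by testing against $1$ but emerges from the Patterson--Sullivan density normalization together with the two-sided mixing estimate on flow boxes; the constant is built into the Hopf-parametrization computation rather than recovered a posteriori.
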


In the case of geometrically finite manifolds Theorem \ref{equipps} can be upgraded to conclude that the measures converges in the weak* topology. For compleness we state such result, which was proven in \cite[Theorem 9.16]{pps}. For $F=0$ this was obtained by Roblin \cite[Theorem 5.2]{ro}. 

\begin{theorem}\label{yess} Suppose that $(M,g)$ is geometrically finite. Let $F\in C_b(T^1M)$ be a H\"older potential with positive pressure which admits an equilibrium state $\mu_F$. Then 
$$P(F)te^{-P(F)t}\sum_{\mu_\gamma\in \M_p(t)}\exp(l(\gamma)\int Fd\mu_\gamma)\mu_\gamma,$$
converges in the weak* topology to $\mu_F$. In particular the following holds
$$\sum_{\mu_\gamma\in \M_p(t)}e^{l(\gamma)\int F d\mu_\gamma}\sim \frac{e^{P(F)t}}{P(F)t}.$$
\end{theorem}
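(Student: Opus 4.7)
The plan is to combine the vague equidistribution of Paulin--Pollicott--Schapira (Theorem \ref{equipps}) with the SPR weak* equidistribution along orbits hitting $W$ (Theorem \ref{equi2}) in order to extract the sharp constant. Writing $B_t := P(F)\,t\,e^{-P(F)t}\,m_p(W,t)$, the conclusion is equivalent to the assertion $B_t\to 1$, and I would get this by testing the two limits against a single well-chosen non-negative function.

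Concretely, I would first produce $f\in C_c(T^1M)$ with $\supp(f)\subset W$ and $\int f\,d\mu_F>0$. This is possible because $W$ is open and meets $\Omega$, and the Gibbs measure $\mu_F$ of a H\"older potential has $\supp(\mu_F)=\Omega$, so every open subset of $T^1M$ meeting $\Omega$ has positive $\mu_F$-measure; pick $x_0\in W\cap\Omega$, take a bump supported in a small metric ball around $x_0$ contained in $W$, and the inequality is automatic. Evaluating the vague limit of Theorem \ref{equipps} on $f$ gives
\begin{equation*}
P(F)\,t\,e^{-P(F)t}\sum_{\mu_\gamma\in\M_p(t)} e^{l(\gamma)\int F\,d\mu_\gamma}\,\mu_\gamma(f)\;\longrightarrow\;\mu_F(f).
\end{equation*}
The essential observation is that for $\mu_\gamma\in\M_p(t)\setminus\M_p(W,t)$ the closed geodesic $\gamma$ is disjoint from $W\supset\supp(f)$, hence $\mu_\gamma(f)=0$; the sum on the left therefore collapses to the $\M_p(W,t)$-sum, which equals $B_t\cdot\mu_p(W,t)(f)$.

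To finish, I would invoke Theorem \ref{equi2}---the place where the SPR hypothesis is used in an essential way---to conclude that $\mu_p(W,t)\to\mu_F$ in the weak* topology, so $\mu_p(W,t)(f)\to\mu_F(f)>0$. Dividing the two limits yields $B_t\to\mu_F(f)/\mu_F(f)=1$, which is the desired asymptotic. The main delicate input in this plan is the production of $f$, i.e.\ the assertion that $\mu_F$ charges every non-empty open subset of $\Omega$; this is a standard consequence of the construction of the Gibbs measure $m_F$ in \cite{pps} (cf.\ Section \ref{pretf}), and without it one cannot directly convert the vague limit into a non-trivial statement about $m_p(W,t)$. Everything else is a bookkeeping combination of the two equidistribution results already available in the paper, and no new thermodynamic input is needed.
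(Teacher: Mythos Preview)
The paper does not prove Theorem~\ref{yess}; it is quoted from \cite[Theorem~9.16]{pps} ``for completeness'' and no argument is given. What you have written is, essentially verbatim, the paper's proof of a \emph{different} result, Theorem~\ref{fin}: choose a bump $f\in C_c(T^1M)$ supported in $W$ with $\int f\,d\mu_F>0$, collapse the $\M_p(t)$-sum to an $\M_p(W,t)$-sum because $\supp f\subset W$, apply Theorem~\ref{equipps} to the left side and Theorem~\ref{equi2} to $\mu_p(W,t)$, and divide. So as a reconstruction of the proof of Theorem~\ref{fin} your proposal is correct and matches the paper.

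As a proof of Theorem~\ref{yess}, however, there is a genuine gap. You invoke Theorem~\ref{equi2}, whose hypotheses are that $F\in C_0(T^1M)$ is H\"older and SPR (i.e.\ $P(F)>\delta_\infty$). Theorem~\ref{yess} assumes instead that $(M,g)$ is geometrically finite and $F\in C_b(T^1M)$ is H\"older with an equilibrium state; neither $F\in C_0(T^1M)$ nor the SPR inequality follows from these hypotheses, so Theorem~\ref{equi2} is simply unavailable. You also never use geometric finiteness, which is the entire point of Theorem~\ref{yess}: it is what allows one to sum over all of $\M_p(t)$ rather than only $\M_p(W,t)$ (every closed geodesic meets a fixed compact core), and to upgrade the vague convergence of Theorem~\ref{equipps} to weak* convergence by controlling the mass escaping into the cusps --- an argument carried out in \cite{pps} using the geometry of the ends and not reproducible with the tools developed in this paper. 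Finally, you only address the counting corollary and not the primary weak* convergence assertion of Theorem~\ref{yess}.
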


The counting statement in Theorem \ref{yess} was first obtained in the compact case (for $F=0$) by Margulis \cite{marg} and it is usually called the prime geodesic theorem (see also \cite{papo}). Our next result gives new information in  case that $M$ is not geometrically finite. We believe the assumption $F\in C_0(T^1M)$ can be removed, this would follow from an affirmative answer to Conjecture \ref{conj1}. The assumption that $F$ is SPR seems to be  more substantial, but it could still be the case that it is not really necessary (as in the geometrically finite case). 

\begin{theorem}\label{fin}  Let $(M,g)$ be a pinched negatively curved manifold and assume that the geodesic flow is topologically mixing. Let $F\in C_0(T^1M)$ be a H\"older SPR potential such that $P(F)>0$. Then 
$$\sum_{\mu_\gamma\in \M_p(W,t)}e^{l(\gamma)\int F d\mu_\gamma}\sim \frac{e^{P(F)t}}{P(F)t},$$
for every open relatively compact subset $W\subset T^1M$ such that $W\cap \Omega\ne \emptyset$.
\end{theorem}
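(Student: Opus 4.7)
The plan is to combine Theorem \ref{equipps} (vague equidistribution over all of $\M_p(t)$) with Theorem \ref{equi2} (weak* convergence of the normalized sum over $\M_p(W,t)$) by testing against a single well-chosen compactly supported function. Set
\[
B(t):=\sum_{\mu_\gamma\in \M_p(W,t)} e^{l(\gamma)\int F\,d\mu_\gamma},
\]
so the claim is $P(F)t\,e^{-P(F)t}B(t)\to 1$. The bridge between the two theorems is a single function $\phi$: Theorem \ref{equipps} will provide the numerator $P(F)t\,e^{-P(F)t}B(t)\cdot \int\phi\,d\mu_F$, while Theorem \ref{equi2} will provide the ratio $\int\phi\,d\mu_p(W,t)\to \int\phi\,d\mu_F$.

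First, I would choose a non-negative $\phi\in C_c(T^1M)$ with $\supp(\phi)\subset W$ and $\int \phi\,d\mu_F>0$. Such a $\phi$ exists because $\mu_F$ is the (unique) equilibrium state of the H\"older SPR potential $F$, and for the topologically mixing geodesic flow its support is all of $\Omega$; since $W$ is open with $W\cap\Omega\neq\emptyset$, we have $\mu_F(W)>0$, so a bump function concentrated near a point of $W\cap\Omega$ works. The crucial feature of this choice is that $\int\phi\,d\mu_\gamma>0$ iff $\gamma$ meets $\supp(\phi)\subset W$, in which case $\mu_\gamma\in\M_p(W,t)$; therefore
\[
\sum_{\mu_\gamma\in \M_p(t)} e^{l(\gamma)\int F\,d\mu_\gamma}\int\phi\,d\mu_\gamma
=\sum_{\mu_\gamma\in \M_p(W,t)} e^{l(\gamma)\int F\,d\mu_\gamma}\int\phi\,d\mu_\gamma.
\]

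Applying Theorem \ref{equipps} to this $\phi$ (vague convergence suffices since $\phi$ is compactly supported) yields
\[
P(F)t\,e^{-P(F)t}\sum_{\mu_\gamma\in \M_p(W,t)} e^{l(\gamma)\int F\,d\mu_\gamma}\int\phi\,d\mu_\gamma \;\longrightarrow\; \int\phi\,d\mu_F.
\]
Theorem \ref{equi2} gives $\mu_p(W,t)\to \mu_F$ in the weak* topology, hence
\[
\frac{1}{B(t)}\sum_{\mu_\gamma\in \M_p(W,t)} e^{l(\gamma)\int F\,d\mu_\gamma}\int\phi\,d\mu_\gamma \;\longrightarrow\; \int\phi\,d\mu_F.
\]
Since $\int\phi\,d\mu_F>0$, dividing the first limit by the second is legitimate and gives $P(F)t\,e^{-P(F)t}B(t)\to 1$, which is the desired asymptotic.

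The main conceptual obstacle is the need for the positivity $\int\phi\,d\mu_F>0$ on a function supported inside $W$: this is exactly what fails in Theorem \ref{equipps} alone (vague convergence gives no grip on the total weighted mass $B(t)$ because of possible escape of mass to infinity). The SPR hypothesis enters precisely here, through Theorem \ref{equi2}, which upgrades the weak convergence of the normalized periodic measures to weak* convergence---i.e., it rules out loss of mass along the sequence $\mu_p(W,t)$, allowing the division step to succeed. A secondary point to verify carefully is that $\supp\mu_F=\Omega$ under the topologically mixing assumption; this is standard for Gibbs equilibrium states of H\"older potentials in the Paulin--Pollicott--Schapira framework and should be quoted directly rather than reproved.
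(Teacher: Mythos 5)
Your proposal is correct and follows essentially the same argument as the paper: test against a non-negative bump function supported in $W$ (the paper uses $H\in C_c(T^1M)$ supported in a small ball $U\subset W$), note that this collapses the sum over $\M_p(t)$ to the sum over $\M_p(W,t)$, then combine the vague equidistribution of Theorem \ref{equipps} with the weak* convergence of Theorem \ref{equi2} and divide, using $\int H\,d\mu_F>0$ from $\supp\mu_F=\Omega$.
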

\begin{proof} Pick a point $x\in W\cap \Omega$ and take a small ball $U$ around $x$ such that $U\subset W$. Let $H\in C_c(T^1M)$ be a non-negative potential such that $\supp H\subset U$ and such that $H(x)>0$. Since the support of $\mu_F$ (and of every equilibrium state of a H\"older potential) is $\Omega$ we have that $\int Hd\mu_F>0$. For $\mu\in \M(g)$ we have $\int Hd\mu\ne 0$ only if $\mu(U)>0$, in particular 
$$\sum_{\mu_\gamma\in \M_p(t)}e^{l(\gamma)\int Fd\mu_\gamma}\int Hd\mu_\gamma=\sum_{\mu_\gamma\in \M_p(W,t)}e^{l(\gamma)\int Fd\mu_\gamma}\int H d\mu_\gamma.$$
By Theorem \ref{equipps} we have that 
$$\lim_{t\to\infty} P(F)te^{-P(F)t}\sum_{\mu_\gamma\in \M_p(t)}e^{l(\gamma)\int Fd\mu_\gamma}\int Hd\mu_\gamma=\int Hd\mu_F. $$
Therefore
$$\lim_{t\to\infty} P(F)te^{-P(F)t}m_p(W,t)\bigg(\frac{1}{m_p(W,t)}\sum_{\mu_\gamma\in \M_p(W,t)}e^{l(\gamma)\int Fd\mu_\gamma}\int Hd\mu_\gamma\bigg)=\int Hd\mu_F.$$
Now use Theorem \ref{equi2} to conclude 
$$\lim_{t\to\infty} P(F)te^{-P(F)t}m_p(W,t)\int Hd\mu_F=\int Hd\mu_F.$$
Since $\int Hd\mu_F>0$ we conclude that $$\lim_{t\to\infty} P(F)te^{-P(F)t}m_p(W,t)=1,$$
as desired.

\end{proof}

As a corollary of Theorem \ref{fin} we obtain a version of the prime geodesic theorem for strongly positive recurrent manifolds.

\begin{corollary}\label{fincor} Let $(M,g)$ be strongly positive recurrent and assume its geodesic flow is topologically mixing. Then 
$$\#\{\gamma\text{ primitive periodic orbit }|\text{ } l(\gamma)\le t\text{ and }\gamma\cap W\ne \emptyset\} \sim \frac{e^{\delta_\Gamma t}}{\delta_\Gamma t},$$
for every open relatively compact subset $W\subset T^1M$ such that $W\cap \Omega\ne \emptyset$.
\end{corollary}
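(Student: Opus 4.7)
The plan is to obtain this as a direct consequence of Theorem \ref{fin} applied to the zero potential $F \equiv 0$. First I would verify that the zero potential satisfies all the hypotheses of that theorem. Trivially, $0$ is H\"older continuous and lies in $C_0(T^1M)$. By Theorem \ref{pps} we have $P(0) = \delta_\Gamma = h_{top}(g)$. The SPR hypothesis on $(M,g)$, namely $h_{top}(g) > \delta_\infty$ (Definition \ref{spr_def}), is exactly the condition $P(0) > \delta_\infty$, so $0$ is a H\"older SPR potential in the sense of Definition \ref{sprdef} specialized to $C_0(T^1M)$. The requirement $P(0) > 0$ follows from the standing assumption that $\Gamma$ is non-elementary, which guarantees $\delta_\Gamma > 0$.

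Next I would carry out the simplification of the weighted sum. Since $\int F d\mu_\gamma = 0$ for every periodic measure when $F \equiv 0$, we have
$$\sum_{\mu_\gamma \in \M_p(W,t)} e^{l(\gamma)\int F d\mu_\gamma} = \#\M_p(W,t).$$
Thus Theorem \ref{fin} yields $\#\M_p(W,t) \sim e^{P(F)t}/(P(F)t) = e^{\delta_\Gamma t}/(\delta_\Gamma t)$.

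Finally I would identify $\#\M_p(W,t)$ with the quantity in the statement. Recall from the conventions in Section \ref{ld} (and the discussion preceding Theorem \ref{c}) that the sum in $\M_p(W,t)$ ranges over primitive periodic orbits. Since $\mu_\gamma$ is the normalized Lebesgue measure on the periodic orbit $\gamma$, and $W$ is open, the condition $\mu_\gamma(W) > 0$ is equivalent to $\gamma \cap W \neq \emptyset$. Hence
$$\#\M_p(W,t) = \#\{\gamma \text{ primitive periodic orbit} : l(\gamma) \le t \text{ and } \gamma \cap W \neq \emptyset\},$$
and the corollary follows. There is no real obstacle here: the corollary is essentially a restatement of Theorem \ref{fin} in the special case $F \equiv 0$, once one observes that the SPR hypothesis on the manifold is precisely the SPR hypothesis on the zero potential.
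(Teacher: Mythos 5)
Your proposal is correct and follows exactly the paper's route: the corollary is stated there as a direct application of Theorem \ref{fin} to the zero potential, using $\delta_\infty=\delta^0_{\Gamma,\infty}$, $P(0)=\delta_\Gamma=h_{top}(g)>0$ for non-elementary $\Gamma$, and the identification of the weighted sum with the count of primitive orbits meeting $W$. Your checks of the hypotheses (H\"older, $0\in C_0(T^1M)$, SPR, positive pressure) and of $\mu_\gamma(W)>0\iff\gamma\cap W\ne\emptyset$ are precisely the observations the paper leaves implicit.
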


It would be interesting to obtain error terms for this counting result. The class of strongly positive recurrent manifolds is a natural generalization of hyperbolic geometrically finite manifolds, where this problem has been successfully studied (see \cite[Theorem 1.2]{mmo}). As in \cite{mmo}, this would follow from the exponential decay of correlations of the measure of maximal entropy. The exponential decay of correlations of the measure of maximal entropy was proven for geometrically finite hyperbolic manifolds with sufficiently large critical exponent by A. Mohammadi and H. Oh in \cite{mo}. Because of the similarities between the ergodic theory of countable Markov shifts and the geodesic flow, we suspect the exponential decay of correlations should hold for every SPR manifold (see \cite{csa})--this is certainly an important open problem. 

We finish with the construction of a SPR manifold with infinitely many geodesics of the same length.  This justifies that in the geometrically infinite setting--even within the class of SPR manifolds--we can not remove the condition of counting only geodesics that intersect a  compact part of $M$. 

Let $\Gamma_1$ and $\Gamma_2$ be two discrete torsion free subgroups of $Iso(\widetilde{M})$. We say that $\Gamma_1$ and $\Gamma_2$ are in Schottky position if there exist disjoint compact sets $U_{\Gamma_1}, U_{\Gamma_2}\subset \widetilde{M}\cup\partial_\infty\widetilde{M}$, such that for every $\gamma_1\in \Gamma_1\setminus \{id\}$ and $\gamma_2\in \Gamma_2\setminus \{id\}$, we have
$$\gamma_1((\widetilde{M}\cup\partial_\infty\widetilde{M})\setminus U_{\Gamma_1}\subset U_{\Gamma_1}\text{, and }\gamma_2((\widetilde{M}\cup\partial_\infty\widetilde{M})\setminus U_{\Gamma_2})\subset U_{\Gamma_2}.$$
The ping-pong lemma implies that the group generated by $\Gamma_1$ and $\Gamma_2$, say $\Gamma=\langle \Gamma_1,\Gamma_2\rangle$, is isomorphic to the free product $\Gamma_1*\Gamma_2$. The Klein-Maskit combination theorem implies that the group $\Gamma$ is discrete and torsion free; $\Gamma$ is called the Schottky combination of $\Gamma_1$ and $\Gamma_2$. We use the notation $\delta_{ \Gamma_i,\infty}$ to denote the topological entropy at infinity of the geodesic flow on $\widetilde{M}/\Gamma_i$. 

\begin{example}\label{finrem}
Let $M_0=\H^2/\Gamma_0$ be a closed hyperbolic surface, and $\phi:\Gamma_0\to\Z$ a surjective homeomorphism. The hyperbolic surface $\H^2/\Gamma^0_1$, where $\Gamma^0_1=\ker\phi$, is a $\Z$-cover of $M_0$. We identify $\Gamma^0_1$ with a subgroup $\Gamma_1$ of $PSL(2,\C)$ and define $M_1=\H^3/\Gamma_1$. Let $M_2=\H^3/\Gamma_2$ be a geometrically finite hyperbolic 3-manifold with at least one rank 2 cusp and infinite volume. In this case the domain of discontinuity of $\Gamma_2$ is non-empty and it is possible to find a fundamental domain for the action of $\Gamma_2$ (on its domain of discontinuity) with non-empty interior. Maybe after a conjugation of $\Gamma_2$ we can assume that $\Gamma_1$ and $\Gamma_2$ are in Schottky position. Denote by $\Gamma\leqslant PSL(2,\C)$ the Schottky combination  of $\Gamma_1$ and $\Gamma_2$, and $M=\H^3/\Gamma$. In this situation we can use \cite[Theorem 7.18]{st}, which says that $\delta_{\infty, \Gamma}=\max\{\delta_{ \Gamma_1,\infty},\delta_{ \Gamma_2,\infty}\}.$ It is proved in \cite{rv} that $\delta_{ \Gamma_2,\infty}=\max_{\P}\delta_\P$, where the maximum runs over the critical exponent of the parabolic subgroups of $\Gamma_2$. Since $M_2$ has a rank 2 cusp, and the critical exponent of a rank 2 cusp is 1, we conclude that $\delta_{ \Gamma_2,\infty}=1$. Since every parabolic subgroup of $PSL(2,\C)$ is of divergence type we can use \cite[Proposition 2]{dop} to conclude that $1=\delta_{ \Gamma_2,\infty}<\delta_{\Gamma_2}$. Observe that the critical exponent of $\Gamma_1$ is the same as the critical exponent of $\Gamma_1^0$. Since $\Gamma_1^0$ is a normal subgroup of $\Gamma_0$, and $\Gamma_0$ is cocompact, we get that $\delta_{\Gamma_1^0}=\delta_{\Gamma_0}=1$. We conclude that $\delta_{\Gamma_1}=1$ and that $\delta_{ \Gamma_1,\infty}\le 1$. Finally, putting all this together we get that 
$$\delta_{ \Gamma,\infty}=\max\{\delta_{ \Gamma_1,\infty},\delta_{ \Gamma_2,\infty}\}=1<\delta_{\Gamma_2}\le \delta_{\Gamma}.$$
We conclude that $M$ is a SPR hyperbolic 3-manifold. Since $\Gamma_1\leqslant \Gamma$, it follows that $M$ has infinitely many geodesics of some length (the same that happens for $\H^2/\Gamma_1^0$, and therefore for $\H^3/\Gamma_1$). It worth mentioning that $\pi_1(M)=\Gamma$ is not finitely generated.
\end{example}

\end{document}